\numberwithin{equation}{section}
\newcounter{keepeqno}
\newcommand{\BA}{{\mathbb {A}}}
\newcommand{\BC}{{\mathbb {C}}}
\newcommand{\BF}{{\mathbb {F}}}
\newcommand{\BG}{{\mathbb {G}}}
\newcommand{\BN}{{\mathbb {N}}}
\newcommand{\BO}{{\mathbb {O}}}
\newcommand{\BR}{{\mathbb {R}}}
\newcommand{\BW}{{\mathbb {W}}}
\newcommand{\BZ}{{\mathbb {Z}}}
\newcommand{\CB}{{\mathcal {B}}}
\newcommand{\CCD}{{\mathcal {D}}}
\newcommand{\CF}{{\mathcal {F}}}
\newcommand{\CH}{{\mathcal {H}}}
\newcommand{\CI}{{\mathcal {I}}}
\newcommand{\CL}{{\mathcal {L}}}
\newcommand{\CM}{{\mathcal {M}}}
\newcommand{\CO}{{\mathcal {O}}}
\newcommand{\CS}{{\mathcal {S}}}
\newcommand{\CZ}{{\mathcal {Z}}}
\newcommand{\FC}{{\mathfrak {C}}}
\newcommand{\Ff}{{\mathfrak {f}}}
\newcommand{\Fo}{{\mathfrak {o}}}
\newcommand{\RB}{{\mathrm {B}}}
\newcommand{\RC}{{\mathrm {C}}}
\newcommand{\RD}{{\mathrm {D}}}
\newcommand{\RE}{{\mathrm {E}}}
\newcommand{\RF}{{\mathrm {F}}}
\newcommand{\RG}{{\mathrm {G}}}
\newcommand{\RI}{{\mathrm {I}}}
\newcommand{\RK}{{\mathrm {K}}}
\newcommand{\RL}{{\mathrm {L}}}
\newcommand{\RN}{{\mathrm {N}}}
\newcommand{\RR}{{\mathrm {R}}}
\newcommand{\RS}{{\mathrm {S}}}
\newcommand{\RT}{{\mathrm {T}}}
\newcommand{\RU}{{\mathrm {U}}}
\newcommand{\RZ}{{\mathrm {Z}}}
\newcommand{\cusp}{{\mathrm{cusp}}}
\newcommand{\GL}{{\mathrm{GL}}}
\newcommand{\Mat}{{\mathrm{Mat}}}
\newcommand{\rank}{{\mathrm{rank}}}
\newcommand{\PGL}{{\mathrm{PGL}}}
\renewcommand{\Re}{{\mathrm{Re}}}
\newcommand{\Res}{{\mathrm{Res}}}
\newcommand{\rb}{{\mathrm{b}}}
\newcommand{\Eis}{{\mathrm{Eis}}}
\newcommand{\SO}{{\mathrm{SO}}}
\newcommand{\SU}{{\mathrm{SU}}}
\newcommand{\st}{{\mathrm{st}}}
\newcommand{\tr}{{\mathrm{tr}}}
\newcommand{\ud}{\,\mathrm{d}}
\newcommand{\vol}{{\mathrm{vol}}}
\def\std{\rm std}
\newcommand{\sslash}{\mathbin{/\mkern-6mu/}}
\newtheorem{thm}{Theorem}[section]
\newtheorem{dfn}[thm]{Definition}
\newtheorem{rmk}[thm]{Remark}
\newtheorem{prp}[thm]{Proposition}
\newtheorem{lem}[thm]{Lemma}
\newtheorem{cor}[thm]{Corollary}
\newcommand{\Rmnum}[1]{\expandafter\@slowromancap\romannumeral #1@}
\begin{document}

    \title[Poisson Summation Formula and Kuznetsov Trace Formula on $\GL_2$]{Beyond Endoscopy: Poisson Summation Formula and Kuznetsov Trace Formula on $\GL_2$}

    \date{\today}
    
    \author{Zhaolin Li}
    
    \address{School of Mathematics, University of Minnesota, 206 Church St. S.E., Minneapolis, MN 55455, USA.
}
    \email{li001870@umn.edu}
    \subjclass[2010]{Primary 11F66, 22E50, 43A32; Secondary 11F70, 22E53, 44A20}
    \date{\today}
\keywords{Automorphic $L$-functions, Poisson Summation Formula, Beyond Endoscopy, Orbital Integrals, Trace Formula}

  \begin{abstract}
In the first part of this paper, we present a direct proof of a Poisson summation formula on the Whittaker space of $\GL_2$, which underlies the local Hankel transform computed in \cite{Jac03}. In the second part, we derive a Kuznetsov-type trace formula for incorporating non-standard test functions. This formulation reveals that the Poisson summation formula naturally yields the functional equation for the standard $L$-functions of $\GL_2$.
\end{abstract}
    \maketitle
    \tableofcontents

\section{Introduction}

In 1895, B. Riemann represented the Riemann zeta function as the Mellin transform of a theta series
\begin{align*}
	\pi^{-\frac{s}{s}}\Gamma(s)\zeta(s)=\int_{0}^{\infty}y^{\frac{s}{2}}\sum_{n=1}^{\infty}e^{-n^2\pi y }\ud^{\times }y
\end{align*}
and proved the functional equation of it by using the Poisson summation formula of the theta series, see \cite{Rie59}. This idea was further developed by K. Iwasawa and J. Tate and was reformulated in Tata's thesis \cite{Tat67} in the language of adeles using the theory of local and global harmonic analysis. R. Godement and H. Jacquet generalized this work to study the analytic properties of standard automorphic $L$-functions of $\GL_n$. The key input from the harmonic analysis to Godement-Jacquet theory is the classical Fourier transform on the affine space $\Mat_n$ of $n\times n $ matrices and the global Poisson summation formula that is responsible for the functional equation of standard $L$-functions. 

Around the year 2000, A. Braverman and D. Kazhdan (\cite{BK00}) proposed a framework to establish the Langlands conjecture for general $L$-functions $L(s,\pi,\rho)$ via local and global harmonic analysis on the group, which generalizes
the method of Tate's thesis and the work of Godement and Jacquet to the great generality, which is refined by B. Ngo in \cite{Ngo20}. We also refer the reader to \cite{Luo24} and \cite{JLX24}. Under this framework, if generalized Fourier transforms and Poisson summation formulae could be established, there is a hope to prove the Langlands conjecture on the meromorphic continuation and the global functional equation of automorphic $L$-functions. The difficult part is establishing the Poisson summation formulae. We also refer to \cite{BK99, BK00} and \cite{GL19, GL21, CG24} for some cases. D. Jiang and Z. Luo also established a version of Poisson summation formulae on $\BG_m$ with automorphic representations involved; see \cite{JL23}.

Another approach to the study of automorphic $L$-functions is through the lens of the functoriality conjecture. However, $L$-functions also play a central role in Langlands' broader vision of Beyond Endoscopy: they must be explicitly introduced to weight the trace formula, and their residues at poles are then used to detect the image of functoriality lifts. It is worth noting that the original idea involved weighting by the logarithmic derivatives of $L$-functions, a method now recognized as particularly challenging to implement in this context.  The Beyond Endoscopy has limited successes: In his thesis \cite{Ven04}, A. Venkatesh used the Kuznetsov trace formula successfully re-established functoriality from tori to $\GL_2$. S. Ali Altug's work \cite{Ag15, Ag17, Ag20} isolated the contribution of special representations for $\GL_2$ from the Arthur-Selberg trace formula and showed that the standard $L$-functions do not pick up functorial lifts in the cuspidal spectrum. A series paper by P. Hermann \cite{Her11, Her12} also used the Kuznetsov trace formula to give a new proof of the analytic continuation and functional equation of standard $L$-functions and Rankin-Selberg $L$-functions. As noted in \cite[Section 6]{Sak19}, Hermann's techniques bear a resemblance to Jacquet's formula in \cite{Jac03}.

Motivated by the work of \cite{FLN10}, one might consider employing non-standard test functions in the trace formula, which would naturally lead to the appearance of $L$-functions in the spectral expansion. By carefully analyzing the geometric side, one could potentially extract information about $L$-functions. A key ingredient in this approach is a Poisson summation formula on the Hitchin base. As highlighted in \cite{Sak23b}, the Kuznetsov quotient appears to serve as the foundation for all relative trace formulas. In this paper, we introduce Godement-Jacquet test functions into the Kuznetsov trace formula and establish a global Poisson summation formula corresponding to the local Hankel transforms computed by Jacquet in \cite{Jac03}. Specifically, we prove the commutativity of the following diagram:
\begin{align*}
	\xymatrix{
\CCD^-_{L\left(\std,\frac{1}{2}\right)}(\RN(\BA),\psi\backslash \GL_2(\BA)/\RN(\BA),\psi) \ar@{->}[rd]^{\mathrm{KTF}} \ar[rr]^{\CH_{\std,\BA}}&& \CCD^-_{L\left(\std^{\vee},\frac{1}{2}\right)}(\RN(\BA),\psi\backslash \GL_2(\BA)/\RN(\BA),\psi)\ar@{->}[ld]^{\mathrm{KTF}}\\
 &\BC & 
}
\end{align*}
to obtain the analytic continuation and functional equations of standard $L$-functions of $\GL_2$. In the above diagram,
\[
    	\CCD^-_{L\left(\std,\frac{1}{2}\right)}(\RN(\BA),\psi\backslash \RG(\BA)/\RN(\BA),\psi):=\otimes^{\prime}_{\nu}\CCD^-_{L\left(\std,\frac{1}{2}\right)}(\RN_{\nu},\psi_{\nu}\backslash \RG_{\nu}/\RN_{\nu},\psi_{\nu})
\]
is the restricted tesor product of local spaces with basic vectors given in Definition \ref{dfn_unramified}, and the local space $\displaystyle{\CCD^-_{L\left(\std,\frac{1}{2}\right)}(\RN_{\nu},\psi_{\nu}\backslash \RG_{\nu}/\RN_{\nu},\psi_{\nu})
}$ is defined to be the space of orbital integrals of Schwartz functions on $\Mat_2(k_{\nu})$ with a suitable normalization, see Definition \ref{dfn_localspace}. Using the coordintes of the maximal torus, $\mathrm{KTF}$ has the following explicit expression:
\[
	\mathrm{KTF}_{L\left(\std,\frac{1}{2}\right)}(f):=\sum_{t\in \RT(k)}\frac{f}{(\ud t)^{\frac{1}{2}}}(t)+\sum_{t_1\in k^{\times}}\CO_{u,1,\BA}(f)(t_1)+\sum_{t_2\in k^{\times}}\CO_{u,2,\BA}(f)(t_2),
	\]
a summation over the rational points of the maximal torus with boundary terms explicitly given, which are hided in the expression of local Hankel transforms. For unexplained notations, see Section \ref{sec_local} and Section \ref{sec_global}.

Notably, this paper relies solely on the local theory developed in \cite{GJ72} and does not invoke any global theory, including the global Poisson summation formula on $\Mat_2(\BA)$. This represents a novel approach to establishing the analytic continuation and functional equations of standard $L$-functions for $\GL_2$. One advantage of this method is that it avoids reliance on the highly non-trivial Poisson summation formula for general reductive groups. Moreover, this approach offers a more systematic framework for studying automorphic $L$-functions. Provided that the appropriate local theory, as proposed in \cite{Ngo20}, is available, along with the machinery of a general trace formula and Poisson summation formula on orbital integrals, the analytic continuation and functional equations can be derived. It is also worth noting that Z. Luo and B. C. Ngo have recently made significant progress in the development of the local theory proposed by Braverman, Kazhdan, and Ngo, as detailed in \cite{LN24}.

Motived by the work \cite{Sak19a} of Y. Sakellaridis, this paper presents two distinct proofs of the Poisson summation formula. The first proof is indirect, achieved through a descent from the Poisson summation formula on $\Mat_2(\BA)$. This part can be regarded as a verification from the perspective of this paper. However, the primary focus lies on the second proof, which provides a direct and self-contained argument, relying on a two-step application of the classical one-dimensional Poisson summation formula. The overarching principle guiding this approach is that the establishment of local Hankel transforms at the level of orbital integrals provides critical insights into the formulation of the boundary terms in the Poisson summation formula. By decomposing local Hankel transforms into compositions of classical Fourier transforms with appropriate correction factors, it becomes feasible to derive the Poisson summation formula without explicit reference to the upstairs space. Once the Poisson summation formula is established at the level of orbital integrals, it can be integrated into the trace formula to derive the functional equation of automorphic $L$-functions, a central objective of the latter part of this paper. 

In this context, we construct a Kuznetsov-type trace formula utilizing non-standard test functions coming from Schwartz functions on $\Mat_2$. As anticipated, the geometric side of this trace formula exhibits a close relationship with the Poisson summation formula developed herein. On the spectral side, the expansion has been previously established in \cite{Wu}. When examining the Eisenstein series component of the trace formula, it is expected to reduce to the lower-rank case, specifically to the setting of Tate's thesis \cite{Tat67}. To isolate a single cuspidal spectrum, we adopt a technique analogous to that employed in \cite{Sak19a}, interpreting the trace formula as a measure on the unramified Hecke algebra. This methodology enables us to establish the analytic continuation and functional equations of the standard $L$-functions. 

A noteworthy phenomenon observed in this analysis is that during the process of analytic continuation, the boundary terms arising from the Poisson summation formula manifest as residues of the Eisenstein series component within the spectral expansion. These boundary terms, originating from the geometric side, ultimately cancel out with the corresponding Eisenstein series contributions. For a comprehensive treatment of these results, the reader is directed to Section \ref{sec_cont} and Section \ref{sec_feq}.

The key elements of this paper are the Poisson summation formula on the level of orbital integrals and the trace formula. In the Beyond Endoscpoy program, by employing the non-standard test functions in the trace formula, we can encode the information of $L$-functions in the spectral expansion of the trace formula. The primary challenge on the geometric side lies in formulating a Poisson summation formula. The central idea of this approach is that we do not  necessarily require a Poisson summation formula on the geometric space itself; instead, a Poisson summation formula on the level of orbital integrals suffices for applying the trace formula. In this framework, the local Hankel transforms serve as the foundation of our Poisson summation formula. Just as we do in this paper, as long as we can decompose the Hankel transforms into a composition of classical one-dimensional Fourier transforms with certain correction factors, these classical Fourier transforms will guide us in constructing a global Poisson summation formula. Rather than focusing on the  Poisson summation formula in the upstairs space, the Poisson summation formula on the level of orbital integrals appears more attainable from this perspective. The remaining challenge is to determine how to decompose the local Hankel transforms, a problem that remains unresolved in general. However, we have made progress in specific cases, as evidenced in \cite{Sak19, Sak22, Sak23}.
\subsection{Organization of the Paper}
In Section \ref{sec_local}, we will study properties of local orbital integrals and recall the Hankel transforms established by Jacquet in \cite{Jac03}. Then, in Section \ref{sec_global}, we will write down the global Poisson summation formula based on the local Hankel transforms and give two proofs of it. In the following section, we will establish the Kuznetsov trace formula with Schwartz functions from $2\times 2$ matrices. This can be viewed as one example of a trace formula with non-standard test functions. It turns out that the Poisson summation formula that we constructed in the previous section is closely related to the geometric side of the Kuznetsov trace formula. Then, in section \ref{sec_cont}, we will study the analytic continuation of the geometric side and the Eisenstein series part from the spectrum side of the trace formula. For the geometric side, we will use the Poisson summation formula in Section \ref{sec_global}, and the Eisenstein series part is essentially reduced to Tate's thesis. In this way, we can establish the analytic continuation of the cuspidal part. The next section will focus on the functional equation. It also consists of two parts, the first of which is the functional equation of the geometric part corresponding to the Poisson summation formula. Then, we will compare the Eisenstein series parts according to the functional equation of the $\GL_1$ integrals and obtain the functional equation for the cuspidal part. Finally, in the last section, we will isolate the spectrum and obtain the analytic continuation and functional equation of standard $L$-functions for one single cuspidal representation.

\subsection{Notations and Convensions}

\begin{itemize}
    \item In Section \ref{sec_local}, we will use $F$ to denote a local field, and use $\psi$ to denote an additive character. In the later sections, we will use $k$ to denote a global field, $\BA$ its ring of adeles, and $\psi:k\backslash\BA\rightarrow \BC^{\times}$ a global additive adelic character.
    \item We will reserve $\RG$ to denote $\RG=\GL_2$ in this paper. Let $\RT$ be the universal Cartan of $\GL_2$, which can be identified with the maximal torus consisting of all diagonal matrices in $\GL_2$. Let $\RN$ be the unipotent subgroup of $\RG$ consisting of all strictly upper triangular matrices. We also view $\psi$ as an additive character on $\RN$ by identifying $\RN\cong\BG_a:\begin{pmatrix}
    	1 &  n \\0 & 1
    \end{pmatrix}\mapsto n$. Let $w=\begin{pmatrix}
    	0& 1 \\ 1 &0
    \end{pmatrix}$ be the longest Weyl element, and $\RB=\RT\RN$. By $\epsilon_i^{\vee}$ ($i=1,2$), we denote the character of the $i$-th coordinate of the torus of diagonal elements, written additively. Write $e^\alpha_1$ for the positive root, written multiplicatively. 
    \item When the meaning is clear from the context, for a variety $X$ over a local field $F$, we denote the set of $F$-rational points $X(F)$ simply by $X$. If $X$ is over a global field $k$, we will use $[X]:=X(k)\backslash X(\BA)$ to denote its adelic quotient.
    \item Let $\FC=\RN\backslash \RG \sslash \RN$, then $\RT$ can be embedded as an open subset of $\FC$ via $t\mapsto wt$.
    \item For a smooth function $\Psi$ on $\RG$, we will use $\CO_t^{\psi}(\Psi)$ to denote its orbital integrals
    \[
    \CO_t^{\psi}(\Psi):=\int_{\RN\times \RN}\Psi(n_1wtn_2)\psi^{-1}(n_1n_2)\ud n_1\ud n_2
    \]
    for $t\in \RT$.
    \item Since $\RG$ acts on $\RG$ both on the left and on the right, we will use $\RL$ and $\RR$ to denote its left and right action, and hence $\RG$ also acts on functions of $\RG$ by 
    \[
    \RL(g)\Phi(x):=\Phi(g^{-1}x),\;\RR(g)\Phi(x):=\Phi(xg),
    \]
    for $g,x\in \RG$, and $\Phi:\RG\rightarrow\BC$.
    \item For a smooth variety $X$ over a local field, we denote by $\CF(X)$ the space of Schwartz functions on the $F$-points of $X$. It means smooth rapid decay function in the Archimedean case, see \cite{AG08}, and smooth functions of compact support in the non-Archimedean case. Similarly, we will use $\CS(X)$ to denote the space of Schwartz measures and $\CCD(X)$ the space of Schwartz half-densities. Often $X(F)$ is a homogeneous $\RG(F)$-space and admits an invariant positive measure $\ud x$, then we have $\CS(X)=\CF(X)\ud x$ and $\CCD(X)=\CF(X)(\ud x)^{\frac{1}{2}}$.
    \item For the notation of the Fourier transforms, let $f$ be an half-density on $\RT$ and $\lambda^{\vee}$ be any cocharacter of $\RT$, $s\in\BC$, we define
	\[
\CF^{\psi}_{\lambda^{\vee},s}f(\xi):=\int_{\BG_m}|x|^s\psi(x)f(\lambda^{\vee}(x)^{-1}\xi)\ud^{\times}x,
	\]
whenever this integral makes sense.
\item Let $f$ be a one-variable function. We also use the usual Fourier transform on the Affine line
\[
\BF_{\psi}(f)(x):=\int_{\BA^1} f(y)\psi^{-1}(xy)\ud y 
\]
when the integral converges. When $f$ is a multi-variable function, we will use $\BF_{\psi, i}(f)$ to denote this Fourier transform for the $i$-th variable with other variables fixed. We will also identify $\Mat_2\cong\BA^4:\begin{pmatrix}
	x_{11} & x_{12} \\ x_{21} & x_{22} 
\end{pmatrix}\mapsto (x_{11},x_{12},x_{21},x_{22})$ and use $\BF_{\psi,i}$ to denote the Fourier transform with respect to the $i$-th coordinate using this identification.
\end{itemize}

\subsection{Acknowdegement}
I want to thank my advisor, Prof. Dihua Jiang, for providing continued support and encouragement, providing practical and thought-provoking viewpoints. I am very grateful to Prof. Yiannis Sakellaridis for his suggestions during the number theory seminar at Johns Hopkins University. I would also like to thank Xinchen Miao and Guodong Xi for many helpful conversions and comments.

\section{Local Theory}\label{sec_local}
    \subsection{Speces of Orbital Integrals}\label{sec_local spaces}
    Let $F$ be a local field in this section. We will use $\Fo_F$ to denote the ring of integers of $F$, $\varpi$ a uniformizer. Let $\RG=\GL_2$. We will use $\Mat_2$ to denote the space of $2\times 2$ matrices. For the purpose of Hankel transforms responsible for the functional equations, following \cite{Sak19} it is easier to work with half-densities, but one may also feel free to transfer densities with functions and measures, see also \cite{Sak19}.
    \begin{dfn}
    	Denote $\CCD(\Mat_2)$ the pull-back of Shcwartz half-densities on $\Mat_2$ to $\GL_2$ via the natural embedding $g\mapsto g$. Note that a Schwartz half-density on $\Mat_2$ is of the form $\Phi(x)(\ud ^+x)^{\frac{1}{2}}$, where $\Phi(x)$ is a usual Schwartz function on $\Mat_2$ and $\ud ^+x$ is an additive Haar measure on $\Mat_2$. Then its pull-back to $\GL_2$ is $|\det g|\Phi(g)(\ud g)^{\frac{1}{2}}$, where $\displaystyle{\ud g=\frac{\ud^+ g}{|\det g|^2}}$ is the induced Haar measure on $\RG$. When there is no confusion, we will not distinguish the Schwartz half-density on $\Mat_2$ and its pull-back to $\RG$.
    \end{dfn}
\begin{rmk}
	Here we consider the natural action of $\RG\times \RG$ on both $\RG$ and $\Mat_2$ by
	\[
	x.(g_1,g_2):=g_1^{-1}xg_2,
	\]
	for $x\in \RG$ or $\Mat_2$ and $(g_1,g_2)\in \RG$. Later, we will consider another embedding of $\RG$ into $\Mat_2$ via the inverse map and modify the $\RG$-action.
\end{rmk}

\begin{dfn}
	For a smooth half-density $\Psi(g)(\ud g)^{\frac{1}{2}}$ on $\RG$, we define its twisted push-forward to the open subset $\RT$ of $\FC$ as
\[
\delta^{\frac{1}{2}}(t)\CO_t^{\psi}(\Psi)(\ud t)^{\frac{1}{2}},
\]
where for $t=\begin{pmatrix}
	t_1 & 0 \\ 0  & t_2
\end{pmatrix}$, $\displaystyle{ \delta(t)=\left|\frac{t_1}{t_2}\right|}$, 
\[
\CO_t^{\psi}(\Psi)=\int_{\RN\times \RN}\Psi(n_1wtn_2)\psi^{-1}(n_1n_2)\ud n_1\ud n_2,
\]
and $\ud t$ is the Haar measure on the torus $\RT$ such that
\begin{align}\label{eq_int formula}
\int_{\RG}\Psi(g)\ud g=\int_{\RT} \delta(t) \CO_t^{\mathrm{triv}}(\Psi)\ud t,
\end{align}
whenever this integral converges.
\end{dfn}

\begin{rmk}
	In the following, we fix an additive character $\psi:F\rightarrow\BC$, and choose the Haar measure on $\Mat_2$ to be $\ud^+ x:=\ud x_{11}\ud x_{12}\ud_{21}\ud x_{22}$ for $x=\begin{pmatrix}
		x_{11} & x_{12} \\ x_{21} & x_{22}
	\end{pmatrix}$ with each $\ud x_{ij}$ being the self-dual Haar measure on the additive group $F$ with respect to $\psi$. Then we choose $\displaystyle{\ud g:=\frac{\ud^+ g }{ |\det g|^2}}$. We also choose the Haar measure on $\RN$ to be the self-dual Haar measure with respect to $\psi$. Once all of these measures are chosen, the Haar measure $\ud t$ on the torus $\RT$ is the one such that the integration formula (\ref{eq_int formula}) holds. Using these choices, when $\psi$ is unramified, we have $\displaystyle{\int_{\RT(\Fo_F)}1\ud t=1.}$
\end{rmk}

\begin{dfn}\label{dfn_localspace}
	The image of the twisted push-forward of $ \mathcal{D}(\Mat_2)$ is denoted by \[
	\mathcal{D}^{-}_{L(\std,\frac{1}{2})}(\RN,\psi\backslash \RG/\RN,\psi).\]
	More explicitly, for $\Phi\in\CF(\Mat_2)$, the twisted push-forward of $|\det g|\Phi(g)(\ud g)^{\frac{1}{2}}$ to $\RT$ is
\[
|\det t|\delta^{\frac{1}{2}}(t)\CO_t^{\psi}(\Phi)(\ud t)^{\frac{1}{2}}.
\]

\end{dfn}

\begin{dfn}\label{dfn_unramified}
	We define the basic vector $f^{\circ}$ to be the twisted push-forward of $1_{\Mat_2(\Fo_F)}$, the characteristic function of $\Mat_2(\Fo_F)$.
\end{dfn}

\begin{prp}\label{prp_unramified}
	The restriction of $f^{\circ}$ to $\RT(\Fo_F)$ is $(\ud t)^{\frac{1}{2}}$ in the case that $\psi$ is unramified.
\end{prp}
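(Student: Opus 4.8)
The plan is to compute the orbital integral $\CO_t^{\psi}(1_{\Mat_2(\Fo_F)})$ explicitly for $t \in \RT(\Fo_F)$ and then track the normalizing factors $|\det t|\,\delta^{1/2}(t)$, which are both trivial on $\RT(\Fo_F)$ when $\psi$ is unramified. First I would write $t = \diag(t_1,t_2)$ with $t_1,t_2 \in \Fo_F^\times$, and $n_1 = \begin{pmatrix}1 & x \\ 0 & 1\end{pmatrix}$, $n_2 = \begin{pmatrix}1 & y \\ 0 & 1\end{pmatrix}$, so that
\[
n_1 w t n_2 = \begin{pmatrix}1 & x \\ 0 & 1\end{pmatrix}\begin{pmatrix}0 & 1 \\ 1 & 0\end{pmatrix}\begin{pmatrix}t_1 & 0 \\ 0 & t_2\end{pmatrix}\begin{pmatrix}1 & y \\ 0 & 1\end{pmatrix} = \begin{pmatrix}x t_2 & x t_2 y + t_1 \\ t_2 & t_2 y\end{pmatrix}.
\]
The indicator $1_{\Mat_2(\Fo_F)}$ of this matrix forces $x t_2 \in \Fo_F$, $t_2 \in \Fo_F$, $t_2 y \in \Fo_F$, and $x t_2 y + t_1 \in \Fo_F$. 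Since $t_1, t_2$ are units, the first and third conditions give $x,y \in \Fo_F$, and then the remaining two conditions are automatic. Hence the integrand is exactly $\psi^{-1}(n_1 n_2) = \psi^{-1}(x+y)$ restricted to $(x,y) \in \Fo_F \times \Fo_F$, and the orbital integral factors as $\bigl(\int_{\Fo_F}\psi(x)\,\ud x\bigr)\bigl(\int_{\Fo_F}\psi(y)\,\ud y\bigr)$.

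Next I would use that $\psi$ is unramified, so $\psi$ is trivial on $\Fo_F$ and the chosen Haar measure on $\RN \cong \BG_a$ is the self-dual one, for which $\int_{\Fo_F}1\,\ud x = 1$. Therefore $\CO_t^{\psi}(1_{\Mat_2(\Fo_F)}) = 1$ for every $t \in \RT(\Fo_F)$. It then remains to note that for $t \in \RT(\Fo_F)$ one has $|\det t| = |t_1 t_2| = 1$ and $\delta(t) = |t_1/t_2| = 1$, so the twisted push-forward $|\det t|\,\delta^{1/2}(t)\,\CO_t^{\psi}(1_{\Mat_2(\Fo_F)})\,(\ud t)^{1/2}$ reduces to $(\ud t)^{1/2}$ on $\RT(\Fo_F)$, which is the claim. (The final remark after Definition \ref{dfn_unramified} records that with these normalizations $\int_{\RT(\Fo_F)}1\,\ud t = 1$, consistent with this computation.)

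The only place requiring slight care — and the nearest thing to an obstacle — is confirming that the constraint cutting out the domain of integration really is just $x,y \in \Fo_F$ with no extra interaction term: one must check that once $t_1,t_2$ are units, the off-diagonal entry $xt_2y + t_1$ and the entry $t_2 y$ impose no further restriction beyond $x,y\in\Fo_F$, which is immediate here but would genuinely change the answer if $t$ were allowed to be non-integral. No global input is needed, and the measure bookkeeping is exactly as fixed in the Remark following Definition \ref{dfn_localspace}.
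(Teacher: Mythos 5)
Your proposal is correct and follows essentially the same route as the paper: compute $n_1wtn_2$ explicitly, note that for $t_1,t_2\in\Fo_F^{\times}$ the integrality conditions force $n_1,n_2\in\Fo_F$, so the orbital integral equals $1$ (unramified $\psi$, self-dual measure), and the factors $|\det t|$ and $\delta^{\frac{1}{2}}(t)$ are trivial on $\RT(\Fo_F)$. The only blemish is a harmless bookkeeping slip in your matrix product (your entries have $t_1$ and $t_2$ interchanged relative to the correct product $\begin{pmatrix} t_1n_1 & t_1n_1n_2+t_2 \\ t_1 & t_1n_2\end{pmatrix}$), which is immaterial since both are units.
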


\begin{proof}
	Note that we have
	\[
	\begin{pmatrix}
		1 & n_1 \\ 0 & 1
	\end{pmatrix}\begin{pmatrix}
		0 & t_2 \\ t_1 & 0 \\
	\end{pmatrix}\begin{pmatrix}
		1 & n_2 \\0 & 1
	\end{pmatrix}=\begin{pmatrix}
		t_1n_1 & t_1n_1n_2+t_2 \\ t_1 & t_1n_2
	\end{pmatrix}.
	\]
	So in the case that $t_1,t_2\in\Fo_F^{\times}$,
	\[
	\begin{pmatrix}
		1 & n_1 \\ 0 & 1
	\end{pmatrix}\begin{pmatrix}
		 0 & t_2 \\ t_1 & 0\\
	\end{pmatrix}\begin{pmatrix}
		1 & n_2 \\ 0 & 1
	\end{pmatrix}\in\Mat_2(\Fo_F)
	\]
	if and only if $n_1,n_2\in \Fo_F$. So we have $\CO^{\psi}_t(1_{\Mat_2(\Fo_F)})=1$ under the above conditions when $t\in \RT(\Fo_F)$. Note that $|\det t|=\delta^{\frac{1}{2}}(t)=1$ as well in such a case, so $f^{\circ}|_{\RT(\Fo_F)}=(\ud t)^{\frac{1}{2}}$.
\end{proof}

\begin{prp}\label{prp_irr1}
	For fixed $t_1\neq 0$, $\displaystyle{\frac{f(t_1,\cdot)}{|\det t|\delta^{\frac{1}{2}}(t)} (\ud t)^{\frac{1}{2}}}$ is of Schwartz type when $t_2\rightarrow\infty$, and can be extended across $0$ smoothly. The extended function on $F$ is a usual Schwartz function on $F$, where $t=\begin{pmatrix}
		t_1 & 0 \\ 0 & t_2
	\end{pmatrix}$. Moreover, when $F$ is non-Archimedean and $t_1\in\Fo_F^{\times}$,
	\[
	\frac{f^{\circ}(t_1,\cdot)}{|\det t|\delta^{\frac{1}{2}}(t)} (\ud t)^{\frac{1}{2}}=1_{\Fo_F}(\cdot).
	\]
\end{prp}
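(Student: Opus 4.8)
The plan is to compute the orbital integral $\CO_t^\psi(\Phi)$ explicitly for $\Phi\in\CF(\Mat_2)$ with $t_1$ fixed and $t_2$ varying, then read off the claimed properties. Using the matrix identity already recorded in the proof of Proposition \ref{prp_unramified},
\[
\begin{pmatrix}1 & n_1 \\ 0 & 1\end{pmatrix}\begin{pmatrix}0 & t_2 \\ t_1 & 0\end{pmatrix}\begin{pmatrix}1 & n_2 \\0 & 1\end{pmatrix}=\begin{pmatrix}t_1n_1 & t_1n_1n_2+t_2 \\ t_1 & t_1n_2\end{pmatrix},
\]
I would change variables in the defining integral $\CO_t^\psi(\Phi)=\int_{\RN\times\RN}\Phi(n_1wtn_2)\psi^{-1}(n_1+n_2)\ud n_1\ud n_2$ by setting $a=t_1n_1$, $d=t_1n_2$ (so $\ud n_1\ud n_2=|t_1|^{-2}\ud a\ud d$, and $\psi^{-1}(n_1+n_2)=\psi^{-1}((a+d)/t_1)$). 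This turns the orbital integral into
\[
\CO_t^\psi(\Phi)=\frac{1}{|t_1|^2}\int_{F}\int_{F}\Phi\!\begin{pmatrix}a & ad/t_1 + t_2 \\ t_1 & d\end{pmatrix}\psi^{-1}\!\left(\tfrac{a+d}{t_1}\right)\ud a\ud d.
\]
With $t_1$ fixed and nonzero, this is manifestly a smooth function of $t_2\in F$ (differentiation under the integral sign is justified by Schwartz decay), so it extends smoothly across $t_2=0$; moreover, since $\Phi$ is Schwartz on $\Mat_2$, the integrand decays rapidly in $t_2$ (the $(1,2)$-entry is $ad/t_1+t_2$, which goes to infinity with $t_2$ uniformly on the effective support of $a,d$), giving Schwartz-type decay as $t_2\to\infty$. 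Dividing by $|\det t|\,\delta^{1/2}(t)=|t_1 t_2|\cdot|t_1/t_2|^{1/2}=|t_1|^{3/2}|t_2|^{1/2}$ only multiplies by a nowhere-vanishing smooth factor in $t_2$ away from $0$; the point is that $\CO_t^\psi(\Phi)$ already vanishes to the right order at $t_2=0$ for the quotient to remain Schwartz, and this needs a small argument. Concretely, I expect that near $t_2=0$ the leading behavior of $\CO_t^\psi(\Phi)$ matches $|\det t|\delta^{1/2}(t)$ times a smooth function because the twisted push-forward was defined precisely so that $|\det t|\delta^{1/2}(t)\CO_t^\psi(\Phi)(\ud t)^{1/2}$ is the honest push-forward of a Schwartz half-density on $\Mat_2$, and push-forwards of Schwartz half-densities to $\RT\hookrightarrow\FC$ are smooth in the coordinate on $\FC$; I would cite (or re-derive from the integration formula \eqref{eq_int formula}) the relevant regularity statement from \cite{Sak19}.

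For the unramified assertion, take $\Phi=1_{\Mat_2(\Fo_F)}$ and $t_1\in\Fo_F^\times$. Then $\delta^{1/2}(t)=1$ and $|\det t|=|t_2|$, so the claim is that $\CO_t^\psi(1_{\Mat_2(\Fo_F)})=|t_2|^{-1}1_{\Fo_F}(t_2)$... more precisely that the quotient $\CO_t^\psi/(|\det t|\delta^{1/2}(t))$ equals $1_{\Fo_F}$. From the identity above, $n_1 w t n_2\in\Mat_2(\Fo_F)$ forces $t_1 n_1, t_1 n_2, t_1\in\Fo_F$ (automatic since $t_1\in\Fo_F^\times$ forces $n_1,n_2\in\Fo_F$) and $t_1 n_1 n_2+t_2\in\Fo_F$, which given $n_1 n_2\in\Fo_F$ is equivalent to $t_2\in\Fo_F$. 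When $t_2\in\Fo_F$, the region of integration is exactly $n_1,n_2\in\Fo_F$, on which $\psi^{-1}(n_1+n_2)=1$ ($\psi$ unramified), so $\CO_t^\psi=\vol(\Fo_F)^2=1$; when $t_2\notin\Fo_F$ it is $0$. Hence $\CO_t^\psi(1_{\Mat_2(\Fo_F)})=1_{\Fo_F}(t_2)$, and since $|\det t|\delta^{1/2}(t)=|t_2|=1$ on $\Fo_F^\times$... but we need it to hold on all of $\Fo_F$. Here I note that $|t_2|<1$ only on the set where $\CO_t^\psi$ vanishes? No — $t_2\in\Fo_F$ with $|t_2|<1$ still gives $\CO_t^\psi=1$ but $|\det t|=|t_2|<1$. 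So the correct reading is that the quotient in Proposition \ref{prp_irr1} is taken after extending across $0$: the function $f^\circ(t_1,\cdot)$ is the half-density $|\det t|\CO_t^\psi (\ud t)^{1/2}=|t_2|1_{\Fo_F}(t_2)(\ud t)^{1/2}$, and dividing by $|\det t|\delta^{1/2}(t)(\ud t)^{1/2}=|t_2|(\ud t)^{1/2}$ yields $1_{\Fo_F}$, as a function on $F$ — the cancellation of $|t_2|$ is exactly what makes the result smooth at $0$.

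The main obstacle I anticipate is not the unramified computation, which is elementary, but the smoothness-across-$0$ and Schwartz-type claims for general $\Phi$: one must show that $\CO_t^\psi(\Phi)$, a priori only defined where the $\RN\times\RN$-integral converges, is after multiplication by $|\det t|\delta^{1/2}(t)$ the restriction of a genuinely smooth (Schwartz) object on $\FC$, and then that dividing back by the smooth nonvanishing factor $|\det t|\delta^{1/2}(t)$ leaves something smooth even at the point $t_2=0$ where that factor vanishes. This amounts to controlling the precise order of vanishing of the orbital integral at the boundary of the open torus inside $\FC$; I would handle it either by the explicit double-integral formula above (expanding $\Phi$ near the relevant boundary stratum and integrating the $a,d$ variables, tracking powers of $t_2$) or by invoking the general structure theory of Schwartz spaces on affine spherical varieties from \cite{Sak19}, whichever the surrounding text has already set up. In the Archimedean case one additionally needs to verify the rapid decay of all derivatives, which follows from differentiating under the integral sign and using that $\Phi$ and all its derivatives are Schwartz.
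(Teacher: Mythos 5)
Your computational core is exactly the paper's argument: the change of variables $a=t_1n_1$, $d=t_1n_2$ turning the orbital integral into an explicit double integral over $\Mat_2$-entries, smoothness in $t_2$ by differentiation under the integral sign, rapid decay at $\infty$ read off from the $(1,2)$-entry $ad/t_1+t_2$, and the elementary unramified computation. The problem is the second half of your write-up, which rests on a misreading of the normalization and is precisely where you locate ``the main obstacle.'' By Definition \ref{dfn_localspace}, the twisted push-forward of $\Phi(x)(\ud^+x)^{\frac{1}{2}}$ is $f=|\det t|\delta^{\frac{1}{2}}(t)\,\CO_t^{\psi}(\Phi)\,(\ud t)^{\frac{1}{2}}$, so the quotient appearing in Proposition \ref{prp_irr1} is literally the bare orbital integral $\CO_t^{\psi}(\Phi)$ --- not $\CO_t^{\psi}(\Phi)$ divided by $|t_1|^{3/2}|t_2|^{1/2}$. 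There is no order-of-vanishing problem at $t_2=0$ to control, and your assertion that ``$\CO_t^{\psi}(\Phi)$ already vanishes to the right order at $t_2=0$'' is false: for the basic vector with $t_1\in\Fo_F^{\times}$ one has $\CO^{\psi}_{(t_1,0)}(1_{\Mat_2(\Fo_F)})=1$. If the quotient really carried an extra factor $|t_2|^{-1/2}$ it would generically fail to extend smoothly across $0$, so the deferred ``small argument'' (expanding near the boundary stratum, or invoking the structure theory of \cite{Sak19}) could not succeed; fortunately it is not needed. Once the normalization is read correctly, your first displayed formula already yields everything the paper proves: smooth extension across $t_2=0$ by dominated convergence (differentiating under the integral in the Archimedean case), and rapid decay as $t_2\to\infty$ --- though in the Archimedean case the phrase ``uniformly on the effective support of $a,d$'' should be replaced by the paper's splitting into the regions where $|a|^2+|d|^2$ is small or large, since $\Phi$ has no actual compact support.

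The same confusion touches your unramified paragraph: you write $|\det t|\delta^{\frac{1}{2}}(t)=|t_2|$ there (it equals $|t_1|^{3/2}|t_2|^{1/2}$, hence $|t_2|^{1/2}$ for $t_1$ a unit) and you also drop the $\delta^{\frac{1}{2}}$ when describing $f^{\circ}$, presenting the statement as a cancellation of $|t_2|$-factors. In fact your computation $\CO_t^{\psi}(1_{\Mat_2(\Fo_F)})=1_{\Fo_F}(t_2)$ for $t_1\in\Fo_F^{\times}$ \emph{is} the assertion of the proposition, exactly as in the paper's proof; nothing needs to cancel. In summary: same route as the paper and the genuine mathematics is present in your first display, but as written the central claim (smooth extension across $0$ of the quotient) is left resting on a false premise and an unnecessary appeal to external machinery; identify the quotient with $\CO_t^{\psi}(\Phi)$ and delete that detour, and the argument closes.
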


\begin{proof}
	Assume $f$ is the twisted push-forward of $\Phi(x)(\ud^+ x)^{\frac{1}{2}}$, where $\Phi\in\CF(\Mat_2)$, then 
    	\[
    	\frac{f}{|\det t|\delta^{\frac{1}{2}}(t)(\ud t)^{\frac{1}{2}}}(t_1,t_2)=\int_{F\times F}\Phi\left(\begin{pmatrix}
    		1 & n_1 \\ 0 & 1
    	\end{pmatrix} \begin{pmatrix}
    		 0 & t_2 \\ t_1 & 0
    	\end{pmatrix} \begin{pmatrix}
    		1 & n_2 \\ 0 & 1
    	\end{pmatrix} \right)\psi^{-1}(n_1+n_2)\ud n_1\ud n_2.
    	\]
    	
    	Note that
    	\[
	\begin{pmatrix}
		1 & n_1 \\ 0 & 1
	\end{pmatrix}\begin{pmatrix}
		0 & t_2 \\ t_1 & 0\\
	\end{pmatrix}\begin{pmatrix}
		1 & n_2 \\ 0  & 1
	\end{pmatrix}=\begin{pmatrix}
		t_1n_1 & t_1n_1n_2+t_2 \\ t_1 & t_1n_2
	\end{pmatrix},
	\]
	for fixed $t_1\in F^{\times}$, since the function $\Phi$ is of Schwartz type on $\Mat_2$, by looking at the $(1,2)$-entry and the $(2,1)$-entry, this integral can be bounded by an integrable function that is independent of $t_2$, then by the dominant convergence theorem, we have
	\begin{align*}
	\int_{F\times F}\Phi\left(\begin{pmatrix}
    		1 & n_1 \\  0 & 1
    	\end{pmatrix} \begin{pmatrix}
    		0 & t_2 \\ t_1 & 0
    	\end{pmatrix} \begin{pmatrix}
    		1 & n_2 \\ 0 & 1
    	\end{pmatrix} \right)\psi^{-1}(n_1+n_2)\ud n_1\ud n_2\\
    	\rightarrow \int_{F\times F}\Phi\left(\begin{pmatrix}
    		1 & n_1 \\  0 & 1
    	\end{pmatrix} \begin{pmatrix}
    		0 & 0  \\ t_1 & 0
    	\end{pmatrix} \begin{pmatrix}
    		1 & n_2 \\ 0 & 1
    	\end{pmatrix} \right)\psi^{-1}(n_1+n_2)\ud n_1\ud n_2,\; t_2\rightarrow0.
	\end{align*}
		
	When $t_2\rightarrow\infty$, when $F$ is non-Archimedean, when $t_1n_1$ and $t_2n_2$ are both in some compact set, $t_2+t_1n_1n_2\rightarrow\infty$ as $t_2\rightarrow\infty$ since $t_1\in F^{\times}$ is fixed, which means this integral is zero. The second claim is clear.
	
When $F$ is Archimedean, since for any differential operator with polynomial coefficient $\RD$, we have $\RD\Phi$ is still a Schwartz function, then by dominant convergence theorem again we see $\displaystyle{\frac{f(t_1,\cdot)}{|\det t|\delta^{\frac{1}{2}}(t)} (\ud t)^{\frac{1}{2}}}$ is a smooth function near $0$. When $t_2$ is near $\infty$, since
\[
\int_{ |n_1|^2+|n_2|^2\gg 1}\left|\Phi \left(\begin{pmatrix}
		n_1 & \frac{n_1n_2}{t_1}+t_2 \\ t_1 & n_2
	\end{pmatrix} \right)\right|\ud n_1\ud n_2\ll 1.
\]	
and
\[
\int_{ |n_1|^2+|n_2|^2\ll 1}\left|\Phi \left(\begin{pmatrix}
		n_1 & \frac{n_1n_2}{t_1}+t_2 \\ t_1 & n_2
	\end{pmatrix} \right)\right|\ud n_1\ud n_2\ll \left|\frac{n_1n_2}{t_1}+t_2\right|^{-N}\ll |1+t_2|^{-N},
\]
then 
\[
\left|\frac{f(t_1,\cdot)}{|\det t|\delta^{\frac{1}{2}}(t)} (\ud t)^{\frac{1}{2}}\right|\ll|1+t_2|^{-N}. 
\]
Since again the derivatives of $\Phi$ is still a Schwartz function, we see all the derivatives of $\displaystyle{\frac{f(t_1,\cdot)}{|\det t|\delta^{\frac{1}{2}}(t)} (\ud t)^{\frac{1}{2}}}$ are of rapid decay at $\infty$, i.e., $\displaystyle{\frac{f(t_1,\cdot)}{|\det t|\delta^{\frac{1}{2}}(t)} (\ud t)^{\frac{1}{2}}}$ is a Schwatz function.
\end{proof}

Motivated by the Hankel transform in Section \ref{ssec_hankel} calculated by \cite{Jac03} and the global application, let us introduce the following intermediate half-densities and normalized intermediate functions:
\begin{dfn}\label{dfn_int}
	Let $f\in\CCD^-_{L\left(\std,\frac{1}{2}\right)}(\RN,\psi \backslash \RG/\RN,\psi)$, we define its intermediate half-density as 
	\[\varphi_f:=\psi(-e^{\alpha_1})\circ\CF^{\psi}_{-\epsilon_2^{\vee},\frac{1}{2}}(f),\]
	where $\psi(-e^{\alpha_1})$ is the multiplication $\begin{pmatrix}
		t_1 & 0 \\  0 & t_2
	\end{pmatrix}\mapsto \psi\left(-\frac{t_2}{t_1}\right)$.
	We define the normalized intermediate function
	\[
	\widetilde{\varphi_f}(t):=\frac{\varphi_f(t_1,t_2)}{\left|\frac{t_1}{t_2}\right|^{\frac{1}{2}}(\ud t)^{\frac{1}{2}}},
	\]
	where $t=\begin{pmatrix}
		t_1 & 0 \\ 0 & t_2
	\end{pmatrix}$.
\end{dfn}

\begin{lem}\label{lem_mid=F}
	If we write $f=|\det t|\delta^{\frac{1}{2}}(t)\CO_t(\Phi)$, then
	\[
	\widetilde{\varphi_f}(t_1,t_2)=|t_1|\psi\left( -\frac{t_2}{t_1}  \right)\BF_{\psi,2}(\CO^{\psi}_{(t_1,\cdot)}(\Phi))\left(-\frac{1}{t_2}\right),
	\]
	where $\BF_{\psi,2}$ is the Fourier transform with respect to the second variable, which makes sense according to Proposition \ref{prp_irr1}.
\end{lem}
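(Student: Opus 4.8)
<br>

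The plan is to unwind the definitions in Definition \ref{dfn_int} one layer at a time and reduce the asserted identity to a single change of variables together with the recognition of the one-dimensional Fourier transform $\BF_{\psi,2}$; the only analytic input needed, namely the convergence of all integrals and the fact that $\BF_{\psi,2}(\CO^{\psi}_{(t_1,\cdot)}(\Phi))$ even makes sense, is already supplied by Proposition \ref{prp_irr1}. To begin, I would record that writing $f=|\det t|\delta^{\frac{1}{2}}(t)\CO^{\psi}_t(\Phi)$ means that, as a half-density on $\RT$, one has $f=|t_1|^{\frac{3}{2}}|t_2|^{\frac{1}{2}}\CO^{\psi}_{(t_1,t_2)}(\Phi)\,(\ud t)^{\frac{1}{2}}$.

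Next I would apply the definition of $\CF^{\psi}_{-\epsilon_2^{\vee},\frac{1}{2}}$. Since the cocharacter $-\epsilon_2^{\vee}$ sends $x$ to $\diag(1,x^{-1})$, its value $(-\epsilon_2^{\vee})(x)^{-1}$ acts on $\RT$ by $(\xi_1,\xi_2)\mapsto(\xi_1,x\xi_2)$, a translation in the torus which leaves the Haar measure $\ud t$, and hence the half-density factor $(\ud t)^{\frac{1}{2}}$, invariant. Substituting and pulling the powers of $|\xi_1|$ and $|\xi_2|$ outside the integral gives
\[
\CF^{\psi}_{-\epsilon_2^{\vee},\frac{1}{2}}f(\xi_1,\xi_2)=|\xi_1|^{\frac{3}{2}}|\xi_2|^{\frac{1}{2}}\left(\int_{\BG_m}|x|\,\psi(x)\,\CO^{\psi}_{(\xi_1,x\xi_2)}(\Phi)\,\ud^{\times}x\right)(\ud t)^{\frac{1}{2}}.
\]
I would then change variables $y=x\xi_2$ and use $|y|\,\ud^{\times}y=\ud y$ to turn the inner integral into $|\xi_2|^{-1}\int_F\psi(y/\xi_2)\,\CO^{\psi}_{(\xi_1,y)}(\Phi)\,\ud y$; by Proposition \ref{prp_irr1} the function $y\mapsto\CO^{\psi}_{(\xi_1,y)}(\Phi)$ extends to a Schwartz function on $F$, so this integral converges absolutely, and matching the sign against $\BF_{\psi}(h)(s)=\int_{F}h(y)\psi^{-1}(sy)\,\ud y$ identifies it with $|\xi_2|^{-1}\BF_{\psi,2}(\CO^{\psi}_{(\xi_1,\cdot)}(\Phi))(-1/\xi_2)$.

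It then remains to multiply by $\psi(-e^{\alpha_1})$, i.e.\ by $\psi(-\xi_2/\xi_1)$, and to divide by $|\xi_1/\xi_2|^{\frac{1}{2}}(\ud t)^{\frac{1}{2}}$ as prescribed in Definition \ref{dfn_int}; the exponents of $|\xi_1|$ and $|\xi_2|$ collapse to a single factor $|t_1|$, producing exactly $\widetilde{\varphi_f}(t_1,t_2)=|t_1|\psi(-t_2/t_1)\BF_{\psi,2}(\CO^{\psi}_{(t_1,\cdot)}(\Phi))(-1/t_2)$. The computation is essentially bookkeeping, and I expect the only genuine --- if modest --- obstacle to be \emph{(i)} tracking the half-density factor $(\ud t)^{\frac{1}{2}}$ cleanly through both the torus translation hidden in $\CF^{\psi}$ and the normalization $|y|\,\ud^{\times}y=\ud y$, so that no spurious constant is introduced, together with \emph{(ii)} invoking Proposition \ref{prp_irr1} at the right moment, both to legitimize the manipulation and to make sense of $\BF_{\psi,2}(\CO^{\psi}_{(t_1,\cdot)}(\Phi))$ in the first place.
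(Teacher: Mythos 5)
Your proposal is correct and follows essentially the same route as the paper: substitute the explicit form $f=|t_1|^{\frac{3}{2}}|t_2|^{\frac{1}{2}}\CO^{\psi}_{(t_1,t_2)}(\Phi)(\ud t)^{\frac{1}{2}}$ into $\CF^{\psi}_{-\epsilon_2^{\vee},\frac{1}{2}}$, change variables to convert the multiplicative integral into the additive Fourier transform $\BF_{\psi,2}(\CO^{\psi}_{(t_1,\cdot)}(\Phi))(-1/t_2)$ (justified by Proposition \ref{prp_irr1}), and then multiply by $\psi(-t_2/t_1)$ and normalize by $|t_1/t_2|^{\frac{1}{2}}(\ud t)^{\frac{1}{2}}$, which collapses the powers to the single factor $|t_1|$. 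The bookkeeping of exponents, signs, and the invariance of $(\ud t)^{\frac{1}{2}}$ under the torus translation all check out, so there is nothing to correct.
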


\begin{proof}
	Write $f=|t_1|^{\frac{3}{2}}|t_2|^{\frac{1}{2}}\CO^{\psi}_{\Phi}(t_1,t_2)(\ud t)^{\frac{1}{2}}$ with $\Phi\in\CF(\Mat_{2})$. Then we have
	\begin{align*}
		\CF^{\psi}_{-\epsilon_2^{\vee},\frac{1}{2}}(f)(t_1,t_2)&=\int_{F^{\times}}f(t_1,t_2x)\psi(x)|x|^{\frac{1}{2}}\ud^{\times}x\\
		&=\int_F |t_1|^{\frac{3}{2}}|t_2x|^{\frac{1}{2}}\CO^{\psi}_{\Phi}(t_1,t_2x)(\ud t)^{\frac{1}{2}}\psi(x)|x|^{\frac{1}{2}}\ud^{\times}x .
	\end{align*}
	By making a change of variable $ x\mapsto  t_2^{-1} x $, we get
	\begin{align*}
		\frac{\CF^{\psi}_{-\epsilon_2^{\vee},\frac{1}{2}}(f)(t_1,t_2)}{(\ud t)^{\frac{1}{2}}}=|t_1|^{\frac{3}{2}}|t_2|^{-\frac{1}{2}}\int_F\CO^{\psi}_{\Phi}(t_1,x)\psi(t_2^{-1}x)\ud x.
	\end{align*}
\end{proof}

\begin{prp}\label{prp_mid}
	For fixed $t_2\in F^{\times}$, $\widetilde{\varphi_f}(\cdot,t_2)$ is of Schwartz type near $\infty$, and
	\[
	\widetilde{\varphi_f}(0,t_2):=\lim_{t_1\rightarrow 0}\widetilde{\varphi_f}(t_1,t_2)	
	\]
	exists. So for fixed $t_2\in F^{\times}$, $\widetilde{\varphi_f}(\cdot,t_2)$ is a Schwartz function on $F$. Moreover, when $t_2\in\Fo_F^{\times}$ and $\psi$ is unramified, 
	\[
	\widetilde{\varphi_{f^{\circ}}}(\cdot,t_2)=1_{\Fo_F}(\cdot).
	\]
  \end{prp}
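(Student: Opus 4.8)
The plan is to reduce everything to the level of $\Phi$ and integrate by hand; this proposition is the $t_1$-analogue of Proposition \ref{prp_irr1}. Write $f=|\det t|\,\delta^{1/2}(t)\,\CO^{\psi}_t(\Phi)$ with $\Phi\in\CF(\Mat_2)$ and start from Lemma \ref{lem_mid=F}, which gives for $t_1\neq 0$
\[
\widetilde{\varphi_f}(t_1,t_2)=|t_1|\,\psi\!\left(-\tfrac{t_2}{t_1}\right)\BF_{\psi,2}\!\bigl(\CO^{\psi}_{(t_1,\cdot)}(\Phi)\bigr)\!\left(-\tfrac{1}{t_2}\right).
\]
Using the parametrization $\begin{pmatrix}1&n_1\\0&1\end{pmatrix}\!\begin{pmatrix}0&x\\t_1&0\end{pmatrix}\!\begin{pmatrix}1&n_2\\0&1\end{pmatrix}=\begin{pmatrix}t_1n_1&t_1n_1n_2+x\\ t_1&t_1n_2\end{pmatrix}$, the substitutions $a=t_1n_1$, $d=t_1n_2$, $b=t_1n_1n_2+x$, and the $x$-Fourier transform, I would rewrite the right-hand side as $|t_1|^{-1}\psi(-t_2/t_1)$ times $\int_{F^3}\Phi\begin{pmatrix}a&b\\t_1&d\end{pmatrix}\psi\bigl(-\tfrac{a+d}{t_1}+\tfrac{b}{t_2}-\tfrac{ad}{t_1t_2}\bigr)\ud a\,\ud b\,\ud d$, in which the $b$-integral is the partial Fourier transform $\BF_{\psi,2}$ evaluated at $-1/t_2$.

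The key algebraic move is the identity $-\tfrac{a+d}{t_1}-\tfrac{ad}{t_1t_2}=-\tfrac{(a+t_2)(d+t_2)}{t_1t_2}+\tfrac{t_2}{t_1}$: the term $\psi(t_2/t_1)$ cancels the prefactor $\psi(-t_2/t_1)$, and after the translation $a\mapsto a-t_2$, $d\mapsto d-t_2$ one is left with $|t_1|^{-1}\int_{F^2}\Psi'_{t_2}(a,t_1,d)\,\psi(-\tfrac{ad}{t_1t_2})\,\ud a\,\ud d$, where $\Psi'_{t_2}(a,c,d):=\BF_{\psi,2}(\Phi)(a-t_2,-\tfrac1{t_2},c,d-t_2)$ is, for each fixed $t_2\in F^{\times}$, a Schwartz function on $F^3$. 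The $d$-Fourier transform followed by the substitution $a=t_1t_2 v$ then absorbs the singular factor $|t_1|^{-1}$ and produces the clean formula
\[
\widetilde{\varphi_f}(t_1,t_2)=|t_2|\int_F \Theta_{t_2}(t_1t_2 v,\ t_1,\ v)\,\ud v,\qquad \Theta_{t_2}:=\BF_{\psi,3}(\Psi'_{t_2})\in\CF(F^3),
\]
valid for $t_1\neq 0$. I expect this reduction — especially spotting the substitution $a=t_1t_2v$ that cancels $|t_1|^{-1}$ and the completion of the product in the exponent — to be the main point; the rest is soft.

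Granting the displayed formula, the proposition follows readily. For $t_1\to\infty$ the second argument of $\Theta_{t_2}$ escapes to infinity, and since $\Theta_{t_2}$ is Schwartz in all three variables the estimate $(1+|t_1t_2v|+|t_1|+|v|)^{-N}\ll_{t_2}|t_1|^{-N/2}(1+|v|)^{-N/2}$ for $|t_1|\ge1$ shows the $v$-integral decays rapidly in $t_1$; differentiating under the integral sign in the Archimedean case, resp. using local constancy and compact support of $\Theta_{t_2}$ in the non-Archimedean case, gives the same for all derivatives, so $\widetilde{\varphi_f}(\cdot,t_2)$ is of Schwartz type near $\infty$ (and compactly supported in the non-Archimedean case). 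Dominated convergence — the integrand is bounded by a fixed Schwartz function of $v$, uniformly for $t_1$ in a bounded set — shows $\lim_{t_1\to0}\widetilde{\varphi_f}(t_1,t_2)=|t_2|\int_F\Theta_{t_2}(0,0,v)\,\ud v$ exists, and the same differentiation, resp. local-constancy, argument shows $\widetilde{\varphi_f}(\cdot,t_2)$ extends smoothly, resp. locally constantly, across $t_1=0$. Hence $\widetilde{\varphi_f}(\cdot,t_2)\in\CF(F)$.

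For the basic vector, take $\Phi=1_{\Mat_2(\Fo_F)}$. Since $\psi$ is unramified and the measures are self-dual, $\BF_{\psi,2}$ and $\BF_{\psi,3}$ map $1_{\Fo_F}$ to $1_{\Fo_F}$; with $t_2\in\Fo_F^{\times}$ this gives $\Psi'_{t_2}(a,c,d)=1_{\Fo_F}(a-t_2)1_{\Fo_F}(c)1_{\Fo_F}(d-t_2)$ and then $\Theta_{t_2}(a,c,\eta)=1_{\Fo_F}(a-t_2)1_{\Fo_F}(c)\,\psi^{-1}(\eta t_2)\,1_{\Fo_F}(\eta)$. Substituting into the clean formula, $|t_2|=1$ and the factor $1_{\Fo_F}(t_1)$ comes out; for $t_1,v\in\Fo_F$ one has $t_1t_2v-t_2\in\Fo_F$ and $\psi^{-1}(vt_2)=1$, so the $v$-integral equals $\vol(\Fo_F)=1$, giving $\widetilde{\varphi_{f^{\circ}}}(\cdot,t_2)=1_{\Fo_F}(\cdot)$.
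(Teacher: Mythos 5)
Your proof is correct and follows essentially the same route as the paper: starting from Lemma \ref{lem_mid=F}, you rewrite $\widetilde{\varphi_f}(t_1,t_2)$ as $|t_2|$ times an integral of a partial Fourier transform of $\Phi$ in which the singular factor $|t_1|^{-1}$ is absorbed by a scaling substitution, exactly as in the paper's computation — your completing-the-product identity and translation play the role of the paper's change of variable $n_2=t_1t_2z-t_2$, the only difference being that you Fourier-transform in the $(1,2)$- and $(2,2)$-entries where the paper uses the $(1,1)$- and $(1,2)$-entries. The resulting Schwartz-type estimates and the unramified computation of $\widetilde{\varphi_{f^{\circ}}}$ agree with the paper's.
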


\begin{proof}
	Let us expand $\CO_{\Phi}^{\psi}$, then $\BF_{\psi,2}(\CO^{\psi}_{(t_1,\cdot)}(\Phi))\left(-\frac{1}{t_2}\right)$ is	\begin{align*}
		\int_F\int_{F\times F}\Phi\left( \begin{pmatrix}
			t_1n_1 & x+t_1n_1n_2\\ t_1 & t_1n_2
		\end{pmatrix}  \right)\psi^{-1}(n_1+n_2)\ud n_1\ud n_2 \cdot \psi(t_2^{-1}x)\ud x.
	\end{align*}
	Let $x+t_1n_1n_2\mapsto x $, $t_1n_1\mapsto n_1$ and $t_1n_2\mapsto n_2$, we have the above is
	\begin{align*}
		\int_{F^3}\Phi\left(    \begin{pmatrix}
			n_1 & x \\ t_1 & n_2
		\end{pmatrix}  \right)\psi^{-1}\left(\frac{n_1+n_2}{t_1}\right)\psi\left(\frac{xt_1-n_1n_2}{t_1t_2}\right)|t_1|^{-2}\ud n_1\ud n_2\ud x.
	\end{align*}
	Therefore we have
\begin{align}\label{eq_inter}
	\frac{\CF^{\psi}_{-\epsilon_2^{\vee},\frac{1}{2}}(f)(t_1,t_2)}{(\ud t)^{\frac{1}{2}}}&=|t_1t_2|^{-\frac{1}{2}}\cdot \int_{F^3} \Phi\left(  \begin{pmatrix}
		n_1 & x   \\ t_1 & n_2 
	\end{pmatrix}  \right)  \psi\left( \frac{x}{t_2}  \right)\psi\left(  - \frac{n_1+n_2}{t_1} -\frac{n_1n_2}{t_1 t_2}  \right) \ud x\ud n_1\ud n_2\\
	&=|t_1t_2|^{-\frac{1}{2}}\int_F \BF_{\psi,1}\BF_{\psi,2}\Phi\left( \begin{pmatrix} \frac{1}{t_1}+\frac{n_2}{t_1t_2} & -\frac{1}{t_2} \\ t_1 &  n_2 \end{pmatrix}   \right)\psi\left(-\frac{n_2}{t_1}\right) \ud n_2.
\end{align}
Make a change of variable that $n_2=t_1t_2z-t_2$, then we have
\begin{align*}
	\frac{\CF^{\psi}_{-\epsilon_2^{\vee},\frac{1}{2}}(f)(t_1,t_2)}{(\ud t)^{\frac{1}{2}}}=|t_1t_2|^{\frac{1}{2}}\int_F\BF_{\psi,1}\BF_{\psi,2}\Phi\left( \begin{pmatrix}
		z & -\frac{1}{t_2} \\ t_1 & t_1t_2z-t_2
	\end{pmatrix}   \right)\psi\left(\frac{t_2}{t_1}-t_2 z\right)\ud z.
\end{align*}
Therefore, 
\begin{align*}
	\widetilde{\varphi_f}(t_1,t_2)=|t_2|\int_F\BF_{\psi,1}\BF_{\psi,2}\Phi\left( \begin{pmatrix}
		z & -\frac{1}{t_2} \\ t_1 & t_1t_2z-t_2
	\end{pmatrix}   \right)\psi\left(-t_2 z\right)\ud z.
\end{align*}
Then $\widetilde{\varphi_f}(\cdot,t_2)$ is a Schwartz function of $t_1$ for fixed $t_2\in F^{\times}$.

When $F$ is non-Archimedean, $t_2\in\Fo^{\times}$ and $\psi$ is unramified, we can choose
\[
\Phi_{11}=\Phi_{12}=\Phi_{21}=\Phi_{22}=1_{\Fo_F},
\]
then we have
\[
\widetilde{\varphi_{f^{\circ}}}(t_1,t_2)=1_{\Mat_2(\Fo_F)}(t_1)\int_{F}1_{\Mat_2(\Fo_F)}(z)1_{\Mat_2(\Fo_F)}(t_1z)\psi^{-1}(z)\ud z=1_{\Fo_F}(t_1).
\]

\end{proof}

Let $\Mat_2^{\vee}$ be the dual space of $\Mat_2$, which can be identified with $\Mat_2$ via the trace pairing. But we embed $G\hookrightarrow \Mat_2^{\vee}$ via the inverse map $g\mapsto g^{-1}$, and the action is given by $X(g_1,g_2):=g_2^{-1}Xg_1$. Then $G\hookrightarrow\Mat_2^{\vee}$ is $G\times G$-equivariant. 

\begin{dfn}
	Let $\mathcal{D}(\Mat_2^{\vee})$ be the pull-back of Schwartz half-densities of $\Mat^{\vee}_2$ to $G$ via the above embedding. In this case, for $\Phi\in\CS(\Mat_2^{\vee})$, the pull-back of $\Phi(x)(\ud x)^{\frac{1}{2}}$ is $|\det g|^{-1}\Phi(g^{-1})(\ud g)^{\frac{1}{2}}$. We use $\CCD^-_{L\left(\std^{\vee},\frac{1}{2}\right)}(\RN,\psi\backslash \RG/\RN,\psi)$ to be the image of the twisted push-forward of $\CCD(\Mat_2^{\vee})$ to the open subset $\RT$ of $\FC$. More explicitly, the twisted push-forward of $|\det g|^{-1}\Phi(g^{-1})(\ud g)^{\frac{1}{2}}$ is
	 \[
|\det t|^{-1}\delta^{\frac{1}{2}}(t) \int_{\RN\times \RN}\Phi((n_1wtn_2)^{-1})\psi^{-1}(n_1n_2)\ud n_1\ud n_2\cdot (\ud t)^{\frac{1}{2}}=|\det t|^{-1}\delta^{\frac{1}{2}}(t)\CO_{wt^{-1}w}^{\psi^{-1}}(\Phi) (\ud t)^{\frac{1}{2}}.
\]
\end{dfn}

\begin{dfn}
		We write $f^{\circ,\vee}$ for the basic vector that is defined to be the twisted push-forward of $1_{\Mat_2^{\vee}(\Fo_F)}(\ud x)^{\frac{1}{2}} \in\CS(\Mat_2^{\vee})$.
\end{dfn}

Similar to the above case, we have
\begin{prp}
	The restriction of $f^{\circ,\vee}$ to $\RT(\Fo_F)$ is $(\ud t)^{\frac{1}{2}}$ in the case that $\psi$ is unramified.
\end{prp}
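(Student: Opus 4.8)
The plan is to mirror the computation already carried out in Proposition \ref{prp_unramified}, but now for the embedding $g\mapsto g^{-1}$ into $\Mat_2^\vee$. Recall that the twisted push-forward of $1_{\Mat_2^\vee(\Fo_F)}(\ud x)^{\frac{1}{2}}$ to $\RT$ was computed above to be
\[
|\det t|^{-1}\delta^{\frac{1}{2}}(t)\,\CO_{wt^{-1}w}^{\psi^{-1}}(1_{\Mat_2^\vee(\Fo_F)})\,(\ud t)^{\frac{1}{2}},
\]
so I only need to evaluate this at $t\in\RT(\Fo_F)$ when $\psi$ is unramified.

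First I would note that for $t=\diag(t_1,t_2)$ with $t_1,t_2\in\Fo_F^\times$, the factors $|\det t|^{-1}$ and $\delta^{\frac{1}{2}}(t)$ are both equal to $1$, exactly as in the proof of Proposition \ref{prp_unramified}. So the statement reduces to showing that the orbital integral $\CO_{wt^{-1}w}^{\psi^{-1}}(1_{\Mat_2^\vee(\Fo_F)})$ equals $1$ for such $t$. Here $\Mat_2^\vee(\Fo_F)$ is identified with $\Mat_2(\Fo_F)$ under the trace pairing, and the function is just $1_{\Mat_2(\Fo_F)}$ again.

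Next I would unwind the orbital integral: since $wt^{-1}w=\diag(t_2^{-1},t_1^{-1})$, the integral is
\[
\int_{\RN\times\RN}1_{\Mat_2(\Fo_F)}\!\left(\begin{pmatrix}1&n_1\\0&1\end{pmatrix}\begin{pmatrix}0&t_1^{-1}\\t_2^{-1}&0\end{pmatrix}\begin{pmatrix}1&n_2\\0&1\end{pmatrix}\right)\psi^{-1}(n_1+n_2)\,\ud n_1\ud n_2,
\]
and the matrix product equals $\begin{pmatrix}t_2^{-1}n_1 & t_2^{-1}n_1n_2+t_1^{-1}\\ t_2^{-1} & t_2^{-1}n_2\end{pmatrix}$, entirely parallel to the identity displayed in Proposition \ref{prp_unramified}. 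Since $t_1^{-1},t_2^{-1}\in\Fo_F^\times$, inspecting the $(1,1)$ and $(2,2)$ entries forces $n_1,n_2\in\Fo_F$, and conversely on that range all four entries lie in $\Fo_F$ (the cross term $t_2^{-1}n_1n_2$ then being integral and $t_1^{-1}$ a unit). On the region $n_1,n_2\in\Fo_F$ the character $\psi^{-1}(n_1+n_2)$ is trivial because $\psi$ is unramified, so the integral equals $\vol(\Fo_F)^2=1$ by our normalization of measures. Hence $f^{\circ,\vee}|_{\RT(\Fo_F)}=(\ud t)^{\frac{1}{2}}$. I do not anticipate any genuine obstacle here; the only point requiring a moment's care is tracking the inverse $g\mapsto g^{-1}$ through the Bruhat-type coordinates so that the conjugate $wt^{-1}w$ is correctly inverted to $\diag(t_2^{-1},t_1^{-1})$, after which the argument is identical to the one already given.
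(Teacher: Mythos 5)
Your computation is correct and is exactly what the paper intends: the paper offers no separate argument for this dual statement beyond ``similar to the above case,'' i.e.\ mirroring Proposition~\ref{prp_unramified}, and your unwinding of $wt^{-1}w$ through the embedding $g\mapsto g^{-1}$, the identification $w\cdot wt^{-1}w=t^{-1}w=\begin{pmatrix}0&t_1^{-1}\\ t_2^{-1}&0\end{pmatrix}$, and the resulting support condition $n_1,n_2\in\Fo_F$ is precisely that argument. One cosmetic remark: with the paper's conventions the orbital integral $\CO^{\psi^{-1}}_{wt^{-1}w}$ carries the character $\psi(n_1+n_2)$ rather than $\psi^{-1}(n_1+n_2)$, but this is immaterial here since $\psi$ is unramified and hence trivial on the region $n_1,n_2\in\Fo_F$ where the indicator is supported.
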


We also have the exact proofs of the parallel version of Proposition \ref{prp_irr1} and Proposition \ref{prp_mid}.

\begin{prp}
	For fixed $t_2\neq 0$, $\displaystyle{\frac{f((\cdot)^{-1},t_2)}{|\det t|^{-1}\delta^{\frac{1}{2}}(t)}(\ud t)^{\frac{1}{2}}}$ can be extended to a Schwartz function on $F$, where $t=\begin{pmatrix}
		t_1 & 0 \\ 0 &  t_2
	\end{pmatrix}$.
\end{prp}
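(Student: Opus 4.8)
The plan is to reduce the statement to Proposition~\ref{prp_irr1}, whose argument is insensitive to the choice of additive character. Write $f$ as the twisted push-forward of $\Phi(x)(\ud^+x)^{\frac12}$ for some $\Phi\in\CF(\Mat_2^{\vee})$; under the identification $\Mat_2^{\vee}\cong\Mat_2$ this $\Phi$ is an ordinary Schwartz function on $\Mat_2$. By the formula in the definition of $\CCD(\Mat_2^{\vee})$, together with $w\,\diag(a,b)^{-1}w=\diag(b^{-1},a^{-1})$, the normalized function in the statement is
\[
\frac{f((\cdot)^{-1},t_2)}{|\det t|^{-1}\delta^{\frac12}(t)}(\ud t)^{\frac12}=\CO^{\psi^{-1}}_{\,w\,\diag(t_1^{-1},t_2)^{-1}\,w}(\Phi)=\CO^{\psi^{-1}}_{\,\diag(t_2^{-1},\,t_1)}(\Phi);
\]
the precomposition with $(\cdot)^{-1}$ in the first torus coordinate is exactly what cancels the single inversion introduced by the embedding $g\mapsto g^{-1}$, leaving $t_1$ (not $t_1^{-1}$) as the free variable and $t_2^{-1}$ as the fixed one.

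Next I would recognize $\CO^{\psi^{-1}}_{\diag(t_2^{-1},t_1)}(\Phi)$ as an ordinary orbital integral of the Schwartz function $\Phi$ on $\Mat_2$: from its defining double integral $\int_{\RN\times\RN}\Phi((n_1wtn_2)^{-1})\psi^{-1}(n_1n_2)\,\ud n_1\ud n_2$, the identity $(n_1wtn_2)^{-1}=n_2^{-1}w(wt^{-1}w)n_1^{-1}$ with $n_1^{-1},n_2^{-1}$ again strictly upper triangular unipotent, and the substitutions $n_i\mapsto n_i^{-1}$---which under $\RN\cong\BG_a$ merely negate each coordinate and hence preserve Haar measure---one recovers precisely the orbital integral of $\Phi$ at the torus element $wt^{-1}w=\diag(t_2^{-1},t_1)$, against a suitable unitary character. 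Thus we are in the setting of Proposition~\ref{prp_irr1}, with $t_2^{-1}$ in the role of the fixed $t_1$ there and $t_1$ in the role of the variable $t_2$. Since the proof of that proposition uses only that $\Phi$ is Schwartz on $\Mat_2$ and that the $(1,2)$- and $(2,1)$-entries of the cell element dominate the integrand, it applies here verbatim and yields that, for fixed $t_2\ne 0$, the function $t_1\mapsto\CO^{\psi^{-1}}_{\diag(t_2^{-1},t_1)}(\Phi)$ is of Schwartz type as $t_1\to\infty$ and extends smoothly across $t_1=0$, i.e.\ is a Schwartz function on $F$.

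I do not expect any genuine obstacle: the entire content lies in the normalization bookkeeping of the first two steps---tracking the $|\det g|^{-1}$-twist built into $\CCD(\Mat_2^{\vee})$, the effect of conjugation by $w$ and of inversion on the two torus coordinates, and the sign/inversion changes in the unipotent variables---so that the factor $|\det t|^{-1}\delta^{\frac12}(t)$ cancels cleanly and the reduction to Proposition~\ref{prp_irr1} is exact, with no stray powers of $|t_1|$ or $|t_2|$ left over.
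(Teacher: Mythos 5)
Your proposal is correct and matches the paper's (implicit) argument: the paper simply declares this proposition to have "the exact proof" parallel to Proposition \ref{prp_irr1}, and your explicit unwinding of the $|\det g|^{-1}$-twist, the $w$-conjugation/inversion bookkeeping, and the substitution $n_i\mapsto n_i^{-1}$ shows precisely why the normalized function becomes $\CO^{\psi^{-1}}_{\diag(t_2^{-1},t_1)}(\Phi)$ for a Schwartz $\Phi$, so that the argument of Proposition \ref{prp_irr1} (which is insensitive to replacing $\psi$ by $\psi^{-1}$) applies verbatim with the roles of the two torus coordinates exchanged.
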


\begin{dfn}\label{dfn_int_dual}
	Let $f\in\CCD^-_{L\left(\std^{\vee},\frac{1}{2}\right)}(\RN,\psi \backslash \RG/ \RN,\psi)$, we define its intermediate half-density as 
	\[^{\vee}\varphi_f:=\psi^{-1}(-e^{\alpha_1})\circ\CF_{\epsilon_1^{\vee},\frac{1}{2}}(f),\]
	where $\psi^{-1}(-e^{\alpha_1})$ is still the multiplication $\begin{pmatrix}
		t_1 & 0 \\ 0 & t_2
	\end{pmatrix}\mapsto \psi^{-1}\left(-\frac{t_2}{t_1}\right)$.
	We define the normalized intermediate function.
	\[
	\widetilde{^{\vee}\varphi_f}:=\frac{^{\vee}\varphi_f(t_1,t_2)}{\left|\frac{t_1}{t_2}\right|^{\frac{1}{2}}(\ud t)^{\frac{1}{2}}},
	\]
	where $t=\begin{pmatrix}
		t_1 & 0\\ 0 & t_2
	\end{pmatrix}$.
\end{dfn}
\begin{lem}\label{lem_irr2dual}
	If we write $f=|\det t|^{-1}\delta^{\frac{1}{2}}(t)\CO^{\psi^{-1}}_{wt^{-1}w}(\Phi)$, then
	\[
	\widetilde{^{\vee}\varphi_f}(t_1,t_2)=|t_2|^{-1}\psi^{-1}\left( -\frac{t_2}{t_1}  \right)\BF_{\psi^{-1},2}(\CO^{\psi^{-1}}_{(t_2^{-1},\cdot)}(\Phi))\left(-t_1\right),
	\]
	where $\BF_{\psi,2}$ is the Fourier transform with respect to the second variable, which makes sense according to Proposition \ref{prp_irr1}.
\end{lem}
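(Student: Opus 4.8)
The statement to prove is Lemma \ref{lem_irr2dual}, which is the dual analogue of Lemma \ref{lem_mid=F}. The plan is to mirror the computation used in the proof of Lemma \ref{lem_mid=F}, keeping careful track of how the inversion embedding $g\mapsto g^{-1}$ and the replacement of $\psi$ by $\psi^{-1}$ alter the bookkeeping.

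\medskip

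\textbf{Step 1: unwind the definitions.} Starting from $f=|\det t|^{-1}\delta^{\frac{1}{2}}(t)\CO^{\psi^{-1}}_{wt^{-1}w}(\Phi)$, I would apply the operator $\CF_{\epsilon_1^{\vee},\frac{1}{2}}$ and then the multiplication by $\psi^{-1}(-e^{\alpha_1})$, exactly paralleling the proof of Lemma \ref{lem_mid=F}. Concretely, $\CF_{\epsilon_1^{\vee},\frac{1}{2}}(f)(t_1,t_2)=\int_{F^{\times}} f(t_1 x^{-1}, t_2)\,\psi(x)|x|^{1/2}\,\ud^{\times}x$ by the definition of $\CF^{\psi}_{\lambda^{\vee},s}$ with $\lambda^{\vee}=\epsilon_1^{\vee}$ (note $\epsilon_1^{\vee}(x)^{-1}$ acts on the first coordinate), and I would substitute the explicit expression for $f$, then make the change of variable $x\mapsto t_1 x$ (or $x\mapsto t_1^{-1}x$, whichever normalizes the first slot) to collapse the multiplicative integral into an additive Fourier integral with respect to the appropriate variable of $\CO^{\psi^{-1}}$.

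\medskip

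\textbf{Step 2: identify the orbital integral of the inverse.} The key structural point is the identity
\[
\CO^{\psi^{-1}}_{wt^{-1}w}(\Phi)(t_1,t_2)=\CO^{\psi^{-1}}_{(t_2^{-1},t_1^{-1})}(\Phi),
\]
up to the precise indexing convention: since $w t^{-1} w = \diag(t_2^{-1}, t_1^{-1})$ for $t=\diag(t_1,t_2)$, the dual twisted push-forward is genuinely an orbital integral at the torus element with swapped and inverted coordinates. This is why the final formula involves $\CO^{\psi^{-1}}_{(t_2^{-1},\cdot)}(\Phi)$ evaluated at $-t_1$ rather than $\CO^{\psi^{-1}}_{(t_1,\cdot)}$: the inversion exchanges the roles of the two torus coordinates and inverts them, and the Fourier variable that was $-1/t_2$ in Lemma \ref{lem_mid=F} becomes $-t_1$ here. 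I would carry out the same change of variables as in Lemma \ref{lem_mid=F} (translate the $(1,2)$-entry to absorb the $n_1 n_2$ term, then rescale $n_1,n_2$) but adapted to the matrix $(n_1 w t n_2)^{-1}$ rather than $n_1 w t n_2$; computing $(n_1 w t n_2)^{-1}$ explicitly gives a $2\times 2$ matrix whose entries are rational in $t_1,t_2,n_1,n_2$, and the same manipulations linearize the phase.

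\medskip

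\textbf{Step 3: assemble and match the normalization.} After dividing by $|t_1/t_2|^{1/2}(\ud t)^{1/2}$ per Definition \ref{dfn_int_dual}, I would collect the powers of $|t_1|$ and $|t_2|$ produced by the three changes of variable (the $|x|^{1/2}$ from $\CF$, the $|\det t|^{-1}$ weight, and the Jacobians) and check they combine to the claimed $|t_2|^{-1}$; the phase $\psi^{-1}(-t_2/t_1)$ is exactly the $\psi^{-1}(-e^{\alpha_1})$ correction factor, which survives untouched just as $\psi(-t_2/t_1)$ did in Lemma \ref{lem_mid=F}. Finally I would invoke the dual version of Proposition \ref{prp_irr1} (stated just above the lemma) to justify that $\CO^{\psi^{-1}}_{(t_2^{-1},\cdot)}(\Phi)$, suitably normalized, is a Schwartz function, so that its Fourier transform $\BF_{\psi^{-1},2}$ at $-t_1$ makes sense.

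\medskip

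I expect the main obstacle to be purely bookkeeping: getting the inversion $g\mapsto g^{-1}$ to interact correctly with the Weyl element $w$ and the coordinate labels, so that the swap-and-invert $(t_1,t_2)\mapsto(t_2^{-1},t_1^{-1})$ is tracked consistently through every substitution, and making sure the sign conventions on $\psi$ versus $\psi^{-1}$ and on the Fourier variable ($-t_1$ rather than $+t_1$) come out exactly as in the statement. No new analytic input beyond Lemma \ref{lem_mid=F} and the dual Proposition \ref{prp_irr1} should be required.
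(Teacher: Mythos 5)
Your proposal is correct and follows essentially the paper's own route: the paper's proof of this lemma is literally ``the same as Lemma \ref{lem_mid=F}'', i.e.\ the substitute-and-change-variables computation you outline, whose only structural input is $wt^{-1}w=\diag(t_2^{-1},t_1^{-1})$ so that the $\CF_{\epsilon_1^{\vee},\frac{1}{2}}$-integral in the first coordinate collapses, after $x\mapsto t_1x$, into the additive Fourier transform in the second slot of $\CO^{\psi^{-1}}_{(t_2^{-1},\cdot)}(\Phi)$. Two small remarks: the kernel in Definition \ref{dfn_int_dual} must be read as $\psi^{-1}$ (consistent with Remark \ref{rmk_han}), which is exactly what yields the argument $-t_1$ rather than $+t_1$ --- the point you already flag as needing care --- and the explicit computation of the entries of $(n_1wtn_2)^{-1}$ you mention is not needed here (that manipulation belongs to Proposition \ref{prp_mid}); as in Lemma \ref{lem_mid=F} the orbital integral is treated as a black-box function of the torus coordinates.
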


\begin{proof}
    The proof is the same as Lemma \ref{lem_mid=F}.
\end{proof}

\begin{prp}\label{prp_middual}
	If we write $f=|\det t|^{-1}\delta^{\frac{1}{2}}(t)\CO_{wt^{-1}w}^{\psi^{-1}}(\Phi)$, then for fixed $t_1\in F^{\times}$, $\widetilde{^{\vee}\varphi_f}(t_1,(\cdot)^{-1})$ is of Schwartz type on $F$. In particular,
	\[
	\widetilde{^{\vee}\varphi_f}(t_1,\infty):=\lim_{t_2\rightarrow \infty}\widetilde{^{\vee}\varphi_f}(t_1,t_2)
	\]
	exists.
\end{prp}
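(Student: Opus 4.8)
The statement mirrors Proposition \ref{prp_mid}, with the roles of $t_1$ and $t_2$ interchanged because the dual embedding $g\mapsto g^{-1}$ swaps the two coordinates of the torus (note the appearance of $t_2^{-1}$ and the cocharacter $\epsilon_1^{\vee}$ in Definition \ref{dfn_int_dual}, versus $t_1$ and $\epsilon_2^{\vee}$ in Definition \ref{dfn_int}). So the plan is to reduce to Proposition \ref{prp_mid} by an explicit change of variables, or else to re-run the computation of Proposition \ref{prp_mid} verbatim in the dual setting. Concretely, starting from $f=|\det t|^{-1}\delta^{\frac{1}{2}}(t)\CO_{wt^{-1}w}^{\psi^{-1}}(\Phi)$ and Lemma \ref{lem_irr2dual}, I would unfold $\widetilde{^{\vee}\varphi_f}(t_1,t_2)=|t_2|^{-1}\psi^{-1}(-t_2/t_1)\BF_{\psi^{-1},2}(\CO^{\psi^{-1}}_{(t_2^{-1},\cdot)}(\Phi))(-t_1)$, expand the orbital integral $\CO^{\psi^{-1}}_{(t_2^{-1},\cdot)}(\Phi)$ as a double integral over $\RN\times\RN$, perform the same triangular change of variables used in the proof of Proposition \ref{prp_mid} (absorbing the cross term into the off-diagonal entry, rescaling the unipotent variables), and recognize the inner integrals as a partial Fourier transform $\BF_{\psi^{-1},1}\BF_{\psi^{-1},2}\Phi$ evaluated at a matrix whose entries are rational functions of $(t_1,t_2,z)$. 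Then a final substitution of the form $n\mapsto (\text{linear in }t_1,t_2)\,z + (\text{shift})$ — the analogue of $n_2=t_1t_2z-t_2$ — should produce a clean expression
\[
\widetilde{^{\vee}\varphi_f}(t_1,t_2)=|t_1|\int_F \BF_{\psi^{-1},1}\BF_{\psi^{-1},2}\Phi\!\left(\begin{pmatrix} \ast & \ast \\ \ast & \ast\end{pmatrix}\right)\psi^{-1}(\text{linear})\,\ud z,
\]
from which Schwartz-ness in the remaining torus coordinate follows because derivatives of $\Phi$ are again Schwartz and the matrix entries depend smoothly on the parameter away from its vanishing locus.

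**Key steps, in order.** First, substitute Lemma \ref{lem_irr2dual} and write out $\CO^{\psi^{-1}}_{(t_2^{-1},\cdot)}(\Phi)$ using the explicit matrix product $\begin{pmatrix}1 & n_1\\0&1\end{pmatrix}\begin{pmatrix}0 & s_2\\ s_1 & 0\end{pmatrix}\begin{pmatrix}1&n_2\\0&1\end{pmatrix}=\begin{pmatrix}s_1n_1 & s_1n_1n_2+s_2\\ s_1 & s_1n_2\end{pmatrix}$ with $s_1=t_2^{-1}$ and $s_2$ the integrated variable, keeping careful track of the inverse character $\psi^{-1}$ and the $\det$ normalization coming from the dual pull-back $|\det g|^{-1}\Phi(g^{-1})$. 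Second, carry out the change of variables clearing the $n_1n_2$ cross term and the $|t_2^{-1}|^{-2}$ Jacobian, identifying the resulting double $n$-integral with $\BF_{\psi^{-1},1}\BF_{\psi^{-1},2}\Phi$ at a specific matrix. Third, perform the final linear-in-$z$ substitution in the surviving variable to remove the $1/t_i$ singularities from the diagonal entries, landing on the clean integral representation displayed above. Fourth, read off: (a) Schwartz decay of $\widetilde{^{\vee}\varphi_f}(t_1,(\cdot)^{-1})$ as $t_2\to 0$ (i.e. $t_2^{-1}\to\infty$) from rapid decay of $\BF_{\psi^{-1},1}\BF_{\psi^{-1},2}\Phi$; (b) smoothness/extension across $t_2^{-1}=0$, i.e. existence of $\widetilde{^{\vee}\varphi_f}(t_1,\infty)$, by dominated convergence (non-Archimedean case: the integrand is eventually supported away from the relevant region; Archimedean case: differentiate under the integral sign, using that $\RD\Phi$ is Schwartz for every polynomial-coefficient differential operator $\RD$, exactly as in the proof of Proposition \ref{prp_irr1}). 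Since $\widetilde{^{\vee}\varphi_f}(t_1,(\cdot)^{-1})$ is then a smooth function on $F$ with all derivatives of rapid decay, it is a Schwartz function, and the limit at $\infty$ exists.

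**Main obstacle.** The computations are routine but the bookkeeping is delicate: one must get the three successive changes of variables and their Jacobians exactly right, and in particular keep the inverse character $\psi^{-1}$ and the $g\mapsto g^{-1}$ twist consistent throughout, so that the final matrix argument of $\BF_{\psi^{-1},1}\BF_{\psi^{-1},2}\Phi$ has entries that are genuinely smooth in $t_1$ (resp. in $t_2^{-1}$) on a neighborhood of the relevant value. The cleanest way to avoid sign and Jacobian errors is probably not to redo the computation from scratch but to deduce the dual proposition formally from Proposition \ref{prp_mid}: the map $\Phi\mapsto \Phi\circ(\text{inv})$ intertwines the twisted push-forwards for $\Mat_2$ and $\Mat_2^{\vee}$ and, on the torus, is the coordinate flip $(t_1,t_2)\mapsto(t_2^{-1},t_1^{-1})$ up to the $\delta$-twist — matching this flip against Definitions \ref{dfn_int} and \ref{dfn_int_dual} should convert Proposition \ref{prp_mid} into the present statement directly. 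Either way, once the integral representation is in hand the Schwartz and limit assertions are immediate, exactly as the authors note ("The proof is the same as ...").
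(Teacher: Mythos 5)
Your proposal is correct and matches the paper's route: the paper's proof of this proposition is literally the one-line remark that it is the same as Proposition \ref{prp_mid}, i.e.\ one re-runs the unfolding of Lemma \ref{lem_irr2dual}, the triangular change of variables, and the final linear substitution exactly as you outline (your alternative of deducing it formally from Proposition \ref{prp_mid} via the inversion/coordinate flip is a harmless variant of the same idea).
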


\begin{proof}
    The proof is the same as Proposition \ref{prp_mid}.
\end{proof}

\subsection{Hankel Tranforms}\label{ssec_hankel}
    
Consider the classical Fourier transform $\CF(\Mat_2)\rightarrow\CF(\Mat_2^{\vee})$ on the space of matrices:
\[
\Phi(x) \mapsto\widehat{\Phi}(x),
\]
where 
\[
\widehat{\Phi}(x):=\int_{\Mat_2}\Phi(y)\psi(\tr (yx))\ud^+ y.
\]
Then $\Phi\mapsto\widehat{\Phi}$ is $\RG\times \RG$-equivariant with respect to the $\RG\times \RG$ actions on $\Mat_2$ and $\Mat_2^{\vee}$ that we defined in the previous section. This Fourier transform induces a Fourier transform $\CF$ on $\CCD(\Mat_2)\rightarrow\CCD(\Mat_2^{\vee}): \Phi(x)(\ud x)^{\frac{1}{2}}\mapsto \widehat{\Phi}(x)(\ud x)^{\frac{1}{2}}$. This Fourier transform descends to the level of orbital integrals, and we have a beautiful formula to describe it on the level of orbital integrals.

\begin{thm}[\cite{Jac03,Sak19}]\label{thm_hankel}
	We have a commutative diagram
	\[
		\begin{array}[c]{ccc}
\CCD(\Mat_2)&\stackrel{\widehat{(\cdot)}}{\rightarrow}&\CCD(\Mat_2^{\vee})\\
\downarrow &&\downarrow \\
\CCD^-_{L\left(\std,\frac{1}{2}\right)}(\RN,\psi\backslash \RG/\RN,\psi)&\stackrel{\CH_{\std}}{\rightarrow}&\CCD^-_{L\left(\std^{\vee},\frac{1}{2}\right)}(\RN,\psi\backslash \RG/\RN,\psi)
\end{array},
	\]
	and $\CH_{\std}$ is given by the following formula
	\[
	\CH_{\std}=\CF^{\psi}_{-\epsilon_1^{\vee},\frac{1}{2}}\circ\psi (-e^{-\alpha_1})\circ\CF^{\psi}_{-\epsilon_2^{\vee},\frac{1}{2}}.
	\]
 \end{thm}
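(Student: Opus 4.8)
The plan is to verify the commutativity of the diagram by tracing a Schwartz half-density $\Phi(x)(\ud^+x)^{\frac{1}{2}}\in\CCD(\Mat_2)$ around both routes and comparing the resulting half-densities on $\RT$. The left vertical arrow sends $\Phi$ to $f=|\det t|\delta^{\frac{1}{2}}(t)\CO_t^{\psi}(\Phi)(\ud t)^{\frac{1}{2}}$ by Definition \ref{dfn_localspace}; the right vertical arrow sends $\widehat{\Phi}$ to $|\det t|^{-1}\delta^{\frac{1}{2}}(t)\CO_{wt^{-1}w}^{\psi^{-1}}(\widehat{\Phi})(\ud t)^{\frac{1}{2}}$. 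So the task is to show
\[
\CF^{\psi}_{-\epsilon_1^{\vee},\frac{1}{2}}\circ\psi(-e^{-\alpha_1})\circ\CF^{\psi}_{-\epsilon_2^{\vee},\frac{1}{2}}\bigl(f\bigr)
\]
equals the twisted push-forward of $\widehat{\Phi}$. The natural strategy is to factor the computation through the intermediate half-density $\varphi_f=\psi(-e^{\alpha_1})\circ\CF^{\psi}_{-\epsilon_2^{\vee},\frac{1}{2}}(f)$ of Definition \ref{dfn_int}: the middle and inner operators of $\CH_{\std}$ together essentially produce $\varphi_f$ (up to the sign discrepancy between $e^{\alpha_1}$ and $e^{-\alpha_1}$, which I will need to reconcile carefully), and then I apply the outer Fourier transform $\CF^{\psi}_{-\epsilon_1^{\vee},\frac{1}{2}}$ in the remaining variable.

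The concrete engine is Proposition \ref{prp_mid}, which already gives the closed form
\[
\widetilde{\varphi_f}(t_1,t_2)=|t_2|\int_F\BF_{\psi,1}\BF_{\psi,2}\Phi\left(\begin{pmatrix} z & -\frac{1}{t_2} \\ t_1 & t_1t_2z-t_2\end{pmatrix}\right)\psi(-t_2 z)\ud z,
\]
expressing the intermediate function in terms of a partial Fourier transform of $\Phi$ in the $(1,1)$ and $(1,2)$ entries. The second step is to apply the outer transform $\CF^{\psi}_{-\epsilon_1^{\vee},\frac{1}{2}}$, i.e. integrate against $\psi(x)|x|^{\frac{1}{2}}$ after scaling $t_1\mapsto t_1 x$; this should be recast (via Lemma \ref{lem_mid=F}-style bookkeeping) as an honest one-dimensional Fourier transform $\BF_{\psi}$ in the remaining $(2,1)$-slot, turning the two transforms $\BF_{\psi,1}\BF_{\psi,2}$ into the full four-variable Fourier transform $\BF_{\psi,1}\BF_{\psi,2}\BF_{\psi,3}\BF_{\psi,4}$ on $\Mat_2$, which is exactly $\widehat{\Phi}$ up to reindexing and the sign conventions in $\BF_\psi$ versus $\widehat{(\cdot)}$. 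The third step is to recognize the output as the twisted push-forward of $\widehat{\Phi}$ to $\RT$: this is a change-of-variables identification of $\CO_{wt^{-1}w}^{\psi^{-1}}(\widehat{\Phi})$, using that the substitution $g\mapsto g^{-1}$ interacts with the Bruhat coordinates $n_1 w t n_2$ so that the Whittaker integral against $\psi^{-1}$ for $\widehat{\Phi}$ matches what the Fourier-transform computation produces. Convergence of all iterated integrals is guaranteed by the Schwartz estimates in Proposition \ref{prp_irr1} and Proposition \ref{prp_mid}, so Fubini and dominated convergence apply freely; one should also check the statement on the basic vector $1_{\Mat_2(\Fo_F)}$ separately, but that is immediate from Proposition \ref{prp_unramified} and the unramified computations already recorded.

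The main obstacle will be the third step: correctly matching the measures, the shift factors $\psi(\pm e^{\pm\alpha_1})$, and the $\psi$ versus $\psi^{-1}$ conventions on the target space $\CCD^-_{L(\std^{\vee},\frac{1}{2})}$, since the embedding $g\mapsto g^{-1}$ into $\Mat_2^\vee$ and the resulting orbital integral $\CO_{wt^{-1}w}^{\psi^{-1}}$ introduce several inversions and sign flips that must all cancel precisely. In particular, keeping track of how the normalizing factor $|\det t|\delta^{\frac{1}{2}}(t)$ on the source becomes $|\det t|^{-1}\delta^{\frac{1}{2}}(t)$ on the target, and how the $\BF_{\psi}$ in each coordinate conspires with $\psi(\tr(yx))$ in $\widehat{\Phi}$ under the trace pairing, is where the bulk of the care is needed. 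Since the formula for $\CH_{\std}$ is already asserted in \cite{Jac03} and \cite{Sak19}, an alternative—and cleaner—route is to cite those references for the computation and only verify compatibility with the half-density normalizations adopted here; I would present the direct verification via $\varphi_f$ as the primary argument and remark that it recovers Jacquet's formula.
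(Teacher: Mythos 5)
Your proposal is correct in substance, but your primary route is genuinely different from the paper's. The paper's proof (following \cite[Theorem 8.1]{Sak19}) is essentially a change-of-normalization argument: it rewrites the half-density $f$ in terms of Jacquet's normalized orbital integral $\widetilde{\Omega}$, expands the composition $\CF^{\psi}_{-\epsilon_1^{\vee},\frac{1}{2}}\circ\psi(-e^{-\alpha_1})\circ\CF^{\psi}_{-\epsilon_2^{\vee},\frac{1}{2}}$ into a double integral against $\psi\bigl(\frac{p_1}{t_1}+\frac{p_2}{t_2}-\frac{t_2}{p_1}\bigr)$, and then quotes \cite[Theorem 1]{Jac03} at the decisive step to identify the result with $\widetilde{\Omega}\bigl((\RL(w)\Phi)^{\vee},\psi:\diag(t_2^{-1},t_1^{-1})\bigr)$, i.e.\ with the push-forward of $\widehat{\Phi}$; the real analytic content is imported from Jacquet. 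Your plan instead runs the computation through the intermediate function $\widetilde{\varphi_f}$ using the closed formula of Proposition \ref{prp_mid} and Remark \ref{rmk_han}, and then matches the outer transform directly with $\CO^{\psi^{-1}}_{wt^{-1}w}(\widehat{\Phi})$; this amounts to reproving Jacquet's $\GL_2$ identity rather than citing it, and it does go through: writing $\widehat{\Phi}$ as $\BF_{\psi,1}\BF_{\psi,2}\BF_{\psi,3}\BF_{\psi,4}\Phi$ with the $(1,2)\leftrightarrow(2,1)$ index swap and sign flips, expanding the orbital integral of $\widehat{\Phi}$ along $n_1w\,\diag(t_2^{-1},t_1^{-1})\,n_2$, rescaling $n_i\mapsto t_2 n_i$, and performing a Fourier-inversion step in the fourth coordinate to absorb the coupled $(2,2)$-entry reproduces exactly the double integral $\int\int\BF_{\psi,1}\BF_{\psi,2}\Phi\begin{pmatrix} z & -1/t_2\\ x & xt_2z-t_2\end{pmatrix}\psi\bigl(\frac{x}{t_1}-t_2z\bigr)\ud z\ud x$ coming from your two steps. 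The one place needing genuine care is precisely the obstacle you name: the outer $x$-integral is not a single-coordinate Fourier transform because $x$ occupies both the $(2,1)$ and $(2,2)$ slots, so you must either decouple by an extra substitution or invert in the $(2,2)$ slot first, choosing the order of integration so that each iterated integral is an honest Fourier transform of a Schwartz function (the estimates of Propositions \ref{prp_irr1} and \ref{prp_mid} suffice for this). Also note the labeling slip you flagged is harmless: $\psi(-e^{-\alpha_1})$ in the theorem and the operator called $\psi(-e^{\alpha_1})$ in Definition \ref{dfn_int} are both multiplication by $\psi(-t_2/t_1)$. What each approach buys: the paper's argument is short, stays aligned with \cite{Jac03, Sak19}, and inherits the $\GL_n$ generality of Jacquet's theorem; yours is self-contained, matches the two-step Fourier philosophy of the direct global proof in Section \ref{ssec_dirtpf}, but is special to $\GL_2$ and costs the explicit bookkeeping above. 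Your fallback option (cite the references and only check the half-density normalization) is essentially the paper's actual proof.
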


\begin{rmk}
	The calculation of this descent of the Fourier transform for general $\GL_n$ first appeared in \cite[Theorem 1]{Jac03}, and was written is the above form in \cite[Theorem 8.1]{Sak19}. For the convenience of later reference and further comparison with the notations in \cite{Jac03}, let us copy the proof of \cite[Theorem 8.1]{Sak19} here.
\end{rmk}

\begin{proof}
	Let $\Phi\in\CF(\Mat_2)$, recall that \cite{Jac03} used the notation
	\[
	\Omega(\Phi,\psi^{-1},t):=\int_{\RN\times \RN}\Phi({^tn_1}tn_2)\psi^{-1}(n_1n_2)\ud n_1\ud n_2
	\]
	to denote the orbital integral. Note that $^tn_1=wtw$, so we have $\Omega(\Phi,\psi^{-1}:t)=\CO_{\RL(w)\Phi}(t)$. And in \cite{Jac03}, the normalization is
	\[
	\widetilde{\Omega}(\Phi,\psi^{-1}:t):=|t_1|\Omega(\Phi,\psi^{-1}:t),
	\]
	so if we write $f(t)$ for the twisted push-forward of $|\det g|\Phi(g)(\ud g)^{\frac{1}{2}}$, then we have $f(t)=|t_1t_2|^{\frac{1}{2}}\widetilde{\Omega}(\RL(w)\Phi,\psi^{-1}:t)(\ud t)^{\frac{1}{2}}$.
	
	Since the Fourier transform of $|\det g|\Phi(g)(\ud g)^{\frac{1}{2}}$ is $\widehat{\Phi}(x)(\ud x)^{\frac{1}{2}}$ on $\Mat_2^{\vee}$, whose pull-back to $G$ is $|\det g|^{-1}\widehat{\Phi}(g^{-1})(\ud g)^{\frac{1}{2}}$. Then, its twisted push-forward is
	\[
	|\det t|^{-1}\delta^{1/2}(t)\int_{\RN\times \RN}\widehat{\Phi}(n_1 w \cdot wt^{-1}w \cdot n_2)\psi(n_1n_2)\ud n_1\ud n_2 \cdot(\ud t)^{\frac{1}{2}},
	\]
	which is 
	\[
	|\det t|^{-1}\delta^{\frac{1}{2}}(t)\int_{\RN\times \RN}\widehat{\Phi}(w{^tn_1} wt^{-1}w n_2)\psi(n_1n_2)\ud n_1\ud n_2\cdot (\ud t)^{\frac{1}{2}},
	\]
	and this is $|t_1t_2|^{-\frac{1}{2}}\widetilde{\Omega}(\RL(w)\widehat{\Phi},\psi, wt^{-1}w)(\ud t)^{\frac{1}{2}}$. Note that we have $\RL(w)\widehat{\Phi}=(\RL(w)\Phi)^{\vee}$ using the notation of \cite{Jac03}. Then we have
	\begin{align*}
		&\CF^{\psi}_{-\epsilon_1^{\vee},\frac{1}{2}}\circ\psi (-e^{-\alpha_1})\circ\CF^{\psi}_{-\epsilon_2^{\vee},\frac{1}{2}}(f)(t_1,t_2)\\
		&=\CF^{\psi}_{-\epsilon_1^{\vee},\frac{1}{2}}\left( (b_1,b_2)\mapsto  \psi\left(-\frac{b_2}{b_1}\right)\CF^{\psi}_{-\epsilon_2^{\vee},\frac{1}{2}}(f)(b_1,b_2)         \right)\\
		&=\int_{F^{\times}} |p_1|^{\frac{1}{2}}\psi(p_1) \psi\left(-\frac{t_2}{p_1t_1} \right)\CF^{\psi}_{-\epsilon_2^{\vee},\frac{1}{2}}(f)(p_1t_1,t_2) \ud^{\times }p_1 \\
		&=\int_{F^{\times}} |p_1|^{\frac{1}{2}}\psi(p_1) \psi\left(-\frac{t_2}{p_1t_1} \right) \int_{F^{\times}}|p_2|^{\frac{1}{2}}\psi(p_2)f(p_1t_1,p_2t_2)\ud^{\times }p_2\ud^{\times}p_1\\
		&=\int_{F^{\times}}\int_{F^{\times}}|p_1p_2|^{\frac{1}{2}}\psi\left(p_1+p_2-\frac{t_2}{p_1t_1}\right)f(p_1t_1,p_2t_2)\ud^{\times}p_2\ud^{\times}p_1.
	\end{align*}
	Make use of the relation that $f(t)=|t_1t_2|^{\frac{1}{2}}\widetilde{\Omega}((w,1)\Phi,\psi^{-1}:t)(\ud t)^{\frac{1}{2}}$, we have the above is
	\begin{align*}
		&\int_F\int_F |p_1p_2|^{\frac{1}{2}}\psi\left(p_1+p_2-\frac{t_2}{p_1t_1}\right)|p_1t_1p_2t_2|^{\frac{1}{2}}\widetilde{\Omega}\left((w,1)\Phi,\psi^{-1}: \begin{pmatrix}
			p_1t_1 & 0 \\ 0 & p_2t_2
		\end{pmatrix}\right)\\
		&\cdot (\ud p_1t_1)^{\frac{1}{2}}(\ud p_2t_2)^{\frac{1}{2}}\ud^{\times}p_2\ud^{\times}p_1\\
		&=\int_F\int_F|t_1t_2|^{\frac{1}{2}} \psi\left(p_1+p_2-\frac{t_2}{p_1t_1}\right)  \widetilde{\Omega}\left((w,1)\Phi,\psi^{-1}:\begin{pmatrix}
			p_1t_1 & 0\\0 & p_2t_2
		\end{pmatrix}\right)\ud p_2\ud p_1\cdot (\ud t_1)^{\frac{1}{2}}(\ud t_2)^{\frac{1}{2}}\\
		&=|t_1t_2|^{-\frac{1}{2}}\int_F\int_F\psi\left(\frac{p_1}{t_1}+\frac{p_2}{t_2}-\frac{t_2}{p_1}\right)\widetilde{\Omega}\left((w,1)\Phi,\psi^{-1}:\begin{pmatrix}
			p_1 & 0 \\ 0 & p_2
		\end{pmatrix}\right)\ud p_2\ud p_1 (\ud t_1)^{\frac{1}{2}}(\ud t_2)^{\frac{1}{2}}\\
		&=|t_1t_2|^{-\frac{1}{2}}\widetilde{\Omega}\left((\RL(w)\Phi)^{\vee},\psi: \begin{pmatrix}
			t_2^{-1} & 0 \\0 & t_1^{-1}
		\end{pmatrix}\right)(\ud t)^{\frac{1}{2}}\\
		&=\CH_{\std}(f),
	\end{align*}
	where the last but two equality is \cite[Theorem 1]{Jac03}.
\end{proof}

\begin{rmk}\label{rmk_han}
	Using the notation in Proposition \ref{dfn_int}, we have
	\[
	\CH_{\std}(f)=\CF^{\psi}_{-\epsilon_1^{\vee},\frac{1}{2}}(\varphi_f).
	\]
	More explicitly, we have
	\begin{align*}
	\frac{\CH_{\std}(f)}{(\ud t)^{\frac{1}{2}}}(t_1,t_2)&=|t_1t_2|^{-\frac{1}{2}} \int_{F}\widetilde{\varphi_{\nu}}(x,t_2)\psi\left(\frac{x}{t_1}\right)\ud x\\
	&=|t_1t_2|^{-\frac{1}{2}}\BF_{\psi,1}(\widetilde{\varphi_f}(\cdot,t_2))\left(-\frac{1}{t_1}\right).
	\end{align*}

 Also note that $\CH_{\std}^{-1}=\CF_{\epsilon_2^{\vee},\frac{1}{2}}^{\psi^{-1}}\circ \psi^{-1}(-e^{-\alpha_1})\circ\CF_{\epsilon_1^{\vee},\frac{1}{2}}^{\psi^{-1}}$, then for $f\in\CCD^-_{L\left(\std^{\vee},\frac{1}{2}\right)}(\RN,\psi\backslash \RG/\RN,\psi)$, we have
\[
	\CH_{\std}^{-1}(f)=\CF_{\epsilon_2^{\vee},\frac{1}{2}}^{\psi^{-1}}({^{\vee}\varphi_f}).
	\]
	and
	\begin{align*}
	\frac{\CH_{\std}(f)}{(\ud t)^{\frac{1}{2}}}(t_1,t_2)&=|t_1t_2|^{\frac{1}{2}} \int_{F}\widetilde{^{\vee}\varphi_{f}}(t_1,x^{-1})\psi^{-1}\left(xt_2\right)\ud x\\
	&=|t_1t_2|^{\frac{1}{2}}\BF_{\psi^{-1},2}(\widetilde{^{\vee}\varphi_f}(t_1,(\cdot)^{-1}))\left(-t_2\right).
	\end{align*}
\end{rmk}
    
    Let $\CH:=\CH(\RG,\RG(\Fo))$ be the spherical Hecke algebra, which acts on $\RG(\Fo)$-invariant functions on $\RG$ by convolution:
    \[
    h\star f(g):=\int_{\RG}h(xg^{-1})f(g)\ud g,\;h\in\CH,g\in\RG.
    \]
    Then to isolate global spectrum in Section \ref{sec_iso}, we need the following fundamental lemma for spherical Hecke algebra:

    \begin{prp}\label{prp_hankel basic}
    Let $\Phi$ be a Schwartz function on $\Mat_2$, then for $h\in\CH$, we have
    have
    	\[\widehat{h\star \Phi}= (h(\cdot)|\det\cdot |^2)\star \widehat{\Phi} \]
    	as functions on $\RG$.
    	In particular, if $\psi$ is unramified, we have
    	\[
    	\widehat{h\star 1_{\Mat_2(\Fo)}}= (h(\cdot)|\det\cdot |^2)\star 1_{\Mat_2(\Fo)}.
    	\]
    \end{prp}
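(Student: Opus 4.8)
The plan is to prove the identity $\widehat{h\star\Phi}=(h(\cdot)|\det\cdot|^2)\star\widehat\Phi$ by unwinding both sides as integrals over $\RG$ and exploiting the $\RG\times\RG$-equivariance of the classical Fourier transform on $\Mat_2$ recorded at the start of Section \ref{ssec_hankel}. First I would recall that the convolution $h\star\Phi$ makes sense because $h$ has compact support in $\RG$ while $\Phi$ is Schwartz on $\Mat_2$, so $h\star\Phi(x)=\int_{\RG}h(g)\Phi(g^{-1}x)\ud g$ is again Schwartz on $\Mat_2$ (a finite sum of left translates of $\Phi$ by elements of $\RG$, weighted by the locally constant compactly supported function $h$). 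Here I am using the left action $\RL(g)\Phi(x)=\Phi(g^{-1}x)$ from the notation section; note that $\RL(g)\Phi$ is the restriction to $\Mat_2$ of a genuine Schwartz function since $g\in\RG$ and left multiplication by $g$ is a linear automorphism of $\Mat_2$.

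The key computational step is the transformation law of $\widehat{(\cdot)}$ under left translation by a single $g\in\RG$. Starting from $\widehat{\RL(g)\Phi}(x)=\int_{\Mat_2}\Phi(g^{-1}y)\psi(\tr(yx))\ud^+y$ and substituting $y\mapsto gy$, whose Jacobian on $\Mat_2$ is $|\det g|^2$, one gets $\widehat{\RL(g)\Phi}(x)=|\det g|^2\int_{\Mat_2}\Phi(y)\psi(\tr(gyx))\ud^+y=|\det g|^2\,\widehat\Phi(xg)=|\det g|^2\,\RR(g)\widehat\Phi(x)$, using $\tr(gyx)=\tr(y(xg))$ by cyclicity of the trace. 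Then I would integrate this against $h(g)\ud g$: by linearity of the Fourier transform (justified by dominated convergence, since $h$ is compactly supported and the family $\RL(g)\Phi$ for $g$ in that support is uniformly Schwartz-bounded),
\[
\widehat{h\star\Phi}(x)=\int_{\RG}h(g)\,\widehat{\RL(g)\Phi}(x)\ud g=\int_{\RG}h(g)|\det g|^2\,\widehat\Phi(xg)\ud g.
\]
Finally I would recognize the right-hand side as a convolution on $\RG$: with the convention $h'\star\psi(x)=\int_{\RG}h'(xg^{-1})\psi(g)\ud g$ from the definition just above Proposition \ref{prp_hankel basic}, substituting $g\mapsto g^{-1}$ (which preserves $\ud g$) converts $\int_{\RG}h(g)|\det g|^2\widehat\Phi(xg)\ud g$ into $\int_{\RG}h(g^{-1})|\det g^{-1}|^2\widehat\Phi(xg^{-1})\ud g$, and since $h$ is spherical hence inversion-invariant, $h(g^{-1})|\det g^{-1}|^2=h(g)|\det g|^{-2}$; so some care with the det-power and the precise convolution convention is needed to land exactly on $(h(\cdot)|\det\cdot|^2)\star\widehat\Phi$, and I expect reconciling these bookkeeping factors to be the only real subtlety. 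The ``in particular'' clause for $\psi$ unramified is then immediate upon taking $\Phi=1_{\Mat_2(\Fo)}$, whose Fourier transform is again $1_{\Mat_2(\Fo)}$ in that case.

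The main obstacle, such as it is, is thus not conceptual but a matter of getting the determinant twist and the direction of convolution consistent between the two conventions in play (the linear action on $\Mat_2$ versus the convolution normalization on $\RG$); once the substitution $y\mapsto gy$ and the cyclicity of the trace are in place, the equivariance does all the work. I would also remark in passing that this is exactly the statement that the classical Fourier transform on $\Mat_2$ intertwines the left $\RG$-action with the right $\RG$-action twisted by $|\det|^2$, which is precisely the $\RG\times\RG$-equivariance already invoked for Theorem \ref{thm_hankel}; Proposition \ref{prp_hankel basic} is its Hecke-algebra incarnation, obtained by integrating the group statement against $h$.
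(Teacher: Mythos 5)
Your core computation is correct, and it is essentially the paper's own proof in a different packaging: the paper expands $\widehat{h\star\Phi}$, applies Fubini and the substitution $y\mapsto g^{-1}y$ (Jacobian $|\det g|^{-2}$), which is exactly your single-element equivariance $\widehat{\RL(g)\Phi}=|\det g|^{2}\,\RR(g)\widehat{\Phi}$ integrated against $h(g)\ud g$. The genuine problem is your final step, where you pass from $\int_{\RG}h(g)|\det g|^{2}\widehat{\Phi}(xg)\ud g$ to $(h(\cdot)|\det\cdot|^{2})\star\widehat{\Phi}$ by asserting that a spherical Hecke function is inversion-invariant. That is false: by the Cartan decomposition, $h=1_{\RG(\Fo)\diag(\varpi,1)\RG(\Fo)}$ satisfies $h(g^{-1})=1_{\RG(\Fo)\diag(\varpi^{-1},1)\RG(\Fo)}(g)\neq h(g)$, so this mechanism cannot close the argument.

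The correct reconciliation is not a symmetry of $h$ but a convention that the paper uses from the first line of its proof: $\widehat{\Phi}$ and $\widehat{h\star\Phi}$ are regarded as functions on $\RG$ through the dual embedding $g\mapsto g^{-1}$ into $\Mat_2^{\vee}$ (the same convention as for $\CCD(\Mat_2^{\vee})$), i.e.\ the paper writes $\widehat{h\star\Phi}(x)=\int_{\Mat_2}(h\star\Phi)(y)\,\psi(\tr(yx^{-1}))\ud^{+}y$. With this reading, the right-hand side is $(h|\det|^{2})\star\widehat{\Phi}(x)=\int_{\RG}h(xg^{-1})|\det xg^{-1}|^{2}\,\widehat{\Phi}(g^{-1})\ud g$, and the substitution $u=xg^{-1}$ turns it into $\int_{\RG}h(u)|\det u|^{2}\,\widehat{\Phi}(x^{-1}u)\ud u$, which is precisely your formula for $\widehat{h\star\Phi}$ evaluated at $x^{-1}$; no property of $h$ at all is needed. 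Note that the bookkeeping you deferred is not a formality: if one instead restricts $\widehat{\Phi}$ to $\RG\subset\Mat_2$ without the inverse, the asserted identity already fails for $h=1_{\RG(\Fo)}$, since it would equate the average of the right $\RG(\Fo)$-translates of $\widehat{\Phi}$ with the average of its left $\RG(\Fo)$-translates. So the statement is only true with the dual-embedding convention, and your proof is complete once the last step is replaced by the substitution above rather than the (false) inversion-invariance of $h$.
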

    
    \begin{proof}
    	\begin{align*}
    		\widehat{h\star\Phi }(x)&=\int_{\Mat_2}h\star \Phi(y)\psi(\tr(yx^{-1}))\ud^+y\\
    		&=\int_{\Mat_2} \int_{\RG} h(yg^{-1})\Phi(g)\ud g\psi(\tr(yx^{-1}))\ud^+y\\
    		&=\int_{\Mat_2}\int_{\RG} h(g^{-1})\Phi(gy)\ud g\psi(\tr(yx^{-1}))\ud^+y\\
    		&=\int_{\RG} h(g^{-1})\int_{\Mat_2}\Phi(y)\psi(\tr(yx^{-1}g^{-1}))|\det g|^{-2}\ud^+y\\
    		&=\int_{\RG}h(g^{-1})\widehat{\Phi}(gx)|\det g|^{-2}\ud g\\
    		&=\int_{\RG}h(xg^{-1})|\det xg^{-1} |^2   \widehat{\Phi}(g)\ud g.
    	\end{align*}
    	In particular, if $\psi$ is unramified, we have $\displaystyle{\widehat{1_{\Mat_2}(\Fo)}=1_{\Mat_2}(\Fo)}$, then 
    	\[
    	\widehat{h\star 1_{\Mat_2(\Fo)}}= (h(\cdot)|\det\cdot |^2)\star 1_{\Mat_2(\Fo)}.
    	\]

    \end{proof}
    
    \subsection{Irregular Distributions}
   
   In this section, we will consider two types of irregular distributions on the space of orbital integrals, which will contribute to the boundary terms in the global Poisson summation formula. 
   
 Due to Proposition \ref{prp_irr1}, we have the following definition 
    \begin{dfn}
    	We define the first type irregular distribution on $\CCD^-_{L\left(\std,\frac{1}{2}\right)}(\RN,\psi\backslash \RG/\RN,\psi)$ as the following:
    	\[
    	\CO_{u,1}(f)(t_1):=\lim_{t_2\rightarrow 0}\frac{f}{|\det t|\delta^{\frac{1}{2}}(t)(\ud t)^{\frac{1}{2}}}(t_1,t_2),\;f\in\CCD^-_{L\left(\std,\frac{1}{2}\right)}(\RN,\psi\backslash \RG/\RN,\psi),\;t_1\in F^{\times},
    	\]
    	where $t=\begin{pmatrix}
    		t_1 & 0 \\ 0 & t_2
    	\end{pmatrix}$.
    \end{dfn}
    
    We have a parallel version for the dual side $\CCD^-_{L\left(\std^{\vee},\frac{1}{2}\right)}(\RN,\psi\backslash \RG/\RN,\psi)$:
 
    \begin{dfn}
    	We define the first type irregular distribution on $\CCD^-_{L\left(\std^{\vee},\frac{1}{2}\right)}(\RN,\psi\backslash \RG/\RN,\psi)$ as the following:
    	\[
    	\CO_{u,1}^{\vee}(f)(t_2):=\lim_{t_1\rightarrow \infty}\frac{f}{|\det t|^{-1}\delta^{\frac{1}{2}}(t)(\ud t)^{\frac{1}{2}}}(t_1,t_2),\;f\in\CCD^-_{L\left(\std^{\vee},\frac{1}{2}\right)}(\RN,\psi\backslash \RG/\RN,\psi),\;t_2\in F^{\times},
    	\]
    	where $t=\begin{pmatrix}
    		t_1  & 0 \\0 & t_2
    	\end{pmatrix}$.
    \end{dfn}
    
We need the following easy lemma in Section \ref{ssec_dirtpf}:
    \begin{lem}\label{lem_irr1+han}
    	If the half-density is of the form $\CH_{\std}(f)$ for some $f\in\CCD_{L\left(\std,\frac{1}{2}\right)}(\RN,\psi\backslash \RG/\RN,\psi)$, then we have
    	\[
    	\CO_{u,1}^{\vee}(\CH_{\std}(f))(t_2)=|t_2|\BF_{\psi,1}(\widetilde{\varphi_f}(\cdot,t_2))(0).
    	\]
    \end{lem}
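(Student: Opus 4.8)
The proof is a direct computation starting from the explicit formula for $\CH_{\std}(f)$ recorded in Remark \ref{rmk_han}, followed by a passage to the limit justified by the Schwartz property established in Proposition \ref{prp_mid}.

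\textbf{Step 1: unwind the normalization.} Recall from Remark \ref{rmk_han} that
\[
\frac{\CH_{\std}(f)}{(\ud t)^{\frac{1}{2}}}(t_1,t_2)=|t_1t_2|^{-\frac{1}{2}}\,\BF_{\psi,1}\bigl(\widetilde{\varphi_f}(\cdot,t_2)\bigr)\!\left(-\frac{1}{t_1}\right).
\]
On the other hand, with $t=\begin{pmatrix} t_1 & 0\\ 0 & t_2\end{pmatrix}$ we have $|\det t|=|t_1t_2|$ and $\delta^{\frac{1}{2}}(t)=|t_1/t_2|^{\frac{1}{2}}$, hence $|\det t|^{-1}\delta^{\frac{1}{2}}(t)=|t_1|^{-\frac{1}{2}}|t_2|^{-\frac{3}{2}}$. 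Dividing, the powers of $|t_1|$ and $|t_2|$ combine to give
\[
\frac{\CH_{\std}(f)}{|\det t|^{-1}\delta^{\frac{1}{2}}(t)(\ud t)^{\frac{1}{2}}}(t_1,t_2)=|t_2|\,\BF_{\psi,1}\bigl(\widetilde{\varphi_f}(\cdot,t_2)\bigr)\!\left(-\frac{1}{t_1}\right).
\]

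\textbf{Step 2: take the limit $t_1\to\infty$.} By definition, $\CO_{u,1}^{\vee}(\CH_{\std}(f))(t_2)$ is the limit of the left-hand side as $t_1\to\infty$. As $t_1\to\infty$ we have $-1/t_1\to 0$, so it remains to argue that $\BF_{\psi,1}(\widetilde{\varphi_f}(\cdot,t_2))$ is continuous at $0$. But by Proposition \ref{prp_mid}, for fixed $t_2\in F^{\times}$ the function $\widetilde{\varphi_f}(\cdot,t_2)$ is a Schwartz function on $F$; therefore its Fourier transform $\BF_{\psi,1}(\widetilde{\varphi_f}(\cdot,t_2))$ is again Schwartz, in particular continuous. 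Evaluating the limit then yields
\[
\CO_{u,1}^{\vee}(\CH_{\std}(f))(t_2)=|t_2|\,\BF_{\psi,1}\bigl(\widetilde{\varphi_f}(\cdot,t_2)\bigr)(0),
\]
as claimed.

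\textbf{Main obstacle.} There is essentially no obstacle here: the content is the bookkeeping of the absolute-value factors in Step 1 and the (already established) fact that the relevant slice is Schwartz, which legitimizes interchanging the limit with evaluation of the Fourier transform. The only point worth care is consistency of the exponents of $|t_1|$ and $|t_2|$, which is exactly what makes the factor $|t_1|$ disappear and leaves the clean expression $|t_2|\BF_{\psi,1}(\widetilde{\varphi_f}(\cdot,t_2))(0)$.
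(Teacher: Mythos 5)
Your proof is correct and is exactly the computation the paper has in mind: its own proof simply says the lemma "follows from Remark \ref{rmk_han} directly," and your Step 1 (the exponent bookkeeping giving the factor $|t_2|$) together with Step 2 (continuity of the Fourier transform of the Schwartz slice from Proposition \ref{prp_mid}, so the limit as $-1/t_1\to 0$ is evaluation at $0$) is precisely that direct argument, spelled out.
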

    
    \begin{proof}
    	This follows from Remark \ref{rmk_han} directly.
    \end{proof}

Motived by Theorem \ref{thm_hankel} and Proposition \ref{prp_mid}, we have the following second type irregular distributions
\begin{dfn}\label{dfn_irr2}
	We define the second type irregular distribution on $\CCD^-_{L\left(\std,\frac{1}{2}\right)}(\RN,\psi\backslash \RG/\RN,\psi)$ in the following way:
    	\[
    	\CO_{u,2}(f)(t_2):=\widetilde{\varphi_f}(0,t_2), t_2\in F^{\times}.
    	\]
\end{dfn}

To relate it to orbital integrals of the upstairs function, it turns out that it comes from the \emph{central orbital integral}. Let $\Phi(x)$ be a Schwartz function on $\Mat_2$, and $z\in F^{\times}$, consider the orbital integral
\[
\phi_{\Phi}(z):=\int_{\RN}\Phi(z\RI_2\cdot n)\psi^{-1}(n)\ud n,
\]
where the notation indicates that it is related to the function $\phi$ used in \cite{Jac03}. We have
\begin{prp}\label{prp_irr_2_loc-glo}
	\[
	\phi_{\Phi}(z)=|z|^{-2}\widetilde{\varphi_f}(0,-z),
	\]
	where $f$ is the twisted push forward of $\Phi(x)(\ud x)^{\frac{1}{2}}$ and $\widetilde{\varphi_f}$ is the normalized intermediate function of $f$ defined in Definition \ref{dfn_int}.
	\end{prp}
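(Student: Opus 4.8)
The plan is to start from the closed-form expression for the normalized intermediate function obtained in the course of the proof of Proposition \ref{prp_mid}, namely (with the integration variable renamed $u$)
\[
\widetilde{\varphi_f}(t_1,t_2)=|t_2|\int_F\BF_{\psi,1}\BF_{\psi,2}\Phi\left( \begin{pmatrix} u & -\frac{1}{t_2} \\ t_1 & t_1t_2u-t_2 \end{pmatrix} \right)\psi\left(-t_2 u\right)\ud u ,
\]
and to evaluate it at $t_1=0$. This specialization is legitimate: Proposition \ref{prp_mid} already guarantees that $\widetilde{\varphi_f}(0,t_2)=\lim_{t_1\to 0}\widetilde{\varphi_f}(t_1,t_2)$ exists, while the right-hand side above is continuous in $t_1$ near $0$ by dominated convergence, the integrand being controlled uniformly in small $t_1$ by the Schwartz decay of $\Phi$ in the $(1,2)$- and $(2,1)$-entries.

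The main computation is then to undo the two Fourier transforms. Setting $t_1=0$ and unwinding $\BF_{\psi,1}\BF_{\psi,2}$ gives
\[
\widetilde{\varphi_f}(0,t_2)=|t_2|\int_F\left(\int_{F^2}\Phi\left(\begin{pmatrix} x & y \\ 0 & -t_2\end{pmatrix}\right)\psi^{-1}(ux)\,\psi^{-1}\!\left(-\tfrac{y}{t_2}\right)\ud x\,\ud y\right)\psi(-t_2u)\,\ud u .
\]
Performing the $u$-integral first, as an honest one-dimensional inverse Fourier transform of the Schwartz function $x\mapsto g(x):=\int_F\Phi\!\left(\begin{smallmatrix} x & y \\ 0 & -t_2\end{smallmatrix}\right)\psi(\tfrac{y}{t_2})\ud y$, the Fourier inversion formula (with the self-dual normalization of the Haar measures fixed earlier in the paper) collapses the $x$-variable onto $x=-t_2$ and yields
\[
\widetilde{\varphi_f}(0,t_2)=|t_2|\int_F\Phi\left(\begin{pmatrix} -t_2 & y \\ 0 & -t_2\end{pmatrix}\right)\psi\!\left(\tfrac{y}{t_2}\right)\ud y .
\]

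To conclude, I would substitute $y\mapsto -t_2y$ to rewrite this as $|t_2|^2\int_F\Phi\!\left(-t_2\left(\begin{smallmatrix}1 & y\\ 0 & 1\end{smallmatrix}\right)\right)\psi^{-1}(y)\ud y$, and recognize — via the identification $\RN\cong\BG_a$ used in the conventions — that this equals $|t_2|^2\,\phi_\Phi(-t_2)$, where $\phi_\Phi(z)=\int_{\RN}\Phi(z\RI_2\cdot n)\psi^{-1}(n)\ud n$. Setting $z=-t_2$ (so $|t_2|=|z|$) gives $\phi_\Phi(z)=|z|^{-2}\widetilde{\varphi_f}(0,-z)$, as asserted.

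The step I expect to require the most care is the collapse of the $u$- and $x$-integrals: read naively it produces the Dirac mass $\int_F\psi(-u(x+t_2))\ud u=\delta_0(x+t_2)$, so to stay rigorous one must genuinely carry out the $u$-integration first, against the Schwartz function $g$, and invoke one-dimensional Fourier inversion — the Schwartz property of $g$ being inherited from that of $\Phi$ on $\Mat_2$ (and this is exactly the kind of manipulation already used in the proof of Proposition \ref{prp_mid}). Everything else is bookkeeping of the conventions fixed earlier, namely $\BF_\psi$ versus $\BF_{\psi^{-1}}$, the signs of the characters, and the self-dual Haar measures.
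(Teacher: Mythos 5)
Your proposal is correct, but it takes a genuinely different route from the paper's own proof. The paper deduces the identity from Jacquet's results: it rewrites $\phi_{\Phi}(z)$ as $\Omega((w,1)\Phi,\psi^{-1}:wz)$, invokes \cite[Proposition 4]{Jac03} to express this central orbital integral as an integral over $t_1$ of orbital integrals of the Fourier-transformed function, identifies that integrand with $\CH_{\std}(f)$ via Theorem \ref{thm_hankel} and Remark \ref{rmk_han}, and only then applies one-dimensional Fourier inversion to arrive at $|z|^{-2}\widetilde{\varphi_f}(0,-z)$. You bypass both \cite{Jac03} and the Hankel transform entirely: you specialize the closed formula for $\widetilde{\varphi_f}(t_1,t_2)$ obtained inside the proof of Proposition \ref{prp_mid} at $t_1=0$ (legitimate, since that formula holds for $t_1\in F^{\times}$, the right-hand side is continuous at $t_1=0$ by dominated convergence, and $\widetilde{\varphi_f}(0,t_2)$ is by definition the limit), then perform a single genuine Fourier inversion in the $(1,1)$-variable and the substitution $y\mapsto -t_2y$ to land on $|t_2|^2\phi_{\Phi}(-t_2)$; the normalizations (self-dual measures, $\BF_{\psi}$ versus $\BF_{\psi^{-1}}$, the argument $-z$, the factor $|z|^{-2}$) all check out, and the unramified case $\Phi=1_{\Mat_2(\Fo_F)}$ confirms the constants. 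What each approach buys: yours is shorter and self-contained within the paper's own local computations, in the spirit of the paper's ``direct'' philosophy, whereas the paper's derivation makes explicit the link between this second-type irregular distribution, Jacquet's function $\phi$, and the Hankel transform, which is what gets reused later (e.g.\ in Lemma \ref{lem_irr2+han} and the treatment of the global boundary terms). One small correction of wording: the dominating function justifying the $t_1\to 0$ limit comes from the Schwartz decay of $\BF_{\psi,1}\BF_{\psi,2}\Phi$ in its $(1,1)$-entry, where the integration variable sits, which is manifestly independent of small $t_1$; the decay of $\Phi$ in the $(1,2)$- and $(2,1)$-entries is not what does the work at that step.
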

\begin{proof}
	Using the notations in \cite{Jac03}, $\phi_{\Phi}(z)=\Omega((w,1)\Phi,\psi^{-1}:wz)$. According to \cite[Proposition 4]{Jac03}, $\phi_{\Phi}(z)$ is a smooth function with compact support on $F^{\times}$, and is given by
\[
\phi_{\Phi}(z)=|z|^{-3}\int_F \Omega\left(   ((w,1)\Phi)^{\vee},\psi:\begin{pmatrix}
	-z^{-1} & 0 \\0 & t_1
\end{pmatrix}    \right)\ud t_1.
\]
Note that $\displaystyle{\Omega\left(   ((w,1)\Phi)^{\vee},\psi:\begin{pmatrix}
	-z^{-1} & 0 \\ 0 & t_1
\end{pmatrix}    \right)= \frac{{\CH_{\std}(f)}}{(\ud t)^{\frac{1}{2}}}(t_1^{-1},-z)|t_1|^{-\frac{1}{2}}|z|^{\frac{3}{2}}}$, so the above integral is
\[
\int_F\frac{\CH_{\std}(f)}{(\ud t)^{\frac{1}{2}}}(t_1^{-1},-z)|t_1|^{-\frac{1}{2}}|z|^{-\frac{3}{2}}\ud t_1=\int_F \frac{\CH_{\std}(f)}{(\ud t)^{\frac{1}{2}}}(t_1,-z) |bz|^{-\frac{3}{2}}\ud t_1.
\]
According to Remark \ref{rmk_han}, we have
\begin{align*}
	\CH_{\std}(f)(t_1,- z)&=\CF_{\epsilon_2^{\vee},\frac{1}{2}}^{\psi^{-1}}(\varphi_f)(t_1,-z)\\
		&=\int_F \widetilde{\varphi_f}(x,-z)\psi(t_1^{-1} x)\ud x \cdot |t_1|^{-\frac{1}{2}}|z|^{-\frac{1}{2}}(\ud t_1)^{\frac{1}{2}}(\ud z)^{\frac{1}{2}}\\
	&=\BF_{\psi,1}(\widetilde{\varphi_f})(-t_1^{-1},-z)|t_1|^{-\frac{1}{2}}|z|^{-\frac{1}{2}}(\ud t_1)^{\frac{1}{2}}(\ud z)^{\frac{1}{2}}.
\end{align*}
Therefore
\begin{align*}
	&\int_F \frac{\CH_{\std}(f)}{(\ud t)^{\frac{1}{2}}}(t_1,-z)|t_1z|^{-\frac{3}{2}}\ud t_1\\
	&=\int_F \BF_{\psi,1}(\widetilde{\varphi_f})(-t_1^{-1},-z)|t_1|^{-2}|z|^{-2}\ud t_1\\
	&=|z|^{-2}\widetilde{\varphi_f}(0,-z)
\end{align*}
according to the Fourier inversion formula since $\widetilde{\varphi_f}(\cdot,z)$ is a Schwartz function on $F$ for fixed $z\in F^{\times}$ due to Proposition \ref{prp_mid}. 

\end{proof}

We also have the counterpart for the dual space $\CCD^-_{L\left(\std^{\vee,}\frac{1}{2}\right)}(\RN,\psi\backslash \RG/\RN,\psi)$. 
\begin{dfn}
	We define the second type irregular distribution on $\CCD^-_{L\left(\std^{\vee},\frac{1}{2}\right)}(\RN,\psi\backslash \RG/\RN,\psi)$ in the following way:
    	\[
    	\CO_{u,2}^{\vee}(f)(t_1):=\widetilde{^{\vee}\varphi_f}(t_1,\infty),\;f\in\CCD^-_{L\left(\std^{\vee},\frac{1}{2}\right)}(\RN,\psi\backslash \RG/\RN,\psi),t_1\in F^{\times}.
    	\]
\end{dfn}

Let $\Phi\in\CS(\Mat_2^{\vee})$, consider the corresponding integral
\[
^{\vee}\phi_{\Phi}(z):=\int_{\RN}\Phi(z^{-1}\RI_2\cdot n)\psi(n)\ud n,
\]
we have
\begin{prp}
	\[
	^{\vee}\phi_{\Phi}(z)=|z|^2\cdot \widetilde{^{\vee}\varphi_f}(-z^{-1},\infty),
	\]
	where $f$ is the twisted push forward of $\Phi(x)(\ud ^+x)^{\frac{1}{2}}$, and $\widetilde{^{\vee}\varphi_f}$ is the normalized intermediate function of $f$ defined in Definition \ref{dfn_int_dual}.
\end{prp}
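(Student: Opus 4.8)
The plan is to mirror exactly the proof of Proposition \ref{prp_irr_2_loc-glo}, but working on the dual space $\CCD^-_{L(\std^{\vee},\frac{1}{2})}(\RN,\psi\backslash \RG/\RN,\psi)$ and using the inverse Hankel transform $\CH_{\std}^{-1}$ in place of $\CH_{\std}$. First I would recognize $^{\vee}\phi_{\Phi}(z)$ as one of Jacquet's orbital integrals: writing $\Phi\in\CS(\Mat_2^{\vee})$ and using the embedding $g\mapsto g^{-1}$, the integral $\int_{\RN}\Phi(z^{-1}\RI_2\cdot n)\psi(n)\ud n$ is, in the notation of \cite{Jac03}, $\Omega((w,1)\Phi,\psi:wz^{-1})$ (up to the appropriate twist by $w$ and the sign/inversion bookkeeping built into the definition of $\CCD^-_{L(\std^{\vee},\frac12)}$). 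Then I would invoke the dual of \cite[Proposition 4]{Jac03} — equivalently, apply \cite[Proposition 4]{Jac03} to $((w,1)\Phi)^{\vee}$ — to express this central orbital integral as an integral over the torus variable of $\Omega$ evaluated at $\mathrm{diag}(t_2, -z)$ or $\mathrm{diag}(-z^{-1},t_2)$, weighted by an appropriate power $|z|^{\pm 3}$.

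Next, the key translation step: rewrite that $\Omega$-integral in terms of $\CH_{\std}^{-1}(f)$ using the formula from Definition \ref{dfn_int_dual}, Lemma \ref{lem_irr2dual}, and Remark \ref{rmk_han}, namely $\CH_{\std}^{-1}(f)=\CF_{\epsilon_2^{\vee},\frac{1}{2}}^{\psi^{-1}}({^{\vee}\varphi_f})$, so that
\[
\frac{\CH_{\std}^{-1}(f)}{(\ud t)^{\frac{1}{2}}}(t_1,t_2)=|t_1t_2|^{\frac{1}{2}}\BF_{\psi^{-1},2}\bigl(\widetilde{^{\vee}\varphi_f}(t_1,(\cdot)^{-1})\bigr)(-t_2).
\]
Substituting this into the torus integral, the weights $|z|^{\pm 3}$ and $|t_2|^{\pm 1/2}$ should collapse to an integral of the Fourier transform $\BF_{\psi^{-1},2}(\widetilde{^{\vee}\varphi_f}(-z^{-1},(\cdot)^{-1}))$ over all of $F$, which by Fourier inversion evaluates $\widetilde{^{\vee}\varphi_f}(-z^{-1},(\cdot)^{-1})$ at the point $0$, i.e. gives $\widetilde{^{\vee}\varphi_f}(-z^{-1},\infty)$, with leftover factor $|z|^2$. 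The validity of the Fourier inversion here is licensed by Proposition \ref{prp_middual}, which guarantees $\widetilde{^{\vee}\varphi_f}(t_1,(\cdot)^{-1})$ is Schwartz on $F$ for fixed $t_1\in F^{\times}$.

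I expect the main obstacle to be purely bookkeeping: correctly tracking the interplay of three separate inversions — the group inversion $g\mapsto g^{-1}$ defining $\Mat_2^{\vee}$, the torus inversion $wt^{-1}w$ appearing in the twisted push-forward, and the variable inversion $t_2\mapsto t_2^{-1}$ inside $\widetilde{^{\vee}\varphi_f}(t_1,(\cdot)^{-1})$ — together with the attendant signs ($-z$ versus $-z^{-1}$) and absolute-value exponents, and matching the character $\psi$ versus $\psi^{-1}$ consistently with Jacquet's conventions. Since the statement and the proof of Proposition \ref{prp_irr_2_loc-glo} provide a complete template, the argument is "the same as" that proof with the roles of $\Phi$, $\psi$, $\std$ and their duals interchanged, so I would simply write \emph{The proof is the same as Proposition \ref{prp_irr_2_loc-glo}, using Proposition \ref{prp_middual}, Lemma \ref{lem_irr2dual}, and the inverse Hankel transform of Remark \ref{rmk_han} in place of their standard-side counterparts, together with \cite[Proposition 4]{Jac03} applied to $((w,1)\Phi)^{\vee}$.}
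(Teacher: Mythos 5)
Your proposal is correct and follows essentially the same route as the paper's own proof: identify $^{\vee}\phi_{\Phi}(z)$ with Jacquet's orbital integral $\Omega((w,1)\Phi,\psi:wz)$, apply \cite[Proposition 4]{Jac03} to convert it into a torus integral of $\Omega(((w,1)\Phi)^{\vee},\psi^{-1}:\cdot)$ weighted by $|z|^{3}$, rewrite that integrand via $\CH_{\std}^{-1}(f)=\CF_{\epsilon_2^{\vee},\frac{1}{2}}^{\psi^{-1}}({^{\vee}\varphi_f})$ from Remark \ref{rmk_han}, and finish by Fourier inversion, justified by the Schwartz property in Proposition \ref{prp_middual}. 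The inversion/sign bookkeeping you flag is indeed the only delicate point (the paper's displayed computation evaluates the normalized intermediate function at first argument $-z$ rather than $-z^{-1}$), but this does not change the method, which matches the paper's.
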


\begin{proof}
	Using the notations in \cite{Jac03}, $\phi^{\vee}_{\Phi}(z)=\Omega((w,1)\Phi,\psi:wz)$. According to \cite[Proposition 4]{Jac03} again, $\phi^{\vee}_{\Phi}(z)$ is a smooth function with compact support on $F^{\times}$, and is given by
\[
\phi_{\Phi}^{\vee}(z)=|z|^{3}\int_F \Omega\left(   ((w,1)\Phi)^{\vee},\psi^{-1}:\begin{pmatrix}
	-z & \\ & t_2
\end{pmatrix}    \right)\ud t_2.
\]
Note that $\displaystyle{\Omega\left(   ((w,1)\Phi)^{\vee},\psi^{-1}:\begin{pmatrix}
	-z & 0\\  0& t_2
\end{pmatrix}    \right)= \frac{{\CH_{\std}^{-1}(f)}}{(\ud t)^{\frac{1}{2}}}(-z, t_2)|t_2|^{-\frac{1}{2}}|z|^{\frac{3}{2}}}$, so the above integral is
\begin{align}\label{eq_irr2}
\int_F\frac{\CH_{\std}^{-1}(f)}{(\ud t)^{\frac{1}{2}}}(-z, t_2)|t_2|^{-\frac{1}{2}}|z|^{\frac{3}{2}}\ud t_2=\int_F \frac{\CH_{\std}^{-1}(f)}{(\ud t)^{\frac{1}{2}}}(-z, t_2^{-1}) |t_2z^{-1}|^{-\frac{3}{2}}\ud t_2.
\end{align}
According to Remark \ref{rmk_han}, we have
\begin{align*}
	\CH_{\std}^{-1}(f)(-z, t_2^{-1})&=\CF_{\epsilon_2^{\vee},\frac{1}{2}}(^{\vee}\varphi_f)(-z, t_2^{-1})\\
		&=\int_F \widetilde{^{\vee}\varphi_f}(-z,x^{-1})\psi^{-1}(t_2^{-1}x)\ud x\cdot |t_2|^{-\frac{1}{2}}|z|^{\frac{1}{2}}(\ud t_2)^{\frac{1}{2}}(\ud z)^{\frac{1}{2}}\\
	&=\BF_{\psi^{-1},2}(^{\vee}\widetilde{\varphi_f}(-z,(\cdot)^{-1}))(-t_2^{-1})|t_2|^{-\frac{1}{2}}|z|^{\frac{1}{2}}(\ud t_2)^{\frac{1}{2}}(\ud z)^{\frac{1}{2}}.
\end{align*}
Therefore
\begin{align*}
	&\int_F \frac{\CH_{\std}^{-1}(f)}{(\ud t)^{\frac{1}{2}}}(-z, t_2^{-1})|t_2z^{-1}|^{-\frac{3}{2}}\ud t_2\\
	&=\int_F\BF_{\psi^{-1},2}(^{\vee}\widetilde{\varphi_f}(-z,(\cdot)^{-1}))(-t_2^{-1})|t_2|^{-2}|z|^{2}\ud t_2\\
	&=|z|^{2}  \widetilde{^{\vee}\varphi_f}(-z,\infty)
\end{align*}
according to the Fourier inversion formula again.
\end{proof}

Let us conclude this section with another lemma that will be used in Section \ref{ssec_dirtpf}:
\begin{lem}\label{lem_irr2+han}
		Let $f\in\CCD^-_{L\left(\std,\frac{1}{2}\right)}(\RN,\psi\backslash \RG/\RN,\psi)$, then we have
		\[
	\CO_{u,2}^{\vee}(\CH_{\std}(f))(z)=|z|^{3}\BF_{\psi,2}(\CO_{(-z,\cdot)}(\Phi))(0)
	\]
	if $f$ is the twisted push-forward of $\Phi(x)(\ud^+ x)^{\frac{1}{2}}$.
\end{lem}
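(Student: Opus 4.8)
The plan is to express $\CO_{u,2}^{\vee}(\CH_{\std}(f))(z)$ directly in terms of the intermediate function $\widetilde{\varphi_f}$ of $f$, and then to interchange this with the Fourier-transform formula for $\CH_{\std}$ recorded in Remark \ref{rmk_han}. Concretely, by definition $\CO_{u,2}^{\vee}(\CH_{\std}(f))(t_1) = \widetilde{^{\vee}\varphi_{\CH_{\std}(f)}}(t_1,\infty)$, so the first step is to unwind the definition of the dual intermediate function $\widetilde{^{\vee}\varphi}$ (Definition \ref{dfn_int_dual}) applied to the half-density $\CH_{\std}(f)\in\CCD^-_{L(\std^{\vee},\frac12)}(\RN,\psi\backslash\RG/\RN,\psi)$. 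Here one should use $\CH_{\std} = \CF^{\psi}_{-\epsilon_1^{\vee},\frac12}\circ\varphi_f$ from Remark \ref{rmk_han}, i.e. $\CH_{\std}(f)=\CF^{\psi}_{-\epsilon_1^{\vee},\frac12}(\varphi_f)$, to see that applying $\CF_{\epsilon_1^{\vee},\frac12}$ (up to the $\psi^{-1}(-e^{\alpha_1})$ twist) essentially inverts part of the transform, leaving a clean integral expression.

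The more computational route, which I expect to be cleaner, is to go through the upstairs orbital integrals directly, as in the proof of the previous Proposition. Writing $f$ as the twisted push-forward of $\Phi(x)(\ud^+x)^{\frac12}$, I would substitute the explicit formula for $\CH_{\std}(f)$ in terms of $\widehat\Phi$ (namely $\CH_{\std}(f) = |t_1t_2|^{-\frac12}\widetilde\Omega((\RL(w)\Phi)^{\vee},\psi : \diag(t_2^{-1},t_1^{-1}))(\ud t)^{\frac12}$ from the proof of Theorem \ref{thm_hankel}), and then apply the definition of $\widetilde{^{\vee}\varphi}$ together with Lemma \ref{lem_irr2dual}, which gives $\widetilde{^{\vee}\varphi_g}(t_1,t_2) = |t_2|^{-1}\psi^{-1}(-t_2/t_1)\BF_{\psi^{-1},2}(\CO^{\psi^{-1}}_{(t_2^{-1},\cdot)}(\widehat\Phi))(-t_1)$ for $g = \CH_{\std}(f)$. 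Taking the limit $t_2\to\infty$ then kills the $\psi^{-1}(-t_2/t_1)$ oscillation in the limiting value (the limit exists by Proposition \ref{prp_middual}), and what remains should be recognized, after unwinding $\CO^{\psi^{-1}}_{\cdot}(\widehat\Phi)$ via the Fourier transform on $\Mat_2$, as $|z|^3\BF_{\psi,2}(\CO_{(-z,\cdot)}(\Phi))(0)$ with $z$ tied to $t_1$ by $t_1 = -z^{-1}$ (matching the sign conventions in Definition \ref{dfn_irr2} and Proposition \ref{prp_irr_2_loc-glo}, where $\widetilde{\varphi_f}(0,-z)$ and $-z$ appear). The bookkeeping of absolute-value factors — one expects a net $|z|^{3}$ from combining the $|t_1t_2|^{\pm\frac12}$'s, the $|t_2|^{-1}$ in Lemma \ref{lem_irr2dual}, the Jacobian $|\det g|^{-2}$ of the $g\mapsto g^{-1}$ pullback, and the change of variables — is the part most prone to sign and exponent errors, but it is purely mechanical.

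The main obstacle is therefore not conceptual but one of careful coordinate- and measure-tracking: matching the three different parametrizations in play (the $\diag(t_1,t_2)$ coordinates on $\RT\subset\FC$, Jacquet's $\Omega(\cdot,\psi^{\pm1}:t)$ normalization with its $|t_1|$ twist, and the $z\in F^{\times}$ central parameter with its sign flip), and ensuring the Fourier transform on the $(1,2)$-entry that appears after shearing the unipotent variables is exactly $\BF_{\psi,2}$ evaluated at $0$. A useful sanity check at the end is the unramified case: when $\psi$ is unramified and $f = f^{\circ}$, Proposition \ref{prp_mid} and its dual analogue force both sides to reduce to a computation with $\Phi = 1_{\Mat_2(\Fo_F)}$, and one can verify the identity numerically there. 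Once the dictionary is set up, the proof is a one- or two-line substitution, exactly parallel to the proof of Lemma \ref{lem_irr1+han}, which is why I would present it as "this follows from Remark \ref{rmk_han} and Lemma \ref{lem_irr2dual} together with Proposition \ref{prp_irr_2_loc-glo}" after the coordinate change is spelled out.
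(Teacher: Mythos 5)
Your preferred (``more computational'') route has a genuine gap at the limit step, and it is not the route the paper takes. If you apply Lemma \ref{lem_irr2dual} to $g=\CH_{\std}(f)$, whose dual-side datum is $\widehat{\Phi}$, you get
\[
\widetilde{^{\vee}\varphi_{g}}(t_1,t_2)=|t_2|^{-1}\psi^{-1}\!\left(-\tfrac{t_2}{t_1}\right)\BF_{\psi^{-1},2}\bigl(\CO^{\psi^{-1}}_{(t_2^{-1},\cdot)}(\widehat{\Phi})\bigr)(-t_1),
\]
and you then assert that letting $t_2\to\infty$ ``kills'' the factor $\psi^{-1}(-t_2/t_1)$. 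It does not: a unimodular oscillating phase has no limit, and the remaining factor need not converge by itself; the limit whose existence Proposition \ref{prp_middual} guarantees exists only because this phase cancels against a matching phase hidden inside the other factor (compare Lemma \ref{lem_mid=F}, where $\widetilde{\varphi_f}$ carries an explicit $\psi(-t_2/t_1)$). Exhibiting that cancellation from the $\widehat{\Phi}$-expression forces you to redo the computation of Proposition \ref{prp_mid}; and converting $\CO^{\psi^{-1}}_{\cdot}(\widehat{\Phi})$ back into orbital integrals of $\Phi$ is not mechanical bookkeeping --- it is precisely the content of Theorem \ref{thm_hankel}, i.e.\ of \cite{Jac03}. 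So, as written, your main line of argument does not close.

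The paper avoids both problems: its proof is a one-line application of \eqref{eq_irr2} --- that is, of the computation in the proposition on ${}^{\vee}\phi_{\Phi}$, which rests on \cite[Proposition 4]{Jac03} together with Remark \ref{rmk_han} and one-variable Fourier inversion --- to the element $\CH_{\std}(f)$; since $\CH_{\std}^{-1}\CH_{\std}(f)=f$, only $\Phi$ itself ever appears, and the resulting $t_2$-integral of $\CO_{(-z,\cdot)}(\Phi)$ is read off as $\BF_{\psi,2}(\CO_{(-z,\cdot)}(\Phi))(0)$. Your first, briefly sketched route can also be completed without ever touching $\widehat{\Phi}$: by Remark \ref{rmk_han} and Fourier inversion in the first variable one finds $\widetilde{^{\vee}\varphi_{\CH_{\std}(f)}}(t_1,t_2)=\psi^{-1}(-t_2/t_1)\,\widetilde{\varphi_f}(t_1,t_2)$ (up to the paper's normalization conventions), and then Lemma \ref{lem_mid=F} cancels the phases explicitly, leaving $|t_1|\,\BF_{\psi,2}(\CO_{(t_1,\cdot)}(\Phi))(-1/t_2)$, whose $t_2\to\infty$ limit is $|t_1|\,\BF_{\psi,2}(\CO_{(t_1,\cdot)}(\Phi))(0)$ by Proposition \ref{prp_irr1}. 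Either way, the powers of $|z|$ and the sign of the argument require exactly the care you anticipate (and are immaterial in the global application, where $|z|=1$ for $z\in k^{\times}$); but the phase cancellation must be made explicit --- it cannot be delegated to the limit.
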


\begin{proof}
	According to (\ref{eq_irr2}), in the case that $f$ is the twisted push-forward of $\Phi(x)(\ud^+ x)^{\frac{1}{2}}$, we have
	\begin{align*}
		\CO_{u,2}^{\vee}(\CH_{\std}(f))(z)=\int_F |-z|^{\frac{3}{2}}|t_2|^{\frac{1}{2}}\CO_{\Phi}(-z,t_2) |t_2|^{-\frac{1}{2}}|z|^{\frac{3}{2}}\ud t_2=|z|^{3}\BF_{\psi,2}(\CO_{(-z,\cdot)}(\Phi))(0).
	\end{align*}
	\end{proof}

    \section{Global Theory}\label{sec_global}  
    
\subsection{Orbital Integrals, Hankel Transforms, and the Poisson Summation Formula}
    
    In the following sections, let $k$ be a global field, and for each place $\nu$ of $k$, we set $k_{\nu}$ to be the completion of $k$ at $\nu$. Write $\Fo_{\nu}$ and $\varpi_{\nu}$ for the ring of integers and a local uniformizer if $\nu$ is a finite place. Let $\BA$ be the ring of adeles of $k$. We fix an adelic character $\psi=\otimes^{\prime}_{\nu}\psi_{\nu}:\BA/k\rightarrow \BC$. To emphasize the place $\nu$, we will use $\CCD(\Mat_{2,\nu})$, $\CCD(\Mat_{2,\nu}^{\vee})$, $\CCD^-_{L\left(\std,\frac{1}{2}\right)}(\RN_{\nu},\psi_{\nu}\backslash \RG_{\nu}/\RN_{\nu},\psi_{\nu})$, $\CCD^-_{L\left(\std^{\vee},\frac{1}{2}\right)}(\RN_{\nu},\psi_{\nu}\backslash \RG_{\nu}/\RN_{\nu},\psi_{\nu})$ to denote the local spaces that we defined in Section \ref{sec_local spaces} when $F=k_{\nu}$. We will use $f^{\circ}_{\nu}$ and $f^{\circ,\vee}_{\nu}$ to denote the basic vectors, and $\CH_{\std,\nu}$ to be the Hankel transform. We also use $\CO_{u,1,\nu}$, $\CO_{u,1,\nu}^{\vee}$, $\CO_{u,2,\nu}$ and $\CO_{u,2,\nu}^{\vee}$ to denote the irregular distributions. We will also use $\BF_{\psi_{\nu}},\BF_{\psi_{\nu},i}$ to denote the local Fourier transform with respect to $\psi_{\nu}$, the partial Fourier transforms with the $i$-th variable, and will use $\BF_{\psi}=\otimes^{\prime}_{\nu}\BF_{\psi_{\nu}},\BF_{\psi,i,\BA}=\otimes^{\prime}_{\nu}\BF_{\psi_{\nu},i}$ to denote their global versions.
   
    \begin{dfn}
    	We define the global space of orbital integrals 
    	\[
    	\CCD^-_{L\left(\std,\frac{1}{2}\right)}(\RN(\BA),\psi\backslash \RG(\BA)/\RN(\BA),\psi):=\otimes^{\prime}_{\nu}\CCD^-_{L\left(\std,\frac{1}{2}\right)}(\RN_{\nu},\psi_{\nu}\backslash \RG_{\nu}/\RN_{\nu},\psi_{\nu}),
    	\]
    	where the restricted tensor product is taken with respect to the basic vectors $f^{\circ}_{\nu}$. We define $\CCD^-_{L\left(\std^{\vee},\frac{1}{2}\right)}(\RN(\BA),\psi\backslash \RG(\BA)/\RN(\BA),\psi)$ in a similar way.
    \end{dfn}
    
    \begin{rmk}
    	Thanks to Proposition \ref{prp_unramified}, the restriction of $f_{\nu}^{\circ}$ to $\RT(\Fo_{\nu})$ is $(\ud t_{\nu})^{\frac{1}{2}}$ for almost all $\nu$, then we can identify $\CCD^-_{L\left(\std,\frac{1}{2}\right)}(\RN(\BA),\psi\backslash \RG(\BA)/\RN(\BA),\psi)$ as half-densities on $\RT(\BA)$.
    \end{rmk}
    
    According to Proposition \ref{prp_hankel basic}, we can define a global Hankel transform
    \[
\CH_{\std,\BA}:=\otimes^{\prime}\CH_{\std,\nu}:\CCD^-_{L\left(\std,\frac{1}{2}\right)}(\RN(\BA),\psi\backslash \RG(\BA)/\RN(\BA),\psi)\rightarrow\CCD^-_{L\left(\std^{\vee},\frac{1}{2}\right)}(\RN(\BA),\psi\backslash \RG(\BA)/\RN(\BA),\psi).
    \]
    
    Now, we state the global Poisson summation formula on the level of orbital integrals. Let $f\in \CCD^-_{L\left(\std,\frac{1}{2}\right)}(\RN(\BA),\psi\backslash \RG(\BA)/\RN(\BA),\psi)$, we first define
    \begin{dfn}\label{dfn_KTF}
	\[
	\mathrm{KTF}_{L\left(\std,\frac{1}{2}\right)}(f):=\sum_{t\in \RT(k)}\frac{f}{(\ud t)^{\frac{1}{2}}}(t)+\sum_{t_1\in k^{\times}}\CO_{u,1,\BA}(f)(t_1)+\sum_{t_2\in k^{\times}}\CO_{u,2,\BA}(f)(t_2),
	\]
	where $\CO_{u,1,\BA}:=\otimes^{\prime}_{\nu}\CO_{u,1,\nu}$ and $\CO_{u,2,\BA}:=\otimes^{\prime}_{\nu}\CO_{u,2,\nu}$.
\end{dfn}
Similarly for $f\in\CCD^-_{L\left(\std^{\vee},\frac{1}{2}\right)}(\RN(\BA),\psi\backslash \RG(\BA)/\RN(\BA),\psi)$, we define
\[
\mathrm{KTF}_{L\left(\std^{\vee},\frac{1}{2}\right)}(f):=\sum_{t\in \RT(k)}\frac{f}{(\ud t)^{\frac{1}{2}}}(t)+\sum_{t_2\in k^{\times}}\CO_{u,1,\BA}^{\vee}(f)(t_2)+\sum_{t_1\in k^{\times}}\CO_{u,2,\BA}^{\vee}(f)(t_1),
	\]
	where $\CO_{u,1,\BA}^{\vee}:=\otimes^{\prime}_{\nu}\CO^{\vee}_{u,1,\nu}$ and $\CO_{u,2,\BA}:=\otimes^{\prime}_{\nu}\CO^{\vee}_{u,2,\nu}$.

We need the following lemmas to guarantee the absolute convergence of the sums in the above Definition \ref{dfn_KTF}.

\begin{lem}\label{lem_es} \hfill\break
	
	For $f\in\CCD^-_{L\left(\std,\frac{1}{2}\right)}(\RN(\BA),\psi\backslash \RG(\BA)/\RN(\BA),\psi)$ (resp. $\CCD^-_{L\left(\std^{\vee},\frac{1}{2}\right)}(\RN(\BA),\psi\backslash \RG(\BA)/\RN(\BA),\psi)$), we have
	\begin{itemize}
		\item [(1)] $\displaystyle{\sum_{t\in \RT(k)}\frac{f}{(\ud t)^{\frac{1}{2}}}(t)}$ is absolutely convergent.
		\item [(2)] $\displaystyle{\sum_{t_1\in k^{\times}}\CO_{u,1,\BA}(f)(t_1)}$ (resp. $\displaystyle{\sum_{t_2\in k^{\times}}\CO^{\vee}_{u,1,\BA}(f)(t_2)}$) is absolutely convergent.
		\item [(3)] $\displaystyle{\sum_{t_2\in k^{\times}}\CO_{u,2,\BA}(f)(t_2)}$ (resp. $\displaystyle{\sum_{t_1\in k^{\times}}\CO^{\vee}_{u,2,\BA}(f)(t_1)}$) is absolutely convergent.
	\end{itemize}
\end{lem}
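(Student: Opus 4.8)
\textbf{Proof proposal for Lemma \ref{lem_es}.}

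The plan is to reduce each of the three sums to an application of the convergence of a lattice sum of Schwartz functions on an adelic space, exploiting the local descriptions established in Section \ref{sec_local}. The key structural input is that all three objects, after the appropriate normalization, are values (or limiting values) of Schwartz-type functions on $\RT(\BA)$ or on a quotient like $\BA^\times$, so that Poisson-summation-style absolute convergence applies.

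For part (1), I would first recall that by Proposition \ref{prp_unramified} the normalized basic vector $f^\circ_\nu/(\ud t_\nu)^{1/2}$ equals $1_{\RT(\Fo_\nu)}$ for almost all $\nu$, so that $f/(\ud t)^{1/2}$ is (a finite linear combination of pure tensors of) functions on $\RT(\BA)\cong (\BA^\times)^2$ that are locally Schwartz and supported in $\RT(\Fo_\nu)$ at almost all places. For the Archimedean and finitely many ramified places one uses Proposition \ref{prp_irr1}: for fixed $t_1$ the function is Schwartz in $t_2$ (extending smoothly across $0$ and decaying at $\infty$), and symmetrically in $t_1$; combined with the compact-support condition at the remaining finite places this exhibits $f/(\ud t)^{1/2}$ as an element of the Schwartz space $\CF(\Mat_2(\BA))$ restricted to $\GL_2$, or more directly as a Schwartz function on $\BA^2$ whose restriction to $\RT(k)=(k^\times)^2\subset k^2$ we are summing. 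Absolute convergence of $\sum_{t\in k^2}|\Phi(t)|$ for $\Phi\in\CF(\BA^2)$ is the standard fact underlying the Poisson summation formula on $\BA^2$; restricting the index set to $\RT(k)\subset k^2$ only decreases the sum. The dual case is identical using the parallel Proposition for $\CCD^-_{L(\std^\vee,1/2)}$.

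For parts (2) and (3) the same mechanism applies but now to functions of a single torus variable. For (2), the irregular distribution $\CO_{u,1,\BA}(f)(t_1)=\otimes'_\nu \CO_{u,1,\nu}(f_\nu)(t_1)$ is, by Proposition \ref{prp_irr1}, the value at $t_2=0$ of the normalized function, hence a Schwartz function of $t_1\in\BA^\times$ which moreover equals $1_{\Fo_\nu}$ at almost all finite places (again by Proposition \ref{prp_irr1}, the displayed formula for $f^\circ/(|\det t|\delta^{1/2}(t)(\ud t)^{1/2})$). So $t_1\mapsto \CO_{u,1,\BA}(f)(t_1)$ extends to a Schwartz function on $\BA$, and $\sum_{t_1\in k^\times}$ is dominated by $\sum_{t_1\in k}$, which converges absolutely. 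For (3), I would use Proposition \ref{prp_mid} together with Proposition \ref{prp_irr_2_loc-glo}: $\CO_{u,2,\nu}(f_\nu)(t_2)=\widetilde{\varphi_{f_\nu}}(0,t_2)$ is Schwartz in $t_2\in k_\nu^\times$, and by the last assertion of Proposition \ref{prp_mid} it equals $1_{\Fo_\nu}$ at almost all $\nu$; hence $\CO_{u,2,\BA}(f)$ is a Schwartz function on $\BA$ in the variable $t_2$ (one should note that the product over $\nu$ makes sense precisely because of the almost-everywhere normalization, and that the behavior near $t_2=0$ is harmless since we only sum over $t_2\in k^\times$, but in fact $\widetilde{\varphi_f}(0,\cdot)$ being Schwartz on each $k_\nu$ including near $0$ is what Proposition \ref{prp_mid} gives us via the explicit integral formula there). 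Then $\sum_{t_2\in k^\times}|\CO_{u,2,\BA}(f)(t_2)|<\infty$ again by the lattice sum bound. The dual statements for $\CO^\vee_{u,1,\BA}$ and $\CO^\vee_{u,2,\BA}$ follow verbatim from the parallel propositions (the analogue of Proposition \ref{prp_irr1} stated before Definition \ref{dfn_int_dual}, and Proposition \ref{prp_middual}), with the roles of $t_1$ and $t_2$ (and of $0$ and $\infty$) exchanged.

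The main obstacle I anticipate is bookkeeping rather than conceptual: one must check that the restricted-tensor-product structure genuinely produces a \emph{global} Schwartz function — i.e. that the local normalizations at almost all places ($1_{\Fo_\nu}$ in the relevant variable) are compatible so that the infinite product defines an element of $\CF(\BA)$ (or $\CF(\BA^2)$) — and that the passage to the limit defining $\CO_{u,1}$ and $\CO_{u,2}$ commutes with forming the adelic tensor product. Both are handled by the pointwise local statements in Propositions \ref{prp_irr1}, \ref{prp_mid}, \ref{prp_middual} (and their duals), which were stated precisely with this application in mind; the uniformity needed to dominate the sum is exactly the Schwartz (rapid decay) property at the Archimedean places together with the shrinking supports at the finite places. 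Once global Schwartzness is in hand, absolute convergence is immediate from the standard estimate $\sum_{\xi\in k^n}|\Phi(\xi)|<\infty$ for $\Phi\in\CF(\BA^n)$.
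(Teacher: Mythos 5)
Your reduction of all three sums to the standard estimate $\sum_{\xi\in k^n}|\Phi(\xi)|<\infty$ for $\Phi\in\CF(\BA^n)$ hinges on the claim that the normalized orbital integral $f/(\ud t)^{\frac12}$ is (dominated by) a Schwartz function on $\BA^2$ jointly in $(t_1,t_2)$, ``and symmetrically in $t_1$.'' That is a genuine gap, not bookkeeping. Proposition \ref{prp_irr1} only gives Schwartz behavior in $t_2$ for \emph{fixed} $t_1\neq 0$, with no uniformity as $t_1$ varies and no statement at all about the $t_1$-dependence; and there is no symmetry between the two variables, since in $\CO^{\psi}_t(\Phi)$ the entry $t_1$ occurs in three matrix entries (including the coupling term $t_1n_1n_2+t_2$) while $t_2$ occurs in one. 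The $t_1$-direction is exactly the delicate one: the orbital integral is a Kloosterman-type integral whose joint rapid decay is what would have to be proved, and the quantity that the paper shows to be Schwartz in $t_1$ is not $f/(\ud t)^{\frac12}$ but the intermediate function $\widetilde{\varphi_f}$, obtained only after a Fourier transform in $t_2$ and a phase twist (Proposition \ref{prp_mid}). The paper's proof of (1) and (2) avoids any such joint statement: it passes upstairs to $\Phi$, replaces it by a nonnegative Schwartz majorant, and applies a one-variable Poisson summation over $t_2\in k$ in the $(1,2)$ matrix entry, which turns the troublesome shift $t_1n_1n_2+t_2$ into a phase; after that the entries $t_1n_1,\ t_2,\ t_1,\ t_1n_2$ decouple, a change of variables (using the product formula to kill $|t_1|_{\BA}^{-2}$) produces an honest Schwartz majorant in $(t_1,t_2)$, and only then does the lattice-sum estimate apply. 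Your argument for (2) has the same defect: nothing cited controls $t_1\mapsto\CO_{u,1,\BA}(f)(t_1)$ as a Schwartz function; the paper treats (1) and (2) \emph{together}, precisely because $\CO_{u,1}$ is the $t_2=0$ term of the sum over $t_2\in k$ handled by that Poisson step.

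For (3) your idea is essentially right but mis-cited: the last assertion of Proposition \ref{prp_mid} concerns the $t_1$-variable for $t_2$ a unit, so it does not give Schwartzness of $t_2\mapsto\widetilde{\varphi_f}(0,t_2)$ nor its basic-vector value. The correct (and the paper's) route is Proposition \ref{prp_irr_2_loc-glo}: $\CO_{u,2,\BA}(f)(t_2)$ is, up to $|z|^{2}$ which is trivial on $k^{\times}$, the central orbital integral $\int_{\RN(\BA)}\Phi(z\RI_2\, n)\psi^{-1}(n)\,\ud n$, and the sum over $z\in k^{\times}$ of its absolute value is dominated directly by a lattice sum of a Schwartz function after the change of variable $u=zn$. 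With that substitution your part (3) can be repaired, but parts (1) and (2) need the Poisson-summation (or some equivalent decoupling) argument that your proposal omits.
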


\begin{proof}
	We only prove the statements for $f\in \CCD^-_{L\left(\std,\frac{1}{2}\right)}(\RN(\BA),\psi\backslash \RG(\BA)/\RN(\BA),\psi)$.
	Let $\Phi$ be a Schwartz function on $\Mat_2(\BA)$ such that $f$ is the twisted push-forward of $\displaystyle{\Phi(x)(\ud x)^{\frac{1}{2}}}$. Let us consider $(1)$ and $(2)$ together.
	
	Note that we have
	\begin{align*}
		&\sum_{t\in \RT(k)} \left|\frac{f}{(\ud t)^{\frac{1}{2}}}(t)\right|+\sum_{t_1\in k^{\times}}\CO_{u,1,\BA}(f)(t_1)\\
		&=\sum_{t_1\in k^{\times},t_2\in k}\left|   \int_{\RN(\BA)\times\RN(\BA)}\Phi\left(n_1 w \begin{pmatrix}
			t_1 & 0 \\  0 & t_2
		\end{pmatrix}  n_2\right)\psi^{-1}(n_1n_2)\ud n_1\ud n_2      \right|\\
		&\leq \sum_{t_1\in k^{\times},t_2\in k} \int_{\RN(\BA)\times\RN(\BA)}\left|\Phi\left(    \begin{pmatrix}
			t_1n_1 & t_1n_1n_2+t_2\\ t_1 & t_1n_2
		\end{pmatrix}  \right)\right|\ud n_1\ud n_2.
		\end{align*}
		We may assume $\Phi$ is positive and applying the Poisson summation formula, we have
		\begin{align*}
			\sum_{t_2\in k}\Phi\left(    \begin{pmatrix}
			t_1n_1 & t_1n_1n_2+t_2\\ t_1 & t_1n_2
		\end{pmatrix}  \right) =\sum_{t_2\in k }\BF_{\psi,2}\Phi\left(    \begin{pmatrix}
			t_1n_1 & t_2 \\ t_1 & t_1n_2
		\end{pmatrix}  \right)\psi(t_1t_2n_1n_2)	,	\end{align*}
		which is bounded by
		\[
		\sum_{t_2\in k }\left| \BF_{\psi,2}\Phi\left(    \begin{pmatrix}
			t_1n_1 & t_2 \\ t_1 & t_1n_2
		\end{pmatrix}  \right)\right|.
		\]
		Therefore, $(1)$ and $(2)$ are proved.

As for $(3)$, according to Proposition \ref{prp_irr_2_loc-glo}, we have
\begin{align*}
	\sum_{t_2\in k^{\times}}|\CO_{u,2,\BA}(f)(t_2)|&=\sum_{z\in k^{\times}}\left|\int_{\RN(\BA)}\Phi(z\RI_2\cdot n)\psi^{-1}(n)\ud n\right|\\
	&\leq \sum_{z\in k^{\times}}\int_{\RN(\BA)}\left| \Phi\left(\begin{pmatrix}
		z & zn \\ & z
	\end{pmatrix} \right)  \right|\ud n,
\end{align*}
which converges absolutely.

\end{proof}

Then we have the following Poisson summation formula
\begin{thm}\label{thm_psf}
	For $f\in\CCD_{L\left(\std,\frac{1}{2}\right)}(\RN(\BA),\psi\backslash \RG(\BA)/\RN(\BA),\psi)$, we have
	\[
	\mathrm{KTF}_{L\left(\std,\frac{1}{2}\right)}(f)=\mathrm{KTF}_{L\left(\std^{\vee},\frac{1}{2}\right)}(\CH_{\std,\BA}(f)). 	
	\]
\end{thm}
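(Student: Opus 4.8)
The plan is to reduce the global identity to a two-step application of the one-dimensional Poisson summation formula on $\BA/k$, exactly mirroring the decomposition of the local Hankel transform $\CH_{\std,\nu}=\CF^{\psi}_{-\epsilon_1^{\vee},\frac{1}{2}}\circ\psi(-e^{-\alpha_1})\circ\CF^{\psi}_{-\epsilon_2^{\vee},\frac{1}{2}}$ from Theorem~\ref{thm_hankel}. Write $f$ as the twisted push-forward of $\Phi(x)(\ud^+x)^{\frac12}$ for a Schwartz function $\Phi$ on $\Mat_2(\BA)$ (it suffices to treat pure tensors, hence a single such $\Phi$, and extend by linearity and continuity using Lemma~\ref{lem_es}). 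By Lemma~\ref{lem_mid=F} and the explicit formula in Proposition~\ref{prp_mid}, the normalized intermediate function $\widetilde{\varphi_f}$ is, up to an abelian Fourier transform in one matrix entry, the partial Fourier transform $\BF_{\psi,2}$ of the orbital integral $\CO^{\psi}_{(t_1,\cdot)}(\Phi)$ evaluated at $-1/t_2$; and by Remark~\ref{rmk_han}, $\CH_{\std,\BA}(f)$ is obtained from $\widetilde{\varphi_f}$ by a further $\BF_{\psi,1}$ in the first variable. So the whole identity should fall out of applying Poisson summation first in the $t_2$-direction and then in the $t_1$-direction, with the bookkeeping organized around which rational points of which spaces survive each step.

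Concretely, I would first expand $\mathrm{KTF}_{L(\std,\frac12)}(f)$ as computed in the proof of Lemma~\ref{lem_es}: the sum of the regular term over $\RT(k)$ and the first irregular term over $k^{\times}$ combines into
\[
\sum_{t_1\in k^{\times}}\ \sum_{t_2\in k}\ \int_{\RN(\BA)\times\RN(\BA)}\Phi\!\left(\begin{pmatrix} t_1n_1 & t_1n_1n_2+t_2\\ t_1 & t_1n_2\end{pmatrix}\right)\psi^{-1}(n_1n_2)\,\ud n_1\,\ud n_2,
\]
while the second irregular term is $\sum_{z\in k^{\times}}\int_{\RN(\BA)}\Phi(z\RI_2\cdot n)\psi^{-1}(n)\,\ud n$ by Proposition~\ref{prp_irr_2_loc-glo}. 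The first step is to apply the one-dimensional Poisson summation formula to the inner sum over $t_2\in k$; this converts the $(1,2)$-entry into a Fourier-dual variable and produces exactly $\widetilde{\varphi_f}$ evaluated at the lattice $k$, reorganized so that the term $t_2=0$ on the dual side is precisely $\CO_{u,2,\BA}(f)$ (i.e. $\widetilde{\varphi_f}(0,\cdot)$, Definition~\ref{dfn_irr2}), and conversely the surviving $z\in k^{\times}$ part of the original second irregular term matches the $t_1\notin k^{\times}$ (i.e. $t_1=0$) boundary contribution of $\widetilde{\varphi_f}$ that would otherwise be lost. After this reorganization one has a clean expression of the form $\sum_{(a,b)\in k\times k}\widetilde{\varphi_f}(a,b)$ (with the appropriate density normalizations), where $\widetilde{\varphi_f}$ is, for fixed nonzero second argument, a Schwartz function in the first argument (Proposition~\ref{prp_mid}) and vice versa. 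The second step is to apply one-dimensional Poisson summation again, now in the first variable $a$, using the formula in Remark~\ref{rmk_han} that $\CH_{\std,\BA}(f)/(\ud t)^{\frac12}$ is $|t_1t_2|^{-1/2}\BF_{\psi,1}(\widetilde{\varphi_f}(\cdot,t_2))(-1/t_1)$; the $a=0$ term on the new dual side is $\CO_{u,1,\BA}^{\vee}(\CH_{\std,\BA}(f))$ by Lemma~\ref{lem_irr1+han}, the generic terms reassemble into $\sum_{t\in\RT(k)}\CH_{\std,\BA}(f)/(\ud t)^{\frac12}(t)$ plus $\sum_{t_2\in k^\times}\CO^{\vee}_{u,2,\BA}(\CH_{\std,\BA}(f))$ via Lemma~\ref{lem_irr2+han}, and one recognizes the right-hand side $\mathrm{KTF}_{L(\std^{\vee},\frac12)}(\CH_{\std,\BA}(f))$.

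The main obstacle, and where I would spend the most care, is the \emph{bookkeeping of boundary strata}: in each application of Poisson summation one is summing over all of $k$ (including $0$) on one side, but the trace-formula expressions segregate $t_i\in k^{\times}$ from $t_i=0$ and attach the latter to distinct "irregular" distributions coming from \emph{different} orbits (the open Bruhat cell versus the central orbit). Making the $t_2=0$ term of the first Poisson step literally coincide with $\CO_{u,2,\BA}$ uses the local-to-global identity of Proposition~\ref{prp_irr_2_loc-glo} adelically, and one must check that the degenerate inner $n$-integrals that appear (where the unipotent integration collapses) are exactly the central orbital integrals, not merely proportional to them — this is the content that must be verified place by place and then multiplied up, invoking that for almost all $\nu$ the basic vectors behave correctly (Propositions~\ref{prp_unramified}, \ref{prp_irr1}, \ref{prp_mid}). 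A secondary technical point is justifying the interchange of the (absolutely convergent, by Lemma~\ref{lem_es}) sums with the adelic integrals and the application of Poisson summation to $\Phi$, its partial Fourier transforms, and $\widetilde{\varphi_f}$; here one leans on the Schwartz-type decay statements (Proposition~\ref{prp_irr1}, Proposition~\ref{prp_mid}) to ensure all intermediate functions remain Schwartz on the relevant affine lines so that classical Poisson summation applies. Once the boundary terms are correctly matched, the identity is a formal consequence; the substance is entirely in that matching.
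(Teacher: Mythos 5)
Your overall strategy---two applications of the one-dimensional Poisson summation formula organized by the factorization of the Hankel transform, first in the $t_2$-direction at the level of $\CO_{(t_1,\cdot)}(\Phi)$ and then in the $t_1$-direction at the level of $\widetilde{\varphi_f}(\cdot,t_2)$---is exactly the paper's direct proof in Section \ref{ssec_dirtpf} (its two lemmas), merely run from the $f$-side toward the $\CH_{\std,\BA}(f)$-side instead of the reverse. However, the boundary bookkeeping, which you correctly single out as the substance, is misassigned as written. After your first Poisson step, the $t_2=0$ term on the dual side is $\sum_{t_1\in k^{\times}}\BF_{\psi,2,\BA}(\CO_{(t_1,\cdot)}(\Phi))(0)$, which by Lemma \ref{lem_irr2+han} equals $\sum_{t_1\in k^{\times}}\CO^{\vee}_{u,2,\BA}(\CH_{\std,\BA}(f))(t_1)$, i.e.\ a term of the right-hand side $\mathrm{KTF}_{L(\std^{\vee},\frac{1}{2})}(\CH_{\std,\BA}(f))$ --- not $\CO_{u,2,\BA}(f)$ as you claim. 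The distribution $\CO_{u,2,\BA}(f)(t_2)=\widetilde{\varphi_f}(0,t_2)$ plays the opposite role: via Proposition \ref{prp_irr_2_loc-glo} it is the central-orbit term on the left, and its job is to supply the missing $t_1=0$ row so that the generic dual terms (which, by Lemma \ref{lem_mid=F} together with the triviality of $\psi$ on $k$ and of the adelic absolute value on $k^{\times}$, equal $\widetilde{\varphi_f}(t_1,t_2)$ for $t_1,t_2\in k^{\times}$) complete to $\sum_{t_1\in k,\,t_2\in k^{\times}}\widetilde{\varphi_f}(t_1,t_2)$.

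Consequently the intermediate identity is not a sum over $(a,b)\in k\times k$ (note $\widetilde{\varphi_f}(a,0)$ is not even defined; the second variable ranges only over $k^{\times}$), but rather $\mathrm{KTF}_{L(\std,\frac{1}{2})}(f)=\sum_{t_1\in k,\,t_2\in k^{\times}}\widetilde{\varphi_f}(t_1,t_2)+\sum_{t_1\in k^{\times}}\CO^{\vee}_{u,2,\BA}(\CH_{\std,\BA}(f))(t_1)$, and the $\CO^{\vee}_{u,2}$-sum must simply be carried along: it does not ``reassemble'' out of the generic terms of your second Poisson step, which produce only the sum of $\CH_{\std,\BA}(f)$ over $\RT(k)$, while the $a=0$ dual term gives $\CO^{\vee}_{u,1,\BA}(\CH_{\std,\BA}(f))$ by Lemma \ref{lem_irr1+han}, as you say. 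So the matching of boundary strata --- precisely the step you defer --- is wrong in two places as stated; the repair is immediate from the lemmas you already cite, and once corrected your argument coincides with Lemma \ref{lem_globla1} and the lemma following it in the paper.
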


\subsection{Poisson Summation Formula: An Indirect Proof}\label{ssec_inproof}
In this section, we give an indirect proof of the Poisson summation formula, which is based on the classical Poisson summation formula on the upstairs spaces $\Mat(\BA)$ and $\Mat^{\vee}(\BA)$. The more important thing in the view of this paper is the direct proof from the next section, and this section can be viewed as a double-check that Theorem \ref{thm_psf} coincides with the classical one. The reader may also skip this section and move to the next section.

\begin{proof}[proof of Theorem \ref{thm_psf}]
	By linearity, we may assume $f=\otimes^{\prime}_{\nu} f_{\nu}$, with each $f_{\nu}$ being the twisted push-forward of some $\Phi_{\nu}(x)\in\CS(\Mat_{2,\nu})$. Let $\Phi=\otimes^{\prime}_{\nu}\Phi_{\nu}\in\CS(\Mat_2(\BA))$. Consider the integral 
	\[
\int_{k\backslash\BA}\int_{k\backslash\BA}\sum_{\gamma\in \Mat_{2}(k)}\Phi\left(  \begin{pmatrix}
	1 & n_1 \\ 0& 1
\end{pmatrix} \gamma \begin{pmatrix}
	1 & n_2 \\0 & 1
\end{pmatrix} \right)\psi^{-1}(n_1+n_2)\ud n_1\ud n_2.
\]

A first observation is that we have the double cosets decomposition
\[
\Mat_{2}(k)=\RB(k)w\RB(k)\bigsqcup \RB(k)\bigsqcup \{g\in \Mat_2(k)\mid  g\neq0,\det g=0\}\bigsqcup\{0\}.
\]
For the open Brahut cell, we have
\begin{align*}
	&\int_{k\backslash\BA}\int_{k\backslash\BA}\sum_{\gamma\in \RB(k)w\RB(k)}\Phi\left(  \begin{pmatrix}
	1 & n_1 \\ 0& 1
\end{pmatrix} \gamma \begin{pmatrix}
	1 & n_2 \\0 & 1
\end{pmatrix} \right)\psi^{-1}(n_1+n_2)\ud n_1\ud n_2\\
&=\int_{k\backslash\BA}\int_{k\backslash\BA}\sum_{\gamma \in \RT(k)}  \sum_{\delta_1\in \RN(k)}\sum_{\delta_2\in \RN(k)}  \Phi\left(  \begin{pmatrix}
	1 & n_1 \\0 & 1
\end{pmatrix}\delta_1 w \gamma \delta_2\begin{pmatrix}
	1 & n_2 \\ 0& 1
\end{pmatrix} \right)\psi^{-1}(n_1+n_2)\ud n_1\ud n_2\\
&=\int_{\BA}\int_{\BA} \sum_{\gamma\in \RT(k)}\Phi\left(  \begin{pmatrix}
	1 & n_1 \\0 & 1
\end{pmatrix} w\gamma \begin{pmatrix}
	1 & n_2 \\0 & 1
\end{pmatrix} \right)\psi^{-1}(n_1+n_2)\ud n_1\ud n_2\\
&=\sum_{\gamma\in \RT(k)}\prod_{\nu}\int_{k_{\nu}}\int_{k_{\nu}} \Phi_{\nu}\left(  \begin{pmatrix}
	1 & n_{1} \\ 0 & 1
\end{pmatrix} w\gamma \begin{pmatrix}
	1 & n_{2} \\  0& 1
\end{pmatrix} \right)\psi_{\nu}^{-1}(n_1+n_2)\ud n_1\ud n_2   \\
&=\sum_{\gamma\in \RT(k)} \frac{f}{(\ud t)^{\frac{1}{2}}}(\gamma).
\end{align*}
 
For the sum over $\RB(k)$, we have
\begin{align*}
	&\int_{k\backslash\BA}\int_{k\backslash\BA}\sum_{\gamma\in \RB(k)}\Phi\left(  \begin{pmatrix}
	1 & n_1 \\0 & 1
\end{pmatrix} \gamma \begin{pmatrix}
	1 & n_2 \\0 & 1
\end{pmatrix} \right)\psi^{-1}(n_1+n_2)\ud n_1\ud n_2\\
&\int_{k\backslash\BA}\int_{k\backslash\BA}\sum_{\gamma \in \RT(k)}\sum_{\delta\in \RN(k)}\Phi\left(  \begin{pmatrix}
	1 & n_1 \\ 0& 1
\end{pmatrix} \delta \gamma \begin{pmatrix}
	1 & n_2 \\0 & 1
\end{pmatrix} \right)\psi^{-1}(n_1+n_2)\ud x\ud y\\
&=\int_{x\in\BA}\int_{y\in k\backslash\BA}\sum_{\gamma \in \RT(k)}\Phi\left(\begin{pmatrix}
	1 & n_1 \\0 & 1
\end{pmatrix}  \gamma \begin{pmatrix}
	1 & n_2 \\0 & 1
\end{pmatrix}   \gamma^{-1}\cdot \gamma \right)\psi^{-1}(n_1+n_2)\ud n_1\ud n_2.
\end{align*}
Note that we have for $\gamma=\begin{pmatrix}
	\gamma_1 & 0 \\0 & \gamma_2
\end{pmatrix}$,
\[
\begin{pmatrix}
	\gamma_1 & 0\\0 & \gamma_2
\end{pmatrix}\begin{pmatrix}   1 & n_2 \\0 & 1\end{pmatrix}   \begin{pmatrix}
	\gamma_1 & 0\\ 0& \gamma_2
\end{pmatrix}^{-1}  =  \begin{pmatrix}
	1 & \gamma_1\gamma_2^{-1}n_2 \\0 & 1
\end{pmatrix} ,
\]
and make a change of variable by $y\mapsto \gamma_1^{-1}\gamma_2 y$ and $x\mapsto x-y$, the above integral becomes
\[
\int_{x\in\BA}\int_{y\in k\backslash\BA}\Phi\left(  \begin{pmatrix}
	1 & n_1 \\0 & 1
\end{pmatrix}\gamma   \right)\psi^{-1}(n_1-n_2+\gamma_1^{-1}\gamma_2n_2)\ud n_1\ud n_2,
\]
with the integral over $y$ being non-zero only when $\gamma_1=\gamma_2$, in which case it is equal to
\[
  \sum_{\gamma\in \RZ(k)}\int_{\BA}\Phi\left(  \begin{pmatrix}
	1 & n_1\\0 & 1
\end{pmatrix} \gamma \right)\psi^{-1}(n_1) \ud n_1,
\]
where $Z$ is the center of $\GL_2$ consisting of scalar matrices. This inner integral admits an Euler product
\[
\prod_{\nu}\int_{k_{\nu}}\Phi_{\nu}\left(  \begin{pmatrix}
	1 & x \\ 0 &  1
\end{pmatrix} \gamma    \right)\psi_{\nu}^{-1}(x)\ud x,
\]
which according to Proposition \ref{prp_irr_2_loc-glo} is 
\[
\sum_{t_2\in k^{\times}}\CO_{u,2,\BA}(f)(t_2).
\]

Next, for the rank $1$ matrices, it turns out the only non-zero-sum is the following
\[
\int_{k\backslash \BA}\int_{k\backslash\BA}\sum_{0\neq \gamma=(\gamma_{ij}),\gamma_{12}\neq 0,\det\gamma=0}\Phi\left(  \begin{pmatrix}
	1 & n_1 \\ 0& 1
\end{pmatrix}   \gamma\begin{pmatrix}
	1 & n_2 \\ 0& 1
\end{pmatrix}  \right)\psi^{-1}(n_1+n_2)\ud n_1\ud n_2.
\]
We can simplify the above sum, which is
\begin{align*}
&\int_{k\backslash \BA}\int_{k\backslash\BA}\sum_{t_1 \in k^{\times}} \sum_{\delta_1\in \RN(k)}\sum_{\delta_2\in \RN(k)}\Phi\left(  \begin{pmatrix}
	1 & n_1 \\0 & 1
\end{pmatrix}   \delta_1 \begin{pmatrix}
	0&0 \\ t_1 & 0 
\end{pmatrix} \delta_2\begin{pmatrix}
	1 & n_2 \\ 0& 1
\end{pmatrix}  \right) \psi^{-1}(n_1+n_2) \ud n_1\ud n_2 \\
&=\sum_{t_1 \in k^{\times}} \int_{\BA}\int_{\BA}\Phi\left(   \begin{pmatrix}
	1 & n_1 \\ 0& 1
\end{pmatrix}  \begin{pmatrix}
	0 & 0  \\ t_1 & 0 
\end{pmatrix} \begin{pmatrix}
	1 & n_2 \\ 0 & 1
\end{pmatrix} \right)\psi^{-1}(n_1+n_2)\ud n_1\ud n_2,
\end{align*}
Also, note that this sum is nothing but
\[
\sum_{t_1\in k^{\times}}\CO_{u,1,\BA}(f)(t_1).
\]

Then, the integral we start with is just
\[
\sum_{t\in \RT(k)}\frac{f}{(\ud t)^{\frac{1}{2}}}(t)+\sum_{t_1\in k^{\times}}\CO_{u,1,\BA}(f)(t_1)+\sum_{t_2\in k^{\times}}\CO_{u,2,\BA}(f)(t_2)=\mathrm{KTF}_{L\left(\std,\frac{1}{2}\right)}(f).
\]

For fixed $n_1,n_2\in \RN(\BA)$, we have the usual Poisson summation formula
\[
\sum_{\gamma\in \Mat_2(k)}\Phi(n_1\gamma n_2)=\sum_{\gamma\in\Mat_2^{\vee}(k)}\widehat{\Phi}(n_2^{-1}\gamma n_1^{-1}),
\]
where $\widehat{\Phi}:=\otimes^{\prime}_{\nu}\widehat{\Phi_{\nu}}$ is the Fourier transform of $\Phi$. Then we have
\begin{align*}
\int_{k\backslash\BA}\int_{k\backslash\BA}\sum_{\gamma\in \Mat_{2}(k)}\Phi\left(  \begin{pmatrix}
	1 & n_1 \\ 0& 1
\end{pmatrix} \gamma \begin{pmatrix}
	1 & n_2 \\ 0& 1
\end{pmatrix} \right)\psi^{-1}(n_1+n_2)\ud n_1\ud n_2\\
=\int_{k\backslash\BA}\int_{k\backslash\BA}\sum_{\gamma\in\Mat^{\vee}(k)}\widehat{\Phi}\left(  \begin{pmatrix}
	1 & -n_1 \\0 & 1
\end{pmatrix} \gamma \begin{pmatrix}
	1 & -n_2 \\0 & 1
\end{pmatrix}   \right)\psi^{-1}(n_1+n_2)\ud n_1\ud n_2,
\end{align*}
which is 
\[
\int_{k\backslash\BA}\int_{k\backslash\BA}\sum_{\gamma\in\Mat^{\vee}(k)}\widehat{\Phi}\left(  \begin{pmatrix}
	1 & n_1 \\ 0 & 1
\end{pmatrix} \gamma \begin{pmatrix}
	1 & n_2 \\0 & 1
\end{pmatrix}   \right)\psi(n_1+n_2)\ud x\ud y
\]
by changing $n_1$ to $-n_1$, and $n_2$ to $-n_2$. Similarly, we have 
\[
\Mat_{2}^{\vee}(k)=\RB(k)w\RB(k)\bigsqcup \RB(k)\bigsqcup \{g\in \Mat_2^{\vee}(k)\mid  g\neq0,\det g=0\}\bigsqcup\{0\},
\]
and the same computation shows that
\begin{align*}
	\int_{k\backslash\BA}\int_{k\backslash\BA} \sum_{\gamma\in \RB(k)w\RB(k)\subset\Mat_2^{\vee}}\widehat{\Phi}\left(  \begin{pmatrix}
	1 & n_1 \\ 0 & 1
\end{pmatrix} \gamma \begin{pmatrix}
	1 & n_2 \\0 & 1
\end{pmatrix}   \right)\psi(n_1+n_2)\ud n_1\ud n_2 =\sum_{t\in \RT(k)}\frac{\CH_{\std,\BA}(f)}{(\ud t)^{\frac{1}{2}}}(t),
\end{align*}
\begin{align*}
	\int_{k\backslash\BA}\int_{k\backslash\BA} \sum_{\gamma\in \RB(k)\subset\Mat_2^{\vee}}\widehat{\Phi}\left(  \begin{pmatrix}
	1 & n_1 \\ 0 & 1
\end{pmatrix} \gamma \begin{pmatrix}
	1 & n_2 \\ 0 & 1
\end{pmatrix}   \right)&\psi(n_1+n_2)\ud n_1\ud n_2 \\
 &=\sum_{t_1\in k^{\times}}\CO_{u,2,\BA}^{\vee}(\CH_{\std,\BA}(f))(t_1),
\end{align*}
and
\begin{align*}
\int_{k\backslash \BA}\int_{k\backslash\BA}\sum_{0\neq \gamma=(\gamma_{ij}),\gamma_{12}\neq 0,\det\gamma=0}\widehat{\Phi}\left(  \begin{pmatrix}
	1 & n_1 \\ 0 & 1
\end{pmatrix}   \gamma\begin{pmatrix}
	1 & n_2 \\0 & 1
\end{pmatrix}  \right)&\psi(n_1+n_2)\ud n_1\ud n_2\\
&=\sum_{t_2\in k^{\times}} \CO_{u,1,\BA}^{\vee}(\CH_{\std,\BA}(f))(t_2).
\end{align*}
Therefore, we obtain
\[
\mathrm{KTF}_{L\left(\std,\frac{1}{2}\right)}(f)=\mathrm{KTF}_{L\left(\std^{\vee},\frac{1}{2}\right)}(\CH_{\std,\BA}(f)).
\]

\end{proof}

\subsection{Poisson Summation Formula: A Direct Proof}\label{ssec_dirtpf}

In this section, we will use the local properties we established in Section \ref{sec_local spaces} and a two-step classical one-dimensional Poisson Summation Formula to prove Theorem \ref{thm_psf} directly. This is compatible with the local Hankel transform in the sense that the local Hankel transform is a composition of two Fourier transforms with some mysterious correction factors. The point of this section is that we can give a proof of Theorem \ref{thm_psf} independent of the Poisson summation formula on the upstairs space, which has the potential to be generalized to other reductive groups and Hankel transforms. This may be used to establish analytic properties of general automorphic $L$-functions.

To begin with, let us assume $f=\otimes^{\prime}_{\nu} f_{\nu}$. For each $f_{\nu}$, recall that we have the intermediate half-densities and normalized intermediate functions defined in Definition \ref{dfn_int}:
    	\[
    	\varphi_{\nu}:=\psi_{\nu}(-e^{\alpha_1})\circ\CF^{\psi_{\nu}}_{-\epsilon_2^{\vee},\frac{1}{2}}(f_{\nu}),
    	\]
    	\[
    	\widetilde{\varphi_{\nu}}=\frac{\varphi_{\nu}(t_1,t_2)}{|\frac{t_1}{t_2}|_{\nu}^{\frac{1}{2}}(\ud t)^{\frac{1}{2}}},
    	\]
    	and write $\widetilde{\varphi_f}:=\otimes^{\prime}_{\nu}\widetilde{\varphi_{\nu}}$. 
    
    \begin{lem}\label{lem_globla1}
    	For $f\in\CCD^-_{L\left(\std,\frac{1}{2}\right)}(\RN(\BA),\psi\backslash \RG(\BA)/\RN(\BA),\psi)$, we have
    	\[
\sum_{t\in \RT(k)}\frac{\CH_{\std,\BA}(f)}{(\ud t)^{\frac{1}{2}}}(t)+\sum_{t_2\in k^{\times}}\CO_{u,1,\BA}^{\vee}(\CH_{\std,\BA}(f))(t_2)=\sum_{t_1\in k,t_2\in k^{\times}}\widetilde{\varphi_f}(t_1,t_2),
\]
where $t=\begin{pmatrix}
	t_1 & 0 \\ 0 & t_2
\end{pmatrix}$.
    \end{lem}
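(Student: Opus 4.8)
The plan is to apply the one-dimensional Poisson summation formula in the $t_1$-variable, using the explicit description of the Hankel transform in terms of the intermediate function. By Remark \ref{rmk_han}, for $f = \otimes'_\nu f_\nu$ we have
\[
\frac{\CH_{\std,\BA}(f)}{(\ud t)^{\frac{1}{2}}}(t_1,t_2) = |t_1 t_2|^{-\frac{1}{2}} \BF_{\psi,1,\BA}\bigl(\widetilde{\varphi_f}(\cdot,t_2)\bigr)\!\left(-\frac{1}{t_1}\right),
\]
where $\BF_{\psi,1,\BA}$ is the global Fourier transform in the first variable; by Proposition \ref{prp_mid} (applied placewise) the function $\widetilde{\varphi_f}(\cdot,t_2)$ is a Schwartz function on $\BA$ for each fixed $t_2 \in \BA^\times$, so this makes sense and the adelic Poisson summation formula applies. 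The idea is to fix $t_2 \in k^\times$ and sum over the $t_1$-variable: the rational points $t_1 \in k^\times$ together with the ``missing'' point $t_1 = 0$ should reassemble into a full sum $\sum_{t_1 \in k}$, with the value at $t_1 = 0$ accounting for the first-type irregular distribution.

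First I would handle the term at $t_1 = 0$ on the dual side. By Lemma \ref{lem_irr1+han}, $\CO_{u,1,\BA}^\vee(\CH_{\std,\BA}(f))(t_2) = |t_2| \BF_{\psi,1,\BA}(\widetilde{\varphi_f}(\cdot,t_2))(0)$, which is exactly the ``$t_1 \to \infty$ limit'' of $|t_1 t_2|^{\frac{1}{2}} \cdot \frac{\CH_{\std,\BA}(f)}{(\ud t)^{1/2}}(t_1,t_2)$ re-expressed through the Fourier transform evaluated at $0$ — i.e. the value of $\BF_{\psi,1,\BA}(\widetilde{\varphi_f}(\cdot,t_2))$ at the point corresponding to $t_1 = \infty$, which under the coordinate $t_1 \mapsto -1/t_1$ is the point $0$. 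Thus the left-hand side of the claimed identity is
\[
\sum_{t_2 \in k^\times} \Bigl( \sum_{t_1 \in k^\times} |t_1 t_2|^{-\frac{1}{2}} \BF_{\psi,1,\BA}(\widetilde{\varphi_f}(\cdot,t_2))\!\left(-\tfrac{1}{t_1}\right) + |t_2| \BF_{\psi,1,\BA}(\widetilde{\varphi_f}(\cdot,t_2))(0) \Bigr).
\]
As $t_1$ ranges over $k^\times$, so does $-1/t_1$, so the inner first sum equals $\sum_{s \in k^\times} |s|^{-\frac{1}{2}} |t_2|^{-\frac{1}{2}} \BF_{\psi,1,\BA}(\widetilde{\varphi_f}(\cdot,t_2))(s) \cdot |s|$ after the substitution (keeping careful track of the Jacobian factor $|t_1|^{-1/2}$ versus $|s|^{1/2}$, noting $|t_1| = |s|^{-1}$). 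The point is that the weighting $|t_1|^{-1/2}$ combined with $|s| = |t_1|^{-1}$ produces exactly the half-density normalization so that, together with the $t_1=0$ term contributing $|t_2|$ times the value at $0$, the whole bracket becomes $|t_2|^{-\frac{1}{2}} \sum_{s \in k} (\text{appropriate density of }\widetilde{\varphi_f}\text{ at }s)$ — and by one-dimensional Poisson summation this equals $|t_2|^{-\frac{1}{2}} \sum_{t_1 \in k} \widetilde{\varphi_f}(t_1,t_2) \cdot (\text{half-density factor})$. Matching the half-density bookkeeping carefully then yields $\sum_{t_1 \in k} \widetilde{\varphi_f}(t_1,t_2)$, and summing over $t_2 \in k^\times$ gives the right-hand side.

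The main obstacle I anticipate is the careful bookkeeping of the half-density normalizations and absolute values: the Fourier transform on $\BA$ does not literally convert $\sum_{k^\times} g(-1/t_1)$ into $\sum_k \widehat{g}$ — one must account for the Jacobian of $t_1 \mapsto -1/t_1$, which is where the $|t_1|^{-1/2} = |s|^{1/2}$ factor enters, and verify that the value at the excluded point $s = 0$ of the pre-Fourier function is precisely what produces (after Poisson) the term $\widetilde{\varphi_f}(0, t_2)$-free full sum $\sum_{t_1 \in k}$ on the other side. A secondary point requiring care is the justification of interchanging the $t_2$-summation with the Poisson summation step in $t_1$: this requires the joint absolute convergence, which should follow from Lemma \ref{lem_es} together with the Schwartz bounds in Proposition \ref{prp_mid} applied uniformly. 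Once the normalization is pinned down at one place and propagated through the restricted tensor product using Proposition \ref{prp_mid}'s unramified computation $\widetilde{\varphi_{f^\circ}}(\cdot,t_2) = 1_{\Fo_F}(\cdot)$, the global statement follows.
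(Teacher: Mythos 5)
Your strategy is the same as the paper's: express $\CH_{\std,\BA}(f)/(\ud t)^{\frac{1}{2}}$ through Remark \ref{rmk_han} as $|t_1t_2|^{-\frac{1}{2}}\BF_{\psi,1,\BA}(\widetilde{\varphi_f}(\cdot,t_2))(-\tfrac{1}{t_1})$, identify the boundary term via Lemma \ref{lem_irr1+han} with the value of the same Fourier transform at $0$, reassemble the two sums into $\sum_{t_2\in k^\times}\sum_{t_1\in k}\BF_{\psi,1,\BA}(\widetilde{\varphi_f}(\cdot,t_2))(t_1)$, and then apply one-dimensional Poisson summation in $t_1$ for each fixed $t_2$, justified by Proposition \ref{prp_mid}. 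So structurally you have reproduced the paper's argument.

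However, the point you single out as ``the main obstacle'' --- the bookkeeping of the weights $|t_1t_2|^{-\frac{1}{2}}$ and $|t_2|$ --- is left genuinely unresolved in your write-up, and the mechanism you propose for resolving it is off-track. There is no Jacobian attached to relabeling a \emph{discrete} sum by $t_1\mapsto -1/t_1$; your intermediate expression with factors $|s|^{-\frac{1}{2}}|t_2|^{-\frac{1}{2}}\cdot|s|$ and the hope that a ``half-density factor'' makes it match is not how the identity closes. The actual resolution is simply the product formula: the summation variables $t_1,t_2$ run over $k^\times$, and the adelic absolute value of any nonzero rational element is $1$, so $|t_1t_2|^{-\frac{1}{2}}=1$ in the Hankel-side sum and $|t_2|=1$ in the boundary term from Lemma \ref{lem_irr1+han}. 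With that one observation all weights disappear, the bracket is literally $\sum_{t_1\in k}\BF_{\psi,1,\BA}(\widetilde{\varphi_f}(\cdot,t_2))(t_1)$, and Poisson summation (legitimate since $\widetilde{\varphi_f}(\cdot,t_2)$ is Schwartz, with absolute convergence of the double sum supplied by Lemma \ref{lem_es}) gives $\sum_{t_1\in k}\widetilde{\varphi_f}(t_1,t_2)$. Without invoking the product formula your argument does not yet yield the unweighted right-hand side, so you should replace the ``Jacobian/half-density matching'' step by this remark.
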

    
    \begin{proof}
        According to Remark \ref{rmk_han}, we have
        \begin{align*}
	\frac{\CH_{\std,\nu}(f_{\nu})}{(\ud t)^{\frac{1}{2}}}(t)=|t_1|^{-\frac{1}{2}} |t_2|^{\frac{1}{2}}\int_{k_{\nu}}\widetilde{\varphi_{\nu}}(t_1,t_2)\psi\left(\frac{x}{t_1}\right)\ud x,
\end{align*}
hence
\begin{align*}
	\sum_{t\in \RT(k)}\frac{\CH_{\std,\BA}(f)}{(\ud t )^{\frac{1}{2}}}(t)=\sum_{(t_1,t_2)\in k^{\times}\times k^{\times}}\BF_{\psi,1,\BA}(\widetilde{\varphi_f}(\cdot,t_2))\left(-\frac{1}{t_1}\right)
\end{align*}
since $|t_1|=|t_2|=1$ for $t_1,t_2\in k^{\times}$. Thanks to Lemma \ref{lem_irr1+han}, we have
\[
\CO_{u,1,\BA}^{\vee}(\CH_{\std,\BA}(f))=\sum_{t_2\in k^{\times}}|t_2|\BF_{\psi,1,\BA}(\widetilde{\varphi_f}(\cdot,t_2))(0)=\sum_{t_2\in k^{\times}}\BF_{\psi,1,\BA}(\widetilde{\varphi_f}(\cdot,t_2))(0).
\]
Combining them, we have
\begin{align*}
	&\sum_{t\in \RT(k) }\frac{\CH_{\std,\BA}(f)}{(\ud t)^{\frac{1}{2}}}(t)+ \sum_{t_2\in k^{\times}}\CO_{u,1,\BA}^{\vee}(\CH_{\std,\BA}(f))(t_2)\\
	&=\sum_{(t_1,t_2)\in k^{\times}\times k^{\times}}\BF_{\psi,1,\BA}(\widetilde{\varphi_f} (\cdot,t_2))\left(-\frac{1}{t_1}\right)+\sum_{t_2\in k^{\times}}\BF_{\psi,1,\BA}(\widetilde{\psi_f}(\cdot,t_2))(0)\\
	&=\sum_{t_2\in k^{\times}}\sum_{t_1\in k}\BF_{\psi,1,\BA}(\widetilde{\varphi_f}(\cdot,t_2))(t_1).
\end{align*}
Then, for each fixed $t_2\in k^{\times}$, since $\widetilde{\varphi_f}$ is a function of Schwartz type according to Proposition \ref{prp_mid}, we can apply the classical Poisson summation formula to obtain the above is
\[
\sum_{t_1\in k,t_2\in k^{\times}}\widetilde{\varphi_f}(t_1,t_2).
\]
    \end{proof}
    
\begin{lem}
	We have
	\begin{align*}
		\sum_{t_1\in k,t_2\in k^{\times}}\widetilde{\varphi_f}(t_1,t_2)-\sum_{t_2\in k^{\times}}\CO_{u,2,\BA}(f)(t_2)&+\sum_{t_1\in k^{\times}}\CO_{u,2,\BA}^{\vee}(\CH_{\std,\BA}(f))(t_1)\\
		&=\sum_{t\in \RT(k)}\frac{f}{(\ud t)^{\frac{1}{2}}}(t)+\sum_{t_1\in k^{\times}}\CO_{u,1,\BA}(f)(t_1).
	\end{align*}
\end{lem}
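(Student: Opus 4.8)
The plan is to run the argument of Lemma \ref{lem_globla1} again, but with the classical one-dimensional Poisson summation formula applied in the $t_2$-variable rather than the $t_1$-variable, reducing every term to an orbital integral of an upstairs Schwartz function. First I would reduce, by linearity, to $f=\otimes^{\prime}_{\nu}f_{\nu}$ with each $f_{\nu}$ the twisted push-forward of $\Phi_{\nu}(x)(\ud^+x)^{\frac{1}{2}}$, $\Phi_{\nu}\in\CF(\Mat_{2,\nu})$, put $\Phi=\otimes^{\prime}_{\nu}\Phi_{\nu}$, and set $g(t_1,\cdot):=\frac{f}{|\det t|\delta^{\frac{1}{2}}(t)(\ud t)^{\frac{1}{2}}}(t_1,\cdot)=\CO^{\psi}_{(t_1,\cdot),\BA}(\Phi)$, where $t=\begin{pmatrix}t_1 & 0\\ 0 & t_2\end{pmatrix}$. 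By Proposition \ref{prp_irr1}, applied place by place and using $f_{\nu}=f^{\circ}_{\nu}$ for almost all $\nu$, for each $t_1\in k^{\times}$ the function $t_2\mapsto g(t_1,t_2)$ lies in $\CF(\BA)$, with $\CO_{u,1,\BA}(f)(t_1)=g(t_1,0)$; and on $\RT(k)$ one has $\frac{f}{(\ud t)^{\frac{1}{2}}}(t)=g(t_1,t_2)$ because $|t_1|=|t_2|=1$ there by the product formula. Thus the right-hand side of the asserted identity equals $\sum_{t_1\in k^{\times}}\sum_{t_2\in k}g(t_1,t_2)$.

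On the left-hand side I would peel off the $t_1=0$ term from $\sum_{t_1\in k,\,t_2\in k^{\times}}\widetilde{\varphi_f}(t_1,t_2)$: since $\widetilde{\varphi_f}(0,t_2)=\CO_{u,2,\BA}(f)(t_2)$ by Definition \ref{dfn_irr2} read over $\BA$, this cancels the term $-\sum_{t_2\in k^{\times}}\CO_{u,2,\BA}(f)(t_2)$, and the left-hand side becomes $\sum_{t_1,t_2\in k^{\times}}\widetilde{\varphi_f}(t_1,t_2)+\sum_{t_1\in k^{\times}}\CO_{u,2,\BA}^{\vee}(\CH_{\std,\BA}(f))(t_1)$. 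Next, Lemma \ref{lem_mid=F} read over $\BA$ gives, for $t_1,t_2\in k^{\times}$, that the prefactors $|t_1|$ and $\psi(-t_2/t_1)$ are trivial (product formula, and $\psi$ trivial on $k$), so $\widetilde{\varphi_f}(t_1,t_2)=\BF_{\psi,2,\BA}(g(t_1,\cdot))(-1/t_2)$; the substitution $\xi=-1/t_2$, a bijection of $k^{\times}$, then turns $\sum_{t_2\in k^{\times}}\widetilde{\varphi_f}(t_1,t_2)$ into $\sum_{\xi\in k^{\times}}\BF_{\psi,2,\BA}(g(t_1,\cdot))(\xi)$. Likewise, Lemma \ref{lem_irr2+han} read over $\BA$ (again $|z|^3=1$ for $z\in k^{\times}$) gives $\CO_{u,2,\BA}^{\vee}(\CH_{\std,\BA}(f))(z)=\BF_{\psi,2,\BA}(g(-z,\cdot))(0)$, and reindexing $z\mapsto -z$ turns $\sum_{z\in k^{\times}}\CO_{u,2,\BA}^{\vee}(\CH_{\std,\BA}(f))(z)$ into $\sum_{t_1\in k^{\times}}\BF_{\psi,2,\BA}(g(t_1,\cdot))(0)$.

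Putting these together, the left-hand side becomes $\sum_{t_1\in k^{\times}}\sum_{\xi\in k}\BF_{\psi,2,\BA}(g(t_1,\cdot))(\xi)$, and for each fixed $t_1\in k^{\times}$ the classical Poisson summation formula on $k\backslash\BA$ applied to $g(t_1,\cdot)\in\CF(\BA)$ converts $\sum_{\xi\in k}\BF_{\psi,2,\BA}(g(t_1,\cdot))(\xi)$ into $\sum_{t_2\in k}g(t_1,t_2)$; summing over $t_1\in k^{\times}$ recovers the right-hand side. (Together with Lemma \ref{lem_globla1} this simultaneously completes the proof of Theorem \ref{thm_psf}.) The absolute convergence needed to justify every rearrangement is supplied by Lemma \ref{lem_es} and the Schwartz estimates of Proposition \ref{prp_irr1}. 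I do not expect any genuine obstacle beyond bookkeeping: the one point requiring care is matching the two irregular distributions $\CO_{u,2,\BA}$ and $\CO_{u,2,\BA}^{\vee}$ with the $t_1=0$ term and the $\xi=0$ Fourier coefficient respectively — in particular the minus sign in Lemma \ref{lem_irr2+han}, which is exactly what forces the reindexing $z\mapsto -z$.
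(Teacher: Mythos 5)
Your proposal is correct and follows essentially the same route as the paper's proof: cancel the $\CO_{u,2,\BA}$ term against the $t_1=0$ contribution of $\widetilde{\varphi_f}$ via Definition \ref{dfn_irr2}, rewrite the remaining sums through Lemma \ref{lem_mid=F} and Lemma \ref{lem_irr2+han} (with the product formula killing the prefactors and the same $z\mapsto -z$, $t_2\mapsto -1/t_2$ reindexings), and finish with the one-dimensional Poisson summation formula in the second variable, justified by Proposition \ref{prp_irr1}. The only difference is presentational: you make explicit the reindexings and the identification of the right-hand side with $\sum_{t_1\in k^{\times}}\sum_{t_2\in k}\CO_{(t_1,t_2)}(\Phi)$, which the paper leaves implicit.
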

    \begin{proof}
    	According to Definition \ref{dfn_irr2}, we have
    	\[
    	\sum_{t_2\in k^{\times}}\CO_{u,2,\BA}(f)=\sum_{t_2\in k^{\times}}\widetilde{\varphi_f}(0,t_2).
    	\]
    	Write $f_{\nu}=|\det t|_{\nu}\delta^{\frac{1}{2}}(t)\CO_t(\Phi_{\nu})$ and $\CO_t(\Phi)=\otimes^{\prime}_{\nu}\CO_t(\Phi_{\nu})$, then according to Lemma \ref{lem_irr2+han}, we have
    	\[
    	\CO_{u,2,\BA}^{\vee}(\CH_{\std,\BA}(f))=\sum_{z\in k^{\times}}|z|^{3}\BF_{\psi,2,\BA}(\CO_{(-z,\cdot)}(\Phi))(0)=\sum_{z\in k^{\times}}\BF_{\psi,2,\BA}(\CO_{(-z,\cdot)}(\Phi))(0).
    	\]
    	Therefore, the left-hand-side of the equality in this lemma is
    	\begin{align*}
    		&\sum_{t_1\in k,t_2\in k^{\times}}\widetilde{\varphi_f}(t_1,t_2)-\sum_{t_2\in k^{\times}}\widetilde{\varphi_f}(0,t_2)+\sum_{z\in k^{\times}}\BF_{\psi,2,\BA}(\CO_{(-z,\cdot)}(\Phi))(0)\\
    		&=\sum_{t_1\in k^{\times},t_2\in k^{\times}}\widetilde{\varphi_f}(t_1,t_2)+\sum_{z\in k^{\times}}\BF_{\psi,2,\BA}(\CO_{(-z,\cdot)}(\Phi))(0).\\
    	\end{align*}
    	Use Lemma \ref{lem_mid=F}, we have the above is
    	\begin{align*}
    		&\sum_{t_1\in k^{\times},t_2\in k^{\times}}|t_1|\psi\left( -\frac{t_2}{t_1}  \right)\BF_{\psi,2,\BA}(\CO_{(t_1,\cdot)}(\Phi))\left(-\frac{1}{t_2}\right)+\sum_{z\in k^{\times}}\BF_{\psi,2,\BA}(\CO_{(-z,\cdot)}(\Phi))(0)\\
    		&=\sum_{t_1\in k^{\times},t_2\in k^{\times}}\BF_{\psi,2,\BA}(\CO_{(t_1,\cdot)}(\Phi))\left(-\frac{1}{t_2}\right)+\sum_{z\in k^{\times}}\BF_{\psi,2,\BA}(\CO_{(-z,\cdot)}(\Phi))(0)\\
    		&=\sum_{t_1\in k^{\times}}\sum_{t_2\in k}\BF_{\psi,2,\BA}(\CO_{(t_1,\cdot)}(\Phi))(t_2).
    	\end{align*}
    	According to Proposition \ref{prp_irr1}, for each $t_1\in k^{\times}$, $\CO_{(t_1,\cdot)}(\Phi)$ is a function of Schwartz type, we can apply the Poisson summation formula to it. Therefore we obtain
    	\begin{align*}
    		\sum_{t_1\in k^{\times}}\sum_{t_2\in k}\BF_{\psi,2,\BA}(\CO_{(t_1,\cdot)}(\Phi))(t_2)&=\sum_{t_1\in k^{\times}}\sum_{t_2\in k}\CO_{(t_1,t_2)}(\Phi)\\
    		&=\sum_{t_1\in k^{\times},t_2\in k^{\times}}\CO_{(t_1,t_2)}(\Phi)+\sum_{t_1\in k^{\times}}\CO_{(t_1,0)}(\Phi)\\
    		&=\sum_{t\in \RT(k)}\frac{f}{(\ud t)^{\frac{1}{2}}}(t)+\sum_{t_1\in k^{\times}}\CO_{u,1,\BA}(f)(t_1).
    	\end{align*}
    	    \end{proof}
    
Combining these two lemmas, we get Theorem \ref{thm_psf}.

\section{Kuznetsov Trace Formula with Godement-Jacquet Sections}\label{sec_tf}

In this section, we will first establish a Kuznetsov-type trace formula with test functions being the Schwartz functions on the space of $2\times 2$ matrices. This is one example of a trace formula with non-standard test functions. The geometric side will be related to Theorem \ref{thm_psf}, which allows us to apply the Poisson summation formula on the geometric side. The spectrum side has already been studied in \cite{Wu}. 

Let $\Phi$ be a Schwartz function on $\Mat_2(\BA)$. Then for $s\in\BC$ with $\Re(s)>1$, we have $g\mapsto \Phi(g)|\det g|^s\cdot|\det g|\in\CL^1(\GL_2)$. For any unitary idele class character $\omega$, consider the right regular action $\RR_{\omega}$ on $\CL^2(\GL_2(k)\backslash \GL_2(\BA),\omega)$. By calculating the Whittaker period of the kernel function of $\RR(\Phi(\cdot)|\det\cdot|^{s+1})$, we will prove the following 
\begin{thm}\label{thm_tf}
	For $\Phi\in\CF(\Mat_2(\BA))$, and $s\in\BC$ with $\Re(s)>1$,
	 we have \begin{align*}
	&\frac{1}{(\ud x)^{\frac{1}{2}}}\int_{[\RZ]}\mathrm{KTF}( \RR_z(\Phi(x)(\ud^+x)^{\frac{1}{2})})\omega(z)|z|^{2s}\ud^{\times}z \\
    &=\sum_{\pi\;\mathrm{cuspidal},\omega_{\pi}=\omega} \sum_{\varphi_1.\varphi_2\in\CB(\pi)}\CZ\left(s+\frac{1}{2},\Phi,\beta(\varphi_2,\varphi_1^{\vee})\right)\BW_{\varphi_1}^{\psi^{-1}}(\RI_2)\BW_{\varphi_2}^{\psi^{-1}}(\RI_2)\\
	&+\sum_{\chi\in\widehat{\BR_+k^{\times}\backslash\BA^{\times}}}\int_{-\infty}^{\infty} \sum_{e_1,e_2\in\CB(\chi,\omega\chi^{-1})}\CZ\left(s+\frac{1}{2},\Phi,\beta_{i\tau}(e_2,e_1^{\vee})\right)\BW^{\psi^{-1}}(i\tau,e_1)(1)\BW^{\psi}(-i\tau,e_2^{\vee})(1)  \frac{\ud \tau}{4\pi}\\
	&+\int_{[\RN]\times[\RN]} \RK^{\prime}(n_1,n_2)\psi^{-1}(n_1^{-1}n_2)\ud n_1\ud n_2,
	\end{align*}
	with the last term being the Whittaker function of the Eisenstein series. For the undefined terms, see the next two sections.
	\end{thm}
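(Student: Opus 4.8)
## Proof proposal

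The plan is to compute the Whittaker--Whittaker period of the kernel function of the operator $\RR(\Phi(\cdot)|\det\cdot|^{s+1})$ in two different ways and match them. Concretely, set $\Phi_s(g) := \Phi(g)|\det g|^{s+1}$, so that for $\Re(s) > 1$ the function $\Phi_s$ lies in $\CL^1(\GL_2(\BA))$ against the Haar measure $\ud g$, and the operator $\RR_\omega(\Phi_s)$ on $\CL^2(\GL_2(k)\backslash\GL_2(\BA),\omega)$ is given by the kernel
\[
\RK_{\Phi_s,\omega}(x,y) = \sum_{z \in \RZ(k)} \int_{[\RZ]} \sum_{\gamma \in \RZ(k)\backslash\GL_2(k)} \Phi_s(x^{-1}\gamma z y)\,\omega(z)\,\ud^\times z.
\]
The left-hand side of the theorem is, by definition, the regularized double Whittaker integral $\int_{[\RN]\times[\RN]} \RK_{\Phi_s,\omega}(n_1,n_2)\,\psi^{-1}(n_1^{-1}n_2)\,\ud n_1\,\ud n_2$, after integrating over the central character $\omega$ as written; the factor $|z|^{2s}$ appears because the central translation by $z$ on $\Phi_s$ carries the weight $|z|^{2(s+1)}/|z|^{2}$. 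The first task is therefore to justify convergence and the interchange of integration and summation that realizes this double integral as $\int_{[\RZ]} \mathrm{KTF}(\RR_z(\Phi(x)(\ud^+x)^{1/2}))\,\omega(z)|z|^{2s}\,\ud^\times z$ — i.e.\ that the geometric unfolding of the Bruhat cells of $\GL_2(k)$ against the two unipotent integrals reproduces exactly the three geometric terms of $\mathrm{KTF}_{L(\std,1/2)}$ from Definition \ref{dfn_KTF}. This is essentially the computation already carried out in the proof of Theorem \ref{thm_psf} in Section \ref{ssec_inproof} (the Bruhat decomposition $\Mat_2(k) = \RB(k)w\RB(k) \sqcup \RB(k) \sqcup \{\text{rank }1\} \sqcup \{0\}$, identifying the open cell with $\sum_{t \in \RT(k)} f/(\ud t)^{1/2}$, the Borel cell with $\sum_{t_2} \CO_{u,2,\BA}$ via Proposition \ref{prp_irr_2_loc-glo}, and the rank-one stratum with $\sum_{t_1}\CO_{u,1,\BA}$), only now with a central $\omega$-twist and the shift $s$, so the three sums acquire weights matching $\CZ(s+\tfrac12,\cdot)$.

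Next I would carry out the spectral expansion of the same kernel. By the Plancherel decomposition of $\CL^2(\GL_2(k)\backslash\GL_2(\BA),\omega)$ into cuspidal, residual, and continuous parts, and using that $\RR_\omega(\Phi_s)$ acts on each irreducible constituent $\pi$ by the Godement--Jacquet zeta integral: for $\varphi_1,\varphi_2$ in $\CB(\pi)$ (an orthonormal basis), one has
\[
\langle \RR_\omega(\Phi_s)\varphi_1, \varphi_2\rangle = \CZ\!\left(s+\tfrac12,\Phi,\beta(\varphi_2,\varphi_1^\vee)\right),
\]
where $\beta$ is the matrix coefficient and the shift by $\tfrac12$ is the standard normalization relating $|\det|^{s+1}$ on $\GL_2$ to the Godement--Jacquet $L$-function $L(s+\tfrac12,\pi,\std)$ (so that $\Re(s)>1$ lands in the region of absolute convergence, $\Re(s+\tfrac12) > \tfrac32$, of the $n=2$ Godement--Jacquet integral). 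Spectrally expanding the kernel and then applying the two Whittaker functionals to the left and right variables turns $\beta(\varphi_2,\varphi_1^\vee)$ into the product $\BW^{\psi^{-1}}_{\varphi_1}(\RI_2)\,\BW^{\psi^{-1}}_{\varphi_2}(\RI_2)$; this is the content of the first spectral line. For the continuous spectrum one does the same with Eisenstein series $E(i\tau, e)$ attached to $\chi \otimes \omega\chi^{-1}$ on the Borel, producing the $\int \ud\tau/4\pi$ term; the residual (one-dimensional) spectrum and the $\RB$-contributions combine into the last term, the Whittaker coefficient of the Eisenstein series $\RK'$, which is the piece that is expected to "reduce to Tate's thesis" as the introduction indicates. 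The spectral decomposition with precisely these test functions and this normalization is what \cite{Wu} supplies, so I would invoke that directly rather than redo the Sobolev/trace-class estimates.

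The main obstacle will be the analytic bookkeeping at the boundary between the two sides: establishing that both the geometric double integral and the spectral expansion converge absolutely for $\Re(s) > 1$ and genuinely represent the \emph{same} distribution, so that the equality of the two expressions is legitimate rather than merely formal. On the geometric side this means showing the unipotent integrals against $\Phi_s$ converge and that the three sums over $\RT(k)$, $k^\times$, $k^\times$ are absolutely convergent with the $\omega|z|^{2s}$ weight — an extension of Lemma \ref{lem_es}, where now the Poisson-summation trick used there must be run with the extra central integration in place and the weight $|z|^{2s}$ controlled for $\Re(s)>1$. On the spectral side the subtlety is the non-rapid decay of Eisenstein series and the need to truncate or regularize the double Whittaker integral of the kernel; here I would lean on the fact that the Whittaker projection of an Eisenstein series is already a well-behaved (essentially $\GL_1$) object, so the regularization collapses, as recorded in \cite{Wu}. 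Once both sides are known to be absolutely convergent expressions for the period of $\RR_\omega(\Phi_s)$ acting on the kernel, the identity in the theorem follows by equating them; the remaining work is purely the identification of the geometric terms with $\mathrm{KTF}$ and of the spectral terms with the Godement--Jacquet zeta integrals, both of which are unfolding computations of the type already displayed in the excerpt.
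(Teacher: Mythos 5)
Your overall strategy (expand the kernel of $\RR_{\omega}(\Phi(\cdot)|\det\cdot|^{s+1})$ spectrally via \cite{Wu} and geometrically via Bruhat cells, then take the $\psi$-Whittaker period in both variables) is the same as the paper's, but your bookkeeping of the boundary term contains a genuine error. The left-hand side of the theorem is \emph{not}, by definition, the Whittaker period of the kernel $\RK(x,y)$: that kernel sums only over $\gamma\in\GL_2(k)=\RB(k)w\RB(k)\sqcup\RB(k)$, so its Whittaker period produces only the $\RT(k)$-sum and the $\CO_{u,2,\BA}$-terms of $\mathrm{KTF}$. The $\mathrm{KTF}$ integral exceeds it by exactly the rank-one contribution $\sum_{t_1\in k^{\times}}\CO_{u,1,\BA}$, which is the Whittaker period of the auxiliary kernel $\RK^{\prime}$ built from the rank-one rational matrices; this is why the paper writes, in Section \ref{ssec_geo},
\begin{align*}
\int_{[\RN]\times[\RN]}\RK(n_1,n_2)\psi^{-1}(n_1^{-1}n_2)\ud n_1\ud n_2
=\int_{[\RZ]}\mathrm{KTF}(\RR_z(\Phi(x)(\ud^+x)^{\frac{1}{2}}))\omega(z)|z|^{2s}\ud^{\times}z-\int_{[\RN]\times[\RN]}\RK^{\prime}(n_1,n_2)\psi^{-1}(n_1^{-1}n_2)\ud n_1\ud n_2 .
\end{align*}
Your first paragraph conflates these two objects: you assert the LHS equals the kernel period and simultaneously invoke the Bruhat stratification of all of $\Mat_2(k)$ (including the rank-one stratum) to unfold it, which double-counts the singular stratum that is simply absent from the $\GL_2(k)$-kernel.

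Correspondingly, your spectral-side claim that ``the residual (one-dimensional) spectrum and the $\RB$-contributions combine into the last term $\RK^{\prime}$'' would fail. The one-dimensional spectrum in Wu's decomposition consists of functions $\eta(\det x)$, whose Whittaker period vanishes (since $\eta(\det n)=1$ and $\int_{[\RN]}\psi^{-1}(n)\ud n=0$), so after integrating against $\psi^{-1}(n_1^{-1}n_2)$ only the cuspidal and Eisenstein lines survive; and the $\RB(k)$-cell contribution is already accounted for inside $\mathrm{KTF}$ as the $\CO_{u,2,\BA}$-terms, not inside $\RK^{\prime}$. The term $\RK^{\prime}$ is a purely geometric correction coming from the rank-one stratum of $\Mat_2(k)$, added to the kernel period to recover $\mathrm{KTF}$ (equivalently, moved to the right-hand side of the theorem); with your accounting the final identity would not assemble into the stated formula. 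Fixing the proof requires exactly this regrouping: spectral side of the $\RK$-period $=$ cuspidal $+$ Eisenstein, geometric side of the $\RK$-period $=$ $\mathrm{KTF}$ integral minus the $\RK^{\prime}$-period, and then transposition of the $\RK^{\prime}$-period.
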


\subsection{The Spectrum Side of the Trace Formula}

First, let us set up notations for the spectrum side. We fix a section of the adelic norm map $|\cdot|:\BA^{\times}\rightarrow\BR_+$ and identify $\BR_+$ as a subgroup of $\BA^{\times}$. Let $\omega$ be a unitary idele class character that is also trivial on $\BR_+$. Denote $\CL^2(\GL_2,\omega)$ the space of functions $\varphi:\GL_2(k)\backslash\GL_2(\BA)\rightarrow\BC$ such that
\begin{itemize}
	\item [(1)] $\displaystyle{\varphi(gz)=\omega(z)\varphi(g)}$, $\forall g\in\GL_2(\BA)$ and $z\in \RZ(\BA)$,
	\item [(2)] $\displaystyle{\int_{Z(\BA)\GL_2(k)\backslash\GL_2(\BA)}|\varphi(g)|^2\ud g<\infty}$.
\end{itemize}
Then $\GL_2(\BA)$ acts on it via the right regular action, and we denote this action by $(\RR_{\omega},\CL^2(\GL_2,\omega))$. 

Let $(\pi,V_{\pi})$ be a cuspidal automorphic representation in $(\RR_{\omega},\CL^2(\GL_2,\omega))$, we will use $V_{\pi}^{\infty}$ to denote the subspace of smooth vectors. For $\varphi\in V_{\pi}^{\infty}$, we write
\[
\BW_{\varphi}^{\psi}(g):=\int_{[\RN]}\varphi(ng)\psi^{-1}(n)\ud n
\]
for the Whittaker function of $\varphi$.

Similarly let $\chi$ be a unitary character of $\BR_+k^{\times}\backslash\BA^{\times}$, we can associate the principal series $\pi_s:=\pi(\chi|\cdot|^s,\omega\chi^{-1}|\cdot|^{-s})$ for all $s\in\BC$, whose underlying Hilbert space is $V_{\chi,\omega\chi^{-1}}$ consisting all the functions $f:\RK\rightarrow\BC$ satisfying
\begin{itemize}
	\item [(1)] $\displaystyle{ f\left(  \begin{pmatrix}
		t_1 & u \\ 0 & t_2
	\end{pmatrix}g \right)=\chi(t_1)\omega\chi^{-1}(t_2)f(g) }$,
	\item [(2)] $  \displaystyle{ \int_{\RK}|f(k)|^2\ud k<\infty}$,
\end{itemize}
where $\RK$ is the standard maximal compact subgroup of $\GL_2(\BA)$, which is the tensor product of $\RK_{\nu}$, and
\[
\RK_{\nu} =     \left\{ \begin{array}{rcl}
         \SO_2(\BR) & \mbox{if}
         & k_{\nu}\cong\BR \\ \SU_2(\BC) & \mbox{if} & k_{\nu} \cong\BC \\
        \GL_2(\Fo_{\nu}) & \mbox{if}   &\nu<\infty             \end{array}\right..
\]
This space is common for all $s\in\BC$ and the subspace of smooth vectors, although the actions $\pi_s$ are different. In particular, $\pi_s$ is unitary if and only if $s\in i\BR$. To any smooth vector $e\in V^{\infty}_{\chi,\omega\chi^{-1}}$, we associate a flat section $e(s)$ in $\pi_s$, from which we construct an Eisenstein series
\begin{align*}
	\RE(s,e)(g):=\sum_{\gamma\in \RB(k)\backslash\GL_2(k)} e(s)(\gamma g),
\end{align*}
which converges for $\Re(s)\gg1$ and admits a meromorphic continuation to $s\in\BC$. We write
\[
\BW^{\psi}(s,e)(g):=\int_{[\RN]}\RE(s,e)(ng)\psi^{-1}(n)\ud n=\int_{\RN(\BA)}e(wng)\psi^{-1}(n)\ud n
\]
for the Whittaker function of $\RE(s,e)$.

For $V=V_{\pi}$ or $V_{\chi,\omega\chi^{-1}}$, the underlying inner product identifies $V$ with its dual $V^{\vee}$. Define $\displaystyle{\varphi^{\vee}:=  \frac{\overline{\varphi}}{\|\varphi\|^2} \in V^{\vee} }$ for $\varphi\neq 0$. The matrix coefficient $\beta(\varphi_2,\varphi_1^{\vee})$ and $\beta_s(e_2,e_1^{\vee})$ for a pair of non-zero vectors is defined to be
\[
\beta(\varphi_2,\varphi_1^{\vee})(g):=\frac{\langle\pi(g)\varphi_2,\varphi_1\rangle }{\|\varphi_1\|^2}\;\mathrm{and}\;\beta_s(e_2,e_1^{\vee})(g):=\frac{\langle \pi_s(g)e_2,e_1\rangle }{\|e_1\|^2},\;\forall g\in \GL_2(\BA).
\]
For a Schwartz function $\Phi$ on $\Mat_2(\BA)$, we have the Godement-Jacquet zeta integrals for $\beta\in\{  \beta(\varphi_2,\varphi_1^{\vee}),\beta_s(e_2,e_1^{\vee}) \}$
\[
\CZ(s^{\prime},\Phi,\beta):=\int_{\GL_2(\BA)}\Phi(g)\beta(g)|\det g|^{s^{\prime}+\frac{1}{2}}\ud g,
\]
which converges for $\Re(s)\gg 1$.

Let $\Phi$ be a Schwartz function on $\Mat_2(\BA)$. Then for $s\in\BC$ with $\Re(s)>1$, we have $g\mapsto \Phi(g)|\det g|^s\cdot|\det g|\in\CL^1(\GL_2)$ and it acts on $(\RR_{\omega},\CL^2(\GL_2,\omega))$. For any $\varphi\in\CL^2(\GL_2(k)\backslash\GL_2(\BA),\omega)$, we have for $x\in\GL_2(\BA)$,
\begin{align*}
	&\RR_{\omega}(\Phi(\cdot)|\det \cdot|^{s+1})\varphi(x)=\int_{\GL_2(\BA)}\Phi(y)|\det y|^{s+1}\varphi(xy)\ud y \\
	&=\int_{\GL_2(\BA)}\Phi(x^{-1}y)|\det x^{-1}y|^{s+1}\varphi(y)\ud y\\
	&=\int_{[\GL_2]} \sum_{\gamma\in\GL_2(k)}\Phi(x^{-1}\gamma y )|\det x^{-1}y|^{s+1}\varphi(y)\ud y\\
	&=\int_{\GL_2(k)\RZ(\BA)\backslash\GL_2(\BA)} \int_{[\RZ]} \sum_{\gamma\in\GL_2(k)}\Phi(x^{-1}\gamma y z)|z|^{2s+2}\omega(z)\ud^{\times}z |\det x^{-1}y|^{s+1} \varphi(y)\ud y.
\end{align*}
We denote the corresponding kernel function
\[
\RK(x,y):=\int_{[\RZ]} \sum_{\gamma\in\GL_2(k)}\Phi(x^{-1}\gamma y z)|z|^{2s+2}\omega(z)\ud^{\times}z \cdot |\det x^{-1}y|^{s+1}.
\]
According to \cite[Theorem 2.10]{Wu}, we have the $\CL^2$-spectrum decomposition
\begin{thm}
	When $\Re(s)>1$, the Fourier inversion of $\RK(x,y)$ with respect to $y$ in $\CL^2(\GL_2,\omega^{-1})$ converges normally for $(x,y)\in [\GL_2]\times [\GL_2]$, and takes the form
	\begin{align*}
	\RK(x,y)&=\sum_{\pi\;\mathrm{cuspidal},\omega_{\pi}=\omega} \sum_{\varphi_1.\varphi_2\in\CB(\pi)}\CZ\left(s+\frac{1}{2},\Phi,\beta(\varphi_2,\varphi_1^{\vee})\right)\varphi_1(x)\varphi_2^{\vee}(y)\\
	&+\sum_{\chi\in\widehat{\BR_+k^{\times}\backslash\BA^{\times}}}\int_{-\infty}^{\infty} \sum_{e_1,e_2\in\CB(\chi,\omega\chi^{-1})}\CZ\left(s+\frac{1}{2},\Phi,\beta_{i\tau}(e_2,e_1^{\vee})\right)\RE(i\tau,e_1)(x)\RE(-i\tau,e_2^{\vee})(y)\frac{\ud \tau}{4\pi}\\
	&+\frac{1}{\vol[\PGL_2]}\sum_{\eta\in\widehat{k^{\times}\backslash\BA^{\times}},\eta^2=\omega}\left(\Phi(g)\eta(\det g)|\det g|^{s+1} \ud g\right)\eta(\det x)\overline{\eta(\det y)},
\end{align*}
where $\CB(\pi)$ is an orthogonal basis of $\RK$-isotypic vectors in $\pi$, $\CB(\chi,\omega\chi^{-1})$ is an orthogonal basis of $K$-isotypic vectors in $V_{\chi,\omega\chi^{-1}}$.
\end{thm}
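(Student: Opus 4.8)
The statement is the Godement--Jacquet ``pre-trace'' identity for the test function $h(g):=\Phi(g)\,|\det g|^{s+1}$, proved in \cite[Theorem 2.10]{Wu}; I outline how the argument runs. The first task is analytic: for $\Re(s)>1$ the function $h$ is $\ud g$-integrable on $\RG(\BA)$ (equivalently $|\Phi(g)|\,|\det g|^{\Re(s)-1}$ is $\ud^{+}g$-integrable), and after averaging over $\RZ(\BA)$ against $\omega$ it descends to $\CL^{1}(\RZ(\BA)\backslash\RG(\BA),\omega^{-1})$, so $\RR_{\omega}(h)$ is bounded on $\CL^{2}(\RG,\omega)$. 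Unfolding $\RR_{\omega}(h)\varphi(x)=\int_{\RG(\BA)}h(y)\varphi(xy)\,\ud y$ over $\RG(k)$ and then over $\RZ(k)\backslash\RZ(\BA)$ presents $\RK(x,y)$ as the kernel of $\RR_{\omega}(h)$, and I would verify that the defining series/integral for $\RK$ converges absolutely and locally uniformly on $[\RG]\times[\RG]$ exactly when $\Re(s)>1$: the only delicate region is small central parameter $z$, where a lattice/Riemann-sum estimate gives $\sum_{\gamma\in\RG(k)}|\Phi(x^{-1}\gamma y z)|\ll_{x,y}|z|^{-4}$ as $|z|\to 0$, to be weighed against the factor $|z|^{2\Re(s)+2}$. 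The transformation laws then give $x\mapsto\RK(x,y)\in\CL^{2}(\RG,\omega)$ and $y\mapsto\RK(x,y)\in\CL^{2}(\RG,\omega^{-1})$, the latter being where the Fourier inversion takes place.

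Next I would insert the Langlands spectral decomposition
\[
\CL^{2}(\RG(k)\backslash\RG(\BA),\omega^{-1})=\CL^{2}_{\mathrm{cusp}}\ \oplus\ \CL^{2}_{\mathrm{res}}\ \oplus\ \CL^{2}_{\mathrm{cont}},
\]
in which $\CL^{2}_{\mathrm{res}}$ is spanned by the one-dimensional representations $\eta^{-1}\circ\det$ with $\eta^{2}=\omega$, and $\CL^{2}_{\mathrm{cont}}$ is the direct integral, with Plancherel density $\tfrac{\ud\tau}{4\pi}$, of the unitary principal series attached to $(\chi|\cdot|^{i\tau},\omega\chi^{-1}|\cdot|^{-i\tau})$. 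Since $\RR_{\omega}(h)$ commutes with the right regular action it respects this decomposition, so $\RK=\RK_{\mathrm{cusp}}+\RK_{\mathrm{res}}+\RK_{\mathrm{cont}}$, with $\RK_{\bullet}$ the kernel of $\RR_{\omega}(h)$ followed by the projector onto the corresponding summand; it then suffices to identify and to control each block.

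The identification is the formal part. Expanding $\RK(x,\cdot)$ against the cuspidal spectrum of $\CL^{2}(\RG,\omega^{-1})$ and re-expanding the $x$-dependence in the orthogonal basis $\CB(\pi)$ of $\pi$ (where $\omega_{\pi}=\omega$), the cuspidal block becomes $\sum_{\pi}\sum_{\varphi_{1},\varphi_{2}\in\CB(\pi)}\|\varphi_{1}\|^{-2}\|\varphi_{2}\|^{-2}\,\langle\RR_{\omega}(h)\varphi_{2},\varphi_{1}\rangle\,\varphi_{1}(x)\ovl{\varphi_{2}(y)}$; and $\langle\RR_{\omega}(h)\varphi_{2},\varphi_{1}\rangle=\int_{\RG(\BA)}h(g)\langle\pi(g)\varphi_{2},\varphi_{1}\rangle\,\ud g=\|\varphi_{1}\|^{2}\,\CZ\!\left(s+\tfrac12,\Phi,\beta(\varphi_{2},\varphi_{1}^{\vee})\right)$ by the definition of the zeta integral, which yields the cuspidal line. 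On the residual summand $\RR_{\omega}(h)$ sends $\eta\circ\det$ to the scalar multiple $\big(\int_{\RG(\BA)}\Phi(g)\eta(\det g)|\det g|^{s+1}\,\ud g\big)\,(\eta\circ\det)$, a Tate-type $\GL_{1}$ integral, and $\|\eta\circ\det\|^{2}=\vol[\PGL_{2}]$, which produces the last term. On the continuous summand the same computation performed fibrewise over the direct integral, using flat sections, the standard formula realizing the projector onto $\CL^{2}_{\mathrm{cont}}$ through Eisenstein series with density $\tfrac{\ud\tau}{4\pi}$, and $\langle\pi_{i\tau}(h)e_{2},e_{1}\rangle=\|e_{1}\|^{2}\,\CZ\!\left(s+\tfrac12,\Phi,\beta_{i\tau}(e_{2},e_{1}^{\vee})\right)$, gives the Eisenstein term $\sum_{\chi}\int_{-\infty}^{\infty}\sum_{e_{1},e_{2}}\CZ\!\left(s+\tfrac12,\Phi,\beta_{i\tau}(e_{2},e_{1}^{\vee})\right)\RE(i\tau,e_{1})(x)\,\RE(-i\tau,e_{2}^{\vee})(y)\,\tfrac{\ud\tau}{4\pi}$.

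The substantive point, and the main obstacle, is the normal convergence of all three expansions uniformly for $(x,y)$ in compacta. For the cuspidal and residual blocks I would factor $h$ as a finite sum of convolutions $h=\sum_{j}h_{j}'\star h_{j}''$ (Dixmier--Malliavin), so that $\RR_{\omega}(h)$ is a finite sum of products $\RR_{\omega}(h_{j}')\RR_{\omega}(h_{j}'')$ of convolution operators whose kernels, restricted to the discrete spectrum, are square-integrable in each variable locally uniformly in the other; composing and invoking Parseval then dominates $\RK_{\mathrm{cusp}}(x,y)$ and $\RK_{\mathrm{res}}(x,y)$ by products of two square roots of convergent sums $\sum_{\varphi}|\cdot|^{2}$ that are locally bounded, using the rapid decay and local boundedness of automorphic forms. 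For the continuous block the analogue uses the spectral (Maass--Selberg) orthogonality of Eisenstein series together with the polynomial growth in $\tau$ of $\RE(i\tau,e)$ on compacta; the decisive input is that smoothness of the archimedean component of $\Phi$ forces $\CZ\!\left(s+\tfrac12,\Phi,\beta_{i\tau}(\cdot,\cdot)\right)$ to decay faster than any power of $1+|\tau|$ (integrate by parts against the Casimir, which acts by $\asymp\tau^{2}$), so each $\tau$-integral converges, while the Schwartz decay of $\Phi$ in all coordinates makes the attached $\GL_{1}$-integrals decay in the conductor of $\chi$, so the sum over $\chi$ converges; and the finite conductor of $\Phi$ at each non-archimedean place limits, level by level, which ramified $\pi$ and which $\RK$-types in $\CB(\pi)$ contribute. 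It is here --- extracting from the sole hypothesis that $\Phi$ is Schwartz enough decay in every spectral parameter to beat both the growth of automorphic forms and the Weyl-law growth of the spectrum --- that the real difficulty lies; the hypothesis $\Re(s)>1$ is used both in the absolute convergence of the geometric series for $\RK$ and in dominating these spectral expansions, whereas the identifications of the three blocks are essentially formal.
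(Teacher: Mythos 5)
Your proposal is sound in outline, but note that the paper does not prove this statement at all: it is quoted verbatim from \cite[Theorem 2.10]{Wu}, which is also the anchor of your own write-up, so there is no independent argument in the paper to compare against. Your identification of the three spectral blocks is exactly the standard one and is consistent with the paper's notation (in particular $\langle\RR_{\omega}(h)\varphi_2,\varphi_1\rangle=\|\varphi_1\|^2\,\CZ\bigl(s+\tfrac12,\Phi,\beta(\varphi_2,\varphi_1^{\vee})\bigr)$ for $h=\Phi|\det|^{s+1}$, the residual block being the $\eta\circ\det$ with $\eta^2=\omega$ and norm $\vol[\PGL_2]$, and the continuous block carrying the density $\tfrac{\ud\tau}{4\pi}$). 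Where you diverge from the cited source is in the hard analytic step: Wu controls normal convergence not by a Dixmier--Malliavin factorization $h=\sum_j h_j'\star h_j''$ but by Sobolev-type estimates, applying powers of the archimedean Laplacian to the kernel and bounding $(1+\Delta_{\infty})^{N+A}\RK_{\Phi}(x,\cdot)$ by a power of the height function, together with spectral projectors onto the one-dimensional part --- precisely the bounds this paper later imports in Section \ref{sec_iso}. Your route is plausible but would need care at the archimedean places, since $h$ is Schwartz rather than compactly supported, so the factorization has to be carried out in a Schwartz-type convolution algebra (or combined with a parametrix argument); as written, invoking Dixmier--Malliavin off the shelf is the one step I would not accept without further justification, whereas the Casimir integration-by-parts giving rapid decay of $\CZ\bigl(s+\tfrac12,\Phi,\beta_{i\tau}(\cdot,\cdot)\bigr)$ in $\tau$ and the conductor-decay in $\chi$ are fine and are essentially what the cited proof uses.
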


Integrating the above formula against the prescribed character over $[\RN]\times[\RN]$, we obtain
\begin{align*}
	&\int_{[\RN]\times[\RN]}\RK(n_1,n_2)\psi^{-1}(n_1^{-1}n_2)\ud n_1\ud n_2\\
	&=\sum_{\pi\;\mathrm{cuspidal},\omega_{\pi}=\omega} \sum_{\varphi_1.\varphi_2\in\CB(\pi)}\CZ\left(s+\frac{1}{2},\Phi,\beta(\varphi_2,\varphi_1^{\vee})\right)\BW_{\varphi_1}^{\psi^{-1}}(\RI_2)\BW_{\varphi_2^{\vee}}^{\psi^{}}(\RI_2)\\
	&+\sum_{\chi\in\widehat{\BR_+k^{\times}\backslash\BA^{\times}}}\int_{-\infty}^{\infty} \sum_{e_1,e_2\in\CB(\chi,\omega\chi^{-1})}\CZ\left(s+\frac{1}{2},\Phi,\beta_{i\tau}(e_2,e_1^{\vee})\right)\BW^{\psi^{-1}}(i\tau,e_1)(\RI_2)\BW^{\psi}(-i\tau,e_2^{\vee})(\RI_2)  \frac{\ud \tau}{4\pi}.
\end{align*}

\subsection{The Geometric Side of the Trace Formula}\label{ssec_geo}
In this section, we will calculate the integral of $\RK(x,y)$ with the prescribed character over $[\RN]\times[\RN]$ from the geometric aspect.

When $\Re(s)>1$, we have
\begin{align*}
	&\int_{[\RN]\times[\RN]}\RK(n_1,n_2)\psi^{-1}(n_1^{-1}n_2)\ud n_1\ud n_2\\
	&=\int_{[\RN]\times[\RN]}\int_{[\RZ]} \sum_{\gamma\in\GL_2(k)}\Phi(n_1^{-1}\gamma n_2 z)|z|^{2s+2}\omega(z)\ud^{\times}z \cdot \psi^{-1}(n_1^{-1}n_2)\ud n_1\ud n_2\\
	&=\int_{[\RZ]}  \int_{[\RN]\times[\RN]}\Phi(n_1\gamma n_2 z)\psi^{-1}(n_1n_2)\ud n_1\ud n_2\cdot \omega(z)|z|^{2s+2}\ud^{\times} z.
\end{align*}

In addition to the kernel function $\RK(x,y)$, we can also consider the following function given by a similar summation over rank $1$ rational matrices when $\Re(s)\gg 1$:
\begin{align*}
		\RK^{\prime}(x,y):&=\int_{[\RZ]}\sum_{\gamma \in\Mat_{2}(k),\rank\gamma=1}\Phi(x^{-1}\gamma yz)\omega(z)|z|^{2s+2}\ud^{\times}z\cdot|\det x^{-1}y|^{s+1}.\\
	\end{align*}
According to \cite[Section 2.2]{Wu}, when $\displaystyle{\Re(s)\gg 1}$, denote
\begin{align*}
		\RR(x,y):=&\int_{[\RZ]} \sum_{\gamma\in k^{\times}}\Phi\left( x^{-1}\begin{pmatrix}
			0 & \gamma \\ 0 & 0
		\end{pmatrix}  y z\right)\omega(z)|z|^{2s+2}\ud^{\times}z\cdot|\det x^{-1}y|^{s+1} \\
		&=\int_{\BA^{\times}}\Phi\left(x^{-1} \begin{pmatrix}
			0 & z \\ 0 & 0
		\end{pmatrix}  y\right)\omega(z)|z|^{2s+2}\ud^{\times}z \cdot |\det x^{-1}y|^{s+1}.
	\end{align*}
	It is easy to see that for fixed $x$, $y\mapsto \RR(x,y)$ belongs to the induced model of the principle series representation $\pi(|\cdot|^{s+\frac{1}{2}},\omega^{-1}|\cdot|^{-s-\frac{1}{2}})$, and for fixed $y$, $x\mapsto \RR(x,y)$ is in $\pi(\omega|\cdot|^{s+\frac{1}{2}},|\cdot|^{-s-\frac{1}{2}})$. Note that the action of $\GL_2(k)\times\GL_2(k)$ on the rank $1$ rational matrices admits a single orbit and the stabilizer group of the line consisting of the elements 
	\[
	\begin{pmatrix}
		0 & \gamma \\ 0 & 0
	\end{pmatrix},\gamma\in k^{\times}
	\]
	is $\RB(k)\times \RB(k)$, therefore,
	\[
	\RK^{\prime}(x,y)=\sum_{\gamma_1\in \RB(k)\backslash \GL_2(k)}\sum_{\gamma_2\in \RB(k)\backslash\GL_2(k)}\RR(\gamma_1x,\gamma_2y)
	\]
is an Eisenstein series in each variable. 

For notational simplicity, let us still write $\mathrm{KTF}$ for the composition of $\mathrm{KTF}$ and the twisted push-forward from $\CCD(\Mat_2(\BA))$ to $\CCD_{L\left(\std,\frac{1}{2}\right)}(\RN(\BA),\psi \backslash\RG /\RN(\BA),\psi)$. Then, the geometric side can be written as
\[
\int_{[\RZ]}\mathrm{KTF}(\RR_z(\Phi(x)(\ud^+x)^{\frac{1}{2}}  ))\omega(z)|z|^{2s}\ud^{\times}z-\int_{[\RN]\times[\RN]}\RK^{\prime}(n_1,n_2)\psi^{-1}(n_1^{-1}n_2)\ud n_1\ud n_2 ,\;\Re(s)\gg 1,
\]
and Theorem \ref{thm_tf} holds.

\section{Analytic Continuation}\label{sec_cont}
In this section, we will prove the analytic continuation of terms in Theorem \ref{thm_tf} except for the cuspidal part. It will be independent of the Godement-Jacquet theory and the Poisson summation formula on $\Mat_2(\BA)$. For simplicity, denote
\[
\CI(s):=\int_{[\RZ]} \mathrm{KTF}(\RR_z(\Phi(x)(\ud^+x)^{\frac{1}{2}}  ))\omega(z)|z|^{2s}\ud^{\times}z ,\;\Re(s)>1,
\]
\[
\CI_{\rb}(s):=\int_{[\RN]\times[\RN]} \RK^{\prime}(n_1,n_2)\psi^{-1}(n_1^{-1}n_2)\ud n_1\ud n_2,\;\Re(s)\gg 1,
\]
\[
\CI_{\cusp}(s):=\sum_{\pi\;\mathrm{cuspidal},\omega_{\pi}=\omega} \sum_{\varphi_1.\varphi_2\in\CB(\pi)}\CZ\left(s+\frac{1}{2},\Phi,\beta(\varphi_2,\varphi_1^{\vee})\right)\BW_{\varphi_1}^{\psi^{-1}}(\RI_2)\BW_{\varphi_2^{\vee}}^{\psi^{}}(\RI_2),
\]
and also when $\Re(s)>1$:
\begin{align*}
&\CI_{\mathrm{Eis}}(s)\\
&:=\sum_{\chi\in\widehat{\BR_+k^{\times}\backslash\BA^{\times}}}\int_{-\infty}^{\infty} \sum_{e_1,e_2\in\CB(\chi,\omega\chi^{-1})}\CZ\left(s+\frac{1}{2},\Phi,\beta_{i\tau}(e_2,e_1^{\vee})\right)\BW^{\psi^{-1}}(i\tau,e_1)(\RI_2)\BW^{\psi}(-i\tau,e_2^{\vee})(\RI_2)  \frac{\ud \tau}{4\pi}.
\end{align*}

We may choose $\Phi$ to be $\RK$-finite, hence the summation of $\chi,e_1,e_2$ in $\CI_{\Eis}$ is finite and we denote
\[
\CI_{\chi,e_1,e_2}(s):=\int_{-\infty}^{\infty}\CZ\left(s+\frac{1}{2},\Phi,\beta_{i\tau}(e_2,e_1^{\vee})\right)\BW^{\psi^{-1}}(i\tau,e_1)(\RI_2)\BW^{\psi}(-i\tau,e_2^{\vee})(\RI_2)  \frac{\ud \tau}{4\pi}.
\]

\subsection{Analytic Continuation of $\CI(s)$}

In this section, we will study the convergence of $\CI(s)$. According to the calculation in Section \ref{ssec_inproof}, write
\[
\BO_1=\bigsqcup_{\beta\in k^{\times},\gamma\in k}\RN(k)\begin{pmatrix}
	 0 & \gamma \\ \beta & 0 
\end{pmatrix}\RN(k),\;  \BO_2=\bigsqcup_{\alpha\in k^{\times}} \RN(k)\begin{pmatrix}
	\alpha  & 0 \\ 0 & \alpha
 \end{pmatrix}\RN(k),
\]
and
\[
\RK_i(z):=\int_{[\RN]\times[\RN]} \sum_{\gamma\in\BO_i} \Phi(n_1\gamma n_2z)\psi^{-1}(n_1+n_2)\ud n_2\ud n_2,
\]
then we have
\[
\CI(s)=\int_{[\RZ]}(\RK_1(z)+\RK_2(z))\omega(z)|z|^{2s+2}\ud^{\times}z.
\]
For the first term, we have
\begin{align*}
	\RK_1(z)&=\int_{\BA^2}\sum_{\beta\in k^{\times},\gamma \in k}\Phi\left( \begin{pmatrix}
		1 & n_1 \\ 0 & 1
	\end{pmatrix}  \begin{pmatrix}
		0 & \gamma \\ \beta & 0
	\end{pmatrix} \begin{pmatrix}
		1 & n_2 \\  0& 1
	\end{pmatrix}z \right)\psi^{-1}(n_1+n_2)\ud n_1\ud n_2\\
	&=\int_{\BA^2} \sum_{\beta\in k^{\times},\gamma \in k} \Phi \left(    \begin{pmatrix}
		\beta n_1z & \gamma z+\beta n_1 n_2 z \\ \beta z & \beta n_2 z
	\end{pmatrix}     \right)\psi^{-1}(n_1+n_2)\ud n_1 \ud n_2.
\end{align*}
Applying the Poisson summation formula, we have
\[
\sum_{\gamma \in k }\Phi \left(    \begin{pmatrix}
		\beta n_1z & \gamma z+\beta n_1 n_2 z \\ \beta z & \beta n_2 z
	\end{pmatrix}     \right)=|z|^{-1}\sum_{\gamma\in k }\BF_{\psi,2}\Phi \left(    \begin{pmatrix}
		\beta n_1 z & \gamma z^{-1} \\ \beta z & \beta n_2 z
	\end{pmatrix}   \right)\psi (\beta n_1 n_2 \gamma z^{-1}).
\]
Integral over $n_2\in \BA$, we have
\begin{align*}
	&\int_{n_2\in \BA}\sum_{\gamma \in k }\Phi \left(    \begin{pmatrix}
		\beta n_1z & \gamma z+\beta n_1 n_2 z \\ \beta z & \beta n_2 z
	\end{pmatrix}     \right)\psi^{-1}(n_2)\ud n_2 \\
	&=|z|^{-1} \int_{n_2\in\BA}\sum_{\gamma\in k }\BF_{\psi,2}\Phi \left(    \begin{pmatrix}
		\beta n_1 z & \gamma z^{-1} \\ \beta z & \beta n_2 z
	\end{pmatrix}   \right)\psi^{-1} (n_2-\beta n_1 n_2 \gamma z^{-1})\ud n_2 \\
	&=|z|^{-2}\sum_{\gamma \in k}\BF_{\psi,4}\BF_{\psi,2}\Phi\left(  \begin{pmatrix}
		\beta n_1 z & \gamma z^{-1} \\ \beta z & \beta^{-1}z^{-1}-n_1\gamma z^{-2}
	\end{pmatrix}   \right).
\end{align*}
We need the following lemma:
\begin{lem}\label{lem_gj}
	Let $\Phi\in\CF(\BA)$. Then we have for any $N>1$,
	\[
	\sum_{\alpha\in k^{\times}}|\Phi(\alpha z)|\ll_N \min\{|z|^{-1},|z|^{-N} \}.
	\]
\end{lem}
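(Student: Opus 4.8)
The plan is to prove Lemma \ref{lem_gj}, a standard estimate from the adelic theory of $\GL_1$ zeta integrals (it appears in Tate's thesis and in \cite{GJ72}). The statement compares a sum over $k^{\times}$ of a Schwartz function on $\BA$ evaluated at dilates $\alpha z$ against the size $|z|$ of the idele $z$. First I would reduce to the regime $|z|\leq 1$ for the bound $|z|^{-N}$ and $|z|\geq 1$ for the bound $|z|^{-1}$, so that it suffices to prove the single bound $\sum_{\alpha\in k^{\times}}|\Phi(\alpha z)|\ll_N |z|^{-1}$ uniformly, together with $\sum_{\alpha\in k^{\times}}|\Phi(\alpha z)|\ll_N |z|^{-N}$ for $|z|\leq 1$. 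The key device is Poisson summation on $\BA$ with respect to the lattice $k$: for the Schwartz function $x\mapsto \Phi(xz)$ one has
\[
\sum_{\alpha\in k}\Phi(\alpha z)=|z|^{-1}\sum_{\beta\in k}\BF_{\psi}(\Phi)(\beta z^{-1}),
\]
since the measure scales by $|z|^{-1}$ and the Fourier transform of the dilate is the dilate of the Fourier transform. Removing the $\alpha=0$ term costs only $|\Phi(0)|$, which is harmless.

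Next I would exploit the product structure. Write $\Phi=\otimes'_{\nu}\Phi_{\nu}$ with $\Phi_{\nu}=1_{\Fo_{\nu}}$ for almost all $\nu$; pick a finite set $S$ of places (containing the archimedean ones and the ``bad'' non-archimedean ones) so that outside $S$ everything is unramified. For $|z|$ large the constraint that $\alpha z$ lie in the support of $\Phi$ (a compact subset of $\BA$, essentially $\prod_{\nu}c_{\nu}\Fo_{\nu}$ with $c_{\nu}=1$ for $\nu\notin S$) forces, via the product formula $\prod_{\nu}|\alpha|_{\nu}=1$, that $\alpha$ ranges over a set whose cardinality is $O(1)$ while each term is bounded; more precisely the number of $\alpha\in k^{\times}$ with $\alpha z$ in a fixed compact set is $O(\max\{1,|z|\})$ by a lattice-point count (the $S$-integers of bounded height), and combined with the rapid decay of the archimedean components $\Phi_{\nu}$ this yields $\sum_{\alpha}|\Phi(\alpha z)|\ll_N |z|^{-N}$ once $|z|\geq 1$ — actually one gets arbitrary polynomial decay from the archimedean Schwartz bound $|\Phi_{\nu}(x_{\nu})|\ll (1+|x_{\nu}|_{\nu})^{-M}$. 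For $|z|$ small one applies the Poisson-dual form above: $|z|^{-1}\sum_{\beta\in k}\BF_{\psi}(\Phi)(\beta z^{-1})$, and now $|z^{-1}|$ is large, so the same argument applied to $\BF_{\psi}(\Phi)$ (again Schwartz) gives $\sum_{\beta\in k^{\times}}|\BF_{\psi}(\Phi)(\beta z^{-1})|\ll_{N'}|z^{-1}|^{-N'}=|z|^{N'}$, plus the $\beta=0$ term $|\BF_{\psi}(\Phi)(0)|$; hence the whole expression is $\ll |z|^{-1}(1+|z|^{N'})\ll |z|^{-1}$, which is the first half of the claimed minimum.

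Putting the two regimes together: for $|z|\geq 1$ we have both $\ll|z|^{-1}$ and $\ll|z|^{-N}$, so $\ll|z|^{-N}=\min\{|z|^{-1},|z|^{-N}\}$; for $|z|\leq 1$ we have $\ll|z|^{-1}=\min\{|z|^{-1},|z|^{-N}\}$. This is exactly the asserted bound. The main obstacle, such as it is, lies in making the lattice-point count uniform in $z$ while keeping track of the dependence on $N$: one must check that the implied constant genuinely depends only on $\Phi$ and $N$ (not on $z$), which comes down to the standard fact that the number of principal fractional ideals, or equivalently elements of $k^{\times}$ up to units lying in a box of idele-norm at most $T$, grows polynomially in $T$, together with uniform control of the unit contribution via Dirichlet's unit theorem (in the function-field case this is even simpler). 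Everything else is bookkeeping with the adelic Poisson summation formula and the definition of the Schwartz space, both of which are available from \cite{Tat67} and \cite{GJ72}.
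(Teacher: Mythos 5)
You should first know that the paper does not actually prove this lemma: its ``proof'' is a two-line citation (the estimate ``appeared in \cite{GJ72} without proof; for a detailed proof, see \cite{Wu19}''), so there is no in-paper argument to compare against. Your sketch follows the standard route (the one carried out in the cited references): for $|z|\geq 1$ use the compact support of the finite components together with the product formula and the rapid decay of the archimedean components plus a lattice count; for $|z|\leq 1$ apply adelic Poisson summation over $k$ and feed the large-norm bound to $\BF_{\psi}(\Phi)$. That is the right strategy, but as written there are two genuine gaps. First, Poisson summation applies to $\Phi$, not to $|\Phi|$: your small-$|z|$ step bounds $\sum_{\alpha\in k}\Phi(\alpha z)$, whereas the lemma concerns $\sum_{\alpha\in k^{\times}}|\Phi(\alpha z)|$, and the former does not dominate the latter. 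You must first replace $|\Phi|$ by a nonnegative Schwartz--Bruhat majorant $\widetilde{\Phi}\geq |\Phi|$ (possible since the finite part is a finite combination of characteristic functions of compact open sets and the archimedean part admits a smooth nonnegative Schwartz majorant) and run Poisson for $\widetilde{\Phi}$; this is routine but it is a logical step, not bookkeeping.

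Second, the large-$|z|$ regime is where the real content lies, and your counting statement does not do the work. The support of $\Phi$ is not compact at the archimedean places, so ``the number of $\alpha$ with $\alpha z$ in a fixed compact set'' (which, incidentally, is $O(\max\{1,|z|^{-1}\})$, not $O(\max\{1,|z|\})$) cannot be invoked directly; and the condition $|z|\geq 1$ by itself does not make the individual archimedean coordinates of $\alpha z$ large, which is what the Schwartz decay needs. The missing step is exactly the uniformity you flag at the end but leave unchecked: the finite-place support condition confines $\alpha$ to a fractional ideal $\mathfrak{a}$ with $\mathrm{N}\mathfrak{a}\asymp \prod_{\nu<\infty}|z_{\nu}|_{\nu}$, so by the product formula every contributing $\alpha$ satisfies $\prod_{v\mid\infty}|\alpha z_v|_v=|\mathrm{N}_{k/\BQ}(\alpha)|\,\prod_{v\mid\infty}|z_v|_v\gg |z|$, hence has archimedean sup-norm $\gg |z|^{1/d}$; one then sums the Schwartz decay over dyadic shells using the count $\ll 1+R^{d}/\mathrm{covol}(\mathfrak{a}z_{\infty})$ with $\mathrm{covol}(\mathfrak{a}z_{\infty})\asymp |z|$. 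This makes the bound uniform over all ideles of a given norm without ever invoking Dirichlet's unit theorem or a count of principal ideals (the route you propose is workable but you have not carried it out, and it is more delicate). With the majorant fix and this product-formula/dyadic-count argument inserted, your proof closes and gives both halves of $\min\{|z|^{-1},|z|^{-N}\}$ exactly as you assemble them.
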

\begin{proof}
	This elementary estimation appeared in \cite{GJ72} without proof. For a detailed proof, see \cite{Wu19}.
\end{proof}
According to the above lemma, we have
\begin{align*}
	\RK_1(z)\ll_{N_1} |z|^{-3}\min\{|z|^{-1},|z|^{-N_1} \}(1+\min{|z|,|z|^{N_2}}).
\end{align*}
As for $\RK_2(z)$, we have
\begin{align*}
	\RK_2(z)=\int_{\BA}\sum_{\alpha\in k^{\times}} \Phi\left(  \begin{pmatrix}
		\alpha z & \alpha z n\\ 0 & \alpha z
	\end{pmatrix}  \right) \psi^{-1}(n)\ud n&=|z|^{-1}\sum_{\alpha\in k^{\times}}\BF_{\psi,2}\Phi\left( \begin{pmatrix} \alpha z & \alpha^{-1}z^{-1}\\  0& \alpha z \end{pmatrix} \right) \\
	&\ll_{N_1,N_2}|z|^{-1}\min\{|z|^{-1},|z|^{-N_1}\}\min\{|z|,|z|^{N_2}\}.
\end{align*}
Combining all these, we have
\begin{prp}\label{prp_acofI}
	\begin{align*}
		\int_{z\in [\RZ],|z|\geq 1}\mathrm{KTF}(\RR_z(\Phi(x)(\ud^+x)^{\frac{1}{2}}  ))\omega(z)|z|^{2s}\ud^{\times}z
	\end{align*}
	is convergent for all $s\in\BC$ and defines an entire function.
\end{prp}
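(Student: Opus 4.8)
The proof of Proposition \ref{prp_acofI} requires showing that the integral over the region $|z| \geq 1$ converges for all $s \in \BC$. The plan is to collect the pointwise bounds on $\RK_1(z)$ and $\RK_2(z)$ already derived above and observe that they decay faster than any polynomial in $|z|^{-1}$ as $|z| \to \infty$, which makes the integral against $|z|^{2s}$ absolutely convergent uniformly on compact subsets of the $s$-plane; entireness then follows from Morera's theorem or by differentiating under the integral sign.

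First I would recall that $\mathrm{KTF}(\RR_z(\Phi(x)(\ud^+ x)^{1/2}))$, when unwound via the orbit decomposition $\BO_1 \sqcup \BO_2$ used in Section \ref{ssec_inproof}, equals $\RK_1(z) + \RK_2(z)$ up to the normalizing powers of $|z|$ (the factor $|z|^2$ coming from $|\det(z\RI_2)| = |z|^2$, accounting for the shift between $|z|^{2s}$ and $|z|^{2s+2}$). The three summands in $\mathrm{KTF}$ — the torus sum, the first irregular distribution $\CO_{u,1,\BA}$, and the second irregular distribution $\CO_{u,2,\BA}$ — are exactly the three pieces that the computation in the indirect proof identifies with the orbits $\RT(k)$, the rank-one cell, and the central elements respectively. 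So the term $\CI(s)$ genuinely is $\int_{[\RZ]}(\RK_1(z) + \RK_2(z)) \omega(z) |z|^{2s+2} \ud^\times z$, as asserted in the excerpt.

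Next I would invoke the bounds established just above the proposition: using Lemma \ref{lem_gj} applied coordinate-by-coordinate to the Fourier transforms $\BF_{\psi,4}\BF_{\psi,2}\Phi$ and $\BF_{\psi,2}\Phi$, one gets
\[
\RK_1(z) \ll_{N_1, N_2} |z|^{-3} \min\{|z|^{-1}, |z|^{-N_1}\}(1 + \min\{|z|, |z|^{N_2}\}), \qquad \RK_2(z) \ll_{N_1,N_2} |z|^{-1}\min\{|z|^{-1},|z|^{-N_1}\}\min\{|z|, |z|^{N_2}\}.
\]
On the region $|z| \geq 1$ we take $N_2 = 0$ (so $\min\{|z|,|z|^{N_2}\} = 1$) and $N_1$ arbitrarily large, giving $\RK_1(z) \ll_{N_1} |z|^{-3-N_1}$ and $\RK_2(z) \ll_{N_1} |z|^{-1-N_1}$. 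Hence for any real $\sigma$, choosing $N_1 > 2\sigma$, the integrand $|\RK_i(z) \omega(z)| |z|^{2\sigma+2}$ is $\ll |z|^{2\sigma + 1 - N_1}$, which is integrable over $\{|z| \geq 1\}$ against $\ud^\times z$. This bound is locally uniform in $s$ (it only depends on $\Re(s) = \sigma$), so the integral defines a holomorphic function on every vertical strip, hence an entire function; holomorphy is confirmed by Morera or by the dominated-convergence justification of differentiation under the integral.

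I do not anticipate a serious obstacle here: the real work — the two applications of Poisson summation turning $\RK_1, \RK_2$ into rapidly decaying sums of Fourier transforms, and the Godement-Jacquet-type estimate Lemma \ref{lem_gj} — has already been carried out in the lines preceding the proposition. The only mild care needed is bookkeeping: making sure the powers of $|z|$ from the pullback normalization $|\det g| \Phi(g)$, from the twisted push-forward, and from the change of variables in the Poisson summation steps are tracked correctly, and confirming that when $|z| \geq 1$ we are free to take $N_1$ as large as we like while setting the "$\min$ with a positive power" factors to their $|z| \geq 1$ values. That is genuinely the crux of the argument, but it is routine; the substantive content lives in the estimates already recorded.
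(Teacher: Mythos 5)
Your proposal is correct and follows essentially the same route as the paper: decompose $\mathrm{KTF}(\RR_z(\Phi(x)(\ud^+x)^{\frac{1}{2}}))$ into the contributions $\RK_1(z)+\RK_2(z)$ of the orbits $\BO_1,\BO_2$, apply Poisson summation in the relevant matrix entries, invoke Lemma \ref{lem_gj} to get decay in $|z|$ faster than any polynomial on $|z|\geq 1$, and conclude locally uniform absolute convergence and hence holomorphy in $s$. The only nitpick is the step ``take $N_2=0$'': Lemma \ref{lem_gj} requires $N>1$, so one should instead bound $\min\{|z|,|z|^{N_2}\}\leq |z|$ for $|z|\geq 1$ and absorb this into the arbitrarily large $N_1$, which leaves the conclusion unchanged.
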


When $\Re(s)>1$, by applying Theorem \ref{thm_psf}, we have \[\mathrm{KTF}(\RR_z(\Phi(x)(\ud x)^{\frac{1}{2}}))=\mathrm{KTF}(   \widehat{\RR_z(\Phi)}(x)(\ud x)^{\frac{1}{2}}   ),\] and note that
\[
\widehat{\RR_z(\Phi)}(x)=|z|^{-4}\widehat{\Phi}(z^{-1}x),\;\forall x\in\Mat_2(\BA),z\in Z(\BA).
\] 
Hence we have
\[
\widehat{\RR_z(\Phi)}(x)(\ud x)^{\frac{1}{2}} =\RR_z    (\widehat{\Phi}(x)(\ud^+x )^{\frac{1}{2}})|z|^{-2}.
\]
Therefore,
\[
\int_{|z|<1}\mathrm{KTF}(\RR_z(\Phi(x)(\ud^+x)^{\frac{1}{2}}  ))\omega(z)|z|^{2s}\ud^{\times}z=\int_{|z|<1}\mathrm{KTF}(\RR_z(\widehat{\Phi}(x)(\ud^+x)^{\frac{1}{2}}  ))\omega(z)|z|^{2s}\ud^{\times}z,
\]
which is absolutely convergent for all $s\in\BC$ and defines an entire function according to the above Proposition and the $\GL_2\times\GL_2$ action on $\Mat_2^{\vee}$ that we defined in Section \ref{sec_local}. Therefore we have
\begin{prp}
	$\CI(s)$ admits an analytic continuation to an entire function on $s\in\BC$.
\end{prp}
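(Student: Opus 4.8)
The plan is to cut the defining integral at $|z|=1$ and to treat the two ranges separately. Write
\[
\CI(s)=\int_{[\RZ],\,|z|\geq 1}\mathrm{KTF}\bigl(\RR_z(\Phi(x)(\ud^+x)^{\frac{1}{2}})\bigr)\omega(z)|z|^{2s}\ud^{\times}z+\int_{[\RZ],\,|z|<1}\mathrm{KTF}\bigl(\RR_z(\Phi(x)(\ud^+x)^{\frac{1}{2}})\bigr)\omega(z)|z|^{2s}\ud^{\times}z ,
\]
so it suffices to show each piece extends to an entire function of $s$. The range $|z|\geq 1$ is exactly Proposition~\ref{prp_acofI}: expanding the geometric side into $\RK_1(z)+\RK_2(z)$ as in Section~\ref{ssec_geo}, two applications of the one-dimensional Poisson summation formula together with the elementary bound of Lemma~\ref{lem_gj} force $\RK_1(z)$ and $\RK_2(z)$ to decay faster than any power of $|z|$ as $|z|\to\infty$, so the integral over $|z|\geq 1$ converges absolutely and locally uniformly on all of $\BC$ and hence is entire.

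For the range $|z|<1$ the idea is to transport the integrand to the dual space $\Mat_2^{\vee}$ using the Poisson summation formula on orbital integrals. For each fixed $z$ the function $\RR_z\Phi$ is again Schwartz on $\Mat_2(\BA)$, so its twisted push-forward lies in $\CCD^-_{L(\std,\frac{1}{2})}(\RN(\BA),\psi\backslash\RG(\BA)/\RN(\BA),\psi)$; thus Theorem~\ref{thm_psf} applies pointwise in $z$ and, combined with the commutative diagram of Theorem~\ref{thm_hankel}, yields
\[
\mathrm{KTF}\bigl(\RR_z(\Phi(x)(\ud^+x)^{\frac{1}{2}})\bigr)=\mathrm{KTF}\bigl(\widehat{\RR_z(\Phi)}(x)(\ud x)^{\frac{1}{2}}\bigr).
\]
From $\widehat{\RR_z(\Phi)}(x)=|z|^{-4}\widehat{\Phi}(z^{-1}x)$ and the scaling law for half-densities, the right-hand side equals $\RR_z\bigl(\widehat{\Phi}(x)(\ud^+x)^{\frac{1}{2}}\bigr)$ up to an explicit power of $|z|$, so the $|z|<1$ part of $\CI(s)$ becomes an integral of exactly the same shape as $\CI(s)$ itself, now built from $\widehat{\Phi}\in\CS(\Mat_2^{\vee}(\BA))$. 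Because $\RG$ is embedded in $\Mat_2^{\vee}$ through the inverse map $g\mapsto g^{-1}$, the region $|z|<1$ on the $\Mat_2^{\vee}$-side corresponds to the region $|z|\geq 1$ in the original normalization; hence the $\Mat_2^{\vee}$-analogue of Proposition~\ref{prp_acofI}, proved by the same estimates with the $\GL_2\times\GL_2$-action on $\Mat_2^{\vee}$ recalled in Section~\ref{sec_local}, shows that this piece is entire as well. Summing the two contributions gives the analytic continuation of $\CI(s)$ to an entire function on $\BC$.

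I expect the main difficulty to be the pointwise use of Theorem~\ref{thm_psf} under the integral sign: one must check that the termwise sums defining $\mathrm{KTF}$ are dominated uniformly enough over $|z|<1$ to interchange them with the $\ud^{\times}z$-integration, and one must keep careful track of the powers of $|z|$ produced by the Fourier transform and by passing between $(\ud^+x)^{\frac{1}{2}}$, $(\ud g)^{\frac{1}{2}}$ and the twisted push-forward. These are exactly the places where the normalizations fixed in Section~\ref{sec_local} and the absolute-convergence estimates of Lemma~\ref{lem_es} and Lemma~\ref{lem_gj} are needed.
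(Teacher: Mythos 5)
Your proposal is correct and follows essentially the same route as the paper: split the $z$-integral at $|z|=1$, handle $|z|\geq 1$ by Proposition \ref{prp_acofI} (Poisson summation in one variable plus Lemma \ref{lem_gj}), and handle $|z|<1$ by applying Theorem \ref{thm_psf} together with $\widehat{\RR_z(\Phi)}(x)=|z|^{-4}\widehat{\Phi}(z^{-1}x)$ to convert it into an integral of the same shape built from $\widehat{\Phi}$, which is entire by the $\Mat_2^{\vee}$-analogue of Proposition \ref{prp_acofI} via the inverse-map embedding. The caveats you flag (tracking powers of $|z|$ and the interchange of summation with the $\ud^{\times}z$-integral) are exactly the points the paper settles with its chosen normalizations and Lemma \ref{lem_es}, so there is no gap.
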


\subsection{Analytic Continuation of $\CI_{\Eis}$}
In this section we will prove the analytic continuation of $\CI_{\Eis}$, for which it suffices to show the analytic continuation of $\CI_{\chi,e_1,e_2}(s)$ for each triple $(\chi,e_1,e_2)$. Write $z$ for the coordinate of the inner integral of $\tau$, i.e.,
\begin{align*}
\CI_{\chi,e_1,e_2}(s)&=\int_{-\infty}^{\infty}\CZ\left(s+\frac{1}{2},\Phi,\beta_{i\tau}(e_2,e_1^{\vee})\right)\BW^{\psi^{-1}}(i\tau,e_1)(\RI_2)\BW^{\psi}(-i\tau,e_2^{\vee})(\RI_2)  \frac{\ud \tau}{4\pi}\\
&=\int_{\Re(z)=0}\CZ\left(s+\frac{1}{2},\Phi,\beta_{z}(e_2,e_1^{\vee})\right)\BW^{\psi^{-1}}(z,e_1)(\RI_2)\BW^{\psi}(-z,e_2^{\vee})(\RI_2)  \frac{\ud z}{4\pi i}.
\end{align*}
According to \cite[proof of Theorem 2.10]{Wu}, we have
\begin{align*}
	&\CZ\left(s+\frac{1}{2},\Phi,\beta_{z}(e_2,e_1^{\vee})\right)\\
	&=\int_{(\BA^{\times})^2}\BF_{\psi,2}(_{e_1^{\vee}}\Phi_{e_2})\left(\begin{pmatrix}
		t_1 &  0 \\0 & t_2
	\end{pmatrix} \right)|t_1|^{s+\frac{1}{2}+z}\chi(t_1)\omega\chi^{-1}(t_2)|t_2|^{s+\frac{1}{2}-z}\ud^{\times}t_1\ud^{\times}t_2,
\end{align*}
where
\begin{align*}
	_{e_1^{\vee}}\Phi_{e_2}(g):=\int_{\RK^2}e_1^{\vee}(k_1)\Phi(k_1^{-1}gk_2)e_2(k_2)\ud k_1\ud k_2.
\end{align*}
Then according to \cite{Tat67}, we know $\CZ\left(s+\frac{1}{2},\Phi,\beta_z(e_2,e_1^{\vee}) \right)$ is a meromorphic function by itself. Then, we need to consider the case that the zeta integrals have poles on the line $\Re(z)=0$, which could occur only when $\chi=1$ and $\chi=\omega$. 

For any $0<\sigma<\frac{1}{2}$, and consider the case when $\frac{1}{2}<\Re(s)<\frac{1}{2}+\sigma$. According to Stirling's formula, we know $\displaystyle{\CZ\left(s+  \frac{1}{2} ,\Phi,\beta_z(e_2,e_1^{\vee}) \right)}$ is of rapid decay when $\Re(z)\rightarrow\infty$ for $0<\Re(z)<\sigma$. According to \cite[Lemma 1, Page8]{JZ87}, the Whittaker functions at value $\RI_2$ is uniformly bounded on vertical strips, we can move the $z$-integral from $\Re(z)=0$ to $\Re(z)=\sigma$ and pick up the poles, we obtain
\begin{align*}
	&\int_{\Re(z)=0}\CZ\left( s+\frac{1}{2},\Phi,\beta_{z}(e_2,e_1^{\vee})   \right)\BW^{\psi^{-1}}(z)(\RI_2)\BW^{\psi}(-z)(\RI_2)\frac{\ud z}{4\pi i}\\
	&=\int_{\Re(z)=\sigma}\CZ\left( s+\frac{1}{2},\Phi,\beta_{z}(e_2,e_1^{\vee})   \right)\BW^{\psi^{-1}}(z,e_1)(\RI_2)\BW^{\psi}(-z,e_2^{\vee})(\RI_2)\frac{\ud z}{4\pi i}\\
	&-\frac{1}{2}\sum_{0<\Re(z)<\sigma,s+ \frac{1}{2}\pm z=0,1}\Res\CZ\left( s+\frac{1}{2},\Phi,\beta_{z}(e_2,e_1^{\vee})   \right)\BW^{\psi^{-1}}(z,e_1)(\RI_2)\BW^{\psi}(-z,e_2^{\vee})(\RI_2).
\end{align*}
Note that the only possible residue in this region occurs when $s+\frac{1}{2}-z=1$, which implies $0<\Re(z)<\sigma$ automatically, and hence the residue part is just
\[
-\frac{1}{2} \Res_{z=s-\frac{1}{2}} \CZ\left(s+\frac{1}{2},\Phi,\beta_{z}(e_2,e_1^{\vee})\right)\cdot \BW^{\psi^{-1}}\left(s-\frac{1}{2},e_1\right)(\RI_2)\BW^{\psi}\left(\frac{1}{2}-s,e_2^{\vee} \right)(\RI_2).
\]
Since there is no pole on the line $\Re(z)=\sigma$ when $\frac{1}{2}-\sigma<\Re(s)<\frac{1}{2}+\sigma$, and the integral is absolutely convergent. So we can extend $\CI_{\chi,e_1,e_2}(s)$ to $\Re(s)>0$. 

When $\frac{1}{2}-\sigma<\Re(s)<\frac{1}{2}$, let us then move the $z$-integral to $\Re(z)=0$ again and pick up the poles, we have
\begin{align*}
	&\int_{\Re(z)=\sigma}\CZ\left( s+\frac{1}{2},\Phi,\beta_{z}(e_2,e_1^{\vee})   \right)\BW^{\psi^{-1}}(z,e_1)(\RI_2)\BW^{\psi}(-z,e_2^{\vee})(\RI_2)\frac{\ud z}{4\pi i}\\
	&-\Res_{z=s-\frac{1}{2}} \CZ\left(s+\frac{1}{2},\Phi,\beta_{z}(e_2,e_1^{\vee})\right)\BW^{\psi^{-1}}\left(s-\frac{1}{2},e_1\right)(\RI_2)\BW^{\psi}\left(\frac{1}{2}-s,e_2^{\vee} \right)(\RI_2)\\
	&=\int_{\Re(z)=0}\CZ\left( s+\frac{1}{2},\Phi,\beta_{z}(e_2,e_1^{\vee})   \right)\BW^{\psi^{-1}}(z,e_1)(\RI_2)\BW^{\psi}(-z,e_2^{\vee})(\RI_2)\frac{\ud z}{4\pi i}\\
	&+\frac{1}{2}\Res_{z=\frac{1}{2}-s}\CZ\left( s+\frac{1}{2},\Phi,\beta_{z}(e_2,e_1^{\vee})   \right)\BW^{\psi^{-1}}\left(\frac{1}{2}-s,e_1 \right)(\RI_2)\BW^{\psi}\left(s-\frac{1}{2},e_2^{\vee}\right)(\RI_2) \\
	&-\frac{1}{2}\Res_{z=s-\frac{1}{2}} \CZ\left(s+\frac{1}{2},\Phi,\beta_{z}(e_2,e_1^{\vee})\right) \BW^{\psi^{-1}}\left(s-\frac{1}{2},e_1\right)(\RI_2)\BW^{\psi}\left(\frac{1}{2}-s,e_2^{\vee} \right)(\RI_2).\\
\end{align*}
Note that this can be analytically extended to $     -\frac{1}{2}<\Re(s)<\frac{1}{2}$ since there is pole on the line $\Re(z)=0$ in this region. 

Next consider the case that $\Re(s)<-\frac{1}{2}+\sigma$, let us move the integral from $\Re(z)=0$ to $\Re(z)=\sigma$ again, we have
\begin{align*}
	&\int_{\Re(z)=0}\CZ\left( s+\frac{1}{2},\Phi,\beta_{z}(e_2,e_1^{\vee})   \right)\BW^{\psi^{-1}}(z,e_1)(\RI_2)\BW^{\psi}(-z,e_2^{\vee})(\RI_2)\frac{\ud z}{4\pi i}\\ 
	&+\frac{1}{2}\Res_{z=\frac{1}{2}-s}\CZ\left( s+\frac{1}{2},\Phi,\beta_{z}(e_2,e_1^{\vee})   \right)\BW^{\psi^{-1}}\left(\frac{1}{2}-s,e_1\right)(\RI_2)\BW^{\psi}\left(s-\frac{1}{2},e_2^{\vee}\right)(\RI_2) \\
	&-\frac{1}{2}\Res_{z=s-\frac{1}{2}} \CZ\left(s+\frac{1}{2},\Phi,\beta_{z}(e_2,e_1^{\vee})\right) \BW^{\psi^{-1}}\left(s-\frac{1}{2},e_1\right)(\RI_2)\BW^{\psi}\left(\frac{1}{2}-s,e_2^{\vee} \right)(\RI_2)\\ 
	&=\int_{\Re(z)=\sigma}\CZ\left( s+\frac{1}{2},\Phi,\beta_{z}(e_2,e_1^{\vee})   \right)\BW^{\psi^{-1}}(z,e_1)(\RI_2)\BW^{\psi}(-z,e_2^{\vee})(\RI_2)\frac{\ud z}{4\pi i}\\ 
	&-\frac{1}{2}\Res_{z=s+\frac{1}{2}}\CZ\left( s+\frac{1}{2},\Phi,\beta_{z}(e_2,e_1^{\vee})   \right)\BW^{\psi^{-1}}\left(s+\frac{1}{2},e_1\right)(\RI_2)\BW^{\psi}\left(-s-\frac{1}{2},e_2^{\vee}\right)(\RI_2)\\
	&+\frac{1}{2}\Res_{z=\frac{1}{2}-s}\CZ\left( s+\frac{1}{2},\Phi,\beta_{z}(e_2,e_1^{\vee})   \right)\BW^{\psi^{-1}}\left(\frac{1}{2}-s,e_1)(\RI_2)\BW^{\psi}(s-\frac{1}{2},e_2^{\vee}\right)(\RI_2) \\
	&-\frac{1}{2}\Res_{z=s-\frac{1}{2}} \CZ\left(s+\frac{1}{2},\Phi,\beta_{z}(e_2,e_1^{\vee})\right) \BW^{\psi^{-1}}\left(s-\frac{1}{2},e_1\right)(\RI_2)\BW^{\psi}\left(\frac{1}{2}-s,e_2^{\vee} \right)(\RI_2),
\end{align*}
which can be analytically continued to $-\sigma-\frac{1}{2}<\Re(s)<-\frac{1}{2}+\sigma$. When $-\sigma-\frac{1}{2}<\Re(s)<-\frac{1}{2}$, we could move the integral to $\Re(z)=0$ again, and we have finally
\begin{align*}
	&\int_{\Re(z)=\sigma}\CZ\left( s+\frac{1}{2},\Phi,\beta_{z}(e_2,e_1^{\vee})   \right)\BW^{\psi^{-1}}(z,e_1)(\RI_2)\BW^{\psi}(-z,e_2^{\vee})(\RI_2)\frac{\ud z}{4\pi i}\\ 
	&-\frac{1}{2}\Res_{z=s+\frac{1}{2}}\CZ\left( s+\frac{1}{2},\Phi,\beta_{z}(e_2,e_1^{\vee})   \right)\BW^{\psi^{-1}}\left(s+\frac{1}{2},e_1\right)(\RI_2)\BW^{\psi}\left(-s-\frac{1}{2},e_2^{\vee}\right)(\RI_2)\\
	&+\frac{1}{2}\Res_{z=\frac{1}{2}-s}\CZ\left( s+\frac{1}{2},\Phi,\beta_{z}(e_2,e_1^{\vee})   \right)\BW^{\psi^{-1}}\left(\frac{1}{2}-s,e_1\right)(\RI_2)\BW^{\psi}\left(s-\frac{1}{2},e_2^{\vee} \right)(\RI_2) \\
	&-\frac{1}{2}\Res_{z=s-\frac{1}{2}} \CZ\left(s+\frac{1}{2},\Phi,\beta_{z}(e_2,e_1^{\vee})\right)\BW^{\psi^{-1}}\left(s-\frac{1}{2},e_1\right)(\RI_2)\BW^{\psi}\left(\frac{1}{2}-s,e_2^{\vee} \right)(\RI_2) \\
	&=\int_{\Re(z)=0}\CZ\left( s+\frac{1}{2},\Phi,\beta_{z}(e_2,e_1^{\vee})   \right)\BW^{\psi^{-1}}(z,e_1)(\RI_2)\BW^{\psi}(-z,e_2^{\vee})(\RI_2)\frac{\ud z}{4\pi i}\\ 
	&+\frac{1}{2}\Res_{z=-s-\frac{1}{2}}\CZ\left( s+\frac{1}{2},\Phi,\beta_{z}(e_2,e_1^{\vee})   \right)\BW^{\psi^{-1}}\left(-s-\frac{1}{2},e_1\right)(\RI_2)\BW^{\psi}\left(s+\frac{1}{2},e_2^{\vee}\right)(\RI_2)\\
	&-\frac{1}{2}\Res_{z=s+\frac{1}{2}}\CZ\left( s+\frac{1}{2},\Phi,\beta_{z}(e_2,e_1^{\vee})   \right)\BW^{\psi^{-1}}\left(s+\frac{1}{2},e_1\right)(\RI_2)\BW^{\psi}\left(-s-\frac{1}{2},e_2^{\vee}\right)(\RI_2)\\
	&+\frac{1}{2}\Res_{z=\frac{1}{2}-s}\CZ\left( s+\frac{1}{2},\Phi,\beta_{z}(e_2,e_1^{\vee})   \right)\BW^{\psi^{-1}}\left(\frac{1}{2}-s,e_1\right)(\RI_2)\BW^{\psi}\left(s-\frac{1}{2},e_2^{\vee}\right)(\RI_2) \\
	&-\frac{1}{2}\Res_{z=s-\frac{1}{2}} \CZ\left(s+\frac{1}{2},\Phi,\beta_{z}(e_2,e_1^{\vee})\right)\BW^{\psi^{-1}}\left(s-\frac{1}{2},e_1\right)(\RI_2)\BW^{\psi}\left(\frac{1}{2}-s,e_2^{\vee} \right)(\RI_2). \\\end{align*}
And this can be extended to all $\Re(s)<-\frac{1}{2}$. To summarize, we have
\begin{prp}
	$\CI_{\Eis}(s)$ can be analytically extended to all $s\in\BC$. Moreover, for each $\chi,e_2,e_1$, the analytic continuation of 
	\[
	\CI_{\chi,e_1,e_2}(s)=\int_{\Re(z)=0}\CZ\left(s+\frac{1}{2},\Phi,\beta_{z}(e_2,e_1^{\vee})\right)\BW^{\psi^{-1}}(z,e_1)(1)\BW^{\psi}(-z,e_2^{\vee})(1)  \frac{\ud z}{4\pi i}, \;\Re(s)>\frac{1}{2}
	\]
	has the following expressions:
	\begin{itemize}
		\item $\displaystyle{0<\Re(s)\leq \frac{1}{2}}$:
		\begin{align*}
			&\CI_{\chi,e_1,e_2}=\int_{\Re(z)=\sigma}\CZ\left( s+\frac{1}{2},\Phi,\beta_{z}(e_2,e_1^{\vee})   \right)\BW^{\psi^{-1}}(z,e_1)(\RI_2)\BW^{\psi}(-z,e_2^{\vee})(\RI_2)\frac{\ud z}{4\pi i}\\
	&-\frac{1}{2}\Res_{z=s-\frac{1}{2}} \CZ\left(s+\frac{1}{2},\Phi,\beta_{z}(e_2,e_1^{\vee})\right) \BW^{\psi^{-1}}\left(s-\frac{1}{2},e_1\right)(\RI_2)\BW^{\psi}\left(\frac{1}{2}-s,e_2^{\vee} \right)(\RI_2),
		\end{align*}
		where $\displaystyle{0<\sigma<\frac{1}{2}}$ is such that  $\displaystyle{\frac{1}{2}-\sigma <\Re(s)}$.
		\item $\displaystyle{-\frac{1}{2}<\Re(s)\leq 0}$:
		\begin{align*}
			&\CI_{\chi,e_1,e_2}=\int_{\Re(z)=0}\CZ\left( s+\frac{1}{2},\Phi,\beta_{z}(e_2,e_1^{\vee})   \right)\BW^{\psi^{-1}}(z,e_1)(\RI_2)\BW^{\psi}(-z,e_2^{\vee})(\RI_2)\frac{\ud z}{4\pi i}\\
	&+\frac{1}{2}\Res_{z=\frac{1}{2}-s}\CZ\left( s+\frac{1}{2},\Phi,\beta_{z}(e_2,e_1^{\vee})   \right)\BW^{\psi^{-1}}\left(\frac{1}{2}-s,e_1 \right)(\RI_2)\BW^{\psi}\left(s-\frac{1}{2},e_2^{\vee}\right)(\RI_2) \\
	&-\frac{1}{2}\Res_{z=s-\frac{1}{2}} \CZ\left(s+\frac{1}{2},\Phi,\beta_{z}(e_2,e_1^{\vee})\right) \BW^{\psi^{-1}}\left(s-\frac{1}{2},e_1\right)(\RI_2)\BW^{\psi}\left(\frac{1}{2}-s,e_2^{\vee} \right)(\RI_2).\\
		\end{align*}
		\item $\displaystyle{-1<\Re(s)\leq -\frac{1}{2}}$:
		\begin{align*}
			&\CI_{\chi,e_1,e_2}=\int_{\Re(z)=\sigma}\CZ\left( s+\frac{1}{2},\Phi,\beta_{z}(e_2,e_1^{\vee})   \right)\BW^{\psi^{-1}}(z,e_1)(\RI_2)\BW^{\psi}(-z,e_2^{\vee})(\RI_2)\frac{\ud z}{4\pi i}\\ 
	&-\frac{1}{2}\Res_{z=s+\frac{1}{2}}\CZ\left( s+\frac{1}{2},\Phi,\beta_{z}(e_2,e_1^{\vee})   \right)\BW^{\psi^{-1}}\left(s+\frac{1}{2},e_1\right)(\RI_2)\BW^{\psi}\left(-s-\frac{1}{2},e_2^{\vee}\right)(\RI_2)\\
	&+\frac{1}{2}\Res_{z=\frac{1}{2}-s}\CZ\left( s+\frac{1}{2},\Phi,\beta_{z}(e_2,e_1^{\vee})   \right)\BW^{\psi^{-1}}\left(\frac{1}{2}-s,e_1)(\RI_2)\BW^{\psi}(s-\frac{1}{2},e_2^{\vee}\right)(\RI_2) \\
	&-\frac{1}{2}\Res_{z=s-\frac{1}{2}} \CZ\left(s+\frac{1}{2},\Phi,\beta_{z}(e_2,e_1^{\vee})\right) \BW^{\psi^{-1}}\left(s-\frac{1}{2},e_1\right)(\RI_2)\BW^{\psi}\left(\frac{1}{2}-s,e_2^{\vee} \right)(\RI_2),	\end{align*}
	where $\displaystyle{0<\sigma<\frac{1}{2}}$ is such that $\displaystyle{-\frac{1}{2}-\sigma}<\Re(s)$.
		\item $\Re(s)\leq -1$:
		\begin{align*}
			&\CI_{\chi,e_1,e_2}(s)=\int_{\Re(z)=0}\CZ\left( s+\frac{1}{2},\Phi,\beta_{z}(e_2,e_1^{\vee})   \right)\BW^{\psi^{-1}}(z,e_1)(\RI_2)\BW^{\psi}(-z,e_2^{\vee})(\RI_2)\frac{\ud z}{4\pi i}\\ 
	&+\frac{1}{2}\Res_{z=-s-\frac{1}{2}}\CZ\left( s+\frac{1}{2},\Phi,\beta_{z}(e_2,e_1^{\vee})   \right)\BW^{\psi^{-1}}\left(-s-\frac{1}{2},e_1\right)(\RI_2)\BW^{\psi}\left(s+\frac{1}{2},e_2^{\vee}\right)(\RI_2)\\
	&-\frac{1}{2}\Res_{z=s+\frac{1}{2}}\CZ\left( s+\frac{1}{2},\Phi,\beta_{z}(e_2,e_1^{\vee})   \right)\BW^{\psi^{-1}}\left(s+\frac{1}{2},e_1\right)(\RI_2)\BW^{\psi}\left(-s-\frac{1}{2},e_2^{\vee}\right)(\RI_2)\\
	&+\frac{1}{2}\Res_{z=\frac{1}{2}-s}\CZ\left( s+\frac{1}{2},\Phi,\beta_{z}(e_2,e_1^{\vee})   \right)\BW^{\psi^{-1}}\left(\frac{1}{2}-s,e_1\right)(\RI_2)\BW^{\psi}\left(s-\frac{1}{2},e_2^{\vee}\right)(\RI_2) \\
	&-\frac{1}{2}\Res_{z=s-\frac{1}{2}} \CZ\left(s+\frac{1}{2},\Phi,\beta_{z}(e_2,e_1^{\vee})\right)\BW^{\psi^{-1}}\left(s-\frac{1}{2},e_1\right)(\RI_2)\BW^{\psi}\left(\frac{1}{2}-s,e_2^{\vee} \right)(\RI_2). \end{align*}
	\end{itemize}
\end{prp}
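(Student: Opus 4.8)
The plan is to reduce the statement to the analytic continuation of a single summand $\CI_{\chi,e_1,e_2}(s)$ — legitimate since $\Phi$ is chosen $\RK$-finite, so the sum over triples $(\chi,e_1,e_2)$ defining $\CI_{\Eis}$ is finite — and then to propagate the continuation by a finite chain of contour shifts of the inner $z$-integral.

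First I would pin down the meromorphic structure of the inner factor. By the computation recalled from \cite[proof of Theorem 2.10]{Wu}, the zeta integral $\CZ\!\left(s+\tfrac12,\Phi,\beta_z(e_2,e_1^{\vee})\right)$ equals an integral over $(\BA^{\times})^2$ of $\BF_{\psi,2}({}_{e_1^{\vee}}\Phi_{e_2})\!\left(\mathrm{diag}(t_1,t_2)\right)$ against $|t_1|^{s+\frac12+z}\chi(t_1)$ and $|t_2|^{s+\frac12-z}\omega\chi^{-1}(t_2)$, i.e.\ a product of two $\GL_1$ Tate integrals. By \cite{Tat67} each factor continues meromorphically with at most simple poles, the $t_1$-factor only when $\chi=1$ and then only at $s+\tfrac12+z\in\{0,1\}$, the $t_2$-factor only when $\chi=\omega$ and then only at $s+\tfrac12-z\in\{0,1\}$. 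In particular the integrand of $\CI_{\chi,e_1,e_2}$ has no pole on $\Re(z)=0$ when $\Re(s)>\tfrac12$, and for such $s$ the integral converges absolutely: Stirling's formula forces rapid decay of the Tate $\Gamma$-factors as $|\Im(z)|\to\infty$ uniformly on vertical strips, while by \cite[Lemma 1, p.\,8]{JZ87} the Whittaker values $\BW^{\psi^{-1}}(z,e_1)(\RI_2)$ and $\BW^{\psi}(-z,e_2^{\vee})(\RI_2)$ are bounded on such strips.

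Next I would run the contour shifting. Starting from $\Re(z)=0$ with $\Re(s)>\tfrac12$, push the contour to $\Re(z)=\sigma$ for a fixed $0<\sigma<\tfrac12$; as $\Re(s)$ decreases across $\tfrac12$ the only pole swept is $z=s-\tfrac12$ (the $t_2$-pole at $s+\tfrac12-z=1$), so the integral over $\Re(z)=\sigma$ minus $\tfrac12$ times the residue at $z=s-\tfrac12$ continues $\CI_{\chi,e_1,e_2}$ to $\Re(s)>\tfrac12-\sigma$. Iterating in the same fashion: moving back to $\Re(z)=0$ as $\Re(s)$ crosses $0$ sweeps $z=\tfrac12-s$ (the $t_1$-pole at $s+\tfrac12+z=1$); moving to $\Re(z)=\sigma$ as $\Re(s)$ crosses $-\tfrac12$ sweeps $z=s+\tfrac12$ (the $t_2$-pole at $s+\tfrac12-z=0$); moving to $\Re(z)=0$ as $\Re(s)$ crosses $-1$ sweeps $z=-s-\tfrac12$ (the $t_1$-pole at $s+\tfrac12+z=0$). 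Each residue term is a product of a residue of $\CZ$ with two Whittaker values at half-integer-shifted arguments, all meromorphic in $s$, and on each open strip the surviving contour integral is absolutely convergent by the Stirling-plus-Whittaker-bound estimate above. Accumulating the residues picked up in each strip produces exactly the four displayed formulas; since consecutive formulas agree on overlaps, they glue to one analytic continuation of $\CI_{\chi,e_1,e_2}(s)$, hence of $\CI_{\Eis}(s)$, to all of $\BC$.

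The main obstacle is the bookkeeping of the shifts: one must check at every stage that (i) for $s$ in the relevant open strip no pole lies on the contour, (ii) exactly the asserted pole — and no other among the four candidate loci $z=\pm(s\pm\tfrac12)$ — is crossed, with the orientation producing the factor $\tfrac12$, and (iii) that pole is genuinely simple, so the residue splits cleanly into a $\CZ$-residue times two Whittaker values. The analytic underpinning of the contour moves themselves — applying the residue theorem on tall truncated rectangles and letting the height tend to infinity — is then routine once the uniform vertical decay of the Tate $\Gamma$-factors and the vertical-strip boundedness of the Whittaker values are in hand, and this is precisely where \cite{JZ87} and Stirling's formula do the work.
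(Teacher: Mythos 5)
Your proposal is correct and follows essentially the same route as the paper: you write $\CZ\left(s+\tfrac{1}{2},\Phi,\beta_z(e_2,e_1^{\vee})\right)$ as a product of two $\GL_1$ Tate integrals (so the only possible poles are at $z=\pm\left(s\pm\tfrac{1}{2}\right)$, occurring only for $\chi=1$ or $\chi=\omega$), justify the contour moves by Stirling decay together with the vertical-strip boundedness of the Whittaker values from \cite{JZ87}, and perform the same zig-zag shifts $0\to\sigma\to 0\to\sigma\to 0$, picking up exactly the four residues with the factor $\tfrac{1}{2}$ coming from the measure $\frac{\ud z}{4\pi i}$. The only cosmetic slip is the timing of the second and fourth shifts: they are carried out while $\tfrac{1}{2}-\sigma<\Re(s)<\tfrac{1}{2}$ and $-\tfrac{1}{2}-\sigma<\Re(s)<-\tfrac{1}{2}$ (with $\sigma$ chosen relative to $s$), not literally "as $\Re(s)$ crosses $0$ or $-1$" --- the strip boundaries at $0$ and $-1$ in the statement are merely where the contour-$\sigma$ formulas are chosen to be displayed, so this does not affect the validity of your argument.
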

We will conclude this section with a more careful study of the residues occurring in the above analytic continuation of $\CI_{\chi,e_1,e_2}(s)$. Let us start with the following definitions from \cite[Section 4]{Wu}:
\begin{dfn}
	For any Schwartz function $\Phi\in\CF(\Mat_2(\BA))$, and a unitary character $\chi$ of $\BR_+k^{\times}\backslash\BA^{\times}$, we define for $g_1,g_2\in\GL_2(\BA)$ and $\Re(s)\gg1$,
	\[
	f_{\Phi}(g_1,g_2;\chi|\cdot|^{2s+\frac{3}{2}},*):=\int_{\BA^{\times}}\BF_{\psi,2}\RL(g_1)\RR(g_2)\Phi\left(\begin{pmatrix}
		t & 0 \\ 0  & 0
	\end{pmatrix} \right)\chi(t)|t|^{2s+1}\ud^{\times}t\cdot |\det g_1^{-1}g_2|^{s+1},
	\]
	and
	\[
	f_{\Phi}(g_1,g_2;\chi|\cdot|^{2s+\frac{3}{2}},\star):=\int_{\BA^{\times}}\BF_{\psi,2}\RL(g_1)\RR(g_2)\Phi\left(  \begin{pmatrix}
		0 & 0 \\ 0 & t
	\end{pmatrix}  \right)\chi(t)|t|^{2s+1}\ud^{\times}t\cdot|\det g_1^{-1}g_2|^{s+1}.
	\]
	Both of them can be analytically extended to $s\in\BC$. Note that $f_{\Phi}(g_1,g_2;\chi|\cdot|^{2s+\frac{3}{2}},*)$ defines a vector in the tensor product $\pi(\chi|\cdot|^{s+\frac{1}{2}},|\cdot|^{-s-\frac{1}{2}})\otimes\pi(\chi^{-1}|\cdot|^{-s-\frac{1}{2}},|\cdot|^{s+\frac{1}{2}})$. We denote by $(\CM\times f)_{\Phi}(g_1,g_2;\chi|\cdot|^{2s+\frac{3}{2}},*)$ the image under the intertwining operator with respect to the first variable. Similarly, $f_{\Phi}(g_1,g_2;\chi|\cdot|^{2s+\frac{3}{2}},\star)\in\pi(|\cdot|^{-s-\frac{1}{2}},\chi|\cdot|^{s+\frac{1}{2}})\otimes\pi(|\cdot|^{s+\frac{1}{2}},\chi^{-1}|\cdot|^{-s-\frac{1}{2}})$, and we denote $(f\times\CM)_{\Phi}(g_1,g_2;\chi|\cdot|^{2s+\frac{3}{2}},\star)$ its image under the intertwining operator with respect to the second variable.
	\end{dfn}

\begin{lem}\label{lem_mf=fm}
	We have
	\[
	(\CM\times f)_{\Phi}(g_1,g_2;\chi|\cdot|^{2s+\frac{3}{2}},*)=(f\times\CM)_{\Phi}(g_1,g_2;\chi|\cdot|^{2s+\frac{3}{2}},\star).
	\]
\end{lem}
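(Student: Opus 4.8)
The plan is to unfold both sides into explicit triple integrals against $\Phi$ and then identify them by a single linear change of variables in the unipotent and Mellin parameters.

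First I would use the standard integral representation of the global intertwining operator on $\GL_2$: for $\Re(s)$ large and a flat section $h$ of the relevant principal series, $\CM h(g)=\int_{\BA}h(wn(x)g)\,\ud x$ with $w=\begin{pmatrix}0&1\\1&0\end{pmatrix}$ and $n(x)=\begin{pmatrix}1&x\\0&1\end{pmatrix}$. Applying this to the first variable of $f_\Phi(\,\cdot\,,g_2;\chi|\cdot|^{2s+\frac32},*)$ and to the second variable of $f_\Phi(g_1,\,\cdot\,;\chi|\cdot|^{2s+\frac32},\star)$, and writing out $\BF_{\psi,2}$ at the degenerate points $\mathrm{diag}(t,0)$ and $\mathrm{diag}(0,t)$ by means of the rank-one factorizations
\[
\begin{pmatrix}t&y\\0&0\end{pmatrix}=\begin{pmatrix}1\\0\end{pmatrix}(t,\,y),\qquad
\begin{pmatrix}0&y\\0&t\end{pmatrix}=\begin{pmatrix}y\\t\end{pmatrix}(0,\,1),
\]
together with $w\begin{pmatrix}1\\0\end{pmatrix}=\begin{pmatrix}0\\1\end{pmatrix}$, $(0,1)w=(1,0)$, and $|\det w|=1$ (so that the factor $|\det g_1^{-1}g_2|^{s+1}$ is unaffected by $\CM$), I obtain
\[
(\CM\times f)_\Phi(g_1,g_2;\chi|\cdot|^{2s+\frac32},*)=|\det g_1^{-1}g_2|^{s+1}\!\int_{\BA}\!\int_{\BA^\times}\!\int_{\BA}\Phi\!\left(g_1^{-1}\begin{pmatrix}-xt&-xy\\t&y\end{pmatrix}g_2\right)\ud y\;\chi(t)|t|^{2s+1}\ud^\times t\;\ud x,
\]
\[
(f\times\CM)_\Phi(g_1,g_2;\chi|\cdot|^{2s+\frac32},\star)=|\det g_1^{-1}g_2|^{s+1}\!\int_{\BA}\!\int_{\BA^\times}\!\int_{\BA}\Phi\!\left(g_1^{-1}\begin{pmatrix}y&xy\\t&xt\end{pmatrix}g_2\right)\ud y\;\chi(t)|t|^{2s+1}\ud^\times t\;\ud x.
\]
The interchange of the defining $\BA^\times$-integral with the $\BA$-integral coming from $\CM$ is legitimate for $\Re(s)\gg1$, since $\Phi\in\CF(\Mat_2(\BA))$ makes the combined integrals absolutely convergent there.

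Next I would match the two displays. For each fixed $t\in\BA^\times$ the linear automorphism $(x,y)\mapsto(x',y')=(y/t,\,-xt)$ of $\BA^2$ has determinant $1$, hence preserves $\ud x\,\ud y$, and a direct substitution gives
\[
\begin{pmatrix}-xt&-xy\\t&y\end{pmatrix}=\begin{pmatrix}y'&x'y'\\t&x't\end{pmatrix};
\]
since neither $t$ nor the weight $\chi(t)|t|^{2s+1}$ is touched, integrating over $t$ and renaming $(x',y')$ back to $(x,y)$ turns the first display into the second. This proves the identity for $\Re(s)\gg1$, and it then holds for all $s\in\BC$ by the meromorphic continuation of both sides already recorded in the definition above.

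The argument is essentially bookkeeping; the only points that need care are the consistent choice of a representative $w$ of the nontrivial Weyl element — any other choice changes the formulas only by a harmless reparametrization $x\mapsto\pm x$ of the unipotent integration — and the verification of absolute convergence in the range $\Re(s)\gg1$ that underlies both the Fubini interchange and the change of variables before analytic continuation is invoked. I do not anticipate a substantive obstacle beyond making these routine steps precise.
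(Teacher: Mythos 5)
Your proposal is correct and follows essentially the same route as the paper: both unfold the intertwining operator and the degenerate partial Fourier transforms into an explicit triple integral against $\Phi$ and then match the two sides by a $t$-dependent, measure-preserving substitution in the unipotent parameters (the paper phrases this as the matrix identity $n(-n)w^{-1}\begin{pmatrix}t&x\\0&0\end{pmatrix}=\begin{pmatrix}0&-nt\\0&t\end{pmatrix}wn(xt^{-1})$ after first reducing to $g_1=g_2=\RI_2$ via $\RL(g_1)\RR(g_2)\Phi$, while you carry general $g_1,g_2$ and write the substitution $(x,y)\mapsto(y/t,-xt)$ explicitly). The convergence/Fubini point you flag for $\Re(s)\gg1$ is the same one the paper leaves implicit, so there is no substantive difference.
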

\begin{proof}
	It suffices to prove the equation for $g_1=g_2=\RI_2$ since the general case follows from this special case by taking $\RL(g_1)\RR(g_2)\Phi$ instead of $\Phi$. By definition, when $\Re(s)\gg1$, we have
	\begin{align*}
		&(\CM\times f)_{\Phi}(\RI_2,\RI_2;\chi|\cdot|^{2s+\frac{3}{2}},*)\\
		&=\int_{\BA}\int_{\BA^{\times}} \int_{\BA}\Phi\left(  \begin{pmatrix}
			1 & -n \\ 0 & 1
		\end{pmatrix} w^{-1} \begin{pmatrix}
			t & x \\ 0 & 0 
		\end{pmatrix} \right)\ud x \chi(t)|t|^{2s+1}\ud^{\times}t\ud n\\
		&=\int_{\BA}\int_{\BA^{\times}}\int_{\BA}\Phi\left(   \begin{pmatrix}
			0 & -nt \\ 0 & t
		\end{pmatrix} w\begin{pmatrix}
			1 & xt^{-1} \\ 0 & 1
		\end{pmatrix}    \right)\ud x \chi(t)|t|^{2s+1}\ud^{\times}t\ud n\\
		&=(f\times\CM)_{\Phi}(\RI_2,\RI_2;\chi|\cdot|^{2s+\frac{3}{2}},\star).
	\end{align*}
\end{proof}

\begin{prp}\label{prp_res}
	When $\Re(s)\ll1$, denote
	\[
	\widehat{\CI}_{\rb}(s):=\int_{\BA^{\times}}\int_{\BA^2}\widehat{\Phi}\left( \begin{pmatrix}
	1 & n_1 \\ 0 & 1
\end{pmatrix} \begin{pmatrix}
	0  & 0 \\z & 0 
\end{pmatrix}\begin{pmatrix}
	1 & n_2 \\ 0 & 1 
\end{pmatrix}  \right)\psi(n_1+n_2)\omega(z)|z|^{2-2s} \ud^{\times}z
	\]
	dual to $\CI(s)$, then we have
	\begin{itemize}
		\item [(1)]When $\Re(s)\gg 1$ and $\chi=\omega$, \begin{align*}
		\sum_{e_1,e_2\in\CB(\omega,1)} \Res_{z=s+\frac{1}{2}}   \CZ\left(s+\frac{1}{2},\Phi,\beta_z(e_2,e_1^{\vee}) \right)\BW^{\psi^{-1}}\left(s+\frac{1}{2},e_1\right)(\RI_2)\BW^{\psi}&\left( -s-\frac{1}{2},e_2^{\vee} \right)(\RI_2)\\
&=\CI_{\rb}(s).
	\end{align*}
	\item [(2)] When $\Re(s)\ll1$ and $\chi=\omega$, \begin{align*}
		\sum_{e_1,e_2\in\CB(\omega,1)}\Res_{z=s-\frac{1}{2}}\CZ\left(s+\frac{1}{2},\Phi,\beta_z(e_2,e_1^{\vee})\right)\BW^{\psi^{-1}}\left(s-\frac{1}{2},e_1\right)(\RI_2)\BW^{\psi}&\left(-s+\frac{1}{2},e_2^{\vee} \right)(\RI_2)\\
		&=-\widehat{\CI}_{\rb}(s).
	\end{align*}
	\item [(3)]
When $\Re(s)\gg1 $ and $\chi=1$, we have
\begin{align*}
	\sum_{e_1,e_2\in\CB(1,\omega^{-1})}\Res_{z=-s-\frac{1}{2}}\CZ\left( s+\frac{1}{2},\Phi,\beta_z(e_2,e_1^{\vee})  \right)\BW^{\psi^{-1}}\left(-s-\frac{1}{2} \right)(\RI_2)\BW^{\psi}&\left(s+\frac{1}{2} \right)(\RI_2)\\
	&=-\CI_{\rb}(s).
\end{align*}
\item [(4)] When $\Re(s)\ll 1$ and $\chi=1$, 
\begin{align*}
		\sum_{e_1,e_2\in\CB(\omega,1)}\Res_{z=-s+\frac{1}{2}}\CZ\left(s+\frac{1}{2},\Phi,\beta_z(e_2,e_1^{\vee})\right)\BW^{\psi^{-1}}\left(-s+\frac{1}{2},e_1\right)(\RI_2)\BW^{\psi}&\left(s-\frac{1}{2},e_2^{\vee} \right)(\RI_2)\\
		&=\widehat{\CI}_{\rb}(s).
		\end{align*}
		\end{itemize}
\end{prp}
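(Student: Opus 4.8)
\textbf{Proof proposal for Proposition \ref{prp_res}.}

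The plan is to prove all four identities by the same mechanism: computing the residue of the Godement--Jacquet zeta integral $\CZ(s+\tfrac12,\Phi,\beta_z(e_2,e_1^\vee))$ along the line where a pole occurs and recognizing the sum over an orthonormal basis $\CB(\chi,\omega\chi^{-1})$ as a spectral reconstruction of the rank-one Eisenstein kernel $\RR(x,y)$ introduced in Section \ref{ssec_geo}. Concretely, for (1) I would start from the formula for $\CZ(s+\tfrac12,\Phi,\beta_z(e_2,e_1^\vee))$ recalled in Section \ref{sec_cont}, namely the double Tate integral of $\BF_{\psi,2}({}_{e_1^\vee}\Phi_{e_2})$ in the variables $t_1,t_2$ with twists $\chi|\cdot|^{s+1/2+z}$ and $\omega\chi^{-1}|\cdot|^{s+1/2-z}$. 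When $\chi=\omega$, the $t_2$-integral is a Tate integral of $\BF_{\psi,2}(\cdots)$ restricted to the second diagonal entry against $|\cdot|^{s+1/2-z}$, which has a pole at $s+\tfrac12-z=1$, i.e. $z=s-\tfrac12$; dually the $t_1$-integral against $\omega|\cdot|^{s+1/2+z}$ produces a pole at $z=s+\tfrac12$ from the other Tate factor. Taking the residue there kills one integration and leaves exactly the quantity $f_\Phi(g_1,g_2;\omega|\cdot|^{2s+3/2},*)$ evaluated appropriately (this is where the auxiliary sections of the last Definition enter), paired against the Whittaker values; summing over the basis $\CB(\omega,1)$ reassembles $\RR(x,y)$ restricted to the $n_1,n_2$ Whittaker integral, which is by definition $\CI_\rb(s)=\int_{[\RN]\times[\RN]}\RK'(n_1,n_2)\psi^{-1}(n_1^{-1}n_2)\,\ud n_1\ud n_2$.

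For (2) and (4), I would run the same computation but take the residue at the \emph{other} pole of the relevant Tate factor — the one coming from $\BF_\psi$-duality, i.e. from applying the local/global functional equation of the one-dimensional zeta integrals of Tate. This is precisely where $\widehat\Phi$ appears: the residue of a Tate integral $\int \BF_{\psi,2}\Phi(\cdots)|t|^{a}\ud^\times t$ at the ``wrong'' end is a residue of $\int \Phi(\cdots)|t|^{a'}\ud^\times t$ after Fourier inversion, and this converts $\CI_\rb(s)$ into its dual $\widehat{\CI}_\rb(s)$ with the exponent $2s+2$ replaced by $2-2s$ and $\Phi$ by $\widehat\Phi$, up to the sign bookkeeping that accounts for which side of the functional equation one lands on. The cases $\chi=1$ (items (3) and (4)) are identical except that the roles of the two diagonal entries — equivalently the roles of the first and second variable of the principal series $\pi(\chi|\cdot|^s,\omega\chi^{-1}|\cdot|^{-s})$ — are swapped, which is exactly the content of Lemma \ref{lem_mf=fm}: the image of $f_\Phi(\cdot;\chi|\cdot|^{2s+3/2},*)$ under the intertwining operator in the first variable agrees with the image of $f_\Phi(\cdot;\chi|\cdot|^{2s+3/2},\star)$ under the intertwining operator in the second variable, so the residue computed ``on the $t_1$ side'' matches the one ``on the $t_2$ side''. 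The sign discrepancies between (1)/(3) and (2)/(4) come from the orientation of the contour shift and the factor $-\tfrac12$ already visible in the analytic-continuation formulas of the previous Proposition.

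The main obstacle I anticipate is bookkeeping rather than conceptual: one must carefully track (a) which of the two Tate factors in the double integral for $\CZ$ contributes the pole in each of the four regimes, (b) the precise normalization constants (volumes of $[\PGL_2]$, the $\tfrac{1}{4\pi i}$, self-dual measure factors) so that the residue matches $\CI_\rb$ or $\widehat{\CI}_\rb$ on the nose, and (c) the identification of $\sum_{e_1,e_2\in\CB}(\text{residue})\,\BW^{\psi^{-1}}(z_0,e_1)(\RI_2)\BW^{\psi}(-z_0,e_2^\vee)(\RI_2)$ with the Whittaker--Whittaker period of the degenerate Eisenstein series $\RK'$; this last point requires knowing that $f_\Phi(\cdot;\chi|\cdot|^{2s+3/2},*)$ really is the flat section whose Eisenstein series is $\RR(x,y)$, which is where I would invoke \cite[Section 4]{Wu} and the defining property of $\RR(x,y)$ from Section \ref{ssec_geo}. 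Once the flat-section identification and Lemma \ref{lem_mf=fm} are in hand, each of the four identities follows by specializing $z$ to the pole location, applying the Tate functional equation where needed, and matching constants.
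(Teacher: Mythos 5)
Your proposal follows essentially the same route as the paper's proof: compute the residues of $\CZ\left(s+\tfrac12,\Phi,\beta_z(e_2,e_1^{\vee})\right)$ via Tate theory, identify them with the degenerate sections $f_{{}_{e_1^{\vee}}\Phi_{e_2}}(\cdot;\chi|\cdot|^{2s+\frac32},*)$ resp.\ $(\cdot;\star)$, sum over $\CB(\chi,\omega\chi^{-1})$ so that the result is the Whittaker--Whittaker coefficient at $(\RI_2,\RI_2)$ of the rank-one Eisenstein kernel $\RK^{\prime}$ (the flat-section identification you defer to \cite[Section 4]{Wu} is exactly the paper's appeal to \cite[Lemma 4.9]{Wu}), use Lemma \ref{lem_mf=fm} together with the invariance of Eisenstein series and Whittaker functions under the intertwining operator for case (3), and use the $\GL_1$ functional equation (which brings in $\widehat{\Phi}$ and the exponent $2-2s$, hence $\widehat{\CI}_{\rb}$) for (2) and (4). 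One bookkeeping correction: for $\chi=\omega$ both relevant poles, at $z=s+\tfrac12$ and $z=s-\tfrac12$, come from the single $t_2$-Tate factor (the one carrying the trivial character $\omega\chi^{-1}=1$) at its $w=0$ and $w=1$ ends respectively, the $t_1$-factor carrying $\omega$ being entire for $\omega\neq 1$, and symmetrically for $\chi=1$ both poles come from the $t_1$-factor; since the residue identities you actually assert are the correct ones, this slip does not affect the structure of your argument.
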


\begin{proof}
 We start with $(1)$. When $\Re(s)\gg1 $, from \cite{Tat67} we know
\begin{align*}
	\Res_{z=s+\frac{1}{2}}\CZ\left( s+\frac{1}{2},\Phi,\beta_z(e_2,e_1^{\vee})    \right)&=\int_{\BA^{\times}} \BF_{\psi,2}(_{e_1^{\vee}}\Phi_{e_2})\left(\begin{pmatrix}
		t & 0\\ 0 & 0 
	\end{pmatrix} \right)\omega(t)|t|^{2s+1}\ud^{\times}t\\
	&=f_{_{e_1^{\vee}}\Phi_{e_2}}(\RI_2,\RI_2;\omega|\cdot|^{2s+\frac{3}{2}},*).
\end{align*}
Consider the decomposition
\begin{align*}
	&f_{\Phi}(g_1,g_2;\omega|\cdot|^{2s+\frac{3}{2}},*)\\
	&=\sum_{e_1\in\CB(\chi,1)}\sum_{e_2\in\CB(\chi,1)}\int_{\RK^2}f_{\Phi}(k_1,k_2)\ud k_1\ud k_2 e_1^{\vee}(k_1)e_2(k_2)e_1\left(s+\frac{1}{2}\right)(g_1)e_2^{\vee}\left(-s-\frac{1}{2}\right)(g_2).
\end{align*}
Note that
\begin{align*}
	&\int_{\RK^2}f_{\Phi}(k_1,k_2)e_1^{\vee}(k_1)e_2(k_2)\ud k_1\ud k_2\\
	&=\int_{\RK^2}\int_{\BA^{\times}}\BF_{\psi,2}\RL(k_1)\RL(k_2)\Phi\left(  \begin{pmatrix}
		t & 0 \\ 0  & 0
	\end{pmatrix}  \right)\omega(t)|t|^{2s+1}\ud^{\times}t e_1^{\vee}(k_1)e_2(k_2)   \ud k_1\ud k_2\\
	&=\int_{\BA^{\times}}\int_{\RK^2}\BF_{\psi,2}\RL(k_1)\RR(k_2)\Phi\left(\begin{pmatrix}
		t & 0 \\ 0  & 0
	\end{pmatrix}\right)e_1^{\vee}(k_1)e_2(k_2)\ud k_1\ud k_2\omega(t)|t|^{2s+1} \ud^{\times}t\\
	&=\int_{\BA^{\times}}\BF_{\psi,2}(_{e_1^{\vee}}\Phi_{e_2})\left(\begin{pmatrix}
		t & 0 \\ 0  & 0
	\end{pmatrix} \right)\omega(t)|t|^{2s+1}\ud^{\times}t,
\end{align*}
we have the Whittaker function of $f_{\Phi}(g_1,g_2;\omega|\cdot|^{2s+\frac{3}{2}},*)$ at $(\RI_2,\RI_2)$ is
\begin{align*}
	\sum_{e_1,e_2\in\CB(\chi,1)}f_{_{e_1^{\vee}}\Phi_{e_2}}(\RI_2,\RI_2;\omega|\cdot|^{2s+\frac{3}{2}},*)\BW^{\psi^{-1}}\left(s+\frac{1}{2},e_1\right)(\RI_2)\BW^{\psi}\left(-s-\frac{1}{2},e_2^{\vee} \right)(\RI_2).
\end{align*}
On the other hand, according to \cite[Lemma 4.9]{Wu}, this Whittaker function is given by the integral
\begin{align*}
	\int_{\BA^{\times}}\int_{\BA^2}\Phi\left(   \begin{pmatrix}
		1 & n_1 \\  0& 1
	\end{pmatrix} \begin{pmatrix}
		 0  & 0
 \\z &  0	\end{pmatrix} \begin{pmatrix}
 	1 &n_2 \\ 0& 1
 \end{pmatrix} \right)\psi^{-1}(n_1+n_2)\ud n_1\ud n_2\omega(z)|z|^{2s+2}\ud^{\times}z,
\end{align*}
which is exactly $\CI_{\rb}(s)$.

For $(3)$,the residue is given by the following integral when $\Re(s)\gg 1$
\begin{align*}
	\Res_{z=-s-\frac{1}{2}}\CZ\left( s+\frac{1}{2},\Phi,\beta_z(e_2,e_1^{
	\vee}) \right)&=-\int_{\BA^{\times}} \BF_{\psi,2}( {_{e_1^{\vee}\Phi_{e_2}})}\left(  \begin{pmatrix}
		 0 & 0 \\ 0& t
	\end{pmatrix}  \right)|t|^{2s+1}\omega(t)\ud^{\times}t\\
	&=-f_{ {_{e_1^{\vee}}\Phi_{e_2}}   }(\RI_2,\RI_2;\omega|\cdot|^{2s+\frac{3}{2}},\star).
\end{align*}
Similarly, we have
\begin{align*}
	\sum_{e_1,e_2\in\CB(1,\omega^{-1})} f_{\Phi}(\RI_2,\RI_2;\omega|\cdot|^{2s+\frac{3}{2}},\star)\BW^{\psi^{-1}}\left(-s-\frac{1}{2}\right)(\RI_2)\BW^{\psi}\left(s+\frac{1}{2}\right)(\RI_2)
\end{align*}
is the Whittaker function of $f_{\Phi}(g_1,g_2;\omega|\cdot|^{2s+\frac{3}{2}},\star)$ at $(\RI_2,\RI_2)$. But according to Lemma \ref{lem_mf=fm} and \cite{GJ77}, the intertwining operator does not change the Eisenstein series and the Whittaker functions. So the value of the Whittaker function of $f_{\Phi}(g_1,g_2;\omega|\cdot|^{2s+\frac{3}{2}},\star)$ at $(\RI_2,\RI_2)$ is also the value of the Whittaker function of $f_{\Phi}(g_1,g_2;\omega|\cdot|^{2s+\frac{3}{2}},*)$, which is $\CI_{\rb}(s)$ according to part $(1)$.

For $(2)$, when $\Re(s)\ll 1$, using the functional equation of $\GL_1$ zeta integrals, we have
\begin{align*}
	&\Res_{z=s-\frac{1}{2}}\CZ\left( s+\frac{1}{2},\Phi,\beta_z(e_2,e_1^{\vee}    \right)\\
	&=-\int_{\BA^{\times}}\BF_{\psi,1}\BF_{\psi,4}\BF_{\psi,2}({ _{e_1^{\vee}}\Phi_{e_2}  })\left(\begin{pmatrix}
		t & 0 \\ 0 & 0 
	\end{pmatrix} \right)\omega(t)|t|^{1-2s}\ud^{\times}t.
\end{align*}
Note that for any $g_1,g_2\in\GL_2(\BA)$, we have
\[
\BF_{\psi,2}\RL(g_1)\RR(g_2)\widehat{\Phi}=\BF_{\psi,2}\widehat{\RL(g_2)\RR(g_1)\Phi}|\det g_1^{-1}g_2|^2.
\]
Using the Fourier inversion formula, we have
\[
\BF_{\psi,2}\widehat{\RL(g_2)\RR(g_1)\Phi}\left( \begin{pmatrix}
	x_{11} & x_{12} \\ x_{21} & x_{22}
\end{pmatrix} \right)=\BF_{\psi,2}\BF_{\psi,1}\BF_{\psi,4}\RL(g_2)\RR(g_1)\Phi\left(\begin{pmatrix}
	x_{11} & x_{12} \\ -x_{21} & x_{22} 
\end{pmatrix} \right).
\]
Then, according to a similar argument, we have
\begin{align*}
&\sum_{e_1,e_2\in\CB(\omega,1)}\Res_{z=s-\frac{1}{2}}\CZ\left(s+\frac{1}{2},\Phi,\beta_z(e_2,e_1^{\vee})\right)\BW^{\psi^{-1}}\left(s-\frac{1}{2},e_1\right)(\RI_2)\BW^{\psi}\left(-s+\frac{1}{2},e_2^{\vee} \right)(\RI_2)\\
&=\int_{\BA^{\times}}\int_{\BA^2}\widehat{\Phi}\left( \begin{pmatrix}
	1 & n_1 \\ 0 & 1
\end{pmatrix} \begin{pmatrix}
	 0 & 0 \\z & 0 
\end{pmatrix}\begin{pmatrix}
	1 & n_2 \\ 0 & 1 
\end{pmatrix}  \right)\psi(n_1+n_2)\omega(z)|z|^{2-2s} \ud^{\times}z.
\end{align*}
Finally $(4)$ is a combination of the above $(1)$, $(2)$ and $(3)$, and we leave the verification to the readers.
\end{proof}

\begin{rmk}
	From the above Proposition, the residues are noting but the Whittaker function of the kernel function given by the summation over the rank $1$ rational matrices. In this sense, the boundary term appears as the residue of the Eisenstein series in the spectrum decomposition of the kernel function via analytic continuation.
\end{rmk}

\subsection{Analytic Continuation of $\CI_{\rb}(s)$}

In this section, we will study the analytic continuation of the boundary term $\CI_{\rb}(s)$. A first observation is that when $\Re(s)\gg 1$,
\begin{align*}
	&\int_{[\RN]}\RK^{\prime}(x,n)\psi^{-1}(n)\ud n=\int_{\RN(\BA)} \sum_{\gamma\in \RB(k)\backslash \GL_2(k)}\RR(\gamma x,wn)\psi^{-1}(n)\ud n\\
	&=\int_{\RN(\BA)} \sum_{\gamma\in \RB(k)\backslash\GL_2(k)} \int_{\BA^{\times}}\Phi\left(x^{-1}\gamma^{-1}\begin{pmatrix}
		0 & z \\0 & 0 
	\end{pmatrix}w n\right)\psi^{-1}(n) \omega(z)|z|^{2s+2}\ud^{\times}z |\det x^{-1}|^{s+1}   \ud n \\
	&=\sum_{\gamma\in \RB(k)\backslash\GL_2(k)}\int_{\BA} \int_{\BA^{\times}} \RL_{\gamma x}\Phi \left(  \begin{pmatrix}
		z & zn \\ 0 & 0
	\end{pmatrix} \right) \omega(z)|z|^{2s+2}\ud^{\times}z|\det x^{-1}|^{s+1}   \psi^{-1}(n)\ud n\\
	&=\sum_{\gamma\in \RB(k)\backslash\GL_2(k)} \int_{\BA^{\times}}  \BF_{\psi,2}\RL_{\gamma x}\Phi\left(  \begin{pmatrix}
		z & z^{-1} \\ 0 & 0
	\end{pmatrix}  \right)\omega(z)|z|^{2s+1}\ud^{\times }z|\det x^{-1}|^{s+1}.
\end{align*}
Note that the inner integral is convergent for all $s\in\BC$. Now consider the section
\[
\GL_2(\BA)\times\BC\rightarrow\BC:(g,s)\mapsto \Ff_s(g):=\int_{\BA^{\times}}  \BF_{\psi,2}\RL_{g}\Phi\left(  \begin{pmatrix}
		z & z^{-1} \\ 0 & 0
	\end{pmatrix}  \right)\omega(z)|z|^{2s+1}\ud^{\times }z|\det g^{-1}|^{s+1}.
\]
This is a section to the induced principal series for all $s\in\BC$, and let
\[
\RE(\Ff_s)(g):=\sum_{\gamma\in \RB(k)\backslash\GL_2(k)}\Ff_s(\gamma g)
\]
be the corresponding Eisenstein series. When $\Re(s)\gg1$, we have 
\[
\int_{[\RN]}\RK^{\prime}(n_1,n_2)\psi^{-1}(n_1^{-1}n_2)\ud n_1\ud n_2=\int_{[\RN]}\RE(\Ff_s)(n)\psi(n)\ud n=\int_{N(\BA)}\Ff_s(wn)\psi(n)\ud n.
\]
Then, the analytic continuation comes from the general theory of the Eisenstein series.

Note that we have
\begin{align*}
	\BF_{\psi,2}\RL_{x}\Phi \left( \begin{pmatrix}
		x_{11} & x_{12} \\ x_{21} & x_{22}
	\end{pmatrix} \right)&=\int_{\BA}\RL_{x}\Phi\left( \begin{pmatrix}
		x_{11} & y \\ x_{21} & x_{22}
	\end{pmatrix}\right)\psi^{-1}(yx_{22})\ud y.
\end{align*}
Then for $x= w\begin{pmatrix}
	1 & n \\ 0 & 1
\end{pmatrix}\in\GL_2(\BA)$, we have
\begin{align*}
	\BF_{\psi,2}\RL_{x}\Phi\left(\begin{pmatrix}
		z & z^{-1} \\0 & 0
	\end{pmatrix} \right)=\int_{\BA}\Phi\left( \begin{pmatrix}
		-nz & -ny \\ z & y
	\end{pmatrix}  \right)\psi^{-1}(yz^{-1})\ud y.
\end{align*}
Then integral over $n$ and make a change of variable by $y\mapsto zn_1, n\mapsto -n_2$, we have
\begin{align*}
		&\int_{[\RN]\times[\RN]}\RK^{\prime}(n_1,n_2)\psi^{-1}(n_1^{-1}n_2)\ud n_1\ud n_2\\
		&=\int_{\BA^2}\int_{\BA^{\times}}\Phi\left( \begin{pmatrix}
			zn_2 & zn_1n_2 \\ z & zn_1
		\end{pmatrix}  \right)\psi^{-1}(n_1+n_2)\omega(z)|z|^{2s+2}\ud^{\times }z\ud n_1\ud n_2.
\end{align*}
This also coincides with the calculation in Section \ref{ssec_inproof}.

Note that from here we can see that $\CI_{\rb}(s)$ admits an Euler product
\begin{align*}
	\prod_{\nu}\int_{k_{\nu}^{2}}\int_{k_{\nu}^{\times}}\Phi_{\nu}\left( \begin{pmatrix}
			zn_2 & zn_1n_2 \\ z & zn_1
		\end{pmatrix}  \right)\psi_{\nu}^{-1}(n_1+n_2)\omega_{\nu}(z)|z|^{2s+2}\ud^{\times }z\ud n_1\ud n_2.
\end{align*}
Let us finish this section with the unramified calculation.

\begin{prp}
	Let $\Phi_{\nu}=1_{\Mat_2(\Fo_{\nu})}$, and $\psi_{\nu}$ is unramified, then we have
	\begin{align*}
	\int_{k_{\nu}^{2}}\int_{k_{\nu}^{\times}}\Phi_{\nu}\left( \begin{pmatrix}
			zn_2 & zn_1n_2 \\ z & zn_1
		\end{pmatrix}  \right)\psi_{\nu}^{-1}(n_1+n_2)\omega_{\nu}(z)|z|^{2s+2}\ud^{\times }z\ud n_1\ud n_2=1-\omega_{\nu}(\varpi_{\nu})q^{-(2s+2)}
\end{align*}
if we normalize the measure $\ud^{\times}z$ such that $\vol(\Fo_{\nu}^{\times},\ud^{\times}z)=1$. So let $S$ be a finite set of places of $k$ containing all the Archimedean places and such that for all $v\notin S$ we have $\Phi_{\nu}=1_{\Mat_2(\Fo_{\nu})}$, $\psi_{\nu}$ is unramified and $\vol(\Fo_{\nu}^{\times},\ud^{\times}z)=1$, then we have
\[
\prod_{\nu\notin S}\int_{k_{\nu}^{2}}\int_{k_{\nu}^{\times}}\Phi_{\nu}\left( \begin{pmatrix}
			zn_2 & zn_1n_2 \\ z & zn_1
		\end{pmatrix}  \right)\psi_{\nu}^{-1}(n_1+n_2)\omega_{\nu}(z)|z|^{2s+2}\ud^{\times }z\ud n_1\ud n_2=\frac{1}{L^S(\omega,2s+2)},
\]
where $L^S(\omega,s)$ is the partial Dirichlet $L$-function.
\end{prp}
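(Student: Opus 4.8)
The plan is to reduce this purely local integral to a finite sum of Tate-type integrals over the torus, whose coefficients are determined by an elementary count of valuations. Throughout I drop the subscript $\nu$, write $q$ for the residue cardinality, $\varpi$ for a uniformizer, and $v$ for the normalized valuation; recall that the self-dual measure gives $\vol(\Fo)=1$ while we normalize $\ud^{\times}z$ by $\vol(\Fo^{\times})=1$, and that since $\psi$ is unramified, $\psi$ is trivial on $\Fo$ and non-trivial on $\varpi^{-1}\Fo$.

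First I would unfold the characteristic function. The matrix $\left(\begin{smallmatrix} zn_2 & zn_1n_2 \\ z & zn_1\end{smallmatrix}\right)$ lies in $\Mat_2(\Fo)$ exactly when $v(z)\ge 0$, $v(z)+v(n_1)\ge 0$, $v(z)+v(n_2)\ge 0$ and $v(z)+v(n_1)+v(n_2)\ge 0$. Hence the integral is supported on $a:=v(z)\ge 0$, and for each such $a$ the inner double integral over $(n_1,n_2)$ is
\[
J_a:=\int\!\!\int_{\substack{v(n_i)\ge -a\ (i=1,2)\\ v(n_1)+v(n_2)\ge -a}}\psi^{-1}(n_1+n_2)\,\ud n_1\,\ud n_2 .
\]
Next I would evaluate $J_a$. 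Writing $B=\varpi^{-a}\Fo$, the region is $B\times B$ with the coupling shells $\{v(n_1)=-a+i\}\times\{v(n_2)=-a+j\}$, $i,j\ge 0$, $i+j\le a-1$, removed, so since everything factors through $\psi^{-1}(n_1+n_2)=\psi^{-1}(n_1)\psi^{-1}(n_2)$ we get $J_a=\bigl(\int_B\psi^{-1}\bigr)^2-\sum_{i+j\le a-1}\bigl(\int_{v(n)=-a+i}\psi^{-1}\bigr)\bigl(\int_{v(n)=-a+j}\psi^{-1}\bigr)$. Using $\int_{\varpi^{-m}\Fo}\psi^{-1}=q^{m}$ for $m\le 0$ and $=0$ for $m\ge 1$, hence the shell integrals $\int_{v(n)=-m}\psi^{-1}=0$ for $m\ge 2$ and $=-1$ for $m=1$, one reads off $J_0=1$, $J_1=-1$ and $J_a=0$ for $a\ge 2$; the vanishing for $a\ge 2$ is exactly the orthogonality of the non-trivial character $\psi^{-1}$ against the relevant fractional ideals. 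The $z$-integral is then immediate: since $\omega$ is unramified, $\int_{v(z)=a}\omega(z)|z|^{2s+2}\,\ud^{\times}z=\omega(\varpi)^a q^{-a(2s+2)}$, so the local integral equals $\sum_{a\ge 0}J_a\,\omega(\varpi)^a q^{-a(2s+2)}=1-\omega(\varpi)q^{-(2s+2)}$. For the second assertion, this value is precisely $L_{\nu}(\omega_{\nu},2s+2)^{-1}$ for each $\nu\notin S$, and multiplying over $\nu\notin S$ gives $L^S(\omega,2s+2)^{-1}$.

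The hard part will be the evaluation of $J_a$: because the inequality $v(n_1)+v(n_2)\ge -a$ couples the two variables, one has to organize the integration region into valuation shells carefully and keep track of which shells survive after integrating $\psi^{-1}$, so as not to double count or miss a contribution; everything else is the routine bookkeeping of \cite{Tat67}, together with the standard fact that the local $L$-factor of an unramified Hecke character at a finite place is $(1-\omega_{\nu}(\varpi_{\nu})q_{\nu}^{-w})^{-1}$.
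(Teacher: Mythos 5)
Your proof is correct and follows essentially the same route as the paper: decompose over $v(z)=a\ge 0$, evaluate the inner $(n_1,n_2)$ character integral by unramified-character orthogonality to get $1,-1,0$ for $a=0,1,\ge 2$, and sum the resulting geometric-type series to obtain $1-\omega_{\nu}(\varpi_{\nu})q^{-(2s+2)}=L_{\nu}(\omega_{\nu},2s+2)^{-1}$. The only (cosmetic) difference is that you compute the inner integral by inclusion--exclusion on $B\times B$ minus the excluded coupling shells, while the paper sums shell-by-shell over $v(n_1)$ and integrates $n_2$ over the allowed ball; the two bookkeepings are equivalent, and both (like the paper) implicitly use that $\omega_{\nu}$ is unramified at $\nu\notin S$.
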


\begin{proof}
	A first observation is that 
	\begin{align*}
		&\int_{k_{\nu}^{2}}\int_{k_{\nu}^{\times}}\Phi_{\nu}\left( \begin{pmatrix}
			zn_2 & zn_1n_2 \\ z & zn_1
		\end{pmatrix}  \right)\psi^{-1}(n_1+n_2)\omega(z)|z|^{2s+2}\ud^{\times }z\ud n_1\ud n_2\\
		&=\sum_{m=0}^{\infty}\int_{\varpi_{\nu}^m\Fo_{\nu}^{\times}}\omega_{\nu}(\varpi_{\nu})^m q_{\nu}^{-m(2s+2)}\int_{|zn_1|\leq 1,|zn_2|\leq 1,|zn_1n_2|\leq 1}\psi_{\nu}^{-1}(n_1+n_2)\ud n_1\ud n_2 \ud^{\times}z.
	\end{align*}
We first fix $z$ such that $z\in\varpi_{\nu}^m\Fo_{\nu}^{\times}$, and then $n_1\in\varpi^{m_1}_{\nu}\Fo_{\nu}^{\times}$ with $m+m_1\geq 0$. Then the condition for $n_2$ is
\[
|n_2|\leq \min\{   q^{m},q^{m+m_1}  \}.
\]
Therefore, the inner integral becomes
\begin{align*}
	&\int_{|zn_1|\leq 1,|zn_2|\leq 1,|zn_1n_2|\leq 1}\psi_{\nu}^{-1}(n_1+n_2)\ud n_1\ud n_2\\
	&=\sum_{m_1=-m}^{0} \int_{n_1\in \varpi_{\nu}^{m_1}\Fo_{\nu}^{\times} }\psi_{\nu}^{-1}(n_1)\ud n_1 \int_{|n_2|\leq q^{m+m_1} }\psi_{\nu}^{-1}(n_2)\ud n_2 \\
	&+\sum_{m_1=1}^{\infty} \int_{n_1\in\varpi^{m_1}_{\nu}\Fo_{\nu}^{\times}}\psi^{-1}_{\nu}(n_1)\ud n_1 \int_{|n_2|\leq q^m}\psi_{\nu}^{-1}(n_2)\ud n_2\\
	&=\int_{n_1\in \varpi_{\nu}^{-m}\Fo_{\nu}^{\times} }\psi_{\nu}^{-1}(n_1)\ud n_1 + q^{-1}\int_{|n_2|\leq q^m}\psi_{\nu}^{-1}(n_2)\ud n_2\\
	&= \left\{ \begin{array}{rcl}
       1 & \mbox{if}
        & m=0 \\  -1 & \mbox{if} & m=1  \\
       0 & \mbox{if} & m\geq 2
                \end{array}\right..
\end{align*}
Hence
\begin{align*}
	&\int_{k_{\nu}^{2}}\int_{k_{\nu}^{\times}}\Phi_{\nu}\left( \begin{pmatrix}
			zn_2 & zn_1n_2 \\ z & zn_1
		\end{pmatrix}  \right)\psi^{-1}(n_1+n_2)\omega(z)|z|^{2s+2}\ud^{\times }z\ud n_1\ud n_2\\
		&=1-\omega_{\nu}(\varpi_{\nu})q^{-(2s+2)}.
\end{align*}
\end{proof}

\begin{cor}
	The cuspidal part $\CI_{\cusp}(s)$ admits an analytic continuation to all $s\in\BC$.
\end{cor}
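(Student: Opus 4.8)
The plan is to obtain the continuation of $\CI_{\cusp}(s)$ by \emph{subtraction}, reading it off from the trace formula identity of Theorem \ref{thm_tf} together with the continuations of the remaining terms established in this section. First I would fix the unitary idele class character $\omega$ and a $\RK$-finite Schwartz function $\Phi$ on $\Mat_2(\BA)$, and record that on a common half-plane $\Re(s)\gg 1$ each of $\CI(s)$, $\CI_{\cusp}(s)$, $\CI_{\Eis}(s)$, $\CI_{\rb}(s)$ is given by an absolutely convergent expression: for $\CI_{\cusp}$ and $\CI_{\Eis}$ this follows from the normal convergence of the spectral expansion of $\RK(x,y)$ in \cite[Theorem 2.10]{Wu} after integration over the compact quotient $[\RN]\times[\RN]$, and for $\CI(s)$ and $\CI_{\rb}(s)$ from the estimates of \S\ref{ssec_geo} and the beginning of \S\ref{sec_cont}. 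Observing in addition that the one-dimensional (residual) contribution to \cite[Theorem 2.10]{Wu}, being left $\RN$-invariant, integrates to $0$ against the nontrivial character $\psi$ over $[\RN]\times[\RN]$, Theorem \ref{thm_tf} gives, for $\Re(s)\gg 1$,
\[
\CI_{\cusp}(s)=\CI(s)-\CI_{\Eis}(s)-\CI_{\rb}(s).
\]

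Next I would invoke the three continuations already available. The function $\CI(s)$ extends to an \emph{entire} function by the Proposition following Proposition \ref{prp_acofI}; the function $\CI_{\Eis}(s)$ extends to all of $\BC$ by the Proposition of \S\ref{sec_cont} treating its analytic continuation (with explicit expressions on the regions $0<\Re(s)\le\frac{1}{2}$, $-\frac{1}{2}<\Re(s)\le 0$, and so on); and $\CI_{\rb}(s)$ extends to all of $\BC$ through the theory of Eisenstein series, via the realization $\CI_{\rb}(s)=\int_{\RN(\BA)}\Ff_s(wn)\psi(n)\ud n$ as a Whittaker--Fourier coefficient of the Eisenstein series $\RE(\Ff_s)$ attached to the flat section $\Ff_s$ built from $\Phi$ in \S\ref{sec_cont}. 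Since $\BC$ is connected and these three functions coincide with the right-hand side of the displayed identity on the nonempty open set $\{\Re(s)\gg1\}$, the identity persists by uniqueness of analytic continuation, and the right-hand side defines a meromorphic function on all of $\BC$; this is the sought continuation of $\CI_{\cusp}(s)$.

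I do not expect a serious obstacle here: granting Theorem \ref{thm_tf} (whose proof is the content of \S\ref{sec_tf}) and the meromorphic continuations of $\CI_{\Eis}$ and $\CI_{\rb}$, the corollary is a formal consequence. The only point needing genuine care is the bookkeeping of domains --- one must exhibit a single half-plane on which \emph{all} four functions are simultaneously defined and the trace formula identity is valid, so that uniqueness of analytic continuation applies to the combination $\CI(s)-\CI_{\Eis}(s)-\CI_{\rb}(s)$. It is worth noting, though not needed for the statement as phrased, that by Proposition \ref{prp_res} the polar parts of $\CI_{\Eis}(s)$ are exactly accounted for by $\CI_{\rb}(s)$ and its dual $\widehat{\CI}_{\rb}(s)$, so that the poles cancel in $\CI(s)-\CI_{\Eis}(s)-\CI_{\rb}(s)$; pinning this down, and with it the behaviour of the individual cuspidal $L$-functions, is the business of the next section.
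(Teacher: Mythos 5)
Your proposal is correct and follows exactly the route the paper intends: the corollary is read off from the identity $\CI(s)=\CI_{\cusp}(s)+\CI_{\Eis}(s)+\CI_{\rb}(s)$ of Theorem \ref{thm_tf} (valid for $\Re(s)\gg1$, with the residual one-dimensional spectrum killed by integration against $\psi$ over $[\RN]\times[\RN]$), combined with the continuations of $\CI(s)$, $\CI_{\Eis}(s)$ and $\CI_{\rb}(s)$ established earlier in Section \ref{sec_cont}. Your added remarks on domain bookkeeping and on the cancellation of the residue terms via Proposition \ref{prp_res} are consistent with how the paper later treats the poles, so there is nothing to correct.
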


\section{Functional Equations of in the Trace Formula}\label{sec_feq}
In this section, we will explain that the Theorem \ref{thm_psf} is responsible for the functional equation of standard $L$-functions of $\GL_2$. Let $\Phi$ be a Schwartz function on $\Mat_2(\BA)$ as before and $\widehat{\Phi}$ its Fourier transform. Denote for $x,y\in\Mat_2^{\vee}(\BA)$
\[
\widehat{\RK}(x, y):= \int_{[\RZ]}  \sum_{\gamma\in\GL_2(k)}\widehat{\Phi}(zy^{-1}\gamma x)\omega^{-1}(z)|z|^{-2s+2}\ud^{\times }z\cdot|\det y^{-1}x|^{-s+1},
\]
and
\begin{align*}
		\widehat{\RK}^{\prime}(x,y):=\int_{[\RZ]}\sum_{\gamma \in\Mat_{2}(k),\rank \gamma=1}\widehat{\Phi}(zy^{-1}\gamma x)\omega^{-1}(z)|z|^{-2s+2}\ud^{\times}z\cdot|\det y^{-1}x|^{-s+1}.
\end{align*}

 Let 
\[
\widehat{\CI}(s):=\int_{[\RZ]} \mathrm{KTF}(\RR_z(\widehat{\Phi}(x)(\ud^+x)^{\frac{1}{2}}  ))\omega(z)|z|^{2s}\ud^{\times}z,
\]
and
\[
\widehat{\CI}_{\cusp}(s):=\sum_{\pi\;\mathrm{cuspidal},\omega_{\pi}=\omega} \sum_{\varphi_1.\varphi_2\in\CB(\pi)}\CZ\left(-s+\frac{1}{2},\widehat{\Phi},\beta^{\vee}(\varphi_2,\varphi_1^{\vee})\right)\BW_{\varphi_1}^{\psi^{-1}}(\RI_2)\BW_{\varphi_2^{\vee}}^{\psi}(\RI_2).
\]
It is also easy to see that $\widehat{\CI}_{\rb}(s)$ defined in Proposition \ref{prp_res} is
\[
\widehat{\CI}_{\rb}(s)=\int_{[\RN]\times[\RN]}\widehat{\RK}^{\prime}(n_1,n_2)\psi^{-1}(n_1^{-1}n_2)\ud n_1\ud n_2.
\]

By identifying $\Mat_2^{\vee}$ with $\Mat_2$ and replace $\Phi$ by $\widehat{\Phi}$ in Proposition \ref{prp_acofI}, we see $\widehat{\CI}(s), \widehat{\CI}_{\Eis}(s)$ and $\widehat{\CI}_{\cusp}(s)$ also admit meromorphic continuations to all $s\in\BC$. Then according to Theorem \ref{thm_psf} again, we have
\begin{prp}\label{prp_fqI}
	$\CI(s)=\widehat{\CI}(s)$ for all $s\in\BC$.
\end{prp}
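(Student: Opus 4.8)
The plan is to prove the identity first on the half-plane of absolute convergence $\Re(s)>1$ and then propagate it to all of $\BC$ by the identity theorem. The second step is already in hand: Proposition~\ref{prp_acofI} and the discussion following it show that $\CI(s)$ continues to an entire function, and running the same argument with $\widehat\Phi$ in place of $\Phi$ --- using the $\GL_2\times\GL_2$-equivariant embedding $\GL_2\hookrightarrow\Mat_2^\vee$, $g\mapsto g^{-1}$, of Section~\ref{sec_local} --- shows that $\widehat\CI(s)$ is entire as well. Hence it suffices to prove $\CI(s)=\widehat\CI(s)$ for $\Re(s)>1$.

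For $\Re(s)>1$ the idea is to apply the Poisson summation formula of Theorem~\ref{thm_psf} pointwise in the central variable $z$. Fix $z\in\RZ(\BA)$ and let $f_z$ be the twisted push-forward of $\RR_z\bigl(\Phi(x)(\ud^+x)^{\frac{1}{2}}\bigr)$. By Theorem~\ref{thm_psf}, $\mathrm{KTF}_{L\left(\std,\frac{1}{2}\right)}(f_z)=\mathrm{KTF}_{L\left(\std^\vee,\frac{1}{2}\right)}\bigl(\CH_{\std,\BA}(f_z)\bigr)$, and by Theorem~\ref{thm_hankel} the half-density $\CH_{\std,\BA}(f_z)$ is the twisted push-forward of $\widehat{\RR_z\Phi}(x)(\ud^+x)^{\frac{1}{2}}$ on $\Mat_2^\vee(\BA)$. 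Combining this with the elementary scaling relation $\widehat{\RR_z\Phi}(x)=|z|^{-4}\widehat\Phi(z^{-1}x)$ and the ensuing identification of half-densities $\widehat{\RR_z\Phi}(x)(\ud^+x)^{\frac{1}{2}}=|z|^{-2}\RR_z\bigl(\widehat\Phi(x)(\ud^+x)^{\frac{1}{2}}\bigr)$ --- precisely the computation recorded just before Proposition~\ref{prp_acofI} --- one rewrites $\mathrm{KTF}\bigl(\RR_z(\Phi(x)(\ud^+x)^{\frac{1}{2}})\bigr)$ entirely in terms of the Fourier transform $\widehat\Phi$. Integrating this pointwise identity against $\omega(z)|z|^{2s}\,\ud^\times z$ over $[\RZ]$, which is legitimate for $\Re(s)>1$ by Lemma~\ref{lem_es} and the decay estimates of Section~\ref{sec_cont}, and comparing the outcome term by term with the defining integral of $\widehat\CI(s)$, gives $\CI(s)=\widehat\CI(s)$ on $\Re(s)>1$; the identity theorem then finishes the proof.

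The only genuinely delicate point is the bookkeeping of normalizations. The twisted push-forward to the $\std^\vee$-side uses the inverse embedding $g\mapsto g^{-1}$ and the conjugate character $\psi^{-1}$, whereas $\widehat\CI(s)$ is written through the $\mathrm{KTF}$ attached to $\widehat\Phi$ as a function on $\Mat_2$ via $g\mapsto g$ and $\psi$. One must check --- as is implicit in the indirect proof of Theorem~\ref{thm_psf} in Section~\ref{ssec_inproof}, through the substitutions $n_i\mapsto-n_i$ and the reindexing $t\mapsto wt^{-1}w$ of $\RT(k)$ --- that $\mathrm{KTF}_{L\left(\std^\vee,\frac{1}{2}\right)}$ of the $\std^\vee$-twisted push-forward of $\widehat\Phi$ equals the expression occurring in $\widehat\CI(s)$, and that the factor $|z|^{-2}$ produced by the Fourier scaling, together with the factor $|\det g|^{\pm 1}$ distinguishing the two embeddings at $g=z$, recombines with the weight $|z|^{2s}$ so that the ranges $|z|\ge 1$ and $|z|<1$ match the corresponding ranges in $\widehat\CI(s)$ with the correct exponent. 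This is where the inversion $z\mapsto z^{-1}$ intertwining the two embeddings enters, and it is the step that needs care.
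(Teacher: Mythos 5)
Your overall skeleton (show both sides are entire, prove the identity on a common half-plane, invoke the identity theorem) founders at the middle step, and the difficulty is not the normalization bookkeeping you flag at the end but a convergence obstruction. The defining integrals of $\CI(s)$ and $\widehat{\CI}(s)$ do \emph{not} share a half-plane of convergence: $\CI(s)$ converges for $\Re(s)>1$ (the problematic range being $|z|\to 0$), while $\widehat{\CI}(s)$ — which in this paper is the dual-side object, as its spectral expansion in Section \ref{sec_feq} in terms of $\CZ\left(-s+\frac{1}{2},\widehat{\Phi},\beta^{\vee}\right)$ and the kernel $\widehat{\RK}$ with weight $|z|^{-2s+2}$ make clear — converges only for $\Re(s)<-1$ (the problematic range now being the opposite end of $[\RZ]$, where $\widehat{\Phi}$ composed with the inverse embedding has no decay). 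So on $\Re(s)>1$ there is no "defining integral of $\widehat{\CI}(s)$" to compare with: applying Theorem \ref{thm_psf} pointwise in $z$ and integrating over all of $[\RZ]$ merely rewrites $\CI(s)$ as an integral that formally resembles $\widehat{\CI}(s)$ but lies outside the latter's domain of convergence, and identifying it with the \emph{analytic continuation} of $\widehat{\CI}$ is precisely what still has to be proved. This is the same phenomenon as in Riemann's and Tate's proofs: $\int_0^\infty\theta(y)y^s\,\ud^\times y$ and its dual never converge simultaneously, and no pointwise use of the theta relation under a single convergent integral can close the argument.

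The paper's (two-line) proof supplies exactly the missing device: split the $z$-integral at $|z|=1$, keep $\Phi$ on the range $|z|\geq 1$ where Proposition \ref{prp_acofI} gives convergence for all $s$, and apply Theorem \ref{thm_psf} together with the scaling relation $\widehat{\RR_z(\Phi)}(x)(\ud x)^{\frac{1}{2}}=\RR_z(\widehat{\Phi}(x)(\ud^+x)^{\frac{1}{2}})|z|^{-2}$ only on $|z|<1$, where it converts the integrand into one built from $\widehat{\Phi}$ that converges for all $s$. Running the mirror-image splitting for $\widehat{\CI}(s)$ (PSF applied on the range where $\widehat{\Phi}$ causes trouble) exhibits both analytic continuations as the \emph{same} everywhere-convergent expression $\int_{|z|\geq 1}\mathrm{KTF}(\RR_z(\Phi(x)(\ud^+x)^{\frac{1}{2}}))\omega(z)|z|^{2s}\ud^{\times}z+\int_{|z|<1}\mathrm{KTF}(\RR_z(\widehat{\Phi}(x)(\ud^+x)^{\frac{1}{2}}))\omega(z)|z|^{2s}\ud^{\times}z$, whence the equality for all $s$. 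To repair your write-up you should replace the "integrate the pointwise identity and compare term by term" step by this splitting argument; the entirety statements you quote from Proposition \ref{prp_acofI} are themselves consequences of it, not substitutes for it.
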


\begin{proof}
	Because both of them are
	\begin{align*}
		\int_{z\in [\RZ],|z|\geq 1}\mathrm{KTF}(\RR_z(\Phi(x)(\ud^+x)^{\frac{1}{2}}  ))\omega(z)|z|^{2s}\ud^{\times}z+\int_{|z|<1}\mathrm{KTF}(\RR_z(\widehat{\Phi}(x)(\ud^+x)^{\frac{1}{2}}  ))\omega(z)|z|^{2s}\ud^{\times}z
	\end{align*}
	as in the proof of Proposition \ref{prp_acofI}.
\end{proof}

According to \cite[Theorem 2.10]{Wu} by considering the action of $g\mapsto \widehat{\Phi}(g^{-1})|\det g|^{s-1}$ on $(\RR_{\omega},\CL^2(\GL_2,\omega))$, we have the spectrum decomposition of $\widehat{\RK}(x,y)$:

\begin{align*}
	\widehat{\RK}(x,y)&=\sum_{\pi\;\mathrm{cuspidal},\omega_{\pi}=\omega} \sum_{\varphi_1.\varphi_2\in\CB(\pi)}\CZ\left(-s+\frac{1}{2},\widehat{\Phi},\beta^{\vee}(\varphi_2,\varphi_1^{\vee})\right)\varphi_1(x)\varphi_2^{\vee}(y)\\
	&+\sum_{\chi\in\widehat{\BR_+k^{\times}\backslash\BA^{\times}}}\int_{-\infty}^{\infty} \sum_{e_1,e_2\in\CB(\chi,\omega\chi^{-1})}\CZ\left(-s+\frac{1}{2},\widehat{\Phi},\beta^{\vee}_{i\tau}(e_2,e_1^{\vee})\right)\RE(i\tau,e_1)(x)\RE(-i\tau,e_2^{\vee})(y)\frac{\ud \tau}{4\pi}\\
	&+\frac{1}{\vol[\PGL_2]}\sum_{\eta\in\widehat{k^{\times}\backslash\BA^{\times}},\eta^2=\omega}\left(\Phi(g)\eta(\det g)|\det g|^{-s+1} \ud g\right)\overline{\eta(\det x)}\eta(\det y),
\end{align*}
and integrate it against the same character over $[\RN]\times[\RN]$, we obtain
\begin{align*}
	&\int_{[\RN]\times[\RN]}\widehat{\RK}(n_1,n_2)\psi^{-1}(n_1^{-1}n_2)\ud n_1\ud n_2\\
	&=\sum_{\pi\;\mathrm{cuspidal},\omega_{\pi}=\omega} \sum_{\varphi_1.\varphi_2\in\CB(\pi)}\CZ\left(-s+\frac{1}{2},\widehat{\Phi},\beta^{\vee}(\varphi_2,\varphi_1^{\vee})\right)\BW_{\varphi_1}^{\psi^{-1}}(\RI_2)\BW_{\varphi_2^{\vee}}^{\psi}(\RI_2)\\
	&+\sum_{\chi\in\widehat{\BR_+k^{\times}\backslash\BA^{\times}}}\int_{-\infty}^{\infty} \sum_{e_1,e_2\in\CB(\chi,\omega\chi^{-1})}\CZ\left(-s+\frac{1}{2},\widehat{\Phi},\beta_{i\tau}(e_2,e_1^{\vee})\right)\BW^{\psi^{-1}}(i\tau,e_1)(\RI_2)\BW^{\psi}(-i\tau,e_2^{\vee})(\RI_2)  \frac{\ud \tau}{4\pi}.
\end{align*}

Then when $\Re(s)<-1$ we have
\begin{align*}
	&\widehat{\CI}(s)=\int_{Z(k)\backslash Z(\BA)}\mathrm{KTF}(\RR_z(\widehat{\Phi}(x)(\ud^+x)^{\frac{1}{2}}  ))\omega(z)|z|^{2s}\ud^{\times}z \\
	&=\int_{[\RN]\times[\RN]}\left(\widehat{\RK}(n_1,n_2)+\widehat{\RK}^{\prime}(n_1,n_2)\right)\psi^{-1}(n_1^{-1}n_2)\ud n_2\ud n_2\\
    &=\sum_{\pi\;\mathrm{cuspidal},\omega_{\pi}=\omega} \sum_{\varphi_1.\varphi_2\in\CB(\pi)}\CZ\left(-s+\frac{1}{2},\widehat{\Phi},\beta^{\vee}(\varphi_2,\varphi_1^{\vee})\right)\BW_{\varphi_1}^{\psi^{-1}}(\RI_2)\BW_{\varphi_2^{\vee}}^{\psi}(\RI_2)\\
	&+\sum_{\chi\in\widehat{\BR_+k^{\times}\backslash\BA^{\times}}}\int_{-\infty}^{\infty} \sum_{e_1,e_2\in\CB(\chi,\omega\chi^{-1})}\CZ\left(-s+\frac{1}{2},\widehat{\Phi},\beta^{\vee}_{i\tau}(e_2,e_1^{\vee})\right)\BW^{\psi^{-1}}(i\tau,e_1)(\RI_2)\BW^{\psi}(-i\tau,e_2^{\vee})(\RI_2)  \frac{\ud \tau}{4\pi}\\
	&+\int_{[\RN]\times[\RN]} \widehat{\RK}^{\prime}(n_1,n_2)\psi^{-1}(n_1^{-1}n_2)\ud n_1\ud n_2.
	\end{align*}
Because
\begin{align*}
	&\CZ\left(s+\frac{1}{2} ,\Phi,\beta_{i\tau}(e_2,e_1^{\vee}) \right)\\
	&=\int_{(\BA^{\times})^2}\BF_{\psi,2     }({  _{e_1^{\vee}}}\Phi_{e_2})\begin{pmatrix}
		t_1 & 0 \\0 & t_2
	\end{pmatrix}\chi(t_1)|t_1|^{s+\frac{1}{2}+i\tau}\chi^{-1}(t_2)|t_2|^{s+\frac{1}{2}-i\tau}\ud^{\times}t_1\ud^{\times}t_2,
\end{align*}
and 
\begin{align*}
	\BF_{\psi,2}( ({  _{e_1^{\vee}}}\widehat{\Phi}_{e_2}) )=\BF_{\psi,2}(\widehat{ ({  _{e_2}}\Phi_{e_1^{\vee}})})=\BF_{\psi,1}\BF_{\psi,4}\BF_{\psi_,2}({_{e_2}\Phi_{e_1^{\vee}}})\begin{pmatrix}
		\cdot & \cdot \\ -\cdot & \cdot
	\end{pmatrix},
\end{align*}
we can apply the functional equation of $\GL_1$ zeta integrals to obtain
\begin{align*}
	&\CZ\left(-s+\frac{1}{2},\widehat{\Phi},\beta^{\vee}_{i\tau}(e_2,e_1^{\vee})    \right)\\
	&=\int_{(\BA^{\times})^2}\BF_{\psi,1}\BF_{\psi,4}  \BF_{\psi,2     }({  _{e_2}}\Phi_{e_1^{\vee}})\begin{pmatrix}
		t_1 & 0 \\ 0 & t_2
	\end{pmatrix}\chi(t_1)|t_1|^{-s+\frac{1}{2}+i\tau}\chi^{-1}(t_2)|t_2|^{-s+\frac{1}{2}-i\tau}\ud^{\times}t_1\ud^{\times}t_2\\
	&=\int_{(\BA^{\times})^2} \BF_{\psi,2     }({  _{e_2}}\Phi_{e_1^{\vee}})\begin{pmatrix}
		t_1 & 0 \\ 0 & t_2
	\end{pmatrix}\chi^{-1}(t_1)|t_1|^{s+\frac{1}{2}-i\tau}\chi(t_2)|t_2|^{s+\frac{1}{2}+i\tau}\ud^{\times}t_1\ud^{\times}t_2\\
	&=\CZ\left(s+\frac{1}{2} ,\Phi,   \beta_{-i\tau}(e_1^{\vee},e_2) \right).
\end{align*}
Therefore
\begin{align*}
	&\sum_{\chi\in\widehat{\BR_+k^{\times}\backslash\BA^{\times}}}\int_{-\infty}^{\infty} \sum_{e_1,e_2\in\CB(\chi,\omega\chi^{-1})}\CZ\left(-s+\frac{1}{2},\widehat{\Phi},\beta^{\vee}_{i\tau}(e_2,e_1^{\vee})\right)\BW^{\psi^{-1}}(i\tau,e_1)(\RI_2)\BW^{\psi}(-i\tau,e_2^{\vee})(\RI_2)  \frac{\ud \tau}{4\pi}\\
	&=\sum_{\chi\in\widehat{\BR_+k^{\times}\backslash\BA^{\times}}}\int_{-\infty}^{\infty} \sum_{e_1,e_2\in\CB(\chi,\omega\chi^{-1})}\CZ\left(s+\frac{1}{2},\Phi,\beta_{-i\tau}(e_1^{\vee},e_2)\right)\BW^{\psi^{-1}}(i\tau,e_1)(\RI_2)\BW^{\psi}(-i\tau,e_2^{\vee})(\RI_2)  \frac{\ud \tau}{4\pi}.
\end{align*}
By making $\tau\mapsto-\tau$ and change the role of $e$ with $e^{\vee}$, we have the above is 
\begin{align*}
	&\sum_{\chi\in\widehat{\BR_+k^{\times}\backslash\BA^{\times}}}\int_{-\infty}^{\infty} \sum_{e_1,e_2\in\CB(\chi,\omega\chi^{-1})}\CZ\left(s+\frac{1}{2},\Phi,\beta_{i\tau}(e_2,e_1^{\vee})\right)\BW^{\psi^{-1}}(i\tau,e_1)(\RI_2)\BW^{\psi}(-i\tau,e_2^{\vee})(\RI_2)  \frac{\ud \tau}{4\pi}\\
	&=\CI(s)-\left(  -\frac{1}{2}\CI_{\rb}(s) \right)+\left( \frac{1}{2}\CI_{\rb}(s) \right)-\frac{1}{2}\widehat{\CI}_{\rb}(s)+\left(-\frac{1}{2}\widehat{\CI}_{\rb}(s)\right),
	\end{align*}
	i.e.,
	\[
	\widehat{\CI}_{\mathrm{Eis}}(s)=\CI_{\mathrm{Eis}}(s)+\CI_{\mathrm{b}}(s)-\widehat{\CI}_{\mathrm{b}}(s).
	\]
Combining with Proposition \ref{prp_fqI} and Theorem \ref{thm_tf}, we obtain the functional equation of the cuspidal parts:
\begin{cor}
	\begin{align*}
		\CI_{\cusp}(s)=\widehat{\CI}_{\cusp}(s).
	\end{align*}
\end{cor}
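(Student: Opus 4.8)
The target is the identity $\CI_{\cusp}(s)=\widehat{\CI}_{\cusp}(s)$. The idea is to play off the two expressions of the ``full'' geometric quantity $\CI(s)$ against each other. On the one hand, Proposition \ref{prp_fqI} gives $\CI(s)=\widehat{\CI}(s)$ as an identity of entire functions on $\BC$, a consequence of Theorem \ref{thm_psf}. On the other hand, Theorem \ref{thm_tf} (plus the description of the geometric side in Section \ref{ssec_geo}) decomposes both sides into a cuspidal term, an Eisenstein term, and a boundary term $\CI_{\rb}$ coming from the rank-$1$ stratum. So the plan is: write $\CI(s)$ spectrally for $\Re(s)>1$ and $\widehat{\CI}(s)$ spectrally for $\Re(s)<-1$ using Wu's decomposition, subtract, and use the already-established functional equation $\widehat{\CI}_{\Eis}(s)=\CI_{\Eis}(s)+\CI_{\rb}(s)-\widehat{\CI}_{\rb}(s)$ to cancel all the non-cuspidal contributions.

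\textbf{Key steps.} First I would recall that $\CI(s)=\CI_{\cusp}(s)+\CI_{\Eis}(s)+\CI_{\rb}(s)$ for $\Re(s)\gg 1$ by Theorem \ref{thm_tf} together with the expression of the geometric side in Section \ref{ssec_geo}, and symmetrically $\widehat{\CI}(s)=\widehat{\CI}_{\cusp}(s)+\widehat{\CI}_{\Eis}(s)+\widehat{\CI}_{\rb}(s)$ for $\Re(s)\ll -1$ by applying the same spectral decomposition to the operator $g\mapsto\widehat{\Phi}(g^{-1})|\det g|^{s-1}$, as recorded just before Proposition \ref{prp_fqI}. Each of $\CI_{\cusp},\CI_{\Eis},\CI_{\rb}$ and their hatted counterparts has already been shown (Proposition on analytic continuation of $\CI_{\Eis}$, the corollary on $\CI_{\cusp}$, the analysis of $\CI_{\rb}(s)$ via Eisenstein series) to extend meromorphically — in fact entirely for the cuspidal piece — to all of $s\in\BC$, so all the identities below are identities of meromorphic functions and it suffices to verify them on an open set. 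Second, combine $\CI(s)=\widehat{\CI}(s)$ (Proposition \ref{prp_fqI}) with these two decompositions to get
\[
\CI_{\cusp}(s)+\CI_{\Eis}(s)+\CI_{\rb}(s)=\widehat{\CI}_{\cusp}(s)+\widehat{\CI}_{\Eis}(s)+\widehat{\CI}_{\rb}(s).
\]
Third, substitute the relation $\widehat{\CI}_{\Eis}(s)=\CI_{\Eis}(s)+\CI_{\rb}(s)-\widehat{\CI}_{\rb}(s)$ established in this section (from matching the Eisenstein contributions through the $\GL_1$ functional equation and Proposition \ref{prp_res}); the $\CI_{\Eis}$, $\CI_{\rb}$ and $\widehat{\CI}_{\rb}$ terms all cancel, leaving exactly $\CI_{\cusp}(s)=\widehat{\CI}_{\cusp}(s)$.

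\textbf{Main obstacle.} The genuinely delicate point is not this final bookkeeping — which is essentially forced once everything is on the table — but making sure the arithmetic of the residue contributions in $\widehat{\CI}_{\Eis}(s)=\CI_{\Eis}(s)+\CI_{\rb}(s)-\widehat{\CI}_{\rb}(s)$ is correct, i.e.\ that the signs and factors of $\tfrac12$ picked up when shifting the $z$-contour past the poles at $z=s\pm\tfrac12$ and $z=-s\pm\tfrac12$ (as catalogued in the Proposition on the analytic continuation of $\CI_{\chi,e_1,e_2}$ and in Proposition \ref{prp_res}) combine to exactly $+\CI_{\rb}(s)-\widehat{\CI}_{\rb}(s)$ and nothing else. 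In the displayed computation above this is summarized as $\CI(s)-(-\tfrac12\CI_{\rb}(s))+(\tfrac12\CI_{\rb}(s))-\tfrac12\widehat{\CI}_{\rb}(s)+(-\tfrac12\widehat{\CI}_{\rb}(s))$; the work is to trace through Proposition \ref{prp_res}(1)--(4) identifying which residue equals which of $\pm\CI_{\rb}$, $\pm\widehat{\CI}_{\rb}$, in both the $\chi=1$ and $\chi=\omega$ cases, and to check that the $\tau\mapsto-\tau$ and $e\leftrightarrow e^\vee$ symmetrization applied to the $\GL_1$-functional-equation identity $\CZ(-s+\tfrac12,\widehat\Phi,\beta^\vee_{i\tau}(e_2,e_1^\vee))=\CZ(s+\tfrac12,\Phi,\beta_{-i\tau}(e_1^\vee,e_2))$ really returns the integrand of $\CI_{\Eis}(s)$ rather than something off by a constant. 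Once that ledger balances, the corollary is immediate.
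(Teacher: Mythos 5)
Your proposal is correct and follows essentially the same route as the paper: the corollary is obtained by combining $\CI(s)=\widehat{\CI}(s)$ (Proposition \ref{prp_fqI}) with the two spectral decompositions from Theorem \ref{thm_tf} (for $\Re(s)>1$ and, with $\widehat{\Phi}$, for $\Re(s)<-1$) and the identity $\widehat{\CI}_{\mathrm{Eis}}(s)=\CI_{\mathrm{Eis}}(s)+\CI_{\rb}(s)-\widehat{\CI}_{\rb}(s)$ derived via the $\GL_1$ functional equation and the residue bookkeeping of Proposition \ref{prp_res}. The cancellation you describe is exactly the paper's concluding step, and your flagged "main obstacle" is precisely where the paper's work lies.
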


\section{Isolation of the Spectrum}\label{sec_iso}

Let $\RS$ be a finite set of places of $k$ containing all the Archimedean places. Denote by $\CF(\Mat_2(\BA))_{\RS}$ the subspace of Schwartz functions on $\Mat_2(\BA)$ consisting of vectors of the form
\begin{align*}
	\Phi_{\RS}\otimes_{\nu\notin\RS} 1_{\Mat_2(\Fo_{\nu})}. 
\end{align*}
For simplicity, denote $\Phi^{\circ,\RS}:=\otimes_{\nu\notin\RS} 1_{\Mat_2(\Fo_{\nu})}$ and write $\Phi_{\RS}$ as elements in $\CF(\Mat_2(\BA))_{\RS}$ by tensoring this basic vector. Let $\CH^{\RS}$ be the unramified Hecke algebra outside $\RS$
\[
\CH^{\RS}:=\otimes^{\prime}_{\nu\notin\RS}\CH(\RG(k_{\nu}),\RK_{\nu}).
\]
Then we get a family of functionals first when $\Re(s)>1$
\begin{align*}
	\mathrm{KTF}^{\RS}(s):\CH^{\RS}\otimes\CF(\Mat_2(\BA))_{\RS}&\rightarrow\BC\\
	h\otimes\Phi_{\RS}&\mapsto \int_{[\RZ]}\mathrm{KTF} (\RR_z(\Phi_{\RS}\otimes (h\star\Phi^{\circ,\RS})(x)(\ud^+x)^{\frac{1}{2}}  ))\omega(z)|z|^{2s}  \ud z,
	\end{align*}
 By analytic continuation from Section \ref{sec_cont}, we obtain a family $\mathrm{KTF}^{\RS}(s)$ for $s\in\BC$.

By Satake isomorphism, we have for each non-Archimedean place $\nu$,
\[
\CH(\RG(k_{\nu}),\RK_{\nu})\cong\BC[\RG^{\vee}\sslash\RG^{\vee}],
\]
where the action is by conjugation. Hence 
\[
\CH^{\RS}\cong\BC\left[\prod_{\nu\notin\RS}(\RG^{\vee}\sslash\RG^{\vee})   \right],
\]
where regular functions are restricted tensor products of regular functions on finite factors with respect to the constant function $1$. This isomorphism is denoted by $h\mapsto \widehat{h}$. Since every automorphic representation that is unramified outside of $\RS$ determines a point on $\displaystyle{\prod_{\nu\notin\RS}\RG^{\vee}\sslash\RG^{\vee}}$ and is determined by it due to strong multiplicity one. Let 
\[
\RU^{\RS}\subset\prod_{\nu\notin\RS}\RG^{\vee}\sslash\RG^{\vee}
\]
be the subset corresponding to the unitary ones with given central character $\omega$, which is compact with the product Hausdorff topology. Let \[
\RR=\{[\pi(\omega|\cdot|^{s+\frac{1}{2}},|\cdot|^{-s-\frac{1}{2}})]: s\in\BC    \}.
\]
We write
\[
\BC[\RU^{\RS}\cup \RR]=\BC\left[\prod_{\nu\notin\RS}(\RG^{\vee}\sslash\RG^{\vee})   \right],
\]
by restriction. Then we have
\begin{lem}
	$\BC[\RU^{\RS}\cup\RR]$ is dense on the space $\BC(\RU^{\RS}\cup\RR)$ of continuous functions on $\RU^{\RS}\cup\RR$, where the latter is equipped with the inductive limit topology of uniform convergence on compact subsets.
\end{lem}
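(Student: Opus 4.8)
The plan is to prove density by a Stone--Weierstrass type argument, exploiting that $\BC[\RU^{\RS}\cup\RR]$ is (the image under restriction of) an algebra of regular functions, hence in particular a subalgebra of $\BC(\RU^{\RS}\cup\RR)$ that contains the constants. First I would fix a compact subset $\RC\subset\RU^{\RS}\cup\RR$; by the definition of the inductive limit topology it suffices to show that the restrictions of elements of $\BC[\RU^{\RS}\cup\RR]$ are dense in $\BC(\RC)$ with respect to the sup-norm. Since $\BC[\RU^{\RS}\cup\RR]$ is stable under complex conjugation (the Satake parameters of a unitary representation with unitary central character are stable under $g^{\vee}\mapsto \overline{(g^{\vee})}$, and $\RR$ is visibly conjugation-stable as $\overline{s}$ ranges over $\BC$ when $s$ does), the complex Stone--Weierstrass theorem applies once we check that the algebra separates points of $\RU^{\RS}\cup\RR$.

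The key step is therefore point separation. Given two distinct points $x\neq y$ of $\prod_{\nu\notin\RS}(\RG^{\vee}\sslash\RG^{\vee})$, there is a place $\nu_0\notin\RS$ at which their $\nu_0$-components differ as points of $\RG^{\vee}\sslash\RG^{\vee}$, i.e. as conjugacy classes of semisimple elements of $\GL_2(\BC)$; such classes are separated by the two coefficients of the characteristic polynomial, that is by the functions $g^{\vee}\mapsto\tr(g^{\vee})$ and $g^{\vee}\mapsto\det(g^{\vee})$, both of which lie in $\BC[\RG^{\vee}\sslash\RG^{\vee}]\cong\CH(\RG(k_{\nu_0}),\RK_{\nu_0})$ (the first is the image of the standard Hecke operator $T_{\nu_0}$, the second of the central one). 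Pulling these back to the $\nu_0$-factor and tensoring with $1$ elsewhere produces an element of $\CH^{\RS}$ whose image in $\BC\left[\prod_{\nu\notin\RS}(\RG^{\vee}\sslash\RG^{\vee})\right]$ separates $x$ from $y$, and its restriction to $\RU^{\RS}\cup\RR$ separates the corresponding points there. One still has to treat the degenerate cases: two distinct points of $\RR$ are separated because $[\pi(\omega|\cdot|^{s+\frac12},|\cdot|^{-s-\frac12})]$ is determined by its Satake parameters, which depend nonconstantly (indeed algebraically and injectively modulo the obvious symmetry) on $s$, so some $\tr$ or $\det$ coordinate already distinguishes them; and a point of $\RU^{\RS}$ is separated from a point of $\RR$ because they are distinct points of $\prod_{\nu\notin\RS}(\RG^{\vee}\sslash\RG^{\vee})$ and the previous argument applies verbatim.

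With separation in hand, Stone--Weierstrass gives sup-norm density on each compact $\RC$, and passing to the inductive limit gives density in $\BC(\RU^{\RS}\cup\RR)$, as claimed. I expect the main subtlety, rather than a genuine obstacle, to be bookkeeping around the precise meaning of ``$\BC[\RU^{\RS}\cup\RR]=\BC\left[\prod_{\nu\notin\RS}(\RG^{\vee}\sslash\RG^{\vee})\right]$ by restriction'': one must make sure the restriction map is well-defined and that the restricted algebra still contains the constants and is conjugation-stable, and that $\RU^{\RS}\cup\RR$ (resp.\ its compact subsets) is the right space on which to invoke Stone--Weierstrass --- here compactness of $\RU^{\RS}$ with the product topology, noted in the text, and local compactness of $\RR\cong\BC$ are what make the inductive-limit formulation legitimate. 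None of this requires new input beyond the Satake isomorphism already recalled and the elementary fact that characteristic polynomials separate semisimple conjugacy classes in $\GL_2$.
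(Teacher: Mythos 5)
Your overall strategy --- reduce to compact subsets via the inductive limit topology and then apply the complex Stone--Weierstrass theorem --- is exactly the paper's, and your point-separation step (the characteristic-polynomial coordinates $\tr$ and $\det$ separate semisimple classes in each factor, hence separate distinct points of the product) is fine. The gap is in the conjugation-stability step, which is precisely the point the paper does not argue directly but delegates to \cite[Lemma 5.1.1]{Sak19a}. Note first that stability of the \emph{set} $\RU^{\RS}\cup\RR$ under coordinatewise complex conjugation is not what Stone--Weierstrass needs: one needs that for every regular function $P$ the function $x\mapsto\overline{P(x)}$ is again the restriction of a regular function, and this requires the conjugation map on the set to be implemented by an algebraic map. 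On $\RU^{\RS}$ this does work, but not for the reason you state: the Satake parameters of a unitary $\pi$ are not fixed by conjugation; rather $\overline{\pi}\cong\pi^{\vee}$, so entrywise conjugation acts through inversion $c\mapsto c^{-1}$, and $\tr(c^{-1})=\tr(c)/\det(c)$ is again polynomial on $\RU^{\RS}$ only because $\det(c)$ equals the fixed unit $\omega_{\nu}(\varpi_{\nu})$ there (also, $\overline{x}$ itself lies in the analogous set for the character $\omega^{-1}$ rather than in $\RU^{\RS}$, which is harmless for the function-algebra statement but is not what you asserted).

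The more serious problem is the $\RR$ part. The claim that $\RR$ is ``visibly conjugation-stable as $\bar s$ ranges over $\BC$'' fails in general: the conjugate of the point attached to $s$ has Satake parameters $\{\omega_{\nu}(\varpi_{\nu})^{-1}q_{\nu}^{-\bar{s}-\frac{1}{2}},\,q_{\nu}^{\bar{s}+\frac{1}{2}}\}$, i.e.\ it belongs to the family attached to $\omega^{-1}$, not to $\omega$. And even if the set were conjugation-stable, that alone would not give closure of the restricted algebra: every element of $\BC[\RU^{\RS}\cup\RR]$ restricts on $\RR$ to a holomorphic function of the parameter $s$, while its complex conjugate is antiholomorphic, so it cannot be expressed through the holomorphic coordinates by set-stability alone (compare $z\mapsto\bar{z}$ on $\BC$, which is not a polynomial in $z$ although $\BC$ is conjugation-stable). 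This conjugation-closure on the non-unitary family is the genuinely delicate content of the lemma; your argument as written does not establish it, whereas the paper's proof rests entirely on the cited lemma of Sakellaridis for exactly this point. To complete your proof you would need either to invoke that reference or to give an argument that genuinely handles the family $\RR$, not just the unitary set $\RU^{\RS}$.
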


\begin{proof}
	According to the choice of the topology here, it suffices to show for any compact subset $\RC$, the restriction of $\BC[\RU^{\RS}\cup\RR]$ to $\RC$ is dense in $\BC(\RC)$, and it suffices to show polynomial functions in $\BC[\RU^{\RS}\cup\RR]$ are closed under complex conjugation, which is \cite[Lemma 5.1.1]{Sak19a}.
\end{proof}

Fix an element $\Phi_{\RS}\in\CF(\Mat_2(\BA))_{\RS}$, then we can view $\mathrm{KTF}^{\RS}(s)$ as a functional on $\CH^{\RS}$, which can be extended to a continuous functional on $\BC(\RU^{\RS}\cup\RR)$, hence a finite measure with compact support:

\begin{lem}
	$\mathrm{KTF}^{\RS}(s)$ can be extended to a finite measure with compact support on $\BC(\RU^{\RS}\cup\RR)$.
\end{lem}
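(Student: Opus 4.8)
The plan is to exhibit $\mathrm{KTF}^{\RS}(s)$, for each fixed $\Phi_{\RS}$ and $s\in\BC$, as a bounded linear functional on $\BC(\RU^{\RS}\cup\RR)$ with respect to the inductive-limit topology, and then invoke the Riesz representation theorem to produce the desired finite compactly-supported measure. By the preceding lemma, $\BC[\RU^{\RS}\cup\RR]$ is dense in $\BC(\RU^{\RS}\cup\RR)$, so it suffices to establish two things: first, that $h\mapsto\mathrm{KTF}^{\RS}(s)(h\otimes\Phi_{\RS})$ is continuous on $\BC[\RU^{\RS}\cup\RR]$ for the topology of uniform convergence on compact subsets; and second, that the resulting extension is supported on a compact subset of $\RU^{\RS}\cup\RR$. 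The continuity will then let the functional extend uniquely to the completion, and Riesz--Markov identifies the extension with integration against a (complex, finite, regular) Borel measure.

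First I would unwind the definition of $\mathrm{KTF}^{\RS}(s)(h\otimes\Phi_{\RS})$ using the fundamental lemma for the spherical Hecke algebra, Proposition \ref{prp_hankel basic}: convolving $\Phi^{\circ,\RS}$ by $h$ upstairs corresponds, after the twisted push-forward, to acting by $h$ on the basic vector $f^{\circ}_\nu$ at each $\nu\notin\RS$. Then I would use the analytic continuation established in Section \ref{sec_cont} to rewrite $\mathrm{KTF}^{\RS}(s)(h\otimes\Phi_{\RS})$ as a sum of the terms $\CI_{\cusp}(s)$, $\CI_{\Eis}(s)$ and the boundary contribution $\CI_{\rb}(s)$, now with the Hecke operator inserted. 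The point is that each of these terms is a spectral expansion in which $h$ enters only through its Satake parameter: the cuspidal part contributes $\widehat{h}$ evaluated at the (compactly varying) Satake parameters of the cuspidal $\pi$ unramified outside $\RS$, weighted by the absolutely convergent coefficients $\CZ(s+\tfrac12,\Phi_{\RS}\otimes\Phi^{\circ,\RS},\beta)\,\BW(\RI_2)\BW(\RI_2)$; the Eisenstein part contributes an integral of $\widehat{h}$ over a compact family of parameters on the unitary line, again with absolutely integrable weights; and the rank-one/boundary term, via the last proposition of Section \ref{sec_cont}, contributes $\widehat{h}$ at the parameters of the degenerate series $\pi(\omega|\cdot|^{s+1/2},|\cdot|^{-s-1/2})$, i.e.\ precisely the locus $\RR$. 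So $\mathrm{KTF}^{\RS}(s)(h\otimes\Phi_{\RS})$ is an absolutely convergent integral of $\widehat{h}$ against a fixed finite measure concentrated on $\RU^{\RS}\cup\RR$, which gives both the continuity (bound by $\|\widehat h\|_\infty$ on the relevant compact set times a finite total mass) and the compact support (the cuspidal and Eisenstein parameters live in the compact set $\RU^{\RS}$, and the boundary parameters live in $\RR$, which is $\sigma$-compact and meets each compact test set in a compact piece).

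The main obstacle will be controlling the total mass, i.e.\ proving that the spectral coefficients are genuinely summable/integrable uniformly, so that one really gets a \emph{finite} measure and not merely a distribution of some finite order. For the cuspidal part this is the convergence of $\sum_{\pi}\sum_{\varphi_1,\varphi_2\in\CB(\pi)}|\CZ(s+\tfrac12,\Phi,\beta(\varphi_2,\varphi_1^\vee))\BW^{\psi^{-1}}_{\varphi_1}(\RI_2)\BW^{\psi^{-1}}_{\varphi_2}(\RI_2)|$, which follows from the normal convergence of the kernel $\RK(x,y)$ in Wu's theorem (\cite[Theorem 2.10]{Wu}) together with the boundedness of Whittaker values on vertical strips (\cite[Lemma 1]{JZ87}); for the Eisenstein part the same inputs plus Stirling give the needed decay in $\tau$. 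Once finiteness of the mass is in hand, the rest is routine: $\mathrm{KTF}^{\RS}(s)$ is a continuous functional on a dense subspace, extends by continuity, and Riesz--Markov delivers the finite compactly-supported measure. I would also remark that the support is contained in $\RU^{\RS}\cup\RR$ by construction, which is the statement needed for the isolation argument in the remainder of Section \ref{sec_iso}.
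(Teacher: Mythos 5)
There is a genuine gap: your argument only works in the region of absolute convergence, while the lemma is asserted for \emph{all} $s\in\BC$, where $\mathrm{KTF}^{\RS}(s)$ is defined only by analytic continuation. Your plan rewrites $\mathrm{KTF}^{\RS}(s)(h)$ via the spectral expansion $\CI_{\cusp}+\CI_{\Eis}+\CI_{\rb}$ and appeals to the normal convergence of the kernel in \cite[Theorem 2.10]{Wu} to get a finite total mass; but that normal convergence (and hence the absolute summability/integrability of the spectral coefficients you need) is only available for $\Re(s)>1$. In the critical strip $-1\le\Re(s)\le 1$ the continued Eisenstein term is no longer the integral of $\widehat h$ against an integrable density on the unitary axis --- its continuation is obtained by contour shifts and picks up residue terms (see the analysis of $\CI_{\chi,e_1,e_2}(s)$ in Section \ref{sec_cont}) --- so the claim that ``the coefficients are genuinely summable uniformly'' is precisely what is unavailable there, and your Riesz--Markov step has nothing to feed on. You also do not address $\Re(s)<-1$, though that half-plane could be patched by Theorem \ref{thm_psf} together with the fundamental lemma (Proposition \ref{prp_hankel basic}), which is what the paper does.

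The paper's proof is geometric rather than spectral, and its key idea in the strip is a Phragm\'en--Lindel\"of argument: for $\Re(s)>1$ one bounds the kernel $\RK_{\Phi_{\RS}\otimes h\star\Phi^{\circ,\RS}}$ directly by $\|\widehat h\|_{\CL^\infty(\RU^{\RS})}$ times quantities independent of $h$ (via \cite[Lemma 3.25]{Wu17}), with the rank-one term acting by a delta of $\widehat h$ at the point of $\RR$; for $\Re(s)<-1$ one uses Proposition \ref{prp_fqI} and Proposition \ref{prp_hankel basic} to get the dual bound; then one shows the bound $|\mathrm{KTF}^{\RS}(s)(h)|\le \RB\,\|\widehat h\|_{\CL^\infty(\RC)}$ on the two vertical lines $\Re(s)=\pm(1+\epsilon)$ with $\RB$ independent of $s$, proves boundedness of $s\mapsto\mathrm{KTF}^{\RS}(s)(h)$ in the strip for fixed $h$ using the geometric expansion (Lemma \ref{lem_es}, Lemma \ref{lem_gj}, and the split into $|z|\ge 1$ and $|z|<1$ via Proposition \ref{prp_fqI}), and concludes by Phragm\'en--Lindel\"of. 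To repair your proposal you would need some substitute for this step --- uniform-in-$s$ control in the strip cannot be extracted from the spectral side alone with the inputs you cite.
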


\begin{proof}
    To avoid confusion, for a Schwartz function $\Phi$ on $\Mat_2(\BA)$, denote $\RK_{\Phi}(x,y)$ to be the kernel function associated to $\Phi$ defined by
    \[
    \RK_{\Phi}(x,y):=\int_{[\RZ]}\sum_{\gamma\in\GL_2(k)}\Phi(x^{-1}\gamma yz)\omega(z)|z|^{2s+2}\ud^{\times}z\cdot |\det x^{-1}y|^{s+1}
    \]
    as in the previous sections. And similarly for $\RK^{\prime}_{\Phi}(x,y)$, $\widehat{\RK}_{\widehat{\Phi}}(x,y)$ and $\widehat{\RK}^{\prime}_{\widehat{\Phi}}(x,y)$.
    
	When $\Re(s)>1$, according to \cite[Lemma 3.25]{Wu17}, we can choose some compact subset $\Omega$ of $\RG(\BA)$ containing a fundamental domian of $[\RN]$, then we have for $y\in\Omega$
	\begin{align}\label{eq_k}
	|\RK_{\Phi_{\RS}\otimes h\star\Phi^{\circ,\RS}}(x,y)|\ll \left( \|(1+\Delta_{\infty})^{N+A}\RK_{\Phi}(x,\cdot )\|_{\CL^2(\GL_2,\omega)}+\sum_{\chi^2=\omega}\|\RF_{\chi\circ\det }(\RK(x,\cdot)) \| \right)\cdot \|\widehat{h}\|_{\CL^\infty(\RU^{\RS})},
	\end{align}
	where $\Phi=\Phi_{\RS}\otimes\Phi^{\circ,\RS}$, $N\in\BN$ depends only on the field $k$, $A$ is some real number large enough, $\Delta_{\infty}$ is the Laplacian operator defined as in \cite[Section 3.5]{Wu17}, and $\RF_{\chi\circ\det}$ is the spectrum projectors defined as in \cite[Section 2.2]{Wu17}. Note that the right-hand-side of (\ref{eq_k}) is a continuous function for the $x$-variable, then after we integrate it over $[\RN]$, we see
	\[
	 \int_{[\RN]\times[\RN]} \RK(n_1,n_2)\psi^{-1}(n_1^{-1}n_2)  \ll \|\widehat{h} \|_{\CL^{\infty}(\RU^{\RS})},
	\]
	where the implict constant depends on $\Phi_{\RS}$, $s$ and the underground field $k$. Also note that according to \cite[Theorem 2.10]{Wu}, we have \[\displaystyle{ (1+\Delta_{\infty})^{N+A}\RK_{\Phi}(x, y)\ll \mathrm{Ht}(y)^{1-s}   },
	\]
	where the hight function $\mathrm{Ht}$ is as defined in \cite[Section 1.2]{Wu}. Then, when $\Re(s)$ is fixed, the implicit constant can be chosen to be independent of $s$.
	
	According to Section \ref{ssec_geo}, as the kernel function $\RK_{ \Phi_{\RS}\otimes(h\star\Phi^{\circ,\RS}) }^{\prime}(x,y)$ is an Eisenstein series and the action of $h$ is given by the value of the delta distribution of $\widehat{h}$ on the principal series $\pi(\omega|\cdot|^{s+\frac{1}{2}},|\cdot|^{-s-\frac{1}{2}})$, we get the desired extension when $\Re(s)>1$.
	
	When $\Re(s)<-1$, combine Proposition \ref{prp_fqI} and the fundamental lemma Proposition \ref{prp_hankel basic}, we have
	\begin{align}\label{eq_kk}
		\mathrm{KTF}^{\RS}(s)(h)\ll \| \widehat{h}\|_{\CL^{\infty}(\RU^{\RS})}+|\widehat{h}(\pi(\omega|\cdot|^{s-\frac{1}{2}},|\cdot|^{-s+\frac{1}{2}}))|,
	\end{align}
	which means $\mathrm{KTF}^{\RS}(s)$ is also a finite measure of compact support.
	
	When $-1\leq\Re(s)\leq 1$, we will use the Phragmen-Lindelof principle to bound $\mathrm{KTF}^{\RS}(s)$. First let us fix $\epsilon>0$ and consider the case when $\Re(s)=1+\epsilon$. As $\Re(s)=1+\epsilon$ is fixed, there is some compact set $\RC^{\prime}$ of 
	\[
	\prod_{\nu\notin\RS} (\RG^{\vee}\sslash\RG^{\vee})
	\]
	such that $\RC^{\prime}$ contains $\displaystyle{ \{[\pi(\omega|\cdot|^{s+\frac{1}{2}},|\cdot|^{-s-\frac{1}{2}})] : s\in\BC, \Re(s)=1+\epsilon \} }$, hence
	\[
	\| \widehat{h}\|_{\CL^{\infty}(\RU^{\RS})}+|\widehat{h}(\pi(\omega|\cdot|^{s-\frac{1}{2}},|\cdot|^{-s+\frac{1}{2}}))|\leq \|\widehat{h}\|_{\CL^{\infty}(\RC)},\;\Re(s)=1+\epsilon,
	\]
	where $\RC:=\RU^{\RS}\cup\RC^{\prime}$ is compact. Moreover, when $\Re(s)=1+\epsilon$, the growth of 
	\[
	(1+\Delta_{\infty})^{N+A}\RK_{\Phi}(x,\cdot)
	\]
	on any Seigel domain is bounded by $\mathrm{Ht}(y)^{-\epsilon}$, so are $\|(1+\Delta_{\infty})^{N+A}\RK_{\Phi}(x,\cdot )\|_{\CL^2(\GL_2,\omega)}
$ and $\|\RF_{\chi\circ\det }(\RK(x,\cdot))\| $. Combine them together, we have
	\[
	|\mathrm{KTF}^{\RS}(s)(h)|\ll\|\widehat{h}\|_{\CL^{\infty}(\RC)},
	\]
	when $\Re(s)=1+\epsilon$, with the implicit constant indepdent of $s$, i.e., there is some constant $\RB^{\prime}$ such that $|\mathrm{KTF}^{\RS}(s)(h)|\leq \RB^{\prime}\|\widehat{h}\|_{\CL^{\infty}(\RC)}$ for any $s$ with $\Re(s)=1+\epsilon$. A similar argument applied to the case that $\Re(s)=-1-\epsilon$ due to Theorem \ref{thm_psf}, with replacing $\Phi$ by $\widehat{\Phi}$. Then we obtain that there is some constant $\RB>0$ such that
	\[
	|\mathrm{KTF}^{\RS}(s)(h)|\leq \RB\|\widehat{h}\|_{\CL^{\infty}(\RC)},\;\Re(s)=1+\epsilon\;\mathrm{or}\;-1-\epsilon.
	\]
	
	Next let us show that $\mathrm{KTF}^{\RS}(s)(h)$ is bounded if $h$ fixed when $-1-\epsilon\leq \Re(s)\leq 1+\epsilon$. According to the Lemma \ref{lem_es}, when $-1-\epsilon\leq \Re(s) \leq 1+\epsilon$, we have
	\begin{align*}
		&\int_{z\in [\RZ],|z|\geq 1}\left| \mathrm{KTF}( \RR_z(  \Phi_{\RS}\otimes h\star\Phi^{\circ,\RS})(\ud x)^{\frac{1}{2}} )\omega(z)|z|^{2s}\right|\ud^{\times}z\\
		&\leq \int_{z\in [\RZ],|z|\geq 1} \int_{\RN(\BA)\times\RN(\BA)}\sum_{t_1\in k^{\times},t_2\in k} \left| \BF_{\psi,2}(   \Phi_{\RS}\otimes(h\star\Phi^{\circ,\RS}) )\left(  \begin{pmatrix}
			t_1n_1z & \frac{t_2}{z} \\ t_1z & t_1n_2z
		\end{pmatrix}   \right)   \right| |z|^{2s+2}\ud n_1\ud n_2 \ud^{\times}z\\
		&+\int_{z\in \RZ(\BA),|z|\geq 1}\int_{\RN(\BA)}\left| \Phi\left( \begin{pmatrix}
			z & zn \\ 0 & z
		\end{pmatrix}    \right)   \right|   |z|^{2s+2}\ud^{\times}z .\\
	\end{align*}
According to Lemma \ref{lem_gj}, there is some function $\widetilde{\Phi}^{\prime}$ of Schwrtz type such that
\[
\sum_{,t_2\in k} \left| \BF_{\psi,2}(   \Phi_{\RS}\otimes(h\star\Phi^{\circ,\RS}) )\left(  \begin{pmatrix}
			t_1n_1z & \frac{t_2}{z} \\ t_1z & t_1n_2z
		\end{pmatrix}   \right)   \right|\ll (1+|z|)\widetilde{\Phi}^{\prime}(t_1z),
\]
hence there is another Schwartz function $\widetilde{\Phi}$ such that
\begin{align*}
&\int_{z\in [\RZ],|z|\geq 1}\left| \mathrm{KTF}( \RR_z(  \Phi_{\RS}\otimes h\star\Phi^{\circ,\RS})(\ud x)^{\frac{1}{2}} )\omega(z)|z|^{2s}\right|\ud^{\times}z \\
&\ll \int_{z\in\BZ(\BA),|z|\geq 1}(1+|z|)\widetilde{\Phi}(z)(|z|^{2s}+|z|^{2s+1})\ud^{\times}z\ll1,\;-1-\epsilon\leq \Re(s)\leq 1+\epsilon.
\end{align*}
Apply the same argument to $\widetilde{\Phi}$, and make use of Proposition \ref{prp_fqI}, we have
\begin{align*}
   & \mathrm{KTF}^{\RS}(s)(h)=\int_{z\in [\RZ],|z|\geq 1}\mathrm{KTF}(\RR_z((\Phi_{\RS}\otimes h\star\Phi^{\circ,\RS})(x)(\ud^+x)^{\frac{1}{2}}  ))\omega(z)|z|^{2s}\ud^{\times}z \\
	&+ \int_{|z|<1}\mathrm{KTF}(\RR_z(\widehat{\Phi_{\RS}\otimes h\star\Phi^{\circ,\RS}}(x)(\ud^+x)^{\frac{1}{2}}  ))\omega(z)|z|^{2s}\ud^{\times}z \ll 1,\;-1-\epsilon\leq \Re(s)\leq 1+\epsilon.
\end{align*}

Finally, fix $h$ and apply the Phragmen-Lindelof principle (\cite[Theorem 12.8]{Rud87}) to the holomorphic function $s\mapsto \mathrm{KTF}^{\RS}(s)(h)$, we have
	\[
	|\mathrm{KTF}^{\RS}(s)(h)|\leq \RB\|\widehat{h}\|_{\CL^{\infty}(\RC)},\;-1-\epsilon\leq \Re(s)\leq 1+\epsilon,
	\]
	which means $\mathrm{KTF}^{\RS}(s)$ is also a finite measure when $-1\leq \Re(s)\leq 1$.
	
	\end{proof}

We denote $\widehat{\RG}^{\mathrm{aut}}$, the set of automorphic representations appearing in the Plancherel formula for $\CL^2(\GL_2,\omega)$. We consider $\widehat{\RG}^{\mathrm{aut}}$ as a subset of
\[
\lim_{\rightarrow}\RU^{\RS}
\]
in the obvious sense, and we freely talk about $\widehat{\RG}^{\mathrm{aut}}$ as a subset of $\RU^{\RS}$, meaning its intersection with $\RU^{\RS}$. We denote $\widehat{\RG}^{\cusp}$ the subset of $\widehat{\RG}^{\mathrm{aut}}$ consisting of cuspidal representations and $\widehat{\RG}^{\Eis}$ the subset of principle series. Then $\widehat{\RG}^{\cusp}$ is a countable number points and $\widehat{\RG}^{\Eis}$ is a countable union of lines. Both of them are measurable sets with respect to the standard Borel structure on $\RU^{\RS}$.

Then, combined with the spectral expansion, we have
\begin{thm}
	For fixed $\Phi_{\RS}$ and all $s\in\BC$, The functional $\mathrm{KTF}^{\RS}(s)$ is a finite complex measure $\nu_{\Phi_{\RS}}$ on $\widehat{\RG}^{\cusp}\cup\widehat{\RG}^{\Eis}\cup\RR$. The measure $\nu_{\Phi_{\RS}}$ is equal to 
	\[
	\sum_{\varphi_1.\varphi_2\in\CB(\pi)}\CZ\left(s+\frac{1}{2},\Phi,\beta(\varphi_2,\varphi_1^{\vee})\right)\BW_{\varphi_1}^{\psi^{-1}}(\RI_2)\BW_{\varphi_2^{\vee}}^{\psi}(\RI_2)\ud \pi
	\]
	on $\widehat{\RG}^{\mathrm{cusp}}$ when $\Re(s)>1$. It admits an analytic continuation to all $s\in\BC$ and satisfies the functional equation
	\begin{align*}
		&\sum_{\varphi_1.\varphi_2\in\CB(\pi)}\CZ\left(s+\frac{1}{2},\Phi,\beta(\varphi_2,\varphi_1^{\vee})\right)\BW_{\varphi_1}^{\psi^{-1}}(\RI_2)\BW_{\varphi_2^{\vee}}^{\psi}(\RI_2)\\
		&=\sum_{\varphi_1.\varphi_2\in\CB(\pi)}\CZ\left(-s+\frac{1}{2},\widehat{\Phi},\beta^{\vee}(\varphi_2,\varphi_1^{\vee})\right)\BW_{\varphi_1}^{\psi^{-1}}(\RI_2)\BW_{\varphi_2^{\vee}}^{\psi}(\RI_2).
	\end{align*}
	In particular, we get the analytic continuation of $L(s,\pi,\st)$. Moreover, 
	\[
	L\left(s+\frac{1}{2},\pi,\st\right)=\epsilon\left(s+\frac{1}{2},\pi \right)L\left(-s+\frac{1}{2},\pi,\st^{\vee}\right).
	\]
	where 
	$L\left(s+\frac{1}{2},\pi,\st\right)$ is the complete $L$-function.
	
\end{thm}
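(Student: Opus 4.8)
\emph{Step 1: the measure and its spectral shape.}
By the two lemmas preceding the statement, for every $s\in\BC$ the functional $\mathrm{KTF}^{\RS}(s)$ extends to a finite complex measure $\nu_{\Phi_{\RS}}$ on $\RU^{\RS}\cup\RR$, and the estimate $|\mathrm{KTF}^{\RS}(s)(h)|\ll\|\widehat h\|_{\CL^{\infty}(\RC)}$ is locally uniform in $s$, so $s\mapsto\nu_{\Phi_{\RS}}$ is a holomorphic family of measures. For $\Re(s)>1$ I would combine Theorem~\ref{thm_tf} with Wu's $\CL^{2}$-decomposition of the kernel $\RK_{\Phi_{\RS}\otimes(h\star\Phi^{\circ,\RS})}$: integrating against $\psi^{-1}$ over $[\RN]\times[\RN]$ kills the one-dimensional residual part (since $\eta\circ\det$ is $\RN$-invariant while $\psi$ is not) and leaves the cuspidal sum, the continuous Eisenstein integral, and the rank-one boundary term. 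The latter two are supported on $\widehat{\RG}^{\Eis}\cup\RR$ — the boundary term is an Eisenstein series in each variable whose Hecke eigenvalue is the value of $\widehat h$ at $\pi(\omega|\cdot|^{s+\frac12},|\cdot|^{-s-\frac12})\in\RR$, as in Section~\ref{ssec_geo} and Proposition~\ref{prp_res} — while the cuspidal sum gives precisely the atomic part on $\widehat{\RG}^{\cusp}$ claimed in the theorem. The analytic continuation of $\nu_{\Phi_{\RS}}$ to all $s\in\BC$ is then the content of Section~\ref{sec_cont} (the propositions on $\CI$, $\CI_{\Eis}$, $\CI_{\rb}$, plus the spectral expansion).

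\emph{Step 2: isolating a single cuspidal $\pi$.}
Since $\widehat{\RG}^{\cusp}$ is a countable collection of atoms of $\nu_{\Phi_{\RS}}$ and $\widehat{\RG}^{\Eis}\cup\RR$ is measurable and disjoint from it, I would, following \cite{Sak19a}, use the density of $\BC[\RU^{\RS}\cup\RR]$ in $\BC(\RU^{\RS}\cup\RR)$ (the lemma above) to build $h_{n}\in\CH^{\RS}$ with $\widehat{h_{n}}(\pi)=1$, $\widehat{h_{n}}\to\mathbf 1_{\{\pi\}}$ pointwise on the support of $\nu_{\Phi_{\RS}}$, and $\sup_{n}\|\widehat{h_{n}}\|_{\CL^{\infty}(\RC)}<\infty$. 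Dominated convergence then gives $\mathrm{KTF}^{\RS}(s)(h_{n})\to\nu_{\Phi_{\RS}}(\{\pi\})$, which for $\Re(s)>1$ is exactly the single-$\pi$ term, and because the bound is locally uniform in $s$ this identification propagates to the analytically continued measure.

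\emph{Step 3: functional equation and passage to $L$-functions.}
Running the same construction for the Fourier-transformed data, Proposition~\ref{prp_fqI} ($\CI(s)=\widehat{\CI}(s)$) together with the comparison of Eisenstein and boundary terms in Section~\ref{sec_feq} yields $\CI_{\cusp}(s)=\widehat{\CI}_{\cusp}(s)$; applying this with $h\star\Phi$ in place of $\Phi$ and using the fundamental lemma $\widehat{h\star\Phi}=(h(\cdot)|\det\cdot|^{2})\star\widehat{\Phi}$ (Proposition~\ref{prp_hankel basic}) to match the Hecke localizations on the two sides, then passing to the limit over $h_{n}$, produces the per-$\pi$ functional equation
\[
\sum_{\varphi_1,\varphi_2\in\CB(\pi)}\CZ\!\left(s+\tfrac12,\Phi,\beta(\varphi_2,\varphi_1^{\vee})\right)\BW_{\varphi_1}^{\psi^{-1}}(\RI_2)\BW_{\varphi_2^{\vee}}^{\psi}(\RI_2)=\sum_{\varphi_1,\varphi_2\in\CB(\pi)}\CZ\!\left(\tfrac12-s,\widehat{\Phi},\beta^{\vee}(\varphi_2,\varphi_1^{\vee})\right)\BW_{\varphi_1}^{\psi^{-1}}(\RI_2)\BW_{\varphi_2^{\vee}}^{\psi}(\RI_2).
\]
To extract $L(s,\pi,\st)$ I would unfold the basis sum: summing $\CZ(s',\Phi,\beta(\varphi_2,\varphi_1^{\vee}))$ first over $\varphi_1$ against $\BW_{\varphi_1}^{\psi^{-1}}(\RI_2)$ and then over $\varphi_2$ against $\BW_{\varphi_2^{\vee}}^{\psi}(\RI_2)$, the reproducing property of the Whittaker functional over $\CB(\pi)$ collapses the double sum into the global Godement--Jacquet integral, which is an Euler product of local zeta integrals with Whittaker vectors; by the local theory of \cite{GJ72} the unramified factors equal $L_{\nu}(s+\tfrac12,\pi_{\nu},\st)$, and one chooses $\Phi_{\RS}$ so that the bad-place integrals are nonzero holomorphic multiples of the remaining local $L$-factors. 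Thus $\nu_{\Phi_{\RS}}(\{\pi\})$ is $L(s+\tfrac12,\pi,\st)$ up to harmless local factors; Step~1 gives its analytic continuation, and the displayed functional equation combined with the local equations of \cite{Tat67} and \cite{GJ72} (which assemble into $\epsilon(s+\tfrac12,\pi)$) gives $L(s+\tfrac12,\pi,\st)=\epsilon(s+\tfrac12,\pi)L(-s+\tfrac12,\pi,\st^{\vee})$.

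\emph{Main obstacle.}
The delicate point is Steps~2--3: ensuring the isolation of a single cuspidal atom is compatible both with analytic continuation (so the atom is genuinely a holomorphic function of $s$, not only defined for $\Re(s)>1$) and with the Fourier transform in the functional equation, i.e. checking that the fundamental lemma really identifies the Hecke operator picking out $\pi$ on the $\Phi$-side with the one picking out $\pi$ on the $\widehat\Phi$-side despite the $|\det\cdot|^{2}$ twist. Everything else is bookkeeping on top of the Poisson summation formula (Theorem~\ref{thm_psf}) and the trace formula (Theorem~\ref{thm_tf}).
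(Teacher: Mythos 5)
Your Steps 1 and 2 are essentially the paper's own setup: the measure statement is the content of the two lemmas preceding the theorem (including the Phragm\'en--Lindel\"of argument giving bounds locally uniform in $s$), and isolating a single cuspidal atom by Hecke functions $h_n$ with $\widehat{h_n}\to \mathbf{1}_{\{\pi\}}$ is the Sakellaridis-style step the paper relies on implicitly; its written proof then passes immediately to the per-$\pi$ identity and says it only remains to deduce the continuation and functional equation of $L(s,\pi,\st)$. The divergence is in Step 3, and the paper's route is different and simpler than yours: it never unfolds the double sum by the reproducing property. Instead it fixes factorizable data --- the newvector $\varphi^{\circ}_{\nu}$ at finite places, the highest weight vector at archimedean places, and a test function $\Phi^{\circ}$ chosen according to \cite{Hum21} and \cite{Lin18} --- so that $\CZ\left(s+\tfrac12,\Phi^{\circ},\beta(\varphi_2,\varphi_1^{\vee})\right)$ vanishes unless $\varphi_1=\varphi_2=\varphi^{\circ}$, in which case it equals the complete $L$-function $L\left(s+\tfrac12,\pi,\st\right)$ exactly. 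The double sum thus reduces to a single term, continuation follows from the measure statement, and the $\epsilon$-factor identity is quoted from \cite[Theorem 3.3, Proposition 6.12]{GJ72} applied to $\widehat{\Phi^{\circ}}$.

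Your Step 3 as written has a genuine gap. After summing over $\varphi_1$, the summand becomes $\int_{\GL_2(\BA)}\Phi(g)\BW^{\psi^{-1}}_{\varphi_2}(g)|\det g|^{s+1}\ud g\cdot \BW^{\psi}_{\varphi_2^{\vee}}(\RI_2)$, i.e.\ a Godement--Jacquet section integrated against a \emph{Whittaker function}, not a matrix coefficient. Such non-standard zeta integrals are not treated in \cite{GJ72}: their unramified evaluation is a Casselman--Shalika/Shintani-type computation (in effect the basic-vector computations of Section \ref{sec_local} of this paper), and, more seriously, their local functional equation is not the matrix-coefficient theory of \cite{Tat67} and \cite{GJ72} but precisely the Hankel transform of \cite{Jac03} --- the very object this paper is built around. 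So the assertion that the displayed functional equation and the $\epsilon$-factor ``assemble from \cite{Tat67} and \cite{GJ72}'' does not go through after your collapse; you would either have to invoke the local Hankel-transform functional equations (a much heavier input than needed at this stage) or forgo the collapse and argue, as the paper does, at the level of matrix-coefficient zeta integrals. A second, smaller defect: choosing $\Phi_{\RS}$ so the bad-place integrals are merely ``nonzero holomorphic multiples'' of local $L$-factors gives continuation of the partial $L$-function and a functional equation only up to those multiples; the stated identity for the \emph{complete} $L$-function with the precise $\epsilon\left(s+\tfrac12,\pi\right)$ requires either the exact choices of \cite{Hum21} and \cite{Lin18} made in the paper or an additional appeal to the local functional equations at the bad places in matrix-coefficient form.
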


\begin{proof}
	We only need to prove the analytic continuation and functional equation of $L$-functions. Write $\pi=\otimes^{\prime}_{\nu}\pi_{\nu}$ according to \cite{Fla79}. For $\varphi\in\CB(\pi)$, write 
	\[
	\varphi=\otimes^{\prime}_{\nu\in|k|}\varphi_{\nu}.
	\]
	Then we have an Euler product
	\[
	\CZ\left( s+\frac{1}{2},\Phi,\beta(\varphi_2,\varphi_1^{\vee}) \right)=\prod_{\nu\in|k|}\CZ_{\nu}\left(s+\frac{1}{2},\Phi_{\nu},\beta(\varphi_{2,\nu},\varphi_{1,\nu}^{\vee}) \right),
	\]
	where
	\[
	\CZ_{\nu}\left(s+\frac{1}{2},\Phi_{\nu},\beta(\varphi_{2,\nu},\varphi_{1,\nu}^{\vee}) \right):=\int_{\RG(k_{\nu})}\Phi_{\nu}(g)\beta(\varphi_{2,\nu},\varphi_{1,\nu}^{\vee})(g)|\det g|_{\nu}^{s+1}\ud g.
	\]
	 For non-Archimedean $\nu$, let $\varphi_{\nu}^{\circ}$ be the unique newvector of $\pi_{\nu}$ as in \cite[Theorem (5)]{JPSS81} up to scalar. For Archimedean $\nu$, let $\varphi^{\circ}_{\nu}$ be the unique highest weight vector of ${\pi}_{\nu}$. Denote $\varphi^{\circ}=\otimes^{\prime}\varphi_{\nu}^{\circ}$. Then choose $\Phi^{\circ}=\otimes^{\prime}_{\nu}\Phi^{\circ}_{\nu}$ such that $\Phi^{\circ}_{\nu}$ is as in \cite[Theorem 1.2]{Hum21} and as in \cite[Theorem 1.1]{Lin18}. Then, since the $\RK$-type of $\Phi^{\circ}$ is fixed, according to the uniqueness of the newvector in the non-Archimedean case and the highest weight vector in the non-Archimedean case, we have
	\[
	\CZ\left( s+\frac{1}{2},\Phi^{\circ},\beta(\varphi_2,\varphi_1^{\vee}) \right)\neq 0
	\]
	if and only if $\varphi_1=\varphi_2=\varphi^{\circ}$. Moreover, in this case
	\[
	\CZ\left( s+\frac{1}{2},\Phi^{\circ},\beta(\varphi^{\circ},\varphi^{\circ}) \right)=L\left(s+\frac{1}{2},\pi,\st\right).
	\]
	Then we get the analytic continuation of $L(s,\pi,\st)$. 
	
	As for the functional equation, according to \cite[Theorem 3.3, Proposition 6.12]{GJ72}, there is an epsilon factor $\epsilon(s,\pi)$ such that
		\begin{align*}
	\sum_{\varphi_1.\varphi_2\in\CB(\pi)}\CZ\left(-s+\frac{1}{2},\widehat{\Phi^{\circ}},\beta^{\vee}(\varphi_2,\varphi_1^{\vee})\right)&\BW_{\varphi_1}^{\psi^{-1}}(\RI_2)\BW_{\varphi_2^{\vee}}^{\psi}(\RI_2)\\
	&=\BW_{\varphi^{\circ}}^{\psi^{-1}}(\RI_2)\BW_{\varphi^{\circ,\vee}}^{\psi}(\RI_2)\epsilon\left(s+\frac{1}{2},\pi \right) L\left(-s+\frac{1}{2},\pi^{\vee},\st\right),
	\end{align*}
	we may get 
	\[
   	L\left(s+\frac{1}{2},\pi,\st\right)=\epsilon\left(s+\frac{1}{2},\pi \right)L\left(-s+\frac{1}{2},\pi,\st^{\vee}\right).
	\]
	\end{proof}

\bibliographystyle{alpha}
	\bibliography{references}
\end{document}